\titleformat{\section}{\centering\large\sc}{\thesection.}{0.7em}{}
\titleclass{\subsubsubsection}{straight}[\subsection]
\newcounter{subsubsubsection}[subsubsection]
\renewcommand\thesubsubsubsection{\thesubsubsection.\arabic{subsubsubsection}}
\renewcommand\paragraph{\@startsection{paragraph}{5}{\z@}%
  {3.25ex \@plus1ex \@minus.2ex}%
  {-1em}%
  {\normalfont\normalsize\bfseries}}
\renewcommand\subparagraph{\@startsection{subparagraph}{6}{\parindent}%
  {3.25ex \@plus1ex \@minus .2ex}%
  {-1em}%
  {\normalfont\normalsize\bfseries}}
\def\toclevel@subsubsubsection{4}
\def\toclevel@paragraph{5}
\def\toclevel@paragraph{6}
\def\l@subsubsubsection{\@dottedtocline{4}{7em}{4em}}
\def\l@paragraph{\@dottedtocline{5}{10em}{5em}}
\def\l@subparagraph{\@dottedtocline{6}{14em}{6em}}
\newcommand{\sortas}[1]{}
\begin{document}
\theoremstyle{plain}
\newtheorem{thm}{Theorem}[section]
\newtheorem{theorem}[thm]{Theorem}

\numberwithin{equation}{thm}
\newtheorem*{thm*}{Theorem}
\newtheorem*{cor*}{Corollary}
\newtheorem*{thma}{Theorem A}
\newtheorem*{thmb}{Theorem B}
\newtheorem*{mthm}{Main Theorem}
\newtheorem*{mcor}{Theorem \ref{maincor}}
\newtheorem{thms}[thm]{Theorems}
\newtheorem{prop}[thm]{Proposition}
\newtheorem{proposition}[thm]{Proposition}
\newtheorem{prodef}[thm]{Proposition and Definition}
\newtheorem{lemma}[thm]{Lemma}
\newtheorem{lem}[thm]{Lemma}
\newtheorem{cor}[thm]{Corollary}

\newtheorem{corollary}[thm]{Corollary}
\newtheorem{claim}[thm]{Claim}
\newtheorem*{claim*}{Claim}

\theoremstyle{definition}
\newtheorem{defn}[thm]{Definition}
\newtheorem{setup}[thm]{Setup}
\newtheorem{defnProp}[thm]{Definition and Proposition}
\newtheorem{prob}[thm]{Problem}
\newtheorem{question}[thm]{Question}
\newtheorem{definition}[thm]{Definition}
\newtheorem{ex}[thm]{Example}
\newtheorem{exercise}[thm]{Exercise}
\newtheorem{Example}[thm]{Example}
\newtheorem*{notation}{Notation}
\newtheorem*{acknowledgements}{Acknowledgements}
\newtheorem{example}[thm]{Example}
\newtheorem{fact}[thm]{Fact}
\newtheorem*{Fact*}{Fact}
\newtheorem{conj}[thm]{Conjecture}
\newtheorem{ques}[thm]{Question}
\newtheorem*{quesa}{Question A}
\newtheorem*{quesb}{Question B}
\newtheorem{case}{Case}
\newtheorem{setting}[thm]{Setting}
\newtheorem{rem}[thm]{Remark}
\newtheorem{note}[thm]{Note}
\newtheorem{remark}[thm]{Remark}

\theoremstyle{remark}
\newtheorem*{lemm}{Lemma}
\newtheorem*{pf}{{\sl Proof}}
\newtheorem*{pfpr}{Proof of Proposition \eqref{positive Hecke}}
\newtheorem*{tpf}{{\sl Proof of Theorem 1.1}}
\newtheorem*{cpf1}{{\sl Proof of Claim 1}}
\newtheorem*{cpf2}{{\sl Proof of Claim 2}}

\def\soc{\operatorname{Soc}}
\def\xx{\text{{\boldmath$x$}}}
\def\L{\mathrm{U}}
\def\Z{\mathcal{Z}}
\def\D{\mathcal{D}}
\def\Min{\operatorname{Min}}
\def\Gdim{\operatorname{Gdim}}
\def\pd{\operatorname{pd}}
\def\GCD{\operatorname{GCD}}
\def\Ext{\operatorname{Ext}}
\def\X{\mathcal{X}}
\def\S{\mathcal{S}}
\def\XX{\mathbb{X}}
\def\Im{\operatorname{Im}}
\def\s{\operatorname{s}}
\def\Ker{\operatorname{Ker}}
\def\KK{\mathbb{K}}
\def\bbZ{\mathbb{Z}}
\def\LL{\mathbb{U}}
\def\E{\operatorname{E}}
\def\H{\operatorname{H}}

\def\Hom{\operatorname{Hom}}
\def\Ext{\operatorname{Ext}}
\def\RHom{\mathrm{{\bf R}Hom}}
\def\Tor{\operatorname{Tor}}
\def\Max{\operatorname{Max}}
\def\Assh{\operatorname{Assh}}
\def\End{\mathrm{End}}
\def\rad{\mathrm{rad}}
\def\Soc{\mathrm{Soc}}
\def\Proj{\operatorname{Proj}}

\def\Mod{\mathrm{Mod}}
\def\mod{\mathrm{mod}}
\def\G{{\sf G}}
\def\T{{\sf T}}

\def\Coker{\mathrm{Coker}}
\def\im{\mathrm{im}\;}
\def\st{{}^{\ast}}
\def\rank{\mathrm{rank}}
\def\a{\mathfrak a}
\def\b{\mathfrak b}
\def\c{\mathfrak c}
\def\e{\mathrm{e}}
\def\f{\mathfrak{f}}
\def\m{\mathfrak m}
\def\n{\mathfrak n}
\def\p{\mathfrak p}
\def\q{\mathfrak q}
\def\P{\mathfrak P}
\def\Q{\mathfrak Q}
\def\N{\Bbb N}
\def\C{\mathcal{C}}
\def\K{\mathrm{K}}
\def\H{\mathrm{H}}
\def\J{\mathrm{J}}
\def\Gr{\mathrm G}
\def\FC{\mathrm F}
\def\Var{\mathrm V}
\def\V{\mathrm V}
\def\r{\mathrm{r}}

\newcommand{\nCM}{\mathrm{nCM}}
\newcommand{\nSCM}{\mathrm{nSCM}}
\newcommand{\Aut}{\mathrm{Aut}}
\newcommand{\Att}{\mathrm{Att}}
\newcommand{\Ann}{\mathrm{Ann}}

\newcommand{\rma}{\mathrm{a}}
\newcommand{\rmb}{\mathrm{b}}
\newcommand{\rmc}{\mathrm{c}}
\newcommand{\rmd}{\mathrm{d}}
\newcommand{\rme}{\mathrm{e}}
\newcommand{\rmf}{\mathrm{f}}
\newcommand{\rmg}{\mathrm{g}}
\newcommand{\rmh}{\mathrm{h}}
\newcommand{\rmi}{\mathrm{i}}
\newcommand{\rmj}{\mathrm{j}}
\newcommand{\rmk}{\mathrm{k}}
\newcommand{\rml}{\mathrm{l}}
\newcommand{\rmm}{\mathrm{m}}
\newcommand{\rmn}{\mathrm{n}}
\newcommand{\rmo}{\mathrm{o}}
\newcommand{\rmp}{\mathrm{p}}
\newcommand{\rmq}{\mathrm{q}}
\newcommand{\rmr}{\mathrm{r}}
\newcommand{\rms}{\mathrm{s}}
\newcommand{\rmt}{\mathrm{t}}
\newcommand{\rmu}{\mathrm{u}}
\newcommand{\rmv}{\mathrm{v}}
\newcommand{\rmw}{\mathrm{w}}
\newcommand{\rmx}{\mathrm{x}}
\newcommand{\rmy}{\mathrm{y}}
\newcommand{\rmz}{\mathrm{z}}

\newcommand{\rmA}{\mathrm{A}}
\newcommand{\rmB}{\mathrm{B}}
\newcommand{\rmC}{\mathrm{C}}
\newcommand{\rmD}{\mathrm{D}}
\newcommand{\rmE}{\mathrm{E}}
\newcommand{\rmF}{\mathrm{F}}
\newcommand{\rmG}{\mathrm{G}}
\newcommand{\rmH}{\mathrm{H}}
\newcommand{\rmI}{\mathrm{I}}
\newcommand{\rmJ}{\mathrm{J}}
\newcommand{\rmK}{\mathrm{K}}
\newcommand{\rmL}{\mathrm{L}}
\newcommand{\rmM}{\mathrm{M}}
\newcommand{\rmN}{\mathrm{N}}
\newcommand{\rmO}{\mathrm{O}}
\newcommand{\rmP}{\mathrm{P}}
\newcommand{\rmQ}{\mathrm{Q}}
\newcommand{\rmR}{\mathrm{R}}
\newcommand{\rmS}{\mathrm{S}}
\newcommand{\rmT}{\mathrm{T}}
\newcommand{\rmU}{\mathrm{U}}
\newcommand{\rmV}{\mathrm{V}}
\newcommand{\rmW}{\mathrm{W}}
\newcommand{\rmX}{\mathrm{X}}
\newcommand{\rmY}{\mathrm{Y}}
\newcommand{\rmZ}{\mathrm{Z}}

\newcommand{\cala}{\mathcal{a}}
\newcommand{\calb}{\mathcal{b}}
\newcommand{\calc}{\mathcal{c}}
\newcommand{\cald}{\mathcal{d}}
\newcommand{\cale}{\mathcal{e}}
\newcommand{\calf}{\mathcal{f}}
\newcommand{\calg}{\mathcal{g}}
\newcommand{\calh}{\mathcal{h}}
\newcommand{\cali}{\mathcal{i}}
\newcommand{\calj}{\mathcal{j}}
\newcommand{\calk}{\mathcal{k}}
\newcommand{\call}{\mathcal{l}}
\newcommand{\calm}{\mathcal{m}}
\newcommand{\caln}{\mathcal{n}}
\newcommand{\calo}{\mathcal{o}}
\newcommand{\calp}{\mathcal{p}}
\newcommand{\calq}{\mathcal{q}}
\newcommand{\calr}{\mathcal{r}}
\newcommand{\cals}{\mathcal{s}}
\newcommand{\calt}{\mathcal{t}}
\newcommand{\calu}{\mathcal{u}}
\newcommand{\calv}{\mathcal{v}}
\newcommand{\calw}{\mathcal{w}}
\newcommand{\calx}{\mathcal{x}}
\newcommand{\caly}{\mathcal{y}}
\newcommand{\calz}{\mathcal{z}}

\newcommand{\calA}{\mathcal{A}}
\newcommand{\calB}{\mathcal{B}}
\newcommand{\calC}{\mathcal{C}}
\newcommand{\calD}{\mathcal{D}}
\newcommand{\calE}{\mathcal{E}}
\newcommand{\calF}{\mathcal{F}}
\newcommand{\calG}{\mathcal{G}}
\newcommand{\calH}{\mathcal{H}}
\newcommand{\calI}{\mathcal{I}}
\newcommand{\calJ}{\mathcal{J}}
\newcommand{\calK}{\mathcal{K}}
\newcommand{\calL}{\mathcal{L}}
\newcommand{\calM}{\mathcal{M}}
\newcommand{\calN}{\mathcal{N}}
\newcommand{\calO}{\mathcal{O}}
\newcommand{\calP}{\mathcal{P}}
\newcommand{\calQ}{\mathcal{Q}}
\newcommand{\calR}{\mathcal{R}}
\newcommand{\calS}{\mathcal{S}}
\newcommand{\calT}{\mathcal{T}}
\newcommand{\calU}{\mathcal{U}}
\newcommand{\calV}{\mathcal{V}}
\newcommand{\calW}{\mathcal{W}}
\newcommand{\calX}{\mathcal{X}}
\newcommand{\calY}{\mathcal{Y}}
\newcommand{\calZ}{\mathcal{Z}}

\newcommand{\fka}{\mathfrak{a}}
\newcommand{\fkb}{\mathfrak{b}}
\newcommand{\fkc}{\mathfrak{c}}
\newcommand{\fkd}{\mathfrak{d}}
\newcommand{\fke}{\mathfrak{e}}
\newcommand{\fkf}{\mathfrak{f}}
\newcommand{\fkg}{\mathfrak{g}}
\newcommand{\fkh}{\mathfrak{h}}
\newcommand{\fki}{\mathfrak{i}}
\newcommand{\fkj}{\mathfrak{j}}
\newcommand{\fkk}{\mathfrak{k}}
\newcommand{\fkl}{\mathfrak{l}}
\newcommand{\fkm}{\mathfrak{m}}
\newcommand{\fkn}{\mathfrak{n}}
\newcommand{\fko}{\mathfrak{o}}
\newcommand{\fkp}{\mathfrak{p}}
\newcommand{\fkq}{\mathfrak{q}}
\newcommand{\fkr}{\mathfrak{r}}
\newcommand{\fks}{\mathfrak{s}}
\newcommand{\fkt}{\mathfrak{t}}
\newcommand{\fku}{\mathfrak{u}}
\newcommand{\fkv}{\mathfrak{v}}
\newcommand{\fkw}{\mathfrak{w}}
\newcommand{\fkx}{\mathfrak{x}}
\newcommand{\fky}{\mathfrak{y}}
\newcommand{\fkz}{\mathfrak{z}}

\newcommand{\fkA}{\mathfrak{A}}
\newcommand{\fkB}{\mathfrak{B}}
\newcommand{\fkC}{\mathfrak{C}}
\newcommand{\fkD}{\mathfrak{D}}
\newcommand{\fkE}{\mathfrak{E}}
\newcommand{\fkF}{\mathfrak{F}}
\newcommand{\fkG}{\mathfrak{G}}
\newcommand{\fkH}{\mathfrak{H}}
\newcommand{\fkI}{\mathfrak{I}}
\newcommand{\fkJ}{\mathfrak{J}}
\newcommand{\fkK}{\mathfrak{K}}
\newcommand{\fkL}{\mathfrak{L}}
\newcommand{\fkM}{\mathfrak{M}}
\newcommand{\fkN}{\mathfrak{N}}
\newcommand{\fkO}{\mathfrak{O}}
\newcommand{\fkP}{\mathfrak{P}}
\newcommand{\fkQ}{\mathfrak{Q}}
\newcommand{\fkR}{\mathfrak{R}}
\newcommand{\fkS}{\mathfrak{S}}
\newcommand{\fkT}{\mathfrak{T}}
\newcommand{\fkU}{\mathfrak{U}}
\newcommand{\fkV}{\mathfrak{V}}
\newcommand{\fkW}{\mathfrak{W}}
\newcommand{\fkX}{\mathfrak{X}}
\newcommand{\fkY}{\mathfrak{Y}}
\newcommand{\fkZ}{\mathfrak{Z}}
\newcommand{\kr}{\mathrm{K}_R}
\newcommand{\mapright}[1]{%
\smash{\mathop{%
\hbox to 1cm{\rightarrowfill}}\limits^{#1}}}

\newcommand{\mapleft}[1]{%
\smash{\mathop{%
\hbox to 1cm{\leftarrowfill}}\limits_{#1}}}

\newcommand{\mapdown}[1]{\Big\downarrow
\llap{$\vcenter{\hbox{$\scriptstyle#1\,$}}$ }}
\renewcommand{\o}[1]{{#1}^{\circ}}

\newcommand{\mapup}[1]{\Big\uparrow
\rlap{$\vcenter{\hbox{$\scriptstyle#1\,$}}$ }}
\def\grade{\operatorname{grade}}
\def\depth{\operatorname{depth}}
\def\AGL{\operatorname{AGL}}
\def\Supp{\operatorname{Supp}}
\def\ann{\operatorname{Ann}}
\def\Ass{\operatorname{Ass}}
\def\height{\mathrm{ht}}
\def\Sp{\operatorname{Spec}}
\def\Spf{\operatorname{Spf}}
\def\Syz{\mathrm{Syz}}
\def\hdeg{\operatorname{hdeg}}
\def\id{\operatorname{id}}
\def\gr{\mbox{\rm gr}}
\def\Zdv{\operatorname{Zdv}}
\def\Wd{\operatorname{\mathrm{WD}}}
\def\r{\mathrm{r}}

\def\A{{\mathcal A}}
\def\B{{\mathcal B}}
\def\F{{\mathcal F}}
\def\Y{{\mathcal Y}}
\def\W{{\mathcal W}}
\def\M{{\mathcal M}}
\def\O{{\mathcal O}}
\def\H{{\mathcal H}}
\def\G{{\mathcal G}}
\def\R{{\mathcalR}}
\def\I{{\mathcal I}}
\def\J{{\mathcal J}}
\def\L{{\mathcal L}}
\def\U{{\mathcal U}}
\def\fM{\mathfrak M}
\def\fN{\mathfrak N}
\def\fl{\pi^\flat}
\def\E{{\mathcal E}}
\renewcommand{\R}{\mathcal{R}}

\def\yy{\text{\boldmath $y$}}
\def\rb{\overline{R}}
\def\ol{\overline}
\def\Deg{\mathrm{deg}}

\def\PP{\mathbb{P}}
\def\II{\mathbb{I}}
\newcommand{\w}{\wedge}
\newcommand{\dep}{\mathrm{depth}}
\newcommand{\kz}[1]{\mathrm{K}_{\bullet}(#1)}
\newcommand{\tc}[2]{\langle #1, #2 \rangle}
\newcommand{\hm}[2]{\operatorname{Hom}_{R}(#1, #2 )}
\newcommand{\pt}{\partial}
\newcommand{\ckz}[1]{\mathrm{K}_{\bullet}(#1)}
\newcommand{\loc}[1]{\mathrm{H}_{\m}^d({#1})}
\newcommand{\hy}[2]{\mathrm{H}_{\m}^{#1}(#2)}
\newcommand{\cy}[2]{\mathrm{H}_{#1}(#2)}
\newcommand{\lc}[3]{\mathrm{H}_{#1}^{#2}(#3)}
\newcommand{\exe}[2]{\operatorname{Ext}_{R}^{#1}(#2,\kr)}
\newcommand{\ext}[3]{\operatorname{Ext}_{R}^{#1}(#2, #3)}
\newcommand{\ie}[1]{\mathrm{E}_{R}(#1)}
\newcommand{\vin}{\rotatebox[origin=c]{90}{$\in$}}
\newcommand{\ts}{\otimes}
\newcommand{\as}[1]{\operatorname{Ass}_{R} #1}
\newcommand{\mcF}{\mathcal{F}}
\newcommand{\idd}[1]{\mathrm{id}_{R}({#1})}
\newcommand{\wrr}{\widehat{R}}
\newcommand{\mi}[1]{\mu^i(\m,{#1})}
\newcommand{\krr}[1]{\mathrm{K}_{#1}}
\newcommand{\rt}[1]{\mathrm{r}_R({#1})}
\newcommand{\ci}{\mathbb{C}_{i}(R)}
\newcommand{\ccr}[1]{\mathbb{C}_{#1}(R)}
\newcommand{\dg}{\textsuperscript{\textdagger}}
\newcommand{\rr}[2]{\mathrm{r}_{#1}(#2)}
\newcommand{\wm}{\widehat{\m}}
\newcommand{\wE}{\E_{\wrr}(\wrr/\wm)}
\renewcommand{\sp}{\mathrm{Spec}_S(\mathcal{A})}
\newcommand{\proj}{\mathrm{Proj}}

\newcommand{\hms}[2]{\mathrm{Hom}_S(#1,#2)}
\newcommand{\hyy}[3]{\mathrm{H}_{\underline{#1}}^{#2}(#3)}
\newcommand{\rmod}{R-\mathrm{Mod}}
\newcommand{\tth}{\text{th}}
\newcommand{\asl}[1]{\operatorname{Ass}_{S^{-1}R}S^{-1} #1}
\newcommand{\uijk}{U_{ij\lambda}}
\renewcommand{\Proj}{\mathrm{Proj}}
\renewcommand{\rad}{\mathrm{rad}}
\newcommand{\norm}[1]{\|#1\|}
\newcommand{\fil}[1]{\mathrm{Fil}^{#1}}
\newcommand{\Ql}{\overline{\mathbf{Q}}_{\ell}}
\newcommand{\GL}{\mathrm{GL}_n(\overline{\mathbf{Q}}_{\ell})}
\newcommand{\un}{F^{\mathrm{unr}}}
\newcommand{\tr}{F^{\mathrm{tr}}}
\newcommand{\tl}{t_{\ell}}
\newcommand{\gl}{\mathrm{GL}}
\newcommand{\en}{\mathrm{End}}
\newcommand{\spe}{\mathrm{SL}_2(\mathbb{F}_p)}
\newcommand{\gen}{\mathrm{GL}_2(\mathbb{F}_p)}
\newcommand{\glr}{\mathrm{GL}_n(\mathbb{R})}
\newcommand{\supp}{\mathrm{supp}\;}
\newcommand{\Qbar}{\overline{\mathbf{Q}}}
\newcommand{\Zbar}{\overline{\mathbb{Z}}_p}
\newcommand{\oks}{\O_{K,S}}
\newcommand{\cl}{\mathrm{Cl}}
\newcommand{\cls}{\mathrm{cl}}
\newcommand{\ord}{\mathrm{ord}}
\newcommand{\ab}{\mathrm{ab}}
\newcommand{\unr}{\mathrm{unr}}
\newcommand{\gal}{\mathrm{Gal}}
\newcommand{\vol}{\mathrm{vol}}
\newcommand{\reg}{\mathrm{Reg}}
\newcommand{\re}{\mathrm{Re}}
\newcommand{\spm}{\mathrm{Spm}}
\newcommand{\kunr}{K^\mathrm{unr}}
\newcommand{\kt}{K^\mathrm{t}}
\newcommand{\pnm}{\mathbb{P}^n_R\times_R\mathbb{P}^m_R}
\newcommand{\pnmr}{\mathbb{P}_R^{nm+n+m}}
\newcommand{\bk}{\mathrm{Breuil}\textendash\mathrm{Kisin}}
\newcommand{\bkf}{\mathrm{Breuil}\textendash\mathrm{Kisin}\textendash\mathrm{Fargues}}
\renewcommand{\ss}{\mathrm{ss}}
\newcommand{\crys}{\mathrm{crys}}
\newcommand{\Ao}{A^{\circ}}
\newcommand{\Ddr}{D_{\mathrm{dR}}(\fM_{\Ao}^{\inf})}
\newcommand{\und}[1]{\underline{#1}}
\newcommand{\xdss}{\X_d^{\ss,\underline{\lambda},\tau}}
\newcommand{\xdcrys}{\X_d^{\crys,\underline{\lambda},\tau}}
\newcommand{\BK}{\mathrm{BK}}
\newcommand{\disc}{\mathrm{disc}}
\newcommand{\kbas}{K^\mathrm{basic}}
\newcommand{\cyc}{\mathrm{cyc}}
\newcommand{\aka}{\mathbf{A}_{K,A}}

\newcommand{\ak}[1]{\mathbf{A}_{K,#1}}
\newcommand{\lt}{\mathrm{LT}}
\newcommand{\xan}{X^{\mathrm{an}}}
\newcommand{\ur}[1]{\mathrm{ur}_{#1}}

\newenvironment{psmallmatrix}
  {\left(\begin{smallmatrix}}
  {\end{smallmatrix}\right)}
  \newcommand{\eg}{\mathrm{EG}}
\newcommand{\ltet}{\M^{\mathrm{\acute{e}t},\lt}_{\varphi,\Gamma}}
\renewcommand{\ext}{\mathrm{Ext}}

\title{\large\bf Moduli stacks of Lubin--Tate $(\varphi,\Gamma)$-modules}
\author{{\sc\normalsize Dat Pham}}
\date{}
\maketitle
\begin{abstract}
    We define and study stacks which parametrize Lubin--Tate $(\varphi,\Gamma)$-modules. By working at a perfectoid level, we compare these with the moduli stacks of cyclotomic $(\varphi,\Gamma)$-modules in \cite{EG22}. As a consequence, we deduce perfectness of the Herr complex in the Lubin--Tate setting. 
\end{abstract}
\section{Introduction}
Let $K/\mathbf{Q}_p$ be a finite extension with algebraic closure $\overline{K}$. A basic idea in $p$-adic Hodge theory is to consider ``deeply ramified'' subextensions $K\subseteq K_\infty \subseteq \overline{K}$ which encodes most interesting ramifications of $\overline{K}/K$, yet whose Galois group is simply enough to control. When $K_\infty/K$ is the cyclotomic extension, this idea was realized in \cite{Fon90}, where Fontaine introduced the notion of \' etale $(\varphi,\Gamma)$-modules, and showed that they are naturally equivalent to continuous representations of $G_K$ on finite $\mathbf{Z}_p$-modules. Since its introduction, the concept of $(\varphi,\Gamma)$-modules has proved to be a very powerful tool in the study of $p$-adic Galois representations. It has featured prominently in e.g. Berger's proof of Fontaine's $p$-adic monodromy conjecture, and in Colmez's celebrated work on the $p$-adic local Langlands correspondence for $\mathrm{GL}_2(\mathbf{Q}_p)$.

In \cite{EG22}, Emerton and Gee define and study stacks which interpolate Fontaine's $(\varphi,\Gamma)$-modules in families. More precisely, for each integer $d\geq 1$, they consider the stack $\X_d$ over $\Spf\mathbf{Z}_p$ whose groupoid of $A$-valued points, for any $p$-adically complete $\mathbf{Z}_p$-algebra $A$, is given by the groupoid of rank $d$ projective \' etale $(\varphi,\Gamma)$-modules with $A$-coefficients. The geometry of $\X_d$ has been studied extensively in \cite{EG22}. In particular, the authors show that $\X_d$ is a Noetherian formal algebraic stack, and moreover, its underlying reduced substack is an algebraic stack of finite type over $\mathbf{F}_p$, whose irreducible components admit a natural labelling by Serre weights. The stack $\X_d$ is also expected to play a critical role in the emerging categorical $p$-adic local Langlands program (cf. \cite{EGH}). 

With an eye toward realizing a $p$-adic local Langlands correspondence for fields other than $\mathbf{Q}_p$, there has been a growing interest in studying the analogue of Fontaine's notion in which the cyclotomic extension $K_\infty/K$ is replaced by a Lubin--Tate extension (in what follows, we will often refer to these objects simply as Lubin--Tate $(\varphi,\Gamma)$-modules). We will not try to survey these results, but instead refer the reader, for instance, to \cite{KisinRen}, \cite{Schneider}, and \cite{HerrLTsetting}.

The goal of this note is to extend the aforementioned construction of Emerton--Gee to the Lubin--Tate setting. Before stating our main results, we need to introduce some notation. Let $\pi \in K$ be a fixed uniformizer, and let $\F_{\phi}$ be a Lubin-Tate group for $\pi$, with Frobenius power
series $\phi(T)\in \O_K[[T]]$. The corresponding ring map $\O_K\to\mathrm{End}(\F_{\phi})$ is denoted by $a\mapsto [a](T)$; in particular $[\pi](T) =\phi(T)$. Let $K_\infty/K$ be the extension generated by the torsion points of $\F_{\phi}$ and let $\chi: \Gamma:=\mathrm{Gal}(K_\infty/K)\xrightarrow{\sim}\O_K^\times$ be the resuting Lubin--Tate character. For the purpose of this introduction, we simply let
\begin{displaymath}
    \mathbf{A}_K:=\widehat{\O_K((T))}=\left\{\sum_{n\in\mathbf{Z}}a_nT^n\;|\;\text{$a_n\in \O_K$ and $a_n\to 0$ as $n\to -\infty$}\right\},
\end{displaymath}
where the hat denotes $\pi$-adic completion. The ring $\mathbf{A}_K$ is further endowed with a Frobenius $\varphi_q: f(T)\mapsto f([\pi](T))$ and an action of $\Gamma$ given by $(g,f(T))\mapsto f([\chi(g)](T))$ for $g\in {\Gamma}$; as the notation suggests, $\varphi_q$ is a lift of the $q$-power Frobenius modulo $\pi$. An \' etale $(\varphi_q,\Gamma)$-module (over $\mathbf{A}_K$) is then, by definition, a finite $\mathbf{A}_K$-module endowed
with commuting continuous semilinear actions of $\varphi_q$ and $\Gamma$ such that the linearization of $\varphi_q$ an isomorphism. There is again a natural equivalence between \' etale $(\varphi_q,\Gamma)$-modules and representations of $G_K$ on finite $\O_K$-modules. (One can be slightly more general by allowing also representations on $\O_F$-modules with $F$ being a finite subextension of $K/\mathbf{Q}_p$. See Section \eqref{definition related} below.)  

Fix now an integer $d\geq 1$. By definition, our stack $\X_{K,d}^{\lt}$ takes a $\pi$-adically complete $\O_K$-algebra $A$ to the groupoid of rank $d$ projective \' etale $(\varphi_q,\Gamma)$-modules over $\aka:=\mathbf{A}_K\widehat{\otimes}_{\O_K}A$.
\begin{thm}[Proposition \eqref{Xd ind algebraic K general}]\label{Xd ind algebraic intro}
$\X_{K,d}^{\lt}$ is a limit preserving Ind-algebraic stack over $\Spf \O_K$, with finitely presented affine diagonal.
\end{thm}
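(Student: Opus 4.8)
The plan is to carry over the construction of Emerton--Gee \cite{EG22} to the Lubin--Tate coefficient rings, the main point being that $\mathbf{A}_K^+:=\O_K[[T]]$ and $\mathbf{A}_K$ retain every structural feature used there: $\mathbf{A}_K^+$ is Noetherian, $\mathbf{A}_K$ is a complete discrete valuation ring with uniformizer $\pi$ whose residue field is the Laurent series field over that of $\O_K$, the Frobenius $\varphi_q\colon T\mapsto[\pi](T)$ is finite flat and strictly raises the $T$-adic valuation (since $[\pi](T)\equiv T^q\bmod\pi$), and $\Gamma\cong\O_K^\times$ is topologically finitely generated. I would first pass to finite level: writing $\X_{K,d}^{\lt}=\varinjlim_a\bigl(\X_{K,d}^{\lt}\times_{\Spf\O_K}\Sp(\O_K/\pi^a)\bigr)$ as a colimit along closed immersions reduces matters to the stack $\X_{K,d}^{\lt,a}$ of \'etale $(\varphi_q,\Gamma)$-modules with $\O_K/\pi^a$-coefficients. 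Since $\mathbf{A}_K/\pi^a$ is Noetherian, every such object arises fppf-locally on the base from a $\varphi_q$-stable $\mathbf{A}_K^+/\pi^a\widehat{\otimes}A$-lattice on which the linearised Frobenius has cokernel killed by $T^h$ for some $h$, so $\X_{K,d}^{\lt,a}=\varinjlim_h\X_{K,d}^{\lt,a,\le h}$ with closed-immersion transition maps; it is thus enough to show each $\X_{K,d}^{\lt,a,\le h}$ is an algebraic stack of finite presentation over $\O_K/\pi^a$ with affine diagonal.

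For the algebraicity I would, following \cite{EG22}, first analyse the stack $\mathcal{C}_d^{a,\le h}$ of rank-$d$ projective $\mathbf{A}_K^+/\pi^a\widehat{\otimes}A$-modules $\mathfrak{M}$ with a $\varphi_q$-semilinear endomorphism $\Phi$ of height $\le h$. Zariski-locally this is the data of a matrix $\Phi$ subject to a determinant condition, taken modulo change of basis by $\mathrm{GL}_d(\mathbf{A}_K^+/\pi^a\widehat{\otimes}A)$; a priori this involves infinitely many $T$-adic coefficients, but because $\varphi_q$ strictly raises the $T$-adic valuation a successive-approximation argument shows that the isomorphism class of $(\mathfrak{M},\Phi)$ depends only on $\Phi$ and the change-of-basis matrix modulo a fixed power $T^N$ with $N=N(a,h)$, so that $\mathcal{C}_d^{a,\le h}$ is the quotient of a finite-type affine scheme by a finite-type group scheme, hence a finite type algebraic stack with affine diagonal. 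Inverting $T$ gives the stack of \'etale $\varphi_q$-modules of $T$-height $\le h$, and $\X_{K,d}^{\lt,a,\le h}$ is obtained from it by recording a commuting continuous $\Gamma$-action: the data of matrices $G_{\gamma_1},\dots,G_{\gamma_r}$ in $\mathrm{GL}_d$ over the coefficient ring $\mathbf{A}_K/\pi^a\widehat{\otimes}A$, for topological generators of $\Gamma$, which must commute with $\Phi$ and satisfy the defining relations of $\Gamma$ (a continuity/convergence condition being automatic here). Since commuting with $\Phi$---of bounded height, with $\varphi_q$-conjugation raising the $T$-adic valuation---again pins the $G_{\gamma_i}$ down up to finitely many coefficients, the moduli problem stays of finite type; combined with \cite{EG22}'s treatment of the morphism from the stack of Kisin modules, this exhibits $\X_{K,d}^{\lt,a,\le h}$ as a finite type algebraic stack over $\O_K/\pi^a$ with affine diagonal. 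This step is the technical heart of the argument.

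It remains to control the diagonal directly. For $(\varphi_q,\Gamma)$-modules $M,N$ over $\aka$, the diagonal fibre product is the functor $\mathrm{Isom}_{(\varphi_q,\Gamma)}(M,N)$, which I would realise as the closed subscheme of pairs $(f,g)$ inside $\mathrm{Hom}_{(\varphi_q,\Gamma)}(M,N)\times\mathrm{Hom}_{(\varphi_q,\Gamma)}(N,M)$ cut out by $fg=\mathrm{id}$ and $gf=\mathrm{id}$. The functor $\mathrm{Hom}_{(\varphi_q,\Gamma)}(M,N)$ is the degree-zero part of a finite-level Herr-type complex over $\aka$, and the argument that this complex has finitely generated $A$-modules as cohomology---so that $\mathrm{Hom}_{(\varphi_q,\Gamma)}$ is representable by an affine $A$-scheme of finite presentation---uses only the structure of $\aka$ and the action of $\varphi_q-1$ and the $\gamma_i-1$ on it, and is insensitive to replacing the cyclotomic tower by the Lubin--Tate one, so carries over from \cite{EG22}. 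Hence $\mathrm{Isom}_{(\varphi_q,\Gamma)}$ is affine and of finite presentation, and each $\X_{K,d}^{\lt,a,\le h}$---and so $\X_{K,d}^{\lt}=\varinjlim_{a,h}\X_{K,d}^{\lt,a,\le h}$---has affine, finitely presented diagonal.

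Putting this together, $\X_{K,d}^{\lt}$ is Ind-algebraic over $\Spf\O_K$ by construction, and it is limit preserving since each $\X_{K,d}^{\lt,a,\le h}$ is of finite presentation over $\O_K/\pi^a$ and the transition maps in both colimits are closed immersions of finite presentation, so that $A\mapsto\X_{K,d}^{\lt}(A)$ commutes with filtered colimits. The genuinely delicate point in all of this is the finite type statement for $\mathcal{C}_d^{a,\le h}$, i.e.\ cutting the infinite-dimensional change-of-basis and $\Gamma$-action data down to finitely many $T$-adic coefficients by approximation; everything else is formal once one checks that the Lubin--Tate coefficient rings have the properties listed above, and the role of the perfectoid comparison carried out later in the paper is precisely to upgrade the finite generation used here to genuine perfectness of the Herr complex.
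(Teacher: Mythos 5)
Your outline reproduces the broad EG-style skeleton, but it passes over the two points that are actually the content of the paper's proof, and one of them is a genuine gap. The parenthetical claim that ``a continuity/convergence condition \emph{being automatic here}'' is false, and this is precisely the crux. A tuple of commuting matrices $G_{\gamma_1},\dots,G_{\gamma_r}$ over $\mathbf{A}_{K,A}$ commuting with $\Phi$ defines only an action of the \emph{discrete} group $\Gamma_\disc$; such an action need not extend continuously to $\Gamma_K\cong\mathbf{Z}_p^{\oplus[F:\mathbf{Q}_p]}$, and the stack you would be writing down is the auxiliary stack $\R_d^{\Gamma_\disc}$ of the paper, which strictly contains $\X_{K,d}^{\lt}$. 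The paper's proof is organized exactly around this issue: continuity is characterized concretely (the action of each $\gamma_i-1$ is topologically nilpotent, equivalently $(\gamma_i^{p^s}-1)\fM\subseteq T\fM$ for some lattice and some $s$, via the appendix on $T$-quasi-linear endomorphisms), and this characterization is what gives both limit preservation and the identification of $\X_{K,d}^{\lt}$ with the colimit of the closed substacks $\X^a_{d,h,s}$ obtained as scheme-theoretic images of the stacks of weak Wach modules of level $\le s$. Relatedly, your assertion that $\X_{K,d}^{\lt,a}=\varinjlim_h\X_{K,d}^{\lt,a,\le h}$ with closed-immersion transition maps is essentially the conclusion, not an input: a $\varphi_q$-stable lattice of bounded height is not $\Gamma_K$-stable (only $\fM[1/T]$ carries the action), so the bounded pieces have to be produced by the weak-Wach/scheme-theoretic-image mechanism, and showing they are substacks of $\X_{K,d}^{\lt}$ and exhaust it is again the continuity lemma. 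A further structural point you elide: for general (ramified) $K/F$ the ring $\mathbf{A}_K^+$ is not $\varphi_q$-stable and is not $\O_K[[T]]$ (also $\widetilde\Gamma_K$ sits in $\O_F^\times$, not $\O_K^\times$); the paper first reduces to the $F$-basic case $K^{\mathrm{basic}}$ via an affine, finitely presented forgetful morphism.

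Your treatment of the diagonal also does not work as stated. You propose to get affineness and finite presentation of $\mathrm{Isom}$ from finite generation of the cohomology of a Herr-type complex ``carried over from \cite{EG22}''; but the EG22 finiteness argument for the Herr complex uses the $\psi$-operator, which has no $\Gamma_K$-equivariant analogue in the Lubin--Tate setting when $F\neq\mathbf{Q}_p$ (the paper points this out explicitly), and in this paper perfectness of the Lubin--Tate Herr complex is \emph{deduced from} the comparison with the cyclotomic stack, which is proved only after the present theorem --- so invoking it here is both unavailable and circular relative to the paper's logic. The correct (and simpler) argument, which the paper uses, is representability of $\mathrm{Isom}$ of \'etale $\varphi_q$-modules by affine schemes of finite presentation (the analogue of \cite[Prop.~5.4.8]{EG19}), together with the observation that $\Gamma_\disc$-equivariance is cut out by finitely many equations and that $\X_{K,d}^{\lt}\hookrightarrow\R_d^{\Gamma_\disc}$ is a monomorphism (density of $\Gamma_\disc$ in $\Gamma_K$ plus automatic continuity of $\mathbf{A}_{K,A}$-linear maps), so the diagonal of $\X_{K,d}^{\lt}$ is a base change of that of $\R_d^{\Gamma_\disc}$.
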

The proof of Theorem \eqref{Xd ind algebraic intro} follows closely the argument used in \cite{EG22} for the stack $\X_{K,d}^{\eg}$ of rank $d$ projective \textit{cyclotomic} \' etale $(\varphi,\Gamma)$-modules. Namely, we will deduce the claimed properties for $\X_{K,d}^{\lt}$ from the corresponding properties of the stack of \' etale $\varphi_q$-modules.

The next result gives a comparison between $\X_{K,d}^{\lt}$ and the stack $\X_{K,d}^{\eg}$ in \cite{EG22}.
\begin{thm}[Corollary \eqref{equivalence LT EG stacks}]\label{equivalence LT EG stacks intro}
    There is an isomorphism
\begin{align*}
    \X^{\lt}_{K,d}\xrightarrow{\sim} \X^{\eg}_{K,d}.
\end{align*}
\end{thm}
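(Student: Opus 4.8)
The plan is to prove Theorem~\ref{equivalence LT EG stacks intro} by descending to a perfectoid level where both Lubin--Tate and cyclotomic $(\varphi,\Gamma)$-modules acquire a common description. Concretely, for a finite extension $K/\mathbf{Q}_p$ one attaches to each of the two deeply ramified extensions $K_\infty/K$ (Lubin--Tate, resp.\ cyclotomic) its tilted perfectoid field, together with the relevant period rings $\mathbf{A}_K^{\lt}\subseteq \mathbf{A}_{K_\infty/K}^{\lt,\mathrm{perf}}$ and $\mathbf{A}_K^{\cyc}\subseteq\mathbf{A}_{K_\infty/K}^{\cyc,\mathrm{perf}}$. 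The first step is to recall (or set up, in the body of the paper preceding the statement) the Cherbonnier--Colmez/Kedlaya-style decompletion equivalence: on each side, the category of \'etale $(\varphi_q,\Gamma)$-modules over $\mathbf{A}_K$ is equivalent to the category of \'etale $(\varphi,\Gamma)$-modules over the perfection $\mathbf{A}_K^{\mathrm{perf}}$, compatibly with $A$-coefficients and with the formation of the relevant stacks. Thus both $\X_{K,d}^{\lt}$ and $\X_{K,d}^{\eg}$ become isomorphic to the \emph{same} stack of \'etale $(\varphi,\Gamma)$-modules attached to $G_K$ via the tilt, because the tilt of the completed perfection only depends on $K_\infty^{\wedge}$ as a perfectoid field and, crucially, on the Galois-theoretic datum $(G_K, \Gamma)$ rather than on whether the extension was built from Lubin--Tate or cyclotomic torsion.

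The key geometric input is that passing to the perfectoid level is an equivalence \emph{of stacks}, not merely of groupoids of points over a field: one needs the decompletion functor to be compatible with arbitrary $\pi$-adically complete $\O_K$-algebras $A$ and with base change in $A$, so that it upgrades to a morphism of the moduli stacks which is an isomorphism. I would handle this by checking the equivalence functorially in $A$ --- the decompletion/completion functors of Cherbonnier--Colmez are defined by explicit (convergent) formulas that are manifestly compatible with flat, and then arbitrary, base change after reducing modulo powers of $\pi$ --- and then invoking that a morphism of stacks inducing an equivalence on all $A$-points (with $A$ ranging over a suitable site of test objects, e.g.\ $\pi$-adically complete Noetherian $\O_K$-algebras, which suffices since both stacks are limit preserving by Theorem~\ref{Xd ind algebraic intro}) is an isomorphism. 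One then records Theorem~\ref{equivalence LT EG stacks intro} as the composite $\X_{K,d}^{\lt}\xrightarrow{\sim}\X_{K,d}^{\mathrm{perf}}\xleftarrow{\sim}\X_{K,d}^{\eg}$.

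The main obstacle, I expect, is \emph{not} the formal stack-theoretic bookkeeping but the analytic heart of the decompletion statement with coefficients: showing that the Cherbonnier--Colmez overconvergence/decompletion machinery, originally developed for $\mathbf{Z}_p$- or $\O_K$-representations, works uniformly over a general coefficient ring $A$ and that the resulting equivalence is an equivalence of \emph{families}. In the cyclotomic case this is essentially carried out in \cite{EG22}; the work here is to transport it to the Lubin--Tate tower. Since both towers yield the same perfectoid tilt and the same arithmetic fundamental group, the Frobenius and $\Gamma$-structures match up after identifying $\varphi_q$ on the Lubin--Tate side with the appropriate power of Frobenius on the perfection; verifying this compatibility of Frobenii and of the two $\Gamma$-actions under the tilting identification is the one point that genuinely uses Lubin--Tate formal group theory (the formula $\varphi_q(f)=f([\pi](T))$ and $g\cdot f=f([\chi(g)](T))$) rather than soft arguments.

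A secondary technical point, worth isolating as a lemma, is that the isomorphism should be canonical and functorial in $d$ (compatible with direct sums, duals, and the inclusions $\X_{K,d}^{\lt}\hookrightarrow\X_{K,d+1}^{\lt}$), so that it is compatible with the Ind-algebraic structures of Theorem~\ref{Xd ind algebraic intro}; this falls out automatically once the equivalence is realized as a base-change-compatible equivalence of the underlying module categories with their tensor structures, so I would not expect it to require separate effort beyond remarking on it.
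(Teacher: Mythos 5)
Your overall strategy (pass to a perfectoid level, identify the two sides there, then conclude at the level of stacks using limit preservation) is the same as the paper's, and your final reduction --- both stacks are limit preserving by Theorem \eqref{Xd ind algebraic intro}, so an equivalence on finite type test algebras suffices --- is exactly how Corollary \eqref{equivalence LT EG stacks} is deduced from Proposition \eqref{equivalence LT EG}. However, there is a genuine error in the heart of your argument: the Lubin--Tate and cyclotomic towers do \emph{not} ``yield the same perfectoid tilt and the same arithmetic fundamental group.'' The fields $K_\infty^{\lt}$ and $K(\mu_{p^\infty})$ are different deeply ramified extensions, their completed perfections are different perfectoid fields with different tilts, the groups $\Gamma$ are different ($\O_F^\times$ versus $\mathbf{Z}_p^\times$ up to open subgroups), and $G_{K_\infty^{\lt}}$ and $G_{K(\mu_{p^\infty})}$ are different subgroups of $G_K$. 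Consequently a decompletion equivalence from $\mathbf{A}_K$ to its perfection (i.e.\ to the level of $\widehat{K_\infty}^\flat$) does not put the two sides on common ground; the only common meeting point is $\mathbf{C}^\flat$ with its full $G_K$-action. This is what the paper uses: the descent result \cite[Prop.~2.7.8]{EG22} (and its ramified-Witt-vector analogue) identifies cyclotomic $(\varphi,\Gamma)$-modules over $\aka^{\eg}$ with $(\varphi,G_K)$-modules over $A\widehat{\otimes}_{\mathbf{Z}_p}W(\mathbf{C}^\flat)$, and Lubin--Tate $(\varphi_q,\Gamma_K)$-modules over $\aka$ with $(\varphi_q,G_K)$-modules over $A\widehat{\otimes}_{\O_F}W_{\O_F}(\mathbf{C}^\flat)$.

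Even after this descent the two categories are not literally the same, and your proposal is missing the step that bridges them: one must compare \'etale $\varphi_p$-modules over $A\widehat{\otimes}_{\mathbf{Z}_p}W(\mathbf{C}^\flat)$ with \'etale $\varphi_q$-modules over $A\widehat{\otimes}_{\O_F}W_{\O_F}(\mathbf{C}^\flat)$. The paper does this by the standard ``unfolding'' argument: the isomorphism $W(k_F)\otimes_{\mathbf{Z}_p}W(\mathbf{C}^\flat)\cong\prod_{j}W(\mathbf{C}^\flat)$ (product over embeddings of $k_F$) decomposes any module $M$ as $\prod_j M_j$, the Frobenius permutes the factors cyclically, and $M\mapsto M_0$ is an equivalence onto \'etale $\varphi_q=\varphi^f$-modules over the $\sigma_0$-component, $G_K$-equivariantly since the projection is $G_K$-equivariant. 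Note that this step is pure semilinear algebra over Witt vectors; contrary to your expectation, it does not use the Lubin--Tate formulas $\varphi_q(f)=f([\pi](T))$, $g\cdot f=f([\chi(g)](T))$ at all --- those enter only in the construction of $\mathbf{A}_K$ itself. Your concerns about Cherbonnier--Colmez overconvergence with coefficients are also somewhat misplaced: no overconvergence is needed, only the almost-Galois-descent statement of \cite{EG22} quoted above, applied for finite type $\O/\varpi^a$-algebras $A$.
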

The proof proceeds by using the descent results in \cite{EG22} to reduce the statement to a comparison between \' etale $\varphi_p$-modules over $W(\mathbf{C}^\flat)\widehat{\otimes}_{\mathbf{Z}_p}A$ and \' etale $\varphi_q$-modules over $W_{\O_K}(\mathbf{C}^\flat)\widehat{\otimes}_{\O_K}A$.

As a consequence of Theorem \eqref{equivalence LT EG stacks intro} and the results in \cite{EG22}, we deduce the following refinement of Theorem \eqref{Xd ind algebraic intro} regarding the geometry of the stack of $\X_{K,d}^{\lt}$.
\begin{cor}[Corollary \eqref{Xd noetherian formal}]
$\X_{K,d}^{\lt}$ is a Noetherian formal algebraic stack over $\Spf \O_K$. The underlying reduced substack $\X_{d,\mathrm{red}}^{\lt}$ is an algebraic stack of finite presentation over $\mathbf{F}$. Moreover, the irreducible components of $\X_{d,\mathrm{red}}^{\lt}$ admits a natural labeling by Serre weights. 
\end{cor}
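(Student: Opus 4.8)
The plan is to deduce all three assertions by transport of structure along the isomorphism $\X^{\lt}_{K,d}\xrightarrow{\sim}\X^{\eg}_{K,d}$ of Theorem~\ref{equivalence LT EG stacks intro}: each of the properties in question is invariant under isomorphism of stacks, and each is already established for the Emerton--Gee stack in \cite{EG22}. So no new geometry is needed beyond Theorems~\ref{Xd ind algebraic intro} and~\ref{equivalence LT EG stacks intro}; the work is to quote \cite{EG22} in the correct normalisation and then move the properties across.

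Concretely, I would first record, for the version of the Emerton--Gee stack occurring in Theorem~\ref{equivalence LT EG stacks intro} (i.e. with $\O_K$-coefficients, hence as a stack over $\Spf\O_K$ --- equivalently the relevant coefficient-extended version of the stack $\X_d$ of \cite{EG22}, obtained by base change along $\Spf\O_K\to\Spf\mathbf{Z}_p$, or the $\O_F$-coefficient construction of \cite{EG22} specialised to $F=K$): that $\X^{\eg}_{K,d}$ is a Noetherian formal algebraic stack over $\Spf\O_K$, that its underlying reduced substack is an algebraic stack of finite presentation over $\mathbf{F}$, and that the irreducible components of the latter carry the natural Serre-weight labelling of \cite{EG22}. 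Then I would transport these across the isomorphism: being a Noetherian formal algebraic stack is isomorphism-invariant, so $\X^{\lt}_{K,d}$ is one --- this already sharpens Theorem~\ref{Xd ind algebraic intro} from Ind-algebraic to formal algebraic; formation of the underlying reduced substack commutes with isomorphisms, so the isomorphism restricts to $\X^{\lt}_{d,\mathrm{red}}\xrightarrow{\sim}\X^{\eg}_{K,d,\mathrm{red}}$ and hence $\X^{\lt}_{d,\mathrm{red}}$ inherits being algebraic of finite presentation over $\mathbf{F}$; and an isomorphism of reduced algebraic stacks induces a bijection on irreducible components, transporting the Serre-weight labelling to the components of $\X^{\lt}_{d,\mathrm{red}}$.

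The only point requiring genuine care --- and so the ``main obstacle'', though it is bookkeeping rather than a substantive difficulty --- is matching conventions: one must check that the isomorphism of Theorem~\ref{equivalence LT EG stacks intro} is an isomorphism of stacks over $\Spf\O_K$ and that it is being compared with the $\O_K$-coefficient Emerton--Gee stack, so that the base $\Spf\O_K$, the residue field $\mathbf{F}$ appearing in the finite-presentation statement, and the pertinent notion of Serre weight all come out correctly. If one prefers to argue via base change from the $\mathbf{Z}_p$-coefficient results of \cite{EG22}, one additionally has to observe that this base change along the finite morphism $\Spf\O_K\to\Spf\mathbf{Z}_p$ preserves being Noetherian formal algebraic, preserves reducedness of the special fibre, and does not split the irreducible components. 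Once this dictionary is pinned down, the corollary is immediate.
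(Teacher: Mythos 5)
Your proposal is correct and is essentially the paper's own argument: the corollary is deduced by transporting the Noetherian formal algebraicity, the finite presentation of the reduced substack, and the Serre-weight labelling of irreducible components across the isomorphism $\X^{\lt}_{K,d}\xrightarrow{\sim}\X^{\eg}_{K,d}$ of Corollary \eqref{equivalence LT EG stacks}, citing \cite[Cor. 5.5.18]{EG22} and \cite[Thm. 6.5.1]{EG22}. Your extra remarks on matching coefficient conventions are reasonable housekeeping but do not change the route.
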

We also introduce a version of the Herr complex (\cite{Her98}) in the Lubin--Tate setting with coefficients, and give a new proof of the fact that this complex computes Galois cohomology (Theorem \eqref{comparison Galois coho}). We refer the reader to Subsection \eqref{herr complex section} for the definition of this complex. Finally, by using again the above comparison (Theorem \eqref{equivalence LT EG stacks intro}), we are able to deduce the following result, which may be of independent interest.
\begin{thm}[Theorem \eqref{Herr perfect}]
Let $A$ is a finite type $\pi$-nilpotent $\O_K$-algebra, and let $M$ be a finite projective \' etale $(\varphi_q,\Gamma)$-module with $A$-coefficients. Then the Lubin--Tate Herr complex associated to $M$ is a perfect complex of $A$-modules, whose formation commutes with arbitrary finite type base change in $A$.
\end{thm}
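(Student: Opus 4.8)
The plan is to bootstrap from the cyclotomic case, where the corresponding statement is due to Emerton--Gee. Recall that \cite{EG22} shows the following: for a finite type $\pi$-nilpotent $\O_K$-algebra $A$ and a finite projective \'etale (cyclotomic) $(\varphi,\Gamma)$-module $M'$ over $\aka$, the associated Herr complex $C^{\bullet}_{\eg}(M')$ is a perfect complex of $A$-modules whose formation commutes with finite type base change in $A$. I would transport this along the comparison isomorphism $\X^{\lt}_{K,d}\xrightarrow{\sim}\X^{\eg}_{K,d}$ of Theorem~\eqref{equivalence LT EG stacks intro}, using continuous Galois cohomology as the common denominator of the two Herr-complex constructions.

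In more detail: first I would record that the Lubin--Tate Herr complex $C^{\bullet}_{\lt}(M)$ is a bounded complex of $A$-modules, concentrated for instance in degrees $0$ through $1+[K:\mathbf{Q}_p]$ once the action of $\Gamma\cong\O_K^\times$ is written out by a Koszul-type complex on topological generators. Next, Theorem~\eqref{comparison Galois coho} supplies a natural quasi-isomorphism, hence an isomorphism in $D(A)$,
\begin{displaymath}
C^{\bullet}_{\lt}(M)\;\simeq\;\mathrm{R}\Gamma_{\mathrm{cont}}\!\left(G_K,\,T_A(M)\right),
\end{displaymath}
where $T_A(M)$ is the finite $A$-module with continuous $G_K$-action attached to $M$; since both sides are assembled from explicit continuous-cochain and Koszul complexes, this is moreover compatible with finite type base change $A\to B$. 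The analogous statement on the cyclotomic side is part of \cite{EG22}. Finally, the comparison of Theorem~\eqref{equivalence LT EG stacks intro} is compatible with passage to Galois representations: if $M$ corresponds to $M'$ under the equivalence, then $T_A(M)$ and $T_A(M')$ agree as $A$-modules with continuous $G_K$-action, naturally in $A$; this is built into the perfectoid-level identification used to prove that theorem. Composing the three compatibilities yields a natural isomorphism $C^{\bullet}_{\lt}(M)\simeq C^{\bullet}_{\eg}(M')$ in $D(A)$, compatible with finite type base change in $A$.

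Granting this compatibility, the theorem follows quickly. Perfectness is a property of the image of a complex in $D(A)$, so it passes from $C^{\bullet}_{\eg}(M')$ to $C^{\bullet}_{\lt}(M)$. For the base change assertion, given a finite type map $A\to B$ (so $B$ is again finite type and $\pi$-nilpotent over $\O_K$) one chains
\begin{displaymath}
C^{\bullet}_{\lt}(M)\otimes^{\mathbf{L}}_{A}B\;\simeq\;C^{\bullet}_{\eg}(M')\otimes^{\mathbf{L}}_{A}B\;\simeq\;C^{\bullet}_{\eg}(M'\otimes_{A}B)\;\simeq\;C^{\bullet}_{\lt}(M\otimes_{A}B),
\end{displaymath}
using the naturality in $A$ just established (together with the fact that $M'\otimes_{A}B$ corresponds to $M\otimes_{A}B$ under the equivalence) for the outer isomorphisms and the cyclotomic base change statement for the middle one.

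The main obstacle is the compatibility claim itself: showing that the comparison isomorphism intertwines the two Herr-complex formations, up to the shared identification with $\mathrm{R}\Gamma_{\mathrm{cont}}(G_K,-)$, and that everything is natural in $A$. I would address this by working, as in the proof of Theorem~\eqref{equivalence LT EG stacks intro}, at a perfectoid level, where the two module categories become identified and both cohomology complexes become manifestly $G_K$-cohomology, so that the base-change compatibility reduces to the case already handled in \cite{EG22}. If one wished to avoid invoking the cyclotomic case altogether, a direct argument is also available: $C^{\bullet}_{\lt}(M)$ is bounded with $A$-coherent cohomology (finiteness of continuous $G_K$-cohomology over the Noetherian ring $A$), so by the standard criterion over a Noetherian ring it is perfect as soon as it has finite Tor-amplitude, which by boundedness reduces to the boundedness of $C^{\bullet}_{\lt}(M)\otimes^{\mathbf{L}}_{A}\kappa$ for the residue fields $\kappa$ of $A$; this last point again follows from the base-change compatibility of the Herr complex and finiteness of Galois cohomology with field coefficients. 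Of the two routes I would pursue the reduction to \cite{EG22}.
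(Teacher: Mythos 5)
Your overall strategy (transport perfectness and base change from the cyclotomic Herr complex through the comparison with the Emerton--Gee stack) is the same as the paper's, but the mechanism you propose for intertwining the two Herr complexes has a genuine gap. You take continuous Galois cohomology of $T_A(M)$ as the ``common denominator'', invoking Theorem \eqref{comparison Galois coho}. However, for a general finite type $\O/\varpi^a$-algebra $A$ there is no Galois representation $T_A(M)$ attached to $M$ at all: the equivalence with $G_K$-representations (Theorem \eqref{Dee equivalence}) holds only for complete local Noetherian $A$ with finite residue field, and it is precisely the failure of such an equivalence for general finite type coefficients that forces one to work with $(\varphi_q,\Gamma_K)$-modules in the first place. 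Theorem \eqref{comparison Galois coho} is stated (and provable) only in the complete local Noetherian setting, and in the paper its proof actually \emph{uses} perfectness of the Herr complex (via the Milnor sequence argument), so relying on it here would also be circular. Your fallback ``direct argument'' has the same problem: ``finiteness of continuous $G_K$-cohomology'' over $A$ presupposes the Galois-side interpretation, and finiteness of the cohomology of $\C^\bullet(M)$ is exactly the crux that needs to be proved (the $\psi$-operator route of \cite[\S 5.1]{EG22} being unavailable for $F\neq\mathbf{Q}_p$, as the paper remarks).

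The paper closes this gap without ever leaving the world of $(\varphi_q,\Gamma_K)$-modules: it first identifies $H^i(\C^\bullet(-))$ with the Yoneda extension groups $\ext^i_{\lt}(\aka,-)$ in the exact category of finite projective \'etale $(\varphi_q,\Gamma_K)$-modules, by showing both form universal $\delta$-functors (the effaceability Lemmas \eqref{effaceable ext}--\eqref{(phi - 1) M}), and then uses that the equivalence of Proposition \eqref{equivalence LT EG} is exact and carries $\aka$ to $\aka^{\eg}$, so that $\ext^i_{\lt}(\aka,M)\cong\ext^i_{\eg}(\aka^{\eg},M^{\eg})$; this yields Corollary \eqref{equivalence LT EG cohomology}, i.e. $H^i(\C^\bullet(M))\cong H^i(\C^\bullet_{\eg}(M^{\eg}))$, for arbitrary finite type $A$. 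Since $\C^\bullet(M)$ is a bounded complex of flat $A$-modules, finiteness of its cohomology (imported from \cite[Thm.~5.1.22(1)]{EG22}) gives perfectness, and the base-change statement is then proved by repeating the argument of \cite[Thm.~5.1.22(2)]{EG22} (following \cite[Thm.~4.4.3]{KPX14}), not by deducing it from a Galois-cohomological identification. If you want to salvage your approach, you would need to replace the appeal to $T_A(M)$ and $R\Gamma_{\mathrm{cts}}(G_K,-)$ by some comparison valid for non-local $A$ — in effect, by an argument like the Ext-group one above.
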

\begin{acknowledgements}
I am grateful to my advisors B. V. Le Hung and S. Morra for their constant encouragement and for helpful discussions. This project has received funding from the European Union’s Horizon 2020 research and innovation programme under the Marie Sk\l odowska-Curie grant agreement No. 945322.
\end{acknowledgements}
\section{Generalities on \texorpdfstring{$\varphi_q$}{phi}-modules}\label{definition related}
\subsection{Setup}\label{situation}
Fix a complete algebraic closure $\mathbf{C}:=\widehat{\overline{\mathbf{Q}}}_p$ of $\mathbf{Q}_p$. All algebraic extensions of $\mathbf{Q}_p$ will be regarded as subfields of $\mathbf{C}$. Let $F/\mathbf{Q}_p$ be a finite extension with ring of integers $\O_F$, uniformizer $\pi_F$, residue field $k$ of cardinality $q$. 

Let $\phi\in \O_F[[T]]$ be a Frobenius power series for $\pi$ (i.e. $\phi(T)\equiv \pi T\bmod{T^2}$ and $\phi(T) \equiv T^q\bmod{\pi}$) and let $\F_{\phi}$ be the corresponding Lubin-Tate formal group law over $\O_F$. As usual, denote by $[\cdot]_{\phi}: \O_F\to \mathrm{End}_{\O_F}(\F_{\phi})$ the unique ring homomorphism satisfying $[a]_{\phi}(T)\equiv aT\bmod{T^2}$ for all $a\in \O_F$, and $[\pi]_{\phi}=\phi$. We have a natural $\O_F$-module structure on $\m_{\overline{F}}$ given by $x+y:=\F_\phi(x,y)$ and $a.x:=[a]_{\phi}(x)$ for $x,y\in \m_{\overline{F}}$ and $a\in \O_F$. Let $F_n:=F(\F_{\phi}[\pi^n])$ be the field obtained by adjoining to $F$ all the $\pi^n$-torsion points of $\m_{\overline{F}}$, and set $F_{\infty}:=\bigcup_{n\geq 0}F_n$ (note that the extensions $F_n/F$ (and hence $F_{\infty}/F$) only depend on $\pi$ and not on the choice of $\phi$). 

We write $\chi: G_F\twoheadrightarrow \widetilde{\Gamma}_F\xrightarrow{\sim}\O_F^\times$ for the Lubin-Tate character associated to $\F$, where $\widetilde{\Gamma}_F:=\mathrm{Gal}(F_\infty/F)$\footnote{There is a slight conflict with the notation $\Gamma$ used in the Introduction; this is however to make it compatible with those in \cite{EG22}. We hope that this won't cause any confusion.}. We also let $T\F_{\phi}:=\varprojlim_n \F_{\phi}[\pi^n]$ be the Tate module of $\F$. This is a free $\O_F$-module of rank $1$. We have a $G_F$-equivariant map $\iota: T\F_{\phi}\to \O_{\widehat{F_\infty}}^\flat\subseteq\O_{\mathbf{C}}^\flat$ given by mapping $v=(v_0,v_1,\ldots)\in T\F_{\phi}$ to $\iota(v):=(v_0\bmod \pi,v_1\bmod \pi,\ldots)$. This map is indeed well-defined, since $\O_{\mathbf{C}}^\flat$ is naturally identified with $\varprojlim_{x\mapsto x^q}\O_{\mathbf{C}}/\pi$, and if $[\pi](v_{i+1})=v_i$, then $v_{i+1}^q\equiv v_i\bmod{\pi}$ by the defining property of $\phi$. Note also that the image of $\iota$ is in fact contained in the maximal ideal $\m_{\widehat{F_\infty}^\flat}$ (see \cite[Rem. 2.1.1]{Schneider}).

Finally, if $K$ is a finite extension of $F$, we let $K_n:=KF_n, K_{\infty}:=KF_{\infty}$, and $\widetilde{\Gamma}_K:=\mathrm{Gal}(K_{\infty}/K)$.
\subsection{Rings}\label{ring}
We now introduce the various coefficient rings for our Lubin--Tate $(\varphi_q,\widetilde{\Gamma}_K)$-modules. In what follows, $W_{\O_F}(A):=W(A)\otimes_{W(\mathbf{F}_q)}\O_F$ will denote the ring of ramified Witt vectors with coefficients in a perfect $\mathbf{F}_q$-algebra $A$. We also denote by $\varphi_q$ the natural Frobenius on $W_{\O_F}(A)$ (induced by functoriality and the $q$-power Frobenius on $A$).
\begin{lem}\label{lemma kisin ren}
There is a unique set-theoretic map $\{\cdot\}: \m_{\mathbf{C}^\flat}\to W_{\O_F}(\O_{\mathbf{C}}^\flat)$ lifting the inclusion $\m_{\mathbf{C}^\flat}\subseteq \O_{\mathbf{C}}^\flat$, such that $[\pi]_{\phi}(\{x\})=\varphi_q(\{x\})$ for all $x\in \m_{\mathbf{C}^\flat}$. Moreover, $\{\cdot\}$ respects the action of $G_F$ (where $G_F$ acts on $W_{\O_F}(\O_{\mathbf{C}}^\flat)$ by functoriality), and 
\begin{itemize}
\item[\emph{(i)}] For all $a\in \O_F$ and $v\in T\F_{\phi}$, the series $[a]_{\phi}(\{\iota(v)\})$ converges and we have $[a]_{\phi}(\{\iota(v)\})=\{\iota(av)\}$.
\item[\emph{(ii)}] The action of $G_F$ on $\{\iota(T\F_{\phi})\}\subseteq W_{\O_F}(\O_{\mathbf{C}}^\flat)$ factors through $\widetilde{\Gamma}_F$ and for $g\in \widetilde{\Gamma}_F$
\begin{displaymath}
[\chi(g)]_{\phi}(\{\iota(v)\})=\{\iota(g v)\}=\{g\cdot \iota(v)\}=g\cdot \{\iota(v)\}.
\end{displaymath}
\end{itemize}.
\end{lem}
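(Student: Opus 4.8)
The plan is to construct the lift $\{\cdot\}$ by a $\pi$-adic successive approximation inside $W_{\O_F}(\O_{\mathbf{C}}^\flat)$, closely following the construction of \cite{KisinRen}, and then to deduce the equivariance statements and (i)--(ii) formally from the \emph{uniqueness} of the resulting lift. I will use throughout that $W_{\O_F}(\O_{\mathbf{C}}^\flat)$ is $\pi$-adically complete and separated with $W_{\O_F}(\O_{\mathbf{C}}^\flat)/\pi\cong\O_{\mathbf{C}}^\flat$, that $\varphi_q$ is bijective on $W_{\O_F}(\O_{\mathbf{C}}^\flat)$ because $\O_{\mathbf{C}}^\flat$ is perfect, and — this is the key elementary input — that $\phi(T)\equiv T^q\bmod\pi$ together with $\pi\mid q$ in $\O_F$ (as $q=p^f$) forces $[\pi]_\phi'(T)=\phi'(T)\in\pi\O_F[[T]]$.

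First I would fix $x\in\m_{\mathbf{C}^\flat}$, write $[\cdot]$ for the Teichm\"uller lift, and build a compatible system $(y_m)_{m\geq1}$ with $y_m\in W_{\O_F}(\O_{\mathbf{C}}^\flat)/\pi^m$, $y_m\equiv x\bmod\pi$ and $\varphi_q(y_m)\equiv[\pi]_\phi(y_m)\bmod\pi^m$, each $y_m$ uniquely determined by $y_{m-1}$. One takes $y_1=x$, the congruence mod $\pi$ holding since $\varphi_q(x)=x^q=\phi(x)$ in $\O_{\mathbf{C}}^\flat$. Given $y_m$, choose any lift $\tilde y_m$ modulo $\pi^{m+1}$; then $\varphi_q(\tilde y_m)-[\pi]_\phi(\tilde y_m)\equiv\pi^m\delta\bmod\pi^{m+1}$ for some $\delta\in\O_{\mathbf{C}}^\flat$. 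For $\epsilon\in\O_{\mathbf{C}}^\flat$, replacing $\tilde y_m$ by $\tilde y_m+\pi^m[\epsilon]$ adds $\pi^m\varphi_q(\epsilon)=\pi^m\epsilon^q$ to the first summand, whereas the denominator-free Taylor expansion $[\pi]_\phi(\tilde y_m+h)=[\pi]_\phi(\tilde y_m)+h\,\phi'(\tilde y_m)+h^2(\cdots)$, valid over any commutative ring, together with $m\geq1$ and $\phi'(\tilde y_m)\in\pi W_{\O_F}(\O_{\mathbf{C}}^\flat)$, shows the second summand is unchanged modulo $\pi^{m+1}$. Hence the correction equation is $\epsilon^q=-\delta$, with the unique solution $\epsilon=(-\delta)^{1/q}$ in the perfect ring $\O_{\mathbf{C}}^\flat$; one sets $y_{m+1}:=\tilde y_m+\pi^m[\epsilon]$, and a short computation using that $z\mapsto z^q$ is additive on $\O_{\mathbf{C}}^\flat$ shows $y_{m+1}$ is independent of the choice of $\tilde y_m$, which simultaneously yields uniqueness of $y_{m+1}$. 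Then $\{x\}:=\varprojlim_m y_m\in W_{\O_F}(\O_{\mathbf{C}}^\flat)$ is the unique element reducing to $x$ with $[\pi]_\phi(\{x\})=\varphi_q(\{x\})$; here $[\pi]_\phi(\{x\})$ is well-defined because $\{x\}\equiv x\bmod\pi$ with $x$ of positive valuation, so $\{x\}$ is topologically nilpotent. Pointwise uniqueness gives uniqueness of the map $\{\cdot\}$.

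Given this uniqueness, the rest is formal. For $g\in G_F$, the element $g(\{x\})$ lifts $g(x)\in\O_{\mathbf{C}}^\flat$, and since the functorial $G_F$-action on $W_{\O_F}(\O_{\mathbf{C}}^\flat)$ commutes with $\varphi_q$ and fixes the coefficients of $[\pi]_\phi\in\O_F[[T]]$, we get $\varphi_q(g\{x\})=g([\pi]_\phi\{x\})=[\pi]_\phi(g\{x\})$; by uniqueness $g(\{x\})=\{g(x)\}$, i.e. $\{\cdot\}$ is $G_F$-equivariant. For (i), fix $a\in\O_F$ and $v\in T\F_\phi$; as $\{\iota(v)\}$ reduces to $\iota(v)\in\m_{\mathbf{C}^\flat}$ it is topologically nilpotent, so $y:=[a]_\phi(\{\iota(v)\})$ converges. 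Since $\varphi_q$ is a continuous $\O_F$-algebra endomorphism and $[a]_\phi,[\pi]_\phi$ commute as power series (they lie in the commutative ring $\mathrm{End}_{\O_F}(\F_\phi)\cong\O_F$), $\varphi_q(y)=[a]_\phi(\varphi_q\{\iota(v)\})=[a]_\phi([\pi]_\phi\{\iota(v)\})=[\pi]_\phi(y)$; and modulo $\pi$ one has $y\equiv\overline{[a]_\phi}(\iota(v))=\iota(av)$, the last identity being the $\O_F$-equivariance of $\iota$, a routine unwinding of $\O_{\mathbf{C}}^\flat=\varprojlim_{z\mapsto z^q}\O_{\mathbf{C}}/\pi$ and $[\pi]_\phi(T)\equiv T^q\bmod\pi$ (cf. \cite[Rem. 2.1.1]{Schneider}). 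By pointwise uniqueness $y=\{\iota(av)\}$, proving (i). For (ii), combining the $G_F$-equivariance of $\{\cdot\}$ and of $\iota$ gives $g\cdot\{\iota(v)\}=\{g\cdot\iota(v)\}=\{\iota(gv)\}$, and since $\chi$ is characterized by $gv=[\chi(g)]_\phi(v)$ on $T\F_\phi$, part (i) yields $\{\iota(gv)\}=\{\iota(\chi(g)v)\}=[\chi(g)]_\phi(\{\iota(v)\})$; this is the displayed chain of equalities. Finally $\ker\chi=G_{F_\infty}$, so any $g\in G_{F_\infty}$ fixes $\{\iota(v)\}$ and the $G_F$-action on $\{\iota(T\F_\phi)\}$ factors through $\widetilde{\Gamma}_F$.

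I expect the main obstacle to be the first step: organizing the successive approximation so that the correction equation is \emph{linear} over $\O_{\mathbf{C}}^\flat$ — which is precisely where $\phi'(T)\in\pi\O_F[[T]]$ and the perfectness of $\O_{\mathbf{C}}^\flat$ enter — and verifying that the correction is independent of the intermediate choices, giving uniqueness. Once the uniqueness characterization is in hand, equivariance and parts (i)--(ii) are forced; the only remaining minor point is the mod-$\pi$ identity $\overline{[a]_\phi}(\iota(v))=\iota(av)$, which is bookkeeping with the tilting isomorphism.
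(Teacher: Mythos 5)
Your proof is correct, but it takes a different route from the paper: the paper does not reprove the statement at all, instead citing Colmez (Lem.\ 9.3 of \cite{Col02}) and simply recording the closed formula $\{x\}=\lim_n [\pi^n]_{\phi}([x^{q^{-n}}])$, together with the remark that restricting the domain to $\m_{\mathbf{C}^\flat}$ is what guarantees convergence since $\phi$ is only a power series (via \cite[Lem.\ 2.1.11]{Schneider}). You instead give a self-contained Newton-type successive approximation modulo $\pi^m$, whose two key inputs --- that $\phi'(T)\in\pi\O_F[[T]]$ (because $\phi\equiv T^q\bmod\pi$ and $q\in\pi\O_F$) kills the linear term of the Taylor expansion, and that perfectness of $\O_{\mathbf{C}}^\flat$ makes the correction equation $\epsilon^q=-\delta$ uniquely solvable --- are exactly right, and your derivation of the $G_F$-equivariance and of (i)--(ii) purely from the uniqueness characterization is clean and matches how these identities are usually obtained (this is essentially the Kisin--Ren/Colmez argument that the paper outsources). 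What the paper's formula buys is explicitness and a quick reduction of all convergence questions to the cited references; what your argument buys is independence from the literature and a uniqueness statement that makes (i)--(ii) formal. The only points to make explicit if you wrote this up: the evaluation of $[\pi]_\phi$ at each finite level $W_{\O_F}(\O_{\mathbf{C}}^\flat)/\pi^m$ converges because any lift of $x\in\m_{\mathbf{C}^\flat}$ is topologically nilpotent for the weak topology and the ring is weakly complete (this is precisely where the restriction to $\m_{\mathbf{C}^\flat}$ enters, mirroring the paper's remark), and the mod-$\pi$ identity $\overline{[a]_\phi}(\iota(v))=\iota(av)$ is the $\O_F$-linearity of $\iota$ for the Lubin--Tate module structure on $\m_{\mathbf{C}^\flat}$, which you correctly flag as routine.
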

\begin{proof}
This is \cite[Lem. 9.3]{Col02}. Concretely, the map $x\mapsto \{x\}$ is given by 
\begin{displaymath}
    \{x\}:=\lim_n [\pi^n]_{\phi}([x^{q^{-n}}])
\end{displaymath}
(recall that we always endow $W_{\O_K}(\O_{\mathbf{C}}^\flat)$ with the \textit{weak topology}, i.e. the $(\pi,u)$-adic topology, where $u\in \mathbf{C}^\flat$ is an arbitrary pseudo-uniformizer). (Strictly speaking, unlike \textit{loc. cit.}, here $\{\cdot\}$ is only defined on $\m_{\mathbf{C}^\flat}$. This is because we are not assuming that $\phi$ is actually a polynomial; consequently, restricting the domain to $\m_{\mathbf{C}^\flat}$ is necessary to ensure that the limit defining $\{x\}$ indeed exists. See \cite[Lem. 2.1.11]{Schneider} and the surrounding material.)
\end{proof}
We begin with the case where $K=F$. Let $v\in T\F_{\phi}$ be an $\O_F$-generator. There is an embedding $k[[T]]\hookrightarrow \O_{\widehat{F_\infty}}^\flat\subseteq \O_{\mathbf{C}}^\flat$ sending $T\mapsto \iota(v)$, whose image is identified with the ring of integers of the imperfect norm field $\mathbf{E}_F':=X_F(F_\infty)$ (cf. \cite[\textsection 2]{Win83}) via a canonical embedding $X_F(F_\infty)\hookrightarrow \widehat{F_\infty}^\flat\subseteq {\mathbf{C}}^\flat$ (this embedding identifies the target with the completion of the perfect closure of the source, see \cite[Cor. 4.3.4]{Win83}). Since $\{\iota(v)\}$ is a lift of $\iota(v)$ by construction, we obtain an embedding 
\begin{displaymath}
\O_F[[T]]\hookrightarrow W_{\O_F}(\O_{\widehat{F_\infty}}^\flat)\subseteq W_{\O_F}(\O_{\mathbf{C}}^\flat)
\end{displaymath}
sending $T\mapsto \{\iota(v)\}$ (the source being endowed with the $(\pi,T)$-topology, and the target with its weak topology), which extends further to a map
\begin{displaymath}
\widehat{\O_F((T))}\hookrightarrow W_{\O_F}(\widehat{F_\infty}^\flat)\subseteq W_{\O_F}(\mathbf{C}^\flat),
\end{displaymath}
whose image we denote by $\mathbf{A}_F'$ (where the hat denotes the $\pi$-adic completion). It is a complete discrete valuation ring with uniformizer $\pi$ and residue field $X_F(F_\infty)\cong k((T))$, which is in fact independent on the choice of $v\in T\F_{\phi}$ (\cite[Rem. 2.1.17]{Schneider}). By Lemma \eqref{lemma kisin ren}, $\mathbf{A}_F'$ are stable by $\varphi_q$ and by the natural action of $G_F$ on $W_{\O_F}(\O_{\mathbf{C}}^\flat)$. Moreover, the $G_F$-action on $\mathbf{A}_F'$ factors through $\widetilde{\Gamma}_F$. 

To summarize, $\mathbf{A}_F'$ is $(\varphi_q,\widetilde{\Gamma}_F)$-equivariantly isomorphic to the ring  
\begin{displaymath}
\widehat{\O_F((T))}\cong\left\{\sum_{n\in\mathbf{Z}}a_nT^n\;|\;\text{$a_n\in \O_F$ and $a_n\to 0$ as $n\to -\infty$}\right\},
\end{displaymath}
where the actions of $\varphi_q$ and $\widetilde{\Gamma}_F$ on the latter ring are given by $\varphi_q: f(T)\mapsto f([\pi]_{\phi}(T))$ and $g: f(T)\mapsto f([\chi(g)]_{\phi}(T))$ for $g\in \widetilde{\Gamma}_F$.

We now return to the case of a general finite extension $K/F$. Since $K_\infty=KF_\infty$ by definition, the norm field $\mathbf{E}_K':=X_K(K_\infty)$ associated to the extension $K_\infty/K$ is a finite separable extension of $\mathbf{E}_F'$ (despite the notation, the field $X_K(K_\infty)$ depends only on $K_\infty$ and not on $K$ itself, see \cite[Rem. 2.1.4]{Win83}). As $\mathbf{B}_{F}':=\mathbf{A}_F'[1/p]$ is a discrete valued field with uniformizer $\pi$ and residue field $\mathbf{E}_F'$, it follows that there is a unique finite unramified
extension of $\mathbf{B}_F'$ contained in the field $W(\mathbf{C}^\flat)[1/p]$ with residue field $\mathbf{E}_K'$. We
denote this extension by $\mathbf{B}_K'$, and by $\mathbf{A}_K'$ its ring of integers, or equivalently, $\mathbf{A}_K':=\mathbf{B}_K'\cap W(\mathbf{C}^\flat)$. Thus, we see that $\mathbf{A}_K'$ is a discrete valuation ring, admitting
$\pi$ as a uniformizer, and that $\mathbf{A}_K'/\pi\mathbf{A}_K'=\mathbf{E}_K'$. In order to emphasize the dependence on $F$, we will sometimes denote $\mathbf{A}_K'$ by $\mathbf{A}_{K|F}'$, and similarly for the related notation. There is a natural lift of the $q$-power Frobenius $\varphi_q$ from $\mathbf{E}_K'$ to $\mathbf{A}_K$. Furthermore the action of $\widetilde{\Gamma}_K$ on $\mathbf{E}_K'=X_K(K_\infty)$ induces an action of $\widetilde{\Gamma}_K$ on $\mathbf{A}_K'$, and this action commutes with that of $\varphi_q$.

As in \cite[\textsection 2.1.9]{EG22}, it will be convenient for us to introduce a variant of the ring $\mathbf{A}_K'$. Let $\Delta_K$ be the torsion subgroup of $\widetilde{\Gamma}_K$. As $\widetilde{\Gamma}_K$ can be identified with an open subgroup of $\O_F^\times$ (via the Lubin--Tate character), we have an isomorphism $\widetilde{\Gamma}_K\cong\Gamma_K\times\Delta_K$, where $\Gamma_K\cong \mathbf{Z}_p^{\oplus [F:\mathbf{Q}_p]}$. We now let $\mathbf{A}_K:=(\mathbf{A}_K')^\Delta_K$.
\begin{lem}
$\mathbf{A}_K$ is again is a complete discrete valuation ring with uniformizer $\pi$ and residue field $\mathbf{E}_K:=(\mathbf{E}_K')^{\Delta_K}$.
\end{lem}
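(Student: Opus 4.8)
The plan is to prove this as an instance of the following general fact: if a finite group $\Delta:=\Delta_K$ acts on a complete discrete valuation ring $R:=\mathbf{A}_K'$ fixing a uniformizer $\pi$, and the induced action on the residue field is faithful, then $R^{\Delta}$ is a complete DVR with uniformizer $\pi$ and residue field $(R/\pi)^{\Delta}$. First the elementary reductions. The $G_F$-action on $W_{\O_F}(\O_{\mathbf{C}}^\flat)$ is functorial, hence fixes the subring $\O_F$; thus $\pi=\pi_F$ is fixed by $\widetilde{\Gamma}_K$, a fortiori by $\Delta_K$, and $\O_F\subseteq\mathbf{A}_K$, so $\mathbf{A}_K$ is a subring of the domain $\mathbf{A}_K'$ containing $\O_F$, hence a domain of characteristic $0$. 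I next record the identity
\[
\pi^n\mathbf{A}_K'\cap\mathbf{A}_K=\pi^n\mathbf{A}_K\qquad(n\geq 0):
\]
if $\pi^n y\in\mathbf{A}_K$ with $y\in\mathbf{A}_K'$, then $\pi^n\delta(y)=\delta(\pi^n y)=\pi^n y$ for all $\delta\in\Delta_K$, whence $\delta(y)=y$ since $\mathbf{A}_K'$ is a domain, so $y\in\mathbf{A}_K$. Consequently the $\pi$-adic topology on $\mathbf{A}_K$ coincides with the subspace topology from $\mathbf{A}_K'$; and since $\mathbf{A}_K=\ker\bigl(\mathbf{A}_K'\to\prod_{\delta\in\Delta_K}\mathbf{A}_K',\ x\mapsto(\delta(x)-x)_{\delta}\bigr)$ is a $\pi$-adically closed subgroup of the $\pi$-adically complete and separated ring $\mathbf{A}_K'$, it is itself $\pi$-adically complete and separated.

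Second, $\mathbf{A}_K$ is a DVR with uniformizer $\pi$. Let $v$ be the normalized valuation on $\mathbf{A}_K'$ (so $v(\pi)=1$) and put $\mathbf{B}_K:=\mathbf{A}_K[1/\pi]\subseteq\mathbf{B}_K'=\mathbf{A}_K'[1/\pi]$; since $\pi$ is fixed one checks at once that $\mathbf{B}_K=(\mathbf{B}_K')^{\Delta_K}$ and $\mathbf{A}_K=\mathbf{A}_K'\cap\mathbf{B}_K$. For nonzero $x\in\mathbf{B}_K$, write $x=a/b$ with $a,b\in\mathbf{A}_K$; then $\delta(x)=x$ for all $\delta$, so if $v(x)\geq 0$ then $x\in\mathbf{A}_K'$ and hence $x\in\mathbf{A}_K$, while if $v(x)\leq 0$ then $x^{-1}\in\mathbf{A}_K$. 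Thus $\mathbf{A}_K$ is a valuation ring of $\mathbf{B}_K$ with value group a subgroup of $\bbZ$ containing $v(\pi)=1$, hence equal to $\bbZ$; combined with the previous paragraph, $\mathbf{A}_K$ is a complete DVR with uniformizer $\pi$.

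Third — the main point — the residue field. I claim $\Delta_K$ acts faithfully on $\mathbf{A}_K'$: if $g\in\widetilde{\Gamma}_K$ acts trivially on $\mathbf{A}_K'$ then it acts trivially on the subring $\mathbf{A}_F'$, on which $\widetilde{\Gamma}_K$ acts through the injection $\widetilde{\Gamma}_K\hookrightarrow\widetilde{\Gamma}_F$ by $g\cdot T=[\chi(g)]_\phi(T)$; comparing linear coefficients forces $\chi(g)=1$, so $g=1$ (alternatively, this is the faithfulness of the Galois action on the norm field $\mathbf{E}_K'=X_K(K_\infty)$, cf. \cite{Win83}). Hence $\Delta_K$ also acts faithfully on $\mathbf{B}_K'$, and by Artin's theorem $\mathbf{B}_K'/\mathbf{B}_K$ is a finite Galois extension with group $\Delta_K$, in particular separable, of degree $n:=|\Delta_K|$; likewise $[\mathbf{E}_K':\mathbf{E}_K]=n$. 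Moreover $\mathbf{A}_K'$ is exactly the integral closure of $\mathbf{A}_K$ in $\mathbf{B}_K'$: it is integrally closed (being a DVR), has fraction field $\mathbf{B}_K'$, contains $\mathbf{A}_K$, and every $a\in\mathbf{A}_K'$ is a root of the monic polynomial $\prod_{\delta\in\Delta_K}(X-\delta(a))$, whose coefficients lie in $\mathbf{A}_K'\cap\mathbf{B}_K=\mathbf{A}_K$. Now the standard structure theory of a finite separable extension of a complete discretely valued field gives that $\mathbf{A}_K'$ is finite free over $\mathbf{A}_K$, is a complete DVR, and $e\cdot f=n$, where $e,f$ are the ramification index and residue degree of $\mathbf{A}_K'/\mathbf{A}_K$. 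Since $\pi$ is a uniformizer of both rings, $e=1$, hence $f=n$, i.e. $[\mathbf{E}_K':\mathbf{A}_K/\pi\mathbf{A}_K]=n$. On the other hand the identity of the first paragraph furnishes an embedding $\mathbf{A}_K/\pi\mathbf{A}_K\hookrightarrow\mathbf{A}_K'/\pi\mathbf{A}_K'=\mathbf{E}_K'$ whose image, being $\Delta_K$-invariant, lies in $(\mathbf{E}_K')^{\Delta_K}=\mathbf{E}_K$; the tower $\mathbf{A}_K/\pi\mathbf{A}_K\subseteq\mathbf{E}_K\subseteq\mathbf{E}_K'$ together with $[\mathbf{E}_K':\mathbf{A}_K/\pi\mathbf{A}_K]=n=[\mathbf{E}_K':\mathbf{E}_K]$ then forces $\mathbf{A}_K/\pi\mathbf{A}_K=\mathbf{E}_K$.

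The only genuinely delicate step is this last one. The naive averaging projector $x\mapsto\frac{1}{n}\sum_{\delta}\delta(x)$ is \emph{not} available, because $|\Delta_K|$ can be divisible by $p$ (this occurs precisely when $\widetilde{\Gamma}_K$ contains $p$-power roots of unity), so one must instead run the argument through the degree identity $ef=n$ and the absence of defect for complete discretely valued fields, using the faithfulness of the $\Delta_K$-action on $\mathbf{E}_K'$ — which is where the field-of-norms input (or the injectivity of $\chi$) is really needed. Everything else is soft and formal, so I expect the write-up to be short once the structure-theory citation is pinned down.
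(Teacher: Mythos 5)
Your reductions are sound and in places more careful than the paper, which simply asserts that only the residue-field statement needs proof: your verification that $\mathbf{A}_K$ is a complete DVR with uniformizer $\pi$, the identification of $\mathbf{A}_K'$ with the integral closure of $\mathbf{A}_K$ in $\mathbf{B}_K'$, and the Artin-plus-degree-count scheme are exactly the paper's strategy (the paper likewise invokes Artin's lemma to get that $\mathbf{A}_K'$ is finite free of rank $\#\Delta_K$ over $\mathbf{A}_K$ and then compares degrees of residue extensions). The gap is the step ``likewise $[\mathbf{E}_K':\mathbf{E}_K]=n$''. By Artin again, that equality is \emph{equivalent} to faithfulness of the $\Delta_K$-action on the residue field $\mathbf{E}_K'$, which is the actual crux of the lemma; but the only argument you give proves faithfulness on $\mathbf{A}_K'$, by comparing linear coefficients of $g(T)=[\chi(g)]_\phi(T)$ in characteristic zero. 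That argument does not descend modulo $\pi$: the linear coefficient of the reduced series is $\chi(g)\bmod\pi$, and $\Delta_K$ in general contains elements whose image under $\chi$ is a nontrivial $p$-power root of unity (already for $K=F=\mathbf{Q}_p(\zeta_p)$), hence congruent to $1$ modulo $\pi$. So ``injectivity of $\chi$'', which your closing paragraph presents as an adequate substitute for the field-of-norms input, is not sufficient at the residue-field level: with only what you have proved, a nontrivial element of $\Delta_K$ could act trivially on $\mathbf{E}_K'$ (fierce ramification in $\mathbf{B}_K'/\mathbf{B}_K$ is not excluded by $e=1$, since these residue fields are imperfect), and your degree count would then yield only a strict inclusion of $\mathbf{A}_K/\pi\mathbf{A}_K$ into $(\mathbf{E}_K')^{\Delta_K}$.

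The gap can be closed in either of the two ways the paper indicates. One is your parenthetical alternative, made primary: the action of $\widetilde{\Gamma}_K$ on the norm field $\mathbf{E}_K'=X_K(K_\infty)$ is faithful; this is what the paper proves, by noting that an element of $G_K$ acting trivially on $\mathbf{E}_K'$ agrees on $(\mathbf{E}_K')^{\mathrm{sep}}$ with an element of $G_{K_\infty}$ under the Fontaine--Wintenberger isomorphism, and that $(\mathbf{E}_K')^{\mathrm{sep}}$ is dense in $\mathbf{C}^\flat$. The other is to sharpen your explicit computation so that it works modulo $\pi$: reduce to $K=F$ via $\mathbf{E}_F'\subseteq\mathbf{E}_K'$ and $\widetilde{\Gamma}_K\hookrightarrow\widetilde{\Gamma}_F$, and show that $[a]_\phi(T)\not\equiv T\bmod\pi$ for $a\ne 1$. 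Indeed, if $a\not\equiv 1\bmod\pi$ the linear coefficient already does the job, while if $a-1=\pi^m u$ with $m\geq 1$ and $u\in\O_F^\times$, then $[a-1]_\phi(T)\equiv[u]_\phi(T)^{q^m}\bmod\pi$, so the coefficient of $T^{q^m}$ in $[a]_\phi(T)-T=\F_\phi(T,[a-1]_\phi(T))-T$ is a unit modulo $\pi$; since $T\mapsto\iota(v)$ identifies $k[[T]]$ with $\O_{\mathbf{E}_F'}$, the element acts nontrivially on $\mathbf{E}_F'$. With either of these in place of the ``likewise'', your proof is complete and coincides in substance with the paper's.
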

\begin{proof}
Only the statement on residue field needs a proof. We claim that $\widetilde{\Gamma}_K$ (hence $\Delta_K$) acts faithfully on $\mathbf{E}_K'$ (hence on $\mathbf{A}_K'$). Equivalently, we need to show that if $g\in G_K$ acts trivially on $\mathbf{E}_K'$, then necessarily $g\in G_{K_\infty}$. Indeed, in this case we can view $g$ as an element in $G_{\mathbf{E}_K'}$. We can then find some $g'\in G_{K_\infty}$ which maps into $g$ under the Fontaine--Wintenberger isomorphism $G_{K_\infty}\xrightarrow{\sim} G_{\mathbf{E}_K'}$. This means that the actions of $g$ and $g'$ on $(\mathbf{E}_K')^{\mathrm{sep}}$ agree. As $(\mathbf{E}_K')^{\mathrm{sep}}$ is dense in $\mathbf{C}^\flat$ (cf. \cite[Prop. 1.4.27]{Schneider}), they in fact agree on $\mathbf{C}^\flat$, hence $g=g'\in G_{K_\infty}$, as claimed. (We can also prove the claim by first reducing to the case $K=F$, and then use the explicit description of $\mathbf{E}_F'$ as $\mathbf{E}_F'=k((\iota(v))$, where recall that $v$ is a generator of the Tate module $T\F_{\phi}$.)

It now follows from Artin's lemma in Galois theory that $\mathbf{A}_K'$ is finite free of rank $\#\Delta_K$ over $\mathbf{A}_K=(\mathbf{A}_K')^{\Delta_K}$ (e.g. after passing to their fraction fields). In particular, the same is true for the induced extension of residue fields. Similarly, we see that $\mathbf{E}_K'$ has degree $\#\Delta_K$ over $(\mathbf{E}_K')^{\Delta_K}=:\mathbf{E}_K$, and so the latter must coincide with the residue field of $\mathbf{A}_K=(\mathbf{A}_K')^{\Delta_K}$.
\end{proof}
Clearly, $\mathbf{A}_K$ is $\varphi_q$-stable and equipped with an induced action of $\Gamma_K=\widetilde{\Gamma}_K/\Delta_K$. We let $\mathbf{B}_K:=\mathbf{A}_K[1/\pi]$ be its fraction field, so that $\mathbf{B}_K=(\mathbf{B}_K')^{\Delta_K}$.

If $T_K'$ is any lift of a uniformizer of $\mathbf{E}_K'$ to $\mathbf{A}_K'$, then there is an identification $\widehat{W_{\O_F}(k_\infty')((T_K'))}\xrightarrow{\sim}\mathbf{A}_K'$ (again the hat denotes the $\pi$-adic completion), where $k_{K,\infty}'$ denotes the residue field of $K_\infty$. Similarly, for any lift $T_K$ of a uniformizer of $\mathbf{E}_K$, we have an isomorphism $\widehat{W_{\O_F}(k_{K,\infty})((T_K))}\xrightarrow{\sim} \mathbf{A}_K$, where $k_{K,\infty}$ denotes the residue field of the subfield of $K_{\infty}$ corresponding to the subgroup $\Delta_K\subseteq \widetilde{\Gamma}_K$.

In general, it seems difficult to explicitly write down formulas for the actions of $\varphi_q$ and $\widetilde{\Gamma}_K$ on $\mathbf{A}_K'$. This is possible however in the case where $K$ is unramified over $F$ (equivalently, $K=K_0F$). Indeed, we claim that in this case the element $\iota(v)$ defined earlier is also a uniformizer of $\mathbf{E}_K'$. We have seen that it is a uniformizer of $\mathbf{E}_F'$, hence it suffices to show that $\mathbf{E}_K'$ is unramified over $\mathbf{E}_F'$, which is true because $[\mathbf{E}_K':\mathbf{E}_F']=[K_\infty:F_\infty]=[K:K\cap F_\infty]=[K:F]=[k_K:k_F]\leq [k_{K_\infty}:K_F]$. Thus, if we choose $T_K':=\{\iota(v)\}$, then $\mathbf{A}_K'=\widehat{\O_{K_0 F}((T_K'))}$, and the actions of $\varphi_q$ and $\widetilde{\Gamma}_K=\widetilde{\Gamma}_F$ on $T_K'$ are given exactly as before. We furthermore let $T_K:=\prod_{g\in \Delta_F}g\cdot\{\iota(v)\}$ so that $T_K\in \mathbf{A}_K$ (as it is fixed by $\Delta_F=\Delta_K$). By Lemma \eqref{phi-stable when unramified} below, we have $\mathbf{A}_K=\widehat{\O_{K_0 F}((T_K))}$, and furthermore, the ``integral'' subring $\mathbf{A}_K^+:=\O_{K_0 F}[[T_K]]$ is stable under the actions of $\varphi_q$ and $\Gamma_K$.
\begin{lem}\label{phi-stable when unramified}
Assuming $K/F$ is unramified. Then $\mathbf{A}_K=\widehat{\O_{K_0 F}((T_K))}$. Furthermore, the subring $\mathbf{A}_K^+:=\O_{K_0 F}[[T_K]]$ is $(\varphi_q,\Gamma_K)$-stable; in fact, $\varphi_q(T_K)\in T_K\mathbf{A}_K^+$ and $g(T_K)\in T_K\mathbf{A}_K^+$ for all $g\in \Gamma_K$.
\end{lem}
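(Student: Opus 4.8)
The plan is to deduce everything from the explicit description of $\mathbf{A}_K'$ together with the elementary invariant theory of the finite group $\Delta_K$ acting on a one-variable power series ring. Note first that $\Delta_K=\Delta_F$: since $K/F$ is unramified and $F_\infty/F$ is totally ramified, $K\cap F_\infty=F$ and hence $\widetilde\Gamma_K\xrightarrow{\sim}\widetilde\Gamma_F$. Put $B':=\O_{K_0F}[[T_K']]\subseteq\mathbf{A}_K'=\widehat{\O_{K_0F}((T_K'))}$ and $B:=\O_{K_0F}[[T_K]]$, where $T_K=\prod_{g\in\Delta_F}[\chi(g)]_\phi(T_K')$. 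The goal is to show $(B')^{\Delta_K}=B$, then derive $\mathbf{A}_K=\widehat{\O_{K_0F}((T_K))}$ and that $B=\mathbf{A}_K^+$ is $(\varphi_q,\Gamma_K)$-stable. I record at the outset that $B'$ is stable under $\varphi_q$ and $\widetilde\Gamma_K$, with $\varphi_q(T_K'),g(T_K')\in T_K'B'$ for $g\in\widetilde\Gamma_K$: this is because $\varphi_q$ acts on $T_K'$ by $\phi$ and $g$ by $[\chi(g)]_\phi$, both of which lie in $Y\O_F[[Y]]$, while $\varphi_q$ is an automorphism of $\O_{K_0F}=W_{\O_F}(k_K)$ and $\widetilde\Gamma_K$ acts trivially on $k_K$ (the residue field of $K_\infty$, since $K_\infty/K$ is totally ramified).

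The main step is to identify $B'$ as a module over $B$. The element $T_K\in B'$ is $\Delta_K$-invariant, and modulo $\pi$ each factor $\overline{[\chi(g)]_\phi}(\bar T_K')\in k_K[[\bar T_K']]$ has $\bar T_K'$-adic valuation exactly $1$ (its linear coefficient $\overline{\chi(g)}$ lies in $k^\times$), so $\bar T_K$ has valuation exactly $n:=\#\Delta_K$ in $\mathbf{E}_K'=k_K((\bar T_K'))$. Since $\Delta_K$ acts on the complete DVR $k_K[[\bar T_K']]$ by continuous $k_K$-algebra automorphisms with trivial residual action, the fixed subring is again a complete DVR with residue field $k_K$, hence a one-variable power series ring; as $\bar T_K$ is an invariant element of valuation $\#\Delta_K=[{\rm Frac}(k_K[[\bar T_K']]):{\rm Frac}(k_K[[\bar T_K']])^{\Delta_K}]$, one gets $(k_K[[\bar T_K']])^{\Delta_K}=k_K[[\bar T_K]]$, with $k_K[[\bar T_K']]$ free of rank $n$ over it. A $\pi$-adic successive-approximation argument — legitimate because $B'$ is $\pi$-torsion free and $(B')^{\Delta_K}\cap\pi B'=\pi(B')^{\Delta_K}$ — then upgrades this to: $B'$ is finite free of rank $n$ over $B$ and $(B')^{\Delta_K}=B$.

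Part 1 now follows. Reducing the last identity mod $\pi$ gives $\mathbf{E}_K=(\mathbf{E}_K')^{\Delta_K}=k_K((\bar T_K))$, so $\bar T_K$ is a uniformizer of $\mathbf{E}_K$; moreover the residue field $k_{K,\infty}$ of the subfield of $K_\infty$ fixed by $\Delta_K$ equals $k_K$ (again by total ramification), so $W_{\O_F}(k_{K,\infty})=\O_{K_0F}$. Thus $T_K$ is a lift to $\mathbf{A}_K$ of a uniformizer of $\mathbf{E}_K$, and the structure isomorphism recorded just before the statement yields $\mathbf{A}_K\cong\widehat{\O_{K_0F}((T_K))}$. (Equivalently: $\widehat{\O_{K_0F}((T_K))}\hookrightarrow(\mathbf{A}_K')^{\Delta_K}=\mathbf{A}_K$ is an inclusion of $\pi$-adically complete discrete valuation rings with common uniformizer $\pi$ inducing the identity on residue fields, hence an equality.)

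For Part 2, $\mathbf{A}_K^+=B=(B')^{\Delta_K}$: since $\varphi_q$ commutes with $\Delta_K$ and preserves $B'$, it preserves $(B')^{\Delta_K}=\mathbf{A}_K^+$; since $\widetilde\Gamma_K$ preserves $B'$ and is abelian, the induced $\Gamma_K=\widetilde\Gamma_K/\Delta_K$-action preserves $\mathbf{A}_K^+$. For the sharper divisibilities, compute $\varphi_q(T_K)=\prod_{g\in\Delta_F}\varphi_q(g(T_K'))=\prod_g g(\varphi_q(T_K'))=\prod_g g(\phi(T_K'))$; from $\phi(T_K')\in T_K'B'$ we get $g(\phi(T_K'))\in g(T_K')B'$, and multiplying over $g\in\Delta_F$ gives $\varphi_q(T_K)\in T_KB'$. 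Then $\varphi_q(T_K)/T_K$ lies in $B'$ and is $\Delta_K$-invariant (both $\varphi_q(T_K)$ and $T_K$ lie in $\mathbf{A}_K$), hence $\varphi_q(T_K)/T_K\in(B')^{\Delta_K}=\mathbf{A}_K^+$, i.e. $\varphi_q(T_K)\in T_K\mathbf{A}_K^+$. Running the identical computation with a lift $\widetilde g\in\widetilde\Gamma_K$ of $g\in\Gamma_K$, using that $\widetilde\Gamma_K$ is abelian, gives $g(T_K)=\prod_{h\in\Delta_F}h(\widetilde g(T_K'))\in T_KB'$ and therefore $g(T_K)\in T_K\mathbf{A}_K^+$. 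The genuine obstacle is the second step: proving $(B')^{\Delta_K}=B$ and the finite freeness of $B'$ over $B$, which packages the (classical but not entirely trivial) description of the invariants of a finite group acting on $k_K[[\bar T_K']]$ together with a $\pi$-adic lifting; the remaining arguments are formal once the explicit Lubin--Tate formulas for $\varphi_q$ and $\widetilde\Gamma_K$ on $T_K'$ are available.
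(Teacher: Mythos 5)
Your proposal is correct and follows essentially the same route as the paper: you show that $\overline{T}_K$ has valuation $\#\Delta_K$ in $\mathbf{E}_K'$ (hence is a uniformizer of $\mathbf{E}_K=(\mathbf{E}_K')^{\Delta_K}$, using the totally ramified degree-$\#\Delta_K$ extension and the faithfulness of the $\Delta_K$-action established just before the lemma), and then upgrade the residue-level identity $(k_K[[\overline{T}_K']])^{\Delta_K}=k_K[[\overline{T}_K]]$ to $(\O_{K_0F}[[T_K']])^{\Delta_K}=\O_{K_0F}[[T_K]]$ by the same $\pi$-adic successive approximation the paper uses, from which stability under $\varphi_q$ and $\Gamma_K$ follows. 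The only (harmless) variations are cosmetic: you read off the valuation of each factor $\overline{[\chi(g)]_\phi}(\overline{T}_K')$ from its unit linear coefficient rather than via the limit formula for $|\cdot|_\flat$ and Galois-invariance, and you obtain the divisibilities $\varphi_q(T_K),g(T_K)\in T_K\mathbf{A}_K^+$ by the clean observation that the ratios lie in $(\O_{K_0F}[[T_K']])^{\Delta_K}$, which spells out the paper's terse ``same argument modulo $\pi$.''
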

\begin{proof}
For the first statement, it suffices to show that the image $\overline{T}_K$ of $T_K$ in $\mathbf{E}_K'=k_K((\iota(v)))$ is a uniformizer of the residue field $\mathbf{E}_K:=(\mathbf{E}_K')^{\Delta_K}$ of $\mathbf{A}_K$. Since $\mathbf{E}_K'/(\mathbf{E}_K')^{\Delta_K}$ is a totally ramified extension of degree $\#\Delta$,  we only need to show that $|\overline{T}_K|_{\flat}=|\iota(v)|_{\flat}^{\#\Delta}$. To see this, first recall that for each $n\geq 1$, there is a canonical isomorphism $(\O_F/\pi^n)^\times\xrightarrow{\sim} \mathrm{Gal}(F_n/F)$ defined by $a\mapsto \sigma_a:=(x\mapsto [a]_{\phi}(x), x\in F_{\phi}[\pi^n])$. We can now compute 
\begin{align*}
|\overline{T}_K|_{\flat} & = \prod_{a\in\Delta}|\iota(av)|_{\flat}\\
& =\prod_{a\in \Delta}|([a]_{\phi}(v_0)\bmod{\pi}, [a]_{\phi}(v_1)\bmod{\pi},\ldots)|_{\flat}\\
&\stackrel{\text{def}}{=}\prod_{a\in \Delta}\lim_{n} |[a]_{\phi}(v_n)|^{q^n}\\
&=\prod_{a\in \Delta}\lim_{n} |\sigma_a(v_n)|^{q^n}\\
&=\prod_{a\in \Delta}\lim_n |v_n|^{q^n}\\
&\stackrel{\text{def}}{=} |(v_0\bmod{\pi},v_1\bmod{\pi},\ldots)|_{\flat}^{\#\Delta}=|\iota(v)|_{\flat}^{\#\Delta},
\end{align*}
as desired. Set $\mathbf{A}_K^+:=\O_{K_0 F}[[T_K]]$ with $T_K$ defined as above. We check that $\mathbf{A}_K^+$ is stable under the actions of $\varphi_q$ and $\Gamma_K$. As the ring $\O_{K_0 F}[[T_K']]$ is visibly $(\varphi_q,\widetilde{\Gamma}_K)$-stable, it suffices to show that $\mathbf{A}_K\cap \O_{K_0 F}[[T_K']]=\mathbf{A}_K^+$. To see this, let $f\in \mathbf{A}_K\cap \O_{K_0 F}[[T_K']]\subseteq\O_{K_0 F}[[T_K']]$. Then $f\bmod{\pi}\in \mathbf{E}_K\cap\O_{\mathbf{E}_K'}=\O_{\mathbf{E}_K}=k_K[[\overline{T}_K]]$ whence $f=f_0+\pi g_1$ for some $f_0\in \mathbf{A}_K^+$ and $g_1\in \O_{K_0 F}[[T_K']]$. Since $f$ and $f_0$ are both fixed by $\Delta_K$, the same is true for $g_1$. Thus, by induction, we can find a sequence $(f_n)\subseteq \mathbf{A}_K^+$ such that 
\begin{displaymath}
f\equiv f_0+\pi f_1+\ldots+\pi^n f_n\pmod{\pi^{n+1}\O_{K_0 F}[[T_K']]}
\end{displaymath}
for all $n\geq 0$. Since $\O_{K_0 F}[[T_K]]$ and $\O_{K_0 F}[[T_K']]$ are both $\pi$-adically completed, this implies $f\in \mathbf{A}_K^+$, as required. By using the same argument (i.e. consider the reductions modulo $\pi$), we see that $\varphi_q(T_K)\in T_K\mathbf{A}_K^+$ and $g(T_K)\in T_K\mathbf{A}_K^+$ for all $g\in \Gamma_K$.
\end{proof}
For each finite extension $K/F$, we let $K_{\mathrm{cyc}}$ denote the subfield of $K_\infty$ corersponding to the torsion subgroup $\Delta_K$ of $\widetilde{\Gamma}_K$ (it can also be characterized as the unique subextension of $K_\infty/K$ whose Galois groups is isomorphic to $\mathbf{Z}_p^{\oplus [F:\mathbf{Q}_p]}$). The following definition is modeled on \cite[Defn. 2.1.12]{EG22}.
\begin{defn}\label{basic K}
We say that $K$ is $F$-\textit{basic} if it is contained in $(K_0 F)_{\mathrm{cyc}}$, or equivalently, $K_{\mathrm{cyc}}=(K_0 F)_{\mathrm{cyc}}$ (to see this equivalence, it suffices to note that any open subgroup of $\mathbf{Z}_p^{\oplus [F:\mathbf{Q}_p]}$ is itself isomorphic to $\mathbf{Z}_p^{\oplus [F:\mathbf{Q}_p]}$). 
\end{defn}
Thus, if $K$ is $F$-basic, then $\mathbf{A}_K=\mathbf{A}_{K_0 F}$ and the action of ${\Gamma}_K$ is just the restriction of the action of $\Gamma_{K_0 F}$ om $\mathbf{A}_{K_0 F}$. Thus by Lemma \eqref{phi-stable when unramified}, we can choose $T_K:=T_{K_0 F}$ so that the integral subring $\mathbf{A}_K^+:=\O_{K_0 F}[[T_K]]$ is $(\varphi_q,\Gamma_K)$-stable, and moreover $\varphi_q(T_K)\in T_K\mathbf{A}_K^+$ and $g(T_K)\in T_K\mathbf{A}_K^+$ for all $g\in \Gamma_K$ (and similarly for $\mathbf{A}_K'$ and $(\mathbf{A}_K')^+$). In general, as explained above, we can still choose some $T_K$ so that $\mathbf{A}_K=\widehat{W_{\O_F}(k_{K,\infty})((T_K))}$; once this is done, we will set $\mathbf{A}_K^+:=W_{\O_F}(k_{K,\infty})[[T_K]]$. We endow $\mathbf{A}_K^+$ with the $(\pi,T_K)$-adic topology, and $\mathbf{A}_K$ with the unique topology for which a fundamental system of open neighborhoods of $0\in \mathbf{A}_K$ is given by the sets $\pi^n\mathbf{A}_K+T_K^m\mathbf{A}_K^+, n,m\geq 0$\footnote{This is designed so that the quotient topology on the local field $\mathbf{A}_K/\pi\cong k_{K,\infty}((T_K))$ is its valuation topology.} (one can check easily that the subspace topology on $\mathbf{A}_A^+$ is indeed the $(\pi,T_K)$-adic topology).
\begin{remark}
We have seen that if $u\in \mathbf{A}_K$ is any element lifting a uniformizer of the residue field of $\mathbf{A}_K$, then $\mathbf{A}_K$ can be identified with $\widehat{W_{\O_F}(k_{K,\infty})((u))}$. However, for a general choice of $u$, the integral subring $\mathbf{A}_K^+:=W_{\O_F}(k_{K,\infty})[[u]]$ may not be $\varphi_q$-stable (cf. \cite[\textsection 1.1.2.2]{Her98}). As in \cite{EG22}, the advantage of having $\varphi_q$-stability for $\mathbf{A}_K^+$ is that it allows us to invoke the geometric properties proved in \cite{EG19} of (a variant of) the stack $\R_d$ of \' etale $\varphi_q$-modules over $\mathbf{A}_K$ (see Theorem \eqref{R_d} below).
\end{remark}
\subsection{Coefficients}\label{coefficient}
As we will be interested in moduli stacks parametrizing famillies of Lubin--Tate $(\varphi_q,\Gamma_K)$-modules (or related variants), it is necessary to introduce the version of the various rings considered in Subsection \eqref{ring} relative to a varying coefficient ring $A$. As the stacks we consider will liver over the $p$-adic formal scheme $\Spf \mathbf{Z}_p$, the test ring $A$ will be always assumed to be a $p$-adically complete. In fact, as in \cite{EG22}, it will be convenient to introduce an auxiliary base ring over which $A$ lives. More precisely, we will fix a finite extension $E/F$ with ring of integers $\O$, uniformizer $\varpi$ and residue field $\mathbf{F}$; accordingly, $A$ will be taken to be a $\varpi$-adically complete $\O$-algebra. (We will sometimes need to assume $E$ is large enough, e.g. so that it contains all the images of all embeddings $K\hookrightarrow\overline{\mathbf{Q}}_p$, but for now this is irrelevant to us.) 

For $A$ as above, we will set $\mathbf{A}_{K,A}^+:=\mathbf{A}_{K}^+\widehat{\otimes}_{\O_F}A$, where the completed tensor product is taken with respect to the $\varpi$-adic topology on $A$ and the $(\pi,T_K)$-adic topology topology on $\mathbf{A}_{K}^+$ (where as above $T_K$ is a lift of a uniformizer of $\mathbf{E}_K$ to $\mathbf{A}_K^+$ so that we have $\mathbf{A}_K^+=W_{\O_F}(k_{K,\infty})[[T_K]]$), i.e.
\begin{displaymath}
\mathbf{A}_{K,A}^+:=\varprojlim_n\mathbf{A}_K^+/(p,T_K)^n\otimes_{\O_F}A=\varprojlim_m(\varprojlim_n \mathbf{A}_K^+/(p^m,T_K^n)\otimes_{\O_F}A).
\end{displaymath}
Similarly, we define $\mathbf{A}_{K,A}$ to be the completed tensor product $\mathbf{A}_{K}\widehat{\otimes}_{\O_F}A$ taken with respect to the $\pi$-adic topology on $A$ and the natural topology on $\mathbf{A}_K$ (cf. the discussion following Definition \eqref{basic K}). Concretely, we have 
\begin{displaymath}
\mathbf{A}_{K,A}:=\varprojlim_m((\varprojlim_n \mathbf{A}_K^+/(p^m,T_K^n)\otimes_{\O_F}A)[1/T_K]).
\end{displaymath}
Thus, we see that $\mathbf{A}_{K,A}^+\cong (W_{\O_F}(k_{K,\infty})\otimes_{\O_F}A)[[T_K]]$, whereas $\mathbf{A}_{K,A}$ is the $p$-adic completion of $\mathbf{A}_{K,A}^+[1/T_K]$. Similarly, we can define the rings $(\mathbf{A}_{K,A}')^+$ and $\mathbf{A}_{K,A}'$ (use $T_K'$ in place of $T_K$).

Recall that we have the actions of $\varphi_q$ and $\Gamma_K$ on $\mathbf{A}_{K}$. We can extend these actions on each $\mathbf{A}_{K,A}$ by decreeing that they are $A$-linear. In case $\mathbf{A}_K^+$ is $(\varphi_q,\Gamma)$-stable (e.g. when $K$ is $F$-basic), the same is true of the ring $\mathbf{A}_{K,A}^+$ for each $A$. For a more detailed discussion on these actions, see \cite[Prop. 2.2.17]{EG22} (which in turn rests on \cite[Lem. B.31]{EG22} and \cite[Lem. B.34]{EG22}).    
\subsection{The relationship with Galois representations}
Denote by $\mathbf{A}_K^{\mathrm{ur}}\subseteq W_{\O_F}({\mathbf{C}}^\flat)$ the $\pi$-adic completion of the maximal unramified extension of $\mathbf{A}_K'$ (or equivalently of $\mathbf{A}_F'$) in $W_{\O_F}({\mathbf{C}}^\flat)$. As $\mathbf{A}_K'$ is preserved by the natural actions of $\varphi_q$ and $G_K$ on $W_{\O_K}(\O_{\mathbf{C}}^\flat)$, the same is true for the ring $\mathbf{A}_K^{\mathrm{ur}}$ by functoriality.

Let $(\mathbf{E}_K')^{\mathrm{sep}}$ be the separable closure of $\mathbf{E}_K'$ inside $\mathbf{C}^\flat$. It is clear that the natural action of $G_K$ on $\mathbf{C}^\flat$ preserves $(\mathbf{E}_K')^{\mathrm{sep}}$. If, in addition, $g\in \mathrm{Gal}(\overline{K}/K_{\infty})$, then $g$ fixes $\widehat{K_\infty}^\flat$, and in particular, $\mathbf{E}_K'$. In this way, we obtain a natural homomorphism
\begin{displaymath}
G_{K_\infty}\to G_{\mathbf{E}_K'}
\end{displaymath}
which turns out to be an isomorphism by the theory of norm fields of Fontaine--Wintenberger. From this, we can deduce the following result.
\begin{thm}\label{equivalence with Galois rep no coefficient}
The functor 
\begin{displaymath}
V\mapsto D(V):=(V\otimes_{\O_F}\mathbf{A}_K^{\mathrm{ur}})^{G_{K_\infty}}
\end{displaymath}
gives an equivalence of categories between the category $\mathrm{Rep}_{\O_F}(G_K)$ of continuous $G_K$-representations on finite $\O_F$-modules and the category of finite \' etale $(\varphi_q,\widetilde{\Gamma}_K)$-modules over $\mathbf{A}_K'$. A quasi-inverse functor is given by $M\mapsto T(M):=(\mathbf{A}_K^{\mathrm{ur}}\otimes_{\mathbf{A}_K'}M)^{\varphi_q=1}$. 

Furthermore, $V$ is free of rank $d$ over $\O_F$ if and only if $D(V)$ is free of rank $d$ over $\mathbf{A}_K'$.
\end{thm}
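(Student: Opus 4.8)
The plan is to run the classical argument of Fontaine, adapted to the Lubin--Tate norm field: the Fontaine--Wintenberger theory recalled above identifies $\mathbf{E}_K'$ with a local field of characteristic $p$ carrying $\varphi_q$ as its $q$-power Frobenius, together with an isomorphism $G_{K_\infty}\xrightarrow{\sim}G_{\mathbf{E}_K'}$ and with $\mathbf{A}_K^{\mathrm{ur}}/\pi=(\mathbf{E}_K')^{\mathrm{sep}}$. The first step is to prove the analogous statement one level down, forgetting $\widetilde{\Gamma}_K$: that $V\mapsto D(V)=(V\otimes_{\O_F}\mathbf{A}_K^{\mathrm{ur}})^{G_{K_\infty}}$ is an equivalence between $\mathrm{Rep}_{\O_F}(G_{K_\infty})$ and finite \'etale $\varphi_q$-modules over $\mathbf{A}_K'$, with quasi-inverse $M\mapsto(\mathbf{A}_K^{\mathrm{ur}}\otimes_{\mathbf{A}_K'}M)^{\varphi_q=1}$. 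The heart of this is the mod-$\pi$ case: since $(\mathbf{E}_K')^{\mathrm{sep}}$ is separably closed and $\varphi_q$ acts on it as the absolute Frobenius, a Hilbert-90/Artin--Schreier argument shows that for a finite \'etale $\varphi_q$-module $\overline M$ over $\mathbf{E}_K'$ the space $\overline M^{\varphi_q=1}$ is an $\mathbf{F}_q$-vector space of the same dimension with $\overline M^{\varphi_q=1}\otimes_{\mathbf{F}_q}(\mathbf{E}_K')^{\mathrm{sep}}\xrightarrow{\sim}\overline M\otimes_{\mathbf{E}_K'}(\mathbf{E}_K')^{\mathrm{sep}}$, and dually for $G_{\mathbf{E}_K'}$-representations on $\mathbf{F}_q$-vector spaces; one also records $((\mathbf{E}_K')^{\mathrm{sep}})^{\varphi_q=1}=\mathbf{F}_q$ and $((\mathbf{E}_K')^{\mathrm{sep}})^{G_{\mathbf{E}_K'}}=\mathbf{E}_K'$.

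From the mod-$\pi$ case one bootstraps to $\O_F/\pi^n$-coefficients by d\'evissage along $0\to\pi^{n-1}\O_F/\pi^n\to\O_F/\pi^n\to\O_F/\pi^{n-1}\to 0$ (exactness of the relevant functors plus the five lemma), and then to general finite $\O_F$-modules by passing to the inverse limit over $n$, using the $\pi$-adic completeness of $\mathbf{A}_K'$ and $\mathbf{A}_K^{\mathrm{ur}}$. Along the way one verifies $(\mathbf{A}_K^{\mathrm{ur}})^{\varphi_q=1}=\O_F$ and $(\mathbf{A}_K^{\mathrm{ur}})^{G_{K_\infty}}=\mathbf{A}_K'$, which shows that $D$ and $T$ are mutually quasi-inverse and that the comparison isomorphism $\mathbf{A}_K^{\mathrm{ur}}\otimes_{\O_F}V\cong\mathbf{A}_K^{\mathrm{ur}}\otimes_{\mathbf{A}_K'}D(V)$ holds; faithful flatness of $\mathbf{A}_K^{\mathrm{ur}}$ over both $\O_F$ and $\mathbf{A}_K'$ then gives the assertion about ranks and freeness.

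The final step upgrades the equivalence from the $G_{K_\infty}$/bare-$\varphi_q$ level to the $G_K$/$(\varphi_q,\widetilde{\Gamma}_K)$ level. Since $G_{K_\infty}$ is normal in $G_K$ with quotient $\widetilde{\Gamma}_K$, for $V\in\mathrm{Rep}_{\O_F}(G_K)$ the module $D(V)$ inherits a continuous semilinear $\widetilde{\Gamma}_K$-action commuting with $\varphi_q$; conversely, for an \'etale $(\varphi_q,\widetilde{\Gamma}_K)$-module $M$ one defines a $G_K$-action on $\mathbf{A}_K^{\mathrm{ur}}\otimes_{\mathbf{A}_K'}M$ by $g\cdot(a\otimes m)=g(a)\otimes\bar g(m)$, which is well-defined because the $\widetilde{\Gamma}_K$-action on $M$ is semilinear over the $\widetilde{\Gamma}_K$-action on $\mathbf{A}_K'$ (the latter being the restriction of the $G_K$-action on $\mathbf{A}_K^{\mathrm{ur}}$), and which commutes with $\varphi_q\otimes\varphi_q$ because the $G_K$-action and $\varphi_q$ on $\mathbf{A}_K^{\mathrm{ur}}$ commute; passing to $\varphi_q$-invariants recovers $T(M)$ with a continuous $G_K$-action, finite over $\O_F$. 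That the enriched $D$ and $T$ remain quasi-inverse is then formal, the extra structure on each side being governed by the same descent datum.

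The main obstacle is the characteristic-$p$ descent input of the first step --- the Hilbert-90 statement over the separably closed field $(\mathbf{E}_K')^{\mathrm{sep}}$ --- but in our situation this is not new: once Fontaine--Wintenberger presents $\mathbf{E}_K'$ as a local field of characteristic $p$ with $\varphi_q$ its absolute Frobenius, the argument is identical to that of \cite{Fon90} (see also \cite{KisinRen}, \cite{Schneider}), and the remaining d\'evissage and limit steps are routine, exactly as in the cyclotomic case of \cite{EG22}.
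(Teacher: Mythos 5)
Your proposal is correct, and it is essentially the argument the paper relies on: the paper simply cites \cite[Thm. 1.6]{KisinRen} and \cite[Thm. 3.3.10]{Schneider}, whose proofs are exactly this classical Fontaine-style strategy (trivialization of \'etale $\varphi_q$-modules over $(\mathbf{E}_K')^{\mathrm{sep}}$ via Artin--Schreier/Lang, d\'evissage to $\O_F/\pi^n$, passage to the limit, then enrichment to the $(\varphi_q,\widetilde{\Gamma}_K)$-level using normality of $G_{K_\infty}$ in $G_K$). So you have spelled out the proof the paper defers to, with no genuine divergence in approach.
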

\begin{proof}
This is proved in \cite[Thm. 1.6]{KisinRen}, see also \cite[Thm. 3.3.10]{Schneider} for a more detailed exposition. (Strictly speaking, the latter reference only considers the case $K=F$, but the proof for general $K$ follows the same strategy.)
\end{proof}
In fact, there is also an analogous equivalence of categories using $(\varphi_q,\Gamma_K)$-modules, and taking $\Delta_K$-invariants gives an equivalence of categories between $(\varphi_q,\widetilde{\Gamma}_K)$-modules
and $(\varphi_q,\Gamma_K)$-modules (see Lemma \eqref{Delta invariant} below). Since it is the version with $\mathbf{A}_K$-coefficients that we will work mostly with, let us record the equivalence for $(\varphi_q,\Gamma_K)$-modules over $\mathbf{A}_K$ separately below. 
\begin{thm}
(Recall that $K_{\mathrm{cyc}}$ denotes the subextension of $K_\infty$ corresponding to the subgroup $\Delta_K$ of $\widetilde{\Gamma}_K$.) The functor
\begin{displaymath}
V\mapsto D(V):=(V\otimes_{\O_F}\mathbf{A}_K^{\mathrm{ur}})^{G_{K_{\mathrm{cyc}}}}
\end{displaymath}
gives an equivalence of categories between the category $\mathrm{Rep}_{\O_F}(G_K)$ of continuous $G_K$-representations on finite $\O_F$-modules and the category of finite \' etale $(\varphi_q,{\Gamma}_K)$-modules over $\mathbf{A}_K$. A quasi-inverse functor is given by $M\mapsto T(M):=(\mathbf{A}_K^{\mathrm{ur}}\otimes_{\mathbf{A}_K}M)^{\varphi_q=1}$.

Furthermore, $V$ is free of rank $d$ over $\O_F$ if and only if $D(V)$ is free of rank $d$ over $\mathbf{A}_{K}$.
\end{thm}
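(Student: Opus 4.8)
The plan is to deduce the theorem formally from Theorem~\eqref{equivalence with Galois rep no coefficient}, its analogue for $(\varphi_q,\widetilde{\Gamma}_K)$-modules over $\mathbf{A}_K'$, by composing with the $\Delta_K$-invariants equivalence recorded in Lemma~\eqref{Delta invariant}. First I would set up the descent: recall that $\widetilde{\Gamma}_K$ (hence $\Delta_K$) acts faithfully on $\mathbf{A}_K'$, that $[\mathbf{A}_K':\mathbf{A}_K]=\#\Delta_K$, and that the residue extension $\mathbf{E}_K'/\mathbf{E}_K$ is separable; thus $\mathbf{A}_K'/\mathbf{A}_K$ is a finite \'etale $\Delta_K$-Galois extension of complete discrete valuation rings, so that ordinary Galois descent is available along it. This is the mechanism behind Lemma~\eqref{Delta invariant}, which provides mutually quasi-inverse functors $N\mapsto N^{\Delta_K}$ and $M\mapsto \mathbf{A}_K'\otimes_{\mathbf{A}_K}M$ between finite \'etale $(\varphi_q,\widetilde{\Gamma}_K)$-modules over $\mathbf{A}_K'$ and finite \'etale $(\varphi_q,\Gamma_K)$-modules over $\mathbf{A}_K$, with $\Gamma_K=\widetilde{\Gamma}_K/\Delta_K$ acting on $N^{\Delta_K}$ by the residual action.

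Next I would check that the functor $D$ in the statement is exactly the composite of $D_{\widetilde{\Gamma}}: V\mapsto (V\otimes_{\O_F}\mathbf{A}_K^{\mathrm{ur}})^{G_{K_\infty}}$ with the $\Delta_K$-invariants functor. Since $K_{\mathrm{cyc}}$ is by definition the subfield of $K_\infty$ fixed by $\Delta_K\subseteq\widetilde{\Gamma}_K$, we have $\mathrm{Gal}(K_\infty/K_{\mathrm{cyc}})=\Delta_K$, and hence
\begin{displaymath}
D(V)=(V\otimes_{\O_F}\mathbf{A}_K^{\mathrm{ur}})^{G_{K_{\mathrm{cyc}}}}=\big((V\otimes_{\O_F}\mathbf{A}_K^{\mathrm{ur}})^{G_{K_\infty}}\big)^{\Delta_K}=D_{\widetilde{\Gamma}}(V)^{\Delta_K},
\end{displaymath}
the $\Gamma_K$-action on the right being the residual one. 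Combined with Theorem~\eqref{equivalence with Galois rep no coefficient} and Lemma~\eqref{Delta invariant}, this already shows that $D$ is an equivalence onto the category of finite \'etale $(\varphi_q,\Gamma_K)$-modules over $\mathbf{A}_K$.

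For the quasi-inverse I would argue as follows. Given a finite \'etale $(\varphi_q,\Gamma_K)$-module $M$ over $\mathbf{A}_K$, set $N:=\mathbf{A}_K'\otimes_{\mathbf{A}_K}M$, the associated $(\varphi_q,\widetilde{\Gamma}_K)$-module over $\mathbf{A}_K'$. Because $\mathbf{A}_K'/\mathbf{A}_K$ is finite unramified, $\mathbf{A}_K^{\mathrm{ur}}$ is equally the $\pi$-adic completion of the maximal unramified extension of $\mathbf{A}_K$, and the canonical identification $\mathbf{A}_K^{\mathrm{ur}}\otimes_{\mathbf{A}_K}M\cong \mathbf{A}_K^{\mathrm{ur}}\otimes_{\mathbf{A}_K'}N$ is compatible with $\varphi_q$ and with the $G_K$-actions; therefore
\begin{displaymath}
T(M)=(\mathbf{A}_K^{\mathrm{ur}}\otimes_{\mathbf{A}_K}M)^{\varphi_q=1}\cong (\mathbf{A}_K^{\mathrm{ur}}\otimes_{\mathbf{A}_K'}N)^{\varphi_q=1},
\end{displaymath}
which by Theorem~\eqref{equivalence with Galois rep no coefficient} is the $G_K$-representation attached to $N$. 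Chasing this through, together with the analogous computation for the other composite, shows that $T$ is quasi-inverse to $D$. For the rank assertion, $M$ is finitely generated over the discrete valuation ring $\mathbf{A}_K$, hence free if and only if it is torsion-free; by faithful flatness of $\mathbf{A}_K'/\mathbf{A}_K$ this holds if and only if $N$ is free (equivalently, torsion-free) over $\mathbf{A}_K'$, and then $\mathrm{rank}_{\mathbf{A}_K}M=\mathrm{rank}_{\mathbf{A}_K'}N=\mathrm{rank}_{\O_F}V$ by Theorem~\eqref{equivalence with Galois rep no coefficient}. Specializing $M=D(V)$, $N=D_{\widetilde{\Gamma}}(V)$ gives the claim.

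The only genuine input beyond formal bookkeeping with invariants is Lemma~\eqref{Delta invariant} — in particular, the verification that forming $\Delta_K$-invariants, respectively base changing along $\mathbf{A}_K\hookrightarrow\mathbf{A}_K'$, preserves the \'etale condition and the commuting continuous actions of $\varphi_q$ and the relevant Galois groups — together with the $(\varphi_q,G_K)$-equivariance of the isomorphism $\mathbf{A}_K^{\mathrm{ur}}\otimes_{\mathbf{A}_K}M\cong\mathbf{A}_K^{\mathrm{ur}}\otimes_{\mathbf{A}_K'}N$. This compatibility is the step I would expect to require the most care, even though it is essentially packaged into the lemmas cited above.
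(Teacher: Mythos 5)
Your proposal is correct and takes essentially the same route as the paper, which states this theorem without a separate proof precisely because it is obtained by composing Theorem \eqref{equivalence with Galois rep no coefficient} with the $\Delta_K$-invariants equivalence of Lemma \eqref{Delta invariant}. The details you add (that taking $G_{K_{\mathrm{cyc}}}$-invariants is the same as taking $\Delta_K$-invariants of the $G_{K_\infty}$-invariants, the $(\varphi_q,G_K)$-equivariant identification $\mathbf{A}_K^{\mathrm{ur}}\otimes_{\mathbf{A}_K}M\cong\mathbf{A}_K^{\mathrm{ur}}\otimes_{\mathbf{A}_K'}N$ giving the quasi-inverse, and the rank bookkeeping via the finite free extension $\mathbf{A}_K\subseteq\mathbf{A}_K'$) are exactly the routine verifications the paper leaves implicit.
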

\begin{lem} \label{Delta invariant}
The functor 
\begin{displaymath}
M\mapsto M':=\mathbf{A}_K'\otimes_{\mathbf{A}_K}M
\end{displaymath}
gives an equivalence of categories between finite \' etale $(\varphi_q,\Gamma_K)$-module over $\mathbf{A}_K$ and finite \' etale $(\varphi_q,\widetilde{\Gamma}_K)$-module over $\mathbf{A}_K'$. A quasi-inverse functor is given by $M'\mapsto (M')^{\Delta}$.

Again, $M$ is free of rank $d$ over $\mathbf{A}_K$ if and only if $M'$ is free of rank $d$ over $\mathbf{A}_K'$.
\end{lem}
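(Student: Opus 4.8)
The plan is to descend étale $(\varphi_q,\widetilde{\Gamma}_K)$-modules over $\mathbf{A}_K'$ along the finite étale $\Delta_K$-Galois extension $\mathbf{A}_K\hookrightarrow\mathbf{A}_K'$, using ordinary (non-coefficient) Galois descent, and then check that the extra structures (the $\varphi_q$-action and the $\Gamma_K$-action) are compatible with this descent. First I would recall from the preceding lemma that $\mathbf{A}_K'/\mathbf{A}_K$ is a finite free $\Delta_K$-Galois extension of discrete valuation rings (with $\Delta_K$ acting faithfully), so that by faithfully flat Galois descent the functor $N\mapsto \mathbf{A}_K'\otimes_{\mathbf{A}_K}N$ is an equivalence between finite $\mathbf{A}_K$-modules and finite $\mathbf{A}_K'$-modules equipped with a semilinear $\Delta_K$-action (equivalently, descent data), with quasi-inverse $M'\mapsto (M')^{\Delta_K}$; moreover this equivalence preserves the property of being finite projective (equivalently free, since both rings are local) of a given rank $d$, and the natural map $\mathbf{A}_K'\otimes_{\mathbf{A}_K}(M')^{\Delta_K}\to M'$ is an isomorphism.

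Next I would observe that given a finite étale $(\varphi_q,\widetilde{\Gamma}_K)$-module $M'$ over $\mathbf{A}_K'$, the semilinear $\widetilde{\Gamma}_K$-action restricts to a semilinear action of the torsion subgroup $\Delta_K\subseteq\widetilde{\Gamma}_K$ (semilinear over the action of $\Delta_K$ on $\mathbf{A}_K'$ with fixed ring $\mathbf{A}_K$), which is exactly a descent datum; hence $M:=(M')^{\Delta_K}$ is a finite $\mathbf{A}_K$-module with $\mathbf{A}_K'\otimes_{\mathbf{A}_K}M\xrightarrow{\sim}M'$. Since $\varphi_q$ on $\mathbf{A}_K'$ commutes with the $\widetilde{\Gamma}_K$-action (in particular with $\Delta_K$) and restricts to $\varphi_q$ on $\mathbf{A}_K$, the $\varphi_q$-semilinear endomorphism of $M'$ carries $M=(M')^{\Delta_K}$ into itself, endowing $M$ with a $\varphi_q$-semilinear endomorphism; étaleness of $M$ over $\mathbf{A}_K$ (i.e. that the linearization $\varphi_q^*M\to M$ is an isomorphism) follows from that of $M'$ by faithfully flat descent along $\mathbf{A}_K\to\mathbf{A}_K'$, using that $\varphi_q^*(\mathbf{A}_K'\otimes_{\mathbf{A}_K}M)=\mathbf{A}_K'\otimes_{\mathbf{A}_K}\varphi_q^*M$ compatibly. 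Likewise, since the $\widetilde{\Gamma}_K$-action on $\mathbf{A}_K'$ normalizes $\Delta_K$ (indeed $\widetilde{\Gamma}_K$ is abelian) and $\widetilde{\Gamma}_K/\Delta_K=\Gamma_K$, the induced $\Gamma_K$-action on $M'$-modulo-$\Delta_K$ descends to a semilinear continuous $\Gamma_K$-action on $M=(M')^{\Delta_K}$ commuting with $\varphi_q$. This produces the quasi-inverse functor $M'\mapsto(M')^{\Delta_K}$, and the isomorphisms $\mathbf{A}_K'\otimes_{\mathbf{A}_K}(M')^{\Delta_K}\xrightarrow{\sim}M'$ and $((\mathbf{A}_K'\otimes_{\mathbf{A}_K}M))^{\Delta_K}\xrightarrow{\sim}M$ from Galois descent, upgraded to isomorphisms of $(\varphi_q,\Gamma_K)$- resp. $(\varphi_q,\widetilde{\Gamma}_K)$-modules by the above compatibilities, give the equivalence. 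The rank statement is immediate since $\mathbf{A}_K\to\mathbf{A}_K'$ is faithfully flat and both are local.

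The main obstacle is a bookkeeping one rather than a conceptual one: one must be careful that the continuity hypothesis on the $\Gamma_K$-action is preserved under taking $\Delta_K$-invariants and under extension of scalars. For the invariants direction, $(M')^{\Delta_K}$ carries the subspace topology from $M'$, for which the $\Gamma_K$-action is visibly continuous; for the extension direction, one uses that $\mathbf{A}_K'=\mathbf{A}_K\otimes_{\mathbf{A}_K}\mathbf{A}_K'$ is a finite free $\mathbf{A}_K$-module so the topology on $\mathbf{A}_K'\otimes_{\mathbf{A}_K}M$ is the product topology and the $\widetilde{\Gamma}_K$-action, being continuous on each factor, is continuous. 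A secondary point to check is that the $\Delta_K$-action really does satisfy the cocycle condition making it a genuine descent datum — but this is automatic since it is an honest group action, not merely a collection of isomorphisms. I would also note (as the lemma's parenthetical reductions in the surrounding text suggest) that the whole argument is formally parallel to the coefficient-free descent of $(\varphi_q,\widetilde{\Gamma}_K)$-modules from $\mathbf{A}_K'$ to $\mathbf{A}_K$ already implicit in the statement of Theorem \ref{equivalence with Galois rep no coefficient}, so it can be phrased uniformly.
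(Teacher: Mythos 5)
Your proposal is essentially correct, but it takes a genuinely different route from the paper. The paper does not invoke ring-level Galois descent as a black box: it proves the key isomorphism $(M')^{\Delta}\otimes_{\mathbf{A}_K}\mathbf{A}_K'\xrightarrow{\sim}M'$ by hand, via d\'evissage --- reduce to $M'$ killed by $\pi^n$ (using $M'\cong\varprojlim_n M'/\pi^n$ and finite freeness of $\mathbf{A}_K'$ over $\mathbf{A}_K$ to commute the limit), settle the case $n=1$ by Hilbert~90 over the residue field extension $\mathbf{E}_K'/\mathbf{E}_K$, and then induct on $n$ using vanishing of $H^1(\Delta,\cdot)$ on $\pi$-torsion modules to keep the $\Delta$-invariant sequences exact; the identification $M=(M')^{\Delta}$ in the other direction and the freeness statement are handled similarly. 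Your approach instead packages all of this as Galois descent along $\mathbf{A}_K\subseteq\mathbf{A}_K'$, which is cleaner, but it buys this only at the price of one assertion you leave unjustified: that the extension really is a $\Delta_K$-Galois extension of rings (a $\Delta_K$-torsor), so that descent with quasi-inverse $(\cdot)^{\Delta_K}$ applies to all finite modules, including $\pi$-power torsion ones. Being finite free with a faithful $\Delta_K$-action and $(\mathbf{A}_K')^{\Delta_K}=\mathbf{A}_K$ --- which is all the preceding lemma gives you --- is \emph{not} sufficient in general (for $\mathbf{Z}_p\subseteq\mathbf{Z}_p[\zeta_p]$ with its Galois action, the analogous descent fails, e.g.\ for the maximal ideal). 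What saves you here is that $\pi$ is a uniformizer of both $\mathbf{A}_K$ and $\mathbf{A}_K'$ and that $\mathbf{E}_K'/\mathbf{E}_K$ is Galois with group $\Delta_K$ (Artin's lemma, using faithfulness of the action), hence separable; so $\mathbf{A}_K\to\mathbf{A}_K'$ is finite \'etale and $\mathbf{A}_K'\otimes_{\mathbf{A}_K}\mathbf{A}_K'\to\prod_{\Delta_K}\mathbf{A}_K'$ is an isomorphism (check it modulo $\pi$ by classical Galois theory and conclude by Nakayama). If you add this verification, your argument is complete, and in effect it is an abstract repackaging of exactly what the paper's Hilbert-90-plus-d\'evissage argument verifies concretely; your remaining compatibility checks ($\varphi_q$, the $\Gamma_K$-action and its continuity, ranks) match what is needed and are fine.
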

\begin{proof}
We first show that the natural map $(M')^\Delta\otimes_{\mathbf{A}_K}\mathbf{A}_K'\to M'$ is an isomorphism for any finite \' etale $(\varphi_q,\widetilde{\Gamma}_K)$-module $M'$ over $\mathbf{A}_K'$ (the induced $\varphi_q$-action on $M^\Delta$ is then automatically \' etale as its linearization becomes an isomorphism after base changing along the faithfully flat map $\mathbf{A}_K\hookrightarrow \mathbf{A}_K'$.)

As $M'\xrightarrow{\sim}\varprojlim_nM'/\pi^n$, we may reduce to the case where $M'$ is killed by $\pi^n$ for some $n\geq 1$ (as $\mathbf{A}_K'$ is finite free over $\mathbf{A}_K$, tensoring with $\mathbf{A}_K'$ commutes with inverse limits). The case $n=1$ follows directly from Hilbert's 90 theorem. Note also that $H^1(\Delta,M'')=0$ in this case. Now assume $n\geq 1$ and $M'$ is killed by $\pi^{n+1}$. Let $M'':=\pi^nM'$ and $M''':=M'/M''$ so that we have an exact sequence $0\to M''\to M'\to M'''\to 0$. As $M''$ is killed by $\pi$, $H^1(\Delta,M'')=0$, and the sequence $0\to (M'')^\Delta\to (M')^\Delta\to (M''')^\Delta_K\to 0$ remains exact. Thus, we obtain a commutative diagram with exact rows:
\begin{displaymath}
\begin{tikzcd}
0 \ar[r] & \mathbf{A}_K'\otimes_{\mathbf{A}_K}(M'')^\Delta\ar[d] \ar[r] & \mathbf{A}_K'\otimes_{\mathbf{A}_K}(M')^\Delta\ar[d]\ar[r] & \mathbf{A}_K'\otimes_{\mathbf{A}_K} (M''')^\Delta\ar[d]\ar[r] & 0\\
0\ar[r] & M''\ar[r] & M'\ar[r] & M'''\ar[r] & 0.
\end{tikzcd}
\end{displaymath}
By induction, the outer vertical maps are isomoprhisms, and hence the same is true of the middle map.

Using a similar inductive argument, we can show that for any finite \' etale $(\varphi_q,\Gamma_K)$-module $M$ over $\mathbf{A}_K$, the natural injection $M\hookrightarrow M':=\mathbf{A}_K'\otimes_{\mathbf{A}_K}M$ identifies $M$ with $(M')^{\Delta}$ (injectivity follows from the fact that $\mathbf{A}_K'$ is faithfully flat over $\mathbf{A}_K$). The last statement is clear as any finite $\pi$-torsion free module over the discrete valuation ring $\mathbf{A}_K$ is necessarily free.
\end{proof}
\section{Moduli stacks of \texorpdfstring{$\varphi_q$}{phi}-modules and Lubin-Tate \texorpdfstring{$(\varphi_q,{\Gamma}_K)$}{(phi,Gamma)}-modules}
\subsection{Moduli stacks of \texorpdfstring{$\varphi_q$}{phi}-modules}\label{stacks of phi-modules}
In this subsection, we briefly define the moduli stacks of \' etale $\varphi_q$-modules, and show in particular that they are Ind-algebraic stacks (Theorem \eqref{R_d}). 
\begin{setup}
Fix a finite extension $l\supseteq k\supseteq \mathbf{F}_p$. let $\mathbf{A}^+:=W(l)[[T]]$, and let $\mathbf{A}$ be the $p$-adic completion of $\mathbf{A}^+[1/T]$. Let $\varphi$ be a ring endomorphism of $\mathbf{A}$ which is trivial on the subring $W(k)\subseteq W(l)$, and moreover congruent to the $q$-power Frobenius modulo $p$ for some fixed power $q$ of $p$.  
\end{setup}
As before, we will fix a finite extension $E/W(k)[1/p]$ with uniformizer $\varpi$ and ring of integers $\O$, which will serve as the base for our test rings. If $A$ is a $\varpi$-adically complete $\O$-algebra, we set $\mathbf{A}_{A}^+:=(W(l)\otimes_{W(k)}A)[[T]]$, equipped with the $(p,T)$-adic topology. We also let $\mathbf{A}_A$ be the $p$-adic completion of $\mathbf{A}_A^+[1/T]$, and endow it with the unique topology for which a fundamental system of neighborhoods of $0$ is given by the sets $p^n\mathbf{A}_A+T^m\mathbf{A}_A^+, n,m\geq 0$ (again one checks easily that the subspace topology on $\mathbf{A}_A^+$ is indeed the $(p,T)$-adic topology). In particular, we see that the rings $\mathbf{A}_{K,A}^+, \mathbf{A}_{K,A}$ introduced earlier are special case of the this construction (with $l=k_{K,\infty}$ being the residue field of $K_{\mathrm{cyc}}$ and $k$ being the residue field of $F$). 

Again, as in \cite[Prop. 2.2.17]{EG22}, we can extend $\varphi$ to a continuous $A$-linear action on $\mathbf{A}_{A}$. In case $\mathbf{A}^+$ is moreover $\varphi$-stable (i.e. $\varphi(\mathbf{A}^+)\subseteq \mathbf{A}^+$), the same is true of each $\mathbf{A}_A^+$.
\begin{remark}
Our setting here is slightly more general than that considered in \cite[Chap. 5]{EG19} as we are considering power series (or Laurent series) over rings of the form $W(l)\otimes_{W(k)}A$ (as opposed to the ring $W(l)\otimes_{\mathbf{Z}_p}A$ in that reference). Nevertheless, as we will see shortly, we can deduce various results about our stacks from the corresponding results in that paper.
\end{remark}
\begin{defn}
Let $A$ be a $\varpi$-adically complete $\O$-algebra. An \' etale $\varphi$-module with $A$-coefficients is a finitely generated $\mathbf{A}_A$-module, equipped with a $\varphi$-semilinear morphism $\varphi_M: M\to M$ for which the linearized map $\Phi_M: \varphi^*M\to M$ is an isomorphism.
\end{defn}
\begin{defn}
Fix an integer $d\geq 1$. For each $a\geq 1$, let $\R_{d}^{a}$ be the fibered category over $\Sp(\O/\varpi^a)$ taking an $\O/\varpi^a$-algebra $A$ to the groupoid of rank $d$ projective \' etale $\varphi$-modules with $A$-coefficients. It follows from Drinfeld's descent results (cf. \cite[Thm. 5.1.18]{EG19}) that $\R_{d}^{a}$ is in fact an \textit{fpqc} (hence \textit{fppf}) stack over $\O/\varpi^a$. As usual, we may regard $\R_{d}^{a}$ as an \textit{fppf} stack over $\Sp \O$ (equipped with a map to $\Sp (\O/\varpi^a)$). We then define $\R_{d}$ as the colimit of the stacks $\R_{d}^{a}$ over all $a\geq 1$. Thus $\R_{d}$ is an \textit{fppf} stack over $\Sp\O$ which furthermore admits a (necessarily unique) morphism to $\Spf\O$. Moreover, using \cite[Prop. 0.7.2.10(ii)]{EGAInew} we see that for any $\varpi$-adically complete $\O$-algebra $A$ (not just those living over some $\O/\varpi^a$), $\R_{d}(A)$ (which, by definition, is the groupoid of morphisms $\Spf A\to \R_{d}$) is equivalent to the groupoid of rank $d$ projective \' etale $\varphi$-modules with $A$-coefficients.
\end{defn}
We now define the moduli stacks of so-called finite height $\varphi$-modules. For this, we need to assume that $\mathbf{A}^+$ is $\varphi$-stable. We also need to fix a polynomial $F$ in $(W(l)\otimes_{W(k)}\O)[[T]]$ which is congruent to a positive power of $T$ modulo $\varphi$ (e.g. $F$ can be $T$ itself, or an Eisenstein polynomial). 
\begin{defn}
Assume $\mathbf{A}^+$ is $\varphi$-stable. Let $h$ be a non-negative integer, and let $A$ be a $\varpi$-adically complete $\O$-algebra. A $\varphi$-module of $F$-height $\leq h$ with $A$-coefficients is a finitely generated $T$-torsion free $\mathbf{A}_A^+$-module $\fM$, equipped with a $\varphi$-semilinear morphism $\varphi_{\fM}: \fM\to \fM$ for which the linearized map $\Phi_{\fM}: \varphi^*\fM\to \fM$ is injective, and has cokernel killed by $F^h$.
\end{defn}
\begin{defn}
Fix $d\geq 1$ and $h\geq 0$. For each $a\geq 1$, let $\C_{d,h}^{a}$ be the \textit{fppf} stack over $\Sp(\O/\varpi^a)$ taking an $\O/\varpi^a$-algebra $A$ to the groupoid of rank $d$ projective $\varphi$-modules of $F$-height $\leq h$ with $A$-coefficients. Again, the colimit $\C_{d,h}:=\varinjlim_{a}\C_{d,h}^{a}$ is an \textit{fppf} stack over $\Spf\O$, whose groupoid of $A$-valued points, for any $\varpi$-adically complete $\O$-algebra $A$, is equivalent to the groupoid of rank $d$ projective $\varphi$-modules of $F$-height $\leq h$ with $A$-coefficients.
\end{defn} 
For each $a\geq 1$, as $T$ is invertible in $\mathbf{A}_A$, the same is true of $F$; thus, we have a natural morphism $\C_{d,h}^a\to \R_d^a$ taking $\fM\to \fM[1/T]$. Taking the colimit over $a\geq 1$, we obtain a map $\C_{d,h}\to \R_d$.
\begin{thm}\label{R_d}
Assume that $\mathbf{A}^+$ is $\varphi$-stable. 
\begin{itemize}
    \item[\emph{(1)}] $\C_{d,h}$ is a $p$-adic formal algebraic stack of finite presentation over $\Spf \O$, with affine diagonal.
    \item[\emph{(2)}] The morphism $\C_{d,h}\to \R_d$ is representable by algebraic spaces, proper, and of finite presentation.
    \item[\emph{(3)}] $\R_{d}$ is a limit preserving Ind-algebraic stack, whose diagonal is representable by algebraic spaces, affine, and of finite presentation.  
\end{itemize}
\end{thm}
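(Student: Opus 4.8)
The plan is to deduce all three assertions from the corresponding results of \cite[Chapter~5]{EG19}, which establish exactly these statements in the setting where the coefficient ring is $(W(l)\otimes_{\mathbf{Z}_p}A)[[T]]$ and its $T$-inverted $p$-adic completion, rather than our $(W(l)\otimes_{W(k)}A)[[T]]$. The first step is to observe that this difference is immaterial: the constructions and estimates of \emph{loc.\ cit.} use only that $W(l)\otimes_{\mathbf{Z}_p}A$ is a finite projective $A$-algebra whose formation commutes with base change in $A$, that $\mathbf{A}_A^{+}$ is a $(p,T)$-adically complete, $A$-flat, $\varphi$-stable lift of the $A$-flat ring $\mathbf{A}_A^{+}/(p,T)$, and that $\varphi$ reduces modulo $p$ to a fixed power of the absolute Frobenius --- all of which hold verbatim with $W(k)$ replacing $\mathbf{Z}_p$. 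So the bulk of the proof consists of checking that \cite[Chapter~5]{EG19} applies \emph{mutatis mutandis} to our Setup.

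The imported arguments run as follows. For (1): one presents each $\C_{d,h}^{a}$ over $\O/\varpi^{a}$ as a moduli stack of $\varphi$-semilinear structures of $F$-height $\le h$ on the truncation $(\mathbf{A}_A^{+}/F^{N})^{\oplus d}$ for $N=N(h)$ large enough, which exhibits it as a finite-type algebraic stack with affine diagonal over $\O/\varpi^{a}$; passing to the colimit over $a$ yields a $p$-adic formal algebraic stack of finite presentation with affine diagonal. For (2): for a fixed \'etale $\varphi$-module $M$ over $\mathbf{A}_A$, the functor of $\varphi$-stable $\mathbf{A}_A^{+}$-lattices of $F$-height $\le h$ in $M$ is represented by a closed subscheme of a suitable bounded affine Grassmannian, hence is proper (indeed projective) over the base, so $\C_{d,h}\to\R_d$ is representable by algebraic spaces, proper, and of finite presentation. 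For (3): limit-preservation is immediate because an \'etale $\varphi$-module is a finitely presented datum; the diagonal is controlled by showing $\underline{\mathrm{Isom}}(M_1,M_2)$ is representable by an affine scheme of finite presentation over $A$, the finiteness coming from \'etaleness of the $\varphi$-structures; and $\R_d$ is realised as the increasing colimit $\varinjlim_{h}\R_d^{\le h}$ along closed immersions of finite-type algebraic substacks, where $\R_d^{\le h}$ is the scheme-theoretic image of $\C_{d,h}\to\R_d$, using the fact (proved in \cite[Chapter~5]{EG19}, and transferring because it uses only the structural features above) that an \'etale $\varphi$-module over a Noetherian $\O/\varpi^{a}$-algebra admits, fppf-locally, a $\varphi$-stable lattice of some finite $F$-height.

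As a structural check one can also see the reduction directly. Since $W(k)/\mathbf{Z}_p$ and $W(l)/W(k)$ are finite \'etale and every test ring $A$ is a $W(k)$-algebra, $W(l)\otimes_{\mathbf{Z}_p}A$ splits as the product $\prod_{\sigma}(W(l)\otimes_{W(k),\sigma}A)$ over the $\mathbf{Z}_p$-embeddings $\sigma\colon W(k)\hookrightarrow W(l)$, and because $\varphi$ is trivial on the common image $W(k)\subseteq W(l)$ of all these embeddings, the $A$-linear extension of $\varphi$ respects this decomposition and restricts on the $\sigma=\mathrm{id}$ factor to our $\varphi$. Thus our situation is one factor of the Emerton--Gee situation for the pair $l\supseteq k$; writing $\R_d^{\eg}$ for the \cite{EG19} analogue of $\R_d$, this identifies $\R_d^{\eg}$ with a fibre power $\R_d\times_{\Spf\O}\cdots\times_{\Spf\O}\R_d$, and likewise for $\C_{d,h}$ and for the morphism between them, so that the assertions for $\R_d$, $\C_{d,h}$ can be extracted from those for the Emerton--Gee stacks --- though one must take a little care here, since the rank condition makes $\R_d$ a fibre factor of $\R_d^{\eg}$ rather than an open-and-closed substack.

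I expect the main obstacle to be not conceptual but one of verification: one must check line by line that the boundedness and finite-presentation estimates in \cite[Chapter~5]{EG19} --- above all the truncation bound $N=N(h)$ for $\C_{d,h}$, the boundedness of the relevant affine Grassmannian, and the existence of finite-height models underlying the Ind-algebraicity of $\R_d$ --- survive the replacement of $\mathbf{Z}_p$ by $W(k)$ as the base of the Witt-vector tensor product. Since each of these depends only on the structural properties of $\mathbf{A}_A^{+}\subseteq\mathbf{A}_A$ and of $\varphi$ recorded in the first paragraph, and not on the absolute base, this verification is routine --- but it is where the (modest) real work of the proof resides.
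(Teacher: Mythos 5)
Your plan is correct in substance and largely coincides with the paper's proof, but with the emphasis reversed at one point, so a comparison is worth making. For (1) and (2) the paper argues just as you sketch: it imports the Pappas--Rapoport presentation of $\C_{d,h}^a$ and the properness of $\C_{d,h}^a\to\R_d^a$ \emph{mutatis mutandis}, replacing the group $G$ of \cite{PR09} by $\mathrm{Res}_{W(l)/W(k)}\mathrm{GL}_d$ and \cite[Prop.~2.2]{PR09} by \cite[Lem.~5.2.9]{EG19}, and using \cite[Lem.~5.4.10, Thm.~5.4.11]{EG19} to pass from fpqc-locally free to projective objects (one small organizational difference: affineness of the diagonal of $\C_{d,h}^a$ is deduced from that of $\R_d^a$ via the closed-immersion diagonal of the proper representable morphism $\C_{d,h}^a\to\R_d^a$, not read off the quotient presentation). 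The genuine divergence is in (3): your primary route is to redo the Ind-algebraicity argument of \cite{EG19} (scheme-theoretic images of the $\C_{d,h}$, fppf-local existence of $\varphi$-stable lattices) over the base $W(k)$, whereas the paper only reruns \cite[Prop.~5.4.8]{EG19} to get the affine, finitely presented diagonal of $\R_d^a$, and then promotes to the main argument what you relegate to a ``structural check'': from $\R_d^{\mathrm{EG},a}(A)\cong\R_d^a(W(k)\otimes_{\mathbf{Z}_p}A)$ and $W(k)\otimes_{\mathbf{Z}_p}A\cong\prod_{\overline{\sigma}}A$ it gets $\R_d^{\mathrm{EG},a}\cong\prod_{\overline{\sigma}}\R_d^a$, and the ``little care'' you rightly flag (a fibre factor is not a substack) is resolved precisely by the diagonal statement: it makes the diagonal morphism $\R_d^a\to\prod_{\overline{\sigma}}\R_d^a\cong\R_d^{\mathrm{EG},a}$ affine and of finite presentation, after which limit preservation and Ind-algebraicity are transferred along this morphism via \cite[Cor.~3.2.9]{EG22} together with \cite[Thm.~5.4.11, Thm.~5.4.20]{EG19}. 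The trade-off is that your route demands the line-by-line reverification of the boundedness and lattice-existence estimates you describe, while the paper's transfer needs only the one diagonal verification in the new setting and otherwise quotes \cite{EG19} as a black box; both routes work, and nothing in your sketch would fail.
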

\begin{proof}
By \cite[Prop. A.13]{EG22}, it suffices to show that $\C_{d,h}^a$ is an algebraic stack of finite presentation over $\Sp \O/\varpi^a$, with affine diagonal. That $\C_{d,h}^a$ is an algebraic stack of finite type over $\Sp \O/\varpi^a$ is \cite[Thm. 2.1 (a)]{PR09}. More precisely, the arguments in that reference remain valid in our setting provided that we replace the group $G$ there by $\mathrm{Res}_{W(l)/W(k)}\mathrm{GL}_d$, that we replace \cite[Prop. 2.2]{PR09} by \cite[Lem. 5.2.9]{EG19}, and accordingly, that in \cite[\textsection 3]{PR09} we replace $eah$ by the quantity $n(a,h)$ in \cite[Lem. 5.2.9]{EG19}. The point of \cite[Lem. 5.2.9]{EG19} is that when $n>n(a,h)$, one can replace the $\varphi$-conjugation action of $U_n:=1+T^n M_d(\mathbf{A}_A^+)$ on $LG^{\leq h}:=\{A\in \mathrm{GL}_d(\mathbf{A}_A)\;|\;A^{\pm 1}\in T^{-h}M_d(\mathbf{A}_A^+)\}$ by the \textit{free} action of $U_n$ given by left translations. Together with the fact that any object in $\C_{d,h}^a(M)$ admits a basis locally on $\Sp A$ (cf. \cite[Lem. 5.1.9 (1)]{EG19}), one can then show that $\C_{d,h}^a$ can be written as the quotient stack of a finite type $\O/\varpi^a$-scheme by the action of a smooth finite type group scheme over $\O/\varpi^a$, and so is an algebraic stack of finite type over $\O/\varpi^a$ (e.g. by \cite[\href{https://stacks.math.columbia.edu/tag/06FI}{Tag 06FI}]{Sta21}).

With the same modifications, the arguments in \cite[Thm. 2.5 (b)]{PR09} show that the natural morphism $\C_{d,h}^a\to \R_{d,\mathrm{\textit{fpqc}-free}}^a, \fM\mapsto \fM[1/T]$ is representable by algebraic spaces, proper, and of finite presentation, where $\R_{d,\mathrm{\textit{fpqc}-free}}^a$ is the substack of $\R_d^a$ classifying those objects which are furthermore free \textit{fpqc} locally on $\Sp A$. Combining this with the arguments in \cite[Lem. 5.4.10]{EG19}, \cite[Thm. 5.4.11]{EG19}, we deduce that the map $\C_{d,h}^a\to \R_d^a$ is representable by algebraic spaces, proper, and of finite presentation. At this point, we have shown (1) and (2), except the claim that $\C_{d,h}$ has affine diagonal (indeed, once this is done, it will follow that $\C_{d,h}$ has quasi-compact and quasi-separated diagonal, hence is quasi-separated and then of finite presentation over $\O/\varpi^a$). As the map $\C^a_{d,h}\to \R_d^a$ is representable by algebraic spaces, and proper (hence separated), its diagonal is a proper monomorphism, hence a closed immersion. Using this, we are reduced to show that the diagonal of $\R_d^a$ is affine. This will be done in part (3) below.

(3) When $W(k)=\mathbf{Z}_p$, this is one of the main results of \cite{EG19}. As alluded earlier, our strategy for the general case is to relate $\R_d^a$ with the ``corresponding'' stack introduced in \cite{EG19}. More precisely, let $\R_d^{\mathrm{EG},a}$ be the \textit{fppf} stack over $\Sp \O/\varpi^a$, taking an $\O/\varpi^a$-algebra $A$ to the groupoid of rank $d$ projective \' etale $\varphi$-modules over $(W(l)\otimes_{\mathbf{Z}_p}A)((T))$. Clearly, we have $\R_d^{\mathrm{EG},a}(A)\cong \R_d^a(W(k)\otimes_{\mathbf{Z}_p}A)$. Combining with the $A$-algebra isomorphism $W(k)\otimes_{\mathbf{Z}_p} A\cong \prod_{\overline{\sigma}: k\hookrightarrow \overline{\mathbf{F}}_p}A$, it follows that there is an isomorphism $\R_d^{\mathrm{EG},a}\cong \prod_{\overline{\sigma}}\R_d^a$ of stacks over $\O/\varpi^a$.

Assume for the moment that the diagonal of $\R_d^a$ is affine, and of finite presentation. By induction and the standard graph argument, we deduce that the diagonal $\Delta_n: \R_d^{a}\to \prod_{1\leq i\leq n}\R_d^a$ has the same properties for all $n\geq 1$ (factor $\Delta_n$ as $\Delta_{n+1}$ followed by the projection $\prod_{n+1}\R_d^a\twoheadrightarrow \prod_n \R_d^a$ onto the first $n$ components, and note that the latter map, being a base change of the structure map $\R_d^a\to \Sp(\O/\varpi^a)$, also has finitely presented affine diagonal). In particular, we obtain a morphism $\R_d^a\to \R_d^{\mathrm{EG},a}$ which is affine, and of finite presentation. Now it follows from \cite[Cor. 3.2.9]{EG22}, \cite[Thm. 5.4.11]{EG19} and \cite[Thm. 5.4.20]{EG19} that $\R_d^a$ has the claimed properties.  

Thus, it remains to show that the diagonal of $\R_d^a$ is affine, and of finite presentation. \textit{Mutatis mutandis}, this is proved in \cite[Prop. 5.4.8]{EG19}.
\end{proof}
\subsection{Moduli stacks of Lubin-Tate \texorpdfstring{$(\varphi_q,{\Gamma}_K)$}{(phi,Gamma)}-modules}
We can now define our main objects of interest. We will keep the notation as in Setup \eqref{situation}; in particular we will fix a finite extension $F/\mathbf{Q}_p$, a Lubin--Tate formal group law associated to a uniformizer $\pi$ of $F$, and a finite extension $K$ of $F$. As in Subsection \eqref{coefficient}, we will also fix throughout a finite extension $E/F$ with uniformizer $\varpi$ and ring of integers $\O$, which will serve as the base of our coefficients $A$. Recall also that by an \' etale $(\varphi_q,{\Gamma}_K)$-module with $A$-coefficients, we mean an \' etale $(\varphi_q,{\Gamma}_K)$-module over $\mathbf{A}_{K,A}$ in the usual sense.
\begin{defn}
Fix an integer $d\geq 1$. We let $\X^{\lt}_{K,d}$ denote the \textit{fppf} stack over $\Spf \O$, whose groupoid of $A$-valued points, for any $\varpi$-adically complete $\O$-algebra $A$, is equivalent to the groupoid of rank $d$ \' etale $(\varphi_q,{\Gamma}_K)$-modules with $A$-coefficients. (That this is well-defined follows exactly as in the definitions of the stacks $\R_d$ and $\C_{d,h}$; in particular we have implicitly used Drinfeld's descent results for verifying the stack property.)
\end{defn}
\subsubsection{Basic geometric properties of \texorpdfstring{$\X_{K,d}^{\lt}$}{Xd}}
We want to show the following preliminary result regarding the geometry of the stack $\X_{K,d}^{\lt}$. A more detailed study of its geometric properties will be given in Section \eqref{finer geometric}; in particular, we will show that it is in fact a Noetherian formal algebraic stack (Corollary \eqref{Xd noetherian formal}). 
\begin{thm}\label{Xd ind algebraic}
$\X_{K,d}^{\lt}$ is a limit preserving Ind-algebraic stack whose diagonal is affine (in particular, representable by schemes), and of finite presentation.
\end{thm}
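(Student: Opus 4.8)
The plan is to reduce the geometry of $\X_{K,d}^{\lt}$ to that of the stack $\R_d$ of \'etale $\varphi_q$-modules, which is already known to be a limit preserving Ind-algebraic stack with finitely presented affine diagonal by Theorem \eqref{R_d}. The key observation is that forgetting the $\Gamma_K$-action gives a morphism $\X_{K,d}^{\lt}\to \R_d$ (taking $\mathbf{A}=\mathbf{A}_K$, $\varphi=\varphi_q$, and recalling that $\mathbf{A}_K^+$ can be chosen $\varphi_q$-stable, or at worst one works with the variant where $\mathbf{A}^+$ need not be $\varphi$-stable for the statements in (3), since only part (3) of Theorem \eqref{R_d} is needed here and its proof only relies on \cite{EG19}). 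So the whole problem becomes: show that this forgetful morphism is representable by algebraic spaces, affine, and of finite presentation; the desired properties of $\X_{K,d}^{\lt}$ then follow formally by composing, exactly as in the last paragraph of the proof of Theorem \eqref{R_d} (using \cite[Prop. A.13]{EG22} or the analogous bookkeeping for Ind-algebraic stacks, plus the graph argument for the diagonal).

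First I would make precise the description of an object of $\X_{K,d}^{\lt}$ over $A$ as an object $M$ of $\R_d(A)$ together with a semilinear continuous commuting $\Gamma_K$-action. Since $\Gamma_K\cong \mathbf{Z}_p^{\oplus[F:\mathbf{Q}_p]}$ is topologically finitely generated, a continuous action is determined by the action of a finite set of topological generators $\gamma_1,\dots,\gamma_r$ subject to the commutation relations among themselves and with $\varphi_q$; each $\gamma_i$ acts by a semilinear automorphism of $M$, i.e. (after choosing a basis fpqc-locally, using \cite[Lem. 5.1.9(1)]{EG19}) by an element of $\mathrm{GL}_d(\mathbf{A}_{K,A})$ satisfying a cocycle-type condition. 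The standard way to see that this cuts out an affine, finitely presented thing over $\R_d$ is: the functor on $\R_d$-schemes sending $(A, M)$ to the set of such $\Gamma_K$-actions is a closed subscheme of a product of copies of the "automorphism" space, which itself is affine over $\R_d$ because it is the space of sections of the linearization of a $\varphi_q$-twisted automorphism functor — this is exactly the kind of argument Emerton--Gee run for the cyclotomic stack $\X^{\eg}_{K,d}$ in \cite{EG22}, and I would cite and adapt that: one writes $\X_{K,d}^{\lt}$ as a (finitely presented, affine) closed substack of a fiber product over $\R_d$ of finitely many copies of a Weil-restriction-of-$\mathrm{GL}_d$-torsor-type object, the relations being the commutativity and continuity conditions.

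Concretely, the cleanest route is to mirror \cite[\S 5.5 or the relevant subsection]{EG22} where, for the cyclotomic case, $\X_d$ is built over $\R_d$ by adding the data of a single topological generator of $\Gamma$ when that group is procyclic, or finitely many generators in general, and showing the resulting morphism is affine and finitely presented — the point being that "the space of $\varphi_q$-equivariant semilinear $\Gamma_K$-actions on a fixed \'etale $\varphi_q$-module" is cut out inside an affine space (of matrix entries of the $\gamma_i$, which land in $\mathbf{A}_{K,A}$, a ring which is itself a completion of a finitely presented $A$-algebra in the relevant truncated/torsion situations) by finitely many polynomial equations expressing semilinearity, invertibility (which is open, handled by inverting a determinant, staying affine), commutativity $\gamma_i\gamma_j=\gamma_j\gamma_i$ and $\gamma_i\varphi_q=\varphi_q\gamma_i$, and continuity (automatic once one works with the correct integral model, or imposed as a further closed condition). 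Since each $\R_d^a$ is a quotient of a finite type $\O/\varpi^a$-scheme by a smooth affine group, base-changing this construction along the atlas makes everything an honest affine morphism of finite presentation; taking colimits over $a$ and invoking limit-preservation (inherited from $\R_d$) finishes it.

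The main obstacle I expect is the careful handling of the topology and continuity: $\mathbf{A}_{K,A}$ is only a completed (not finitely generated) algebra, so one must be attentive that the "space of matrices of the $\gamma_i$" is genuinely representable by an affine scheme of finite presentation over $\R_d$ after reducing modulo $\varpi^a$ and using $T_K$-adic truncations — this is precisely the technical heart of \cite[Appendix B]{EG22} and \cite[Prop. 2.2.17]{EG22}, and I would lean on those lemmas ($\mathbf{A}_{K,A}\cong\varprojlim$ of finitely presented quotients, with the $\Gamma_K$-action continuous for the weak topology) rather than re-prove them. A secondary, purely bookkeeping, point is to confirm that "limit preserving" and "Ind-algebraic" pass through the affine finitely-presented morphism $\X_{K,d}^{\lt}\to\R_d$; this is formal given Theorem \eqref{R_d}(3) and \cite[Prop. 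A.13]{EG22} (or \cite[\S A]{EG22} on Ind-algebraic stacks), together with the standard graph trick to get affineness and finite presentation of the diagonal of $\X_{K,d}^{\lt}$ from that of $\R_d$ and of the morphism.
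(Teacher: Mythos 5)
Your plan hinges on the claim that the forgetful morphism $\X_{K,d}^{\lt}\to \R_d$ is affine and of finite presentation, with the continuity of the $\Gamma_K$-action being ``automatic once one works with the correct integral model, or imposed as a further closed condition.'' This is the genuine gap: continuity is neither automatic nor a closed (let alone finitely presented affine) condition. What your matrix-equation argument actually produces is the paper's intermediate stack $\R_d^{\Gamma_\disc}$ of \'etale $\varphi_q$-modules with a \emph{not necessarily continuous} semilinear action of a discretization $\Gamma_\disc\subseteq\Gamma_K$; the statement that $\R_d^{\Gamma_\disc}\to\R_d$ is affine and of finite presentation is exactly Lemma \eqref{R_d Gamma_disc as a fixed point stack}, and that part of your argument is fine. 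But by Lemma \eqref{continuous action Gamma disc}, continuity of the $\Gamma_\disc$-action on $M$ is equivalent to the \emph{existence} of a lattice $\fM$ and some level $s$ with $(\gamma_i^{p^s}-1)(\fM)\subseteq T\fM$ for all $i$ --- an existential condition over varying $s$ (and varying height $h$ of the lattice). Consequently the fiber of $\X_{K,d}^{\lt}\to\R_d^{\Gamma_\disc}$ (hence of $\X_{K,d}^{\lt}\to\R_d$) over an affine test scheme is an increasing union of closed subschemes, i.e.\ in general only an Ind-scheme, so the morphism you want to be affine is not representable in the way your reduction requires; if it were, the proof of Ind-algebraicity would collapse to a triviality, which it does not in this setting any more than in \cite{EG22}.

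Because of this, the paper splits the statement into pieces that your proposal conflates. The diagonal claim does follow softly, but from the fact that $\X_{K,d}^{\lt}\hookrightarrow\R_d^{\Gamma_\disc}$ is a \emph{monomorphism} (any $\mathbf{A}_{K,A}$-linear map is continuous and $\Gamma_\disc$ is dense in $\Gamma_K$), so the diagonal of $\X_{K,d}^{\lt}$ is a base change of that of $\R_d^{\Gamma_\disc}$ (Corollary \eqref{diagonal of Xk}); no affineness of $\X_{K,d}^{\lt}$ over $\R_d$ is needed or used. Limit preservation is not ``inherited'' formally either: it is Lemma \eqref{X limit preserving}, which needs the quantitative form of continuity (level $\leq s$) from the $T$-quasi-linear endomorphism appendix to descend a continuous action along $A=\varinjlim A_i$ to some $A_i$. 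Finally, Ind-algebraicity is proved by introducing the stacks $\W_{d,h,s}$ of weak Wach modules of height $\leq h$ and level $\leq s$, showing each $\W_{d,h,s}$ is a finitely presented $p$-adic formal algebraic stack (Proposition \eqref{w d h s}), and exhibiting $\X_{K,d}^{\lt}$ as the inductive limit of the scheme-theoretic images $\X_{d,h,s}^a$ of $\W^a_{d,h,s}\to\R_d^{\Gamma_\disc}$, with closed immersion transition maps; this, together with the reduction to the $F$-basic case via Lemma \eqref{cartesian diagram xk rk xk basic} and Corollary \eqref{Xk to Xkbasic} (which your sketch also elides, since $\mathbf{A}_K^+$ need not be $\varphi_q$-stable for general $K$), is the technical heart that your proposal omits.
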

For this we will follow closely the argument used in \cite{EG22} for the stack $\X_{K,d}$ of usual (cyclotomic) \' etale $(\varphi,\Gamma)$-modules; in particular, the strategy is to deduce the claimed properties for $\X_{K,d}^{\lt}$ from the corresponding properties of the stack of \' etale $\varphi_q$-modules. Thus, let us first define the latter. 
\begin{defn}
We let $\R_{K,d}$ be the moduli stack of \' etale $\varphi_q$-modules over $\mathbf{A}_K$; in other words, it is the stack $\R_d$ defined in Subsection \eqref{stacks of phi-modules}, with $\mathbf{A}$ there taken to be $\mathbf{A}_{K}$. 
\end{defn}
In case the ring $\aka^+$ is $\varphi_q$-stable (e.g. if $K$ is $F$-basic in the sense of Definition \eqref{basic K}), we can apply Theorem \eqref{R_d} to use various properties of the stack $\R_{K,d}$. Our first goal is to show that these properties still hold for general $K$ (i.e. without assuming that $\mathbf{A}_K^+$ is $\varphi_q$-stable). 
\begin{defn}[{{\cite[Defn. 3.2.3]{EG22}}}]
We set $K^{\mathrm{basic}}:=K\cap (K_0 F)_{\mathrm{cyc}}$.
\end{defn}
It is easy to see that $K^{\mathrm{basic}}$ is indeed $F$-basic and $(K^{\mathrm{cyc}})_{\mathrm{cyc}}=(K_0 F)_{\mathrm{cyc}}$, and that the natural map $\Gamma_{K^{\mathrm{basic}}}\hookrightarrow \Gamma_{K}$ is an isomorphism. Furthermore, as $\mathbf{A}_{K}$ is finite free of rank $[K_{\mathrm{cyc}}:K^{\mathrm{basic}}_{\mathrm{cyc}}]=[K:K^{\mathrm{basic}}]$ over $\mathbf{A}_{K^{\mathrm{basic}}}$, we have a natural forgetful map $\R_{K,d}\to \R_{K^{\mathrm{basic},d[K:K^{\mathrm{basic}}]}}$.
\begin{lem}\label{K to K basic Rk}
The natural map $\R_{K,d}\to \R_{K^{\mathrm{basic}},d[K:K^{\mathrm{basic}}]}$ is affine, and of finite presentation.
\end{lem}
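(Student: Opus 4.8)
The plan is to realize the forgetful map $\R_{K,d}\to \R_{K^{\mathrm{basic}},d[K:K^{\mathrm{basic}}]}$ as a map that remembers extra structure, namely the $\mathbf{A}_K$-module structure on an $\mathbf{A}_{K^{\mathrm{basic}}}$-module together with compatibility of the $\varphi_q$-semilinear map, and then to check affineness and finite presentation fibrewise over $\R_{K^{\mathrm{basic}},d[K:K^{\mathrm{basic}}]}$ by a direct Hom-scheme argument. Since $\mathbf{A}_K$ is finite free of rank $n:=[K:K^{\mathrm{basic}}]=[K_{\mathrm{cyc}}:K^{\mathrm{basic}}_{\mathrm{cyc}}]$ over $\mathbf{A}_{K^{\mathrm{basic}}}$ (as noted just before the statement), for any $\varpi$-adically complete $\O$-algebra $A$ the ring $\mathbf{A}_{K,A}$ is finite free of rank $n$ over $\mathbf{A}_{K^{\mathrm{basic}},A}$, compatibly with $\varphi_q$ (the Frobenius on $\mathbf{A}_{K,A}$ restricts to that on $\mathbf{A}_{K^{\mathrm{basic}},A}$). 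Thus an $A$-point of $\R_{K,d}$ is a rank $d$ projective étale $\varphi_q$-module $M$ over $\mathbf{A}_{K,A}$; forgetting down to $\mathbf{A}_{K^{\mathrm{basic}},A}$ gives a rank $dn$ projective étale $\varphi_q$-module $M_0$, and the additional data is precisely an $\mathbf{A}_{K,A}$-action on $M_0$ extending its $\mathbf{A}_{K^{\mathrm{basic}},A}$-structure and commuting with $\varphi_{M_0}$.

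The key steps, in order: (i) Fix an $A$-point $N$ of $\R_{K^{\mathrm{basic}},d n}$ and describe the fibre product $\R_{K,d}\times_{\R_{K^{\mathrm{basic}},dn}}\Spf A$ as the functor on $\varpi$-adically complete $A$-algebras $B$ sending $B$ to the groupoid of $\mathbf{A}_{K,B}$-module structures on $N_B:=N\widehat\otimes_A B$ compatible with $\varphi$, such that $N_B$ becomes rank $d$ projective over $\mathbf{A}_{K,B}$. (ii) Present such a structure concretely: choosing generators of $\mathbf{A}_K$ as an $\mathbf{A}_{K^{\mathrm{basic}}}$-algebra (finitely many, with finitely many relations), an $\mathbf{A}_{K,B}$-action amounts to specifying the action of these generators as $\mathbf{A}_{K^{\mathrm{basic}},B}$-linear endomorphisms of $N_B$ subject to the algebra relations and to commutation with $\varphi_{N_B}$. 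Each such endomorphism lies in $\End_{\mathbf{A}_{K^{\mathrm{basic}},B}}(N_B)$, which — since $N$ is finite projective over the Noetherian-ish ring $\mathbf{A}_{K^{\mathrm{basic}},A}$ — is a finite projective $\mathbf{A}_{K^{\mathrm{basic}},A}$-module, hence represented on the level of $B$-points by an affine $A$-scheme of finite presentation (a Hom-scheme, cf. the representability arguments underlying \cite[Prop. 5.4.8]{EG19}). (iii) The algebra relations and the $\varphi$-commutation are closed conditions cut out by finitely many equations in the coordinates of these endomorphisms, so they define a closed subscheme, hence the fibre is still affine and finitely presented over $A$. (iv) The projectivity/rank condition on $N_B$ over $\mathbf{A}_{K,B}$ is an open condition (openness of the projective locus), so it only removes a quasi-compact open, preserving affineness is not automatic — but one checks, as in \cite{EG22}, that the locus where the resulting module is étale and of the right rank is in fact also closed here because the total rank $dn$ is fixed and $\mathbf{A}_{K,B}$ is finite flat over $\mathbf{A}_{K^{\mathrm{basic}},B}$, forcing the rank over $\mathbf{A}_{K,B}$ to be exactly $d$ once the action is a module action; projectivity then follows from étaleness. (v) Finally, affineness and finite presentation are fppf-local on the target by descent, so it suffices to have checked the fibre over each $\Spf A\to \R_{K^{\mathrm{basic}},dn}$, and one concludes.

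The main obstacle I expect is step (ii)–(iv): making precise that "specifying an $\mathbf{A}_{K,B}$-module structure on a fixed $\mathbf{A}_{K^{\mathrm{basic}},B}$-module, compatibly with Frobenius" is representable by an affine finitely presented $A$-scheme. One must be careful that $\mathbf{A}_{K^{\mathrm{basic}},A}$ is not itself finite type over $\O$, so "affine of finite presentation over $A$" for the Hom-functor requires knowing that $N$ is finite projective over $\mathbf{A}_{K^{\mathrm{basic}},A}$ and that the relevant $\Hom$ and $\End$ modules are again finite projective over $\mathbf{A}_{K^{\mathrm{basic}},A}$, together with the completed-tensor bookkeeping for $\varpi$-adic topologies (the same technical package used in \cite[Prop. 2.2.17]{EG22} and the appendix of \cite{EG22}). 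Once this representability is in hand, the closed-condition arguments are routine, and the statement follows; alternatively one can deduce it formally from \cite[Prop. A.13]{EG22} plus the analogous fact at each finite level $\O/\varpi^a$, which sidesteps the topological subtleties by working with honest finite type algebras.
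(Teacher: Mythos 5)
Your overall architecture is the one the paper intends: its proof of this lemma is simply that the argument of \cite[Lem. 3.2.5]{EG22} applies \textit{mutatis mutandis}, i.e.\ one describes the fibre of $\R_{K,d}\to \R_{K^{\mathrm{basic}},d[K:K^{\mathrm{basic}}]}$ over a test point $\Sp A\to \R_{K^{\mathrm{basic}},d[K:K^{\mathrm{basic}}]}$ as a scheme of $\varphi_q$-compatible $\mathbf{A}_{K,B}$-module structures and invokes the representability results of \cite{EG19}. However, two of your steps are incorrect as written. In (ii)--(iii) you assert that $B\mapsto \End_{\mathbf{A}_{K^{\mathrm{basic}},B}}(N_B)$ is an affine $A$-scheme of finite presentation because the endomorphism module is finite projective over $\mathbf{A}_{K^{\mathrm{basic}},A}$, and you then impose $\varphi$-compatibility and the algebra relations as closed conditions. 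This inverts the essential point: $\mathbf{A}_{K^{\mathrm{basic}},A}$ is not of finite type over $A$, so the functor of \emph{all} linear endomorphisms is nowhere near finite presentation, and cutting closed conditions out of a non-finite-type object cannot produce finite presentation. Finite presentation is exactly what $\varphi_q$-equivariance buys: \cite[Prop. 5.4.8]{EG19} applies to $\varphi_q$-equivariant homomorphisms of \'etale $\varphi_q$-modules (e.g.\ to maps $N_B\otimes_{\mathbf{A}_{K^{\mathrm{basic}},B}}\mathbf{A}_{K,B}\to N_B$, or to the endomorphism $\alpha$ giving the action of a generator of $\mathbf{A}_K$ over $\mathbf{A}_{K^{\mathrm{basic}}}$, with its $\varphi_q$-twisted compatibility --- note the condition is semilinearity of $\varphi_q$ over $\mathbf{A}_{K,B}$, not literal commutation), and only afterwards does one impose the finitely many algebra relations coming from a presentation such as $\mathbf{A}_K=\mathbf{A}_{K^{\mathrm{basic}}}[y]/(f)$ as finitely presented closed conditions.

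Second, in (iv) the claim that finite flatness of $\mathbf{A}_{K,B}$ over $\mathbf{A}_{K^{\mathrm{basic}},B}$ plus the fixed total rank $dn$ ``forces the rank over $\mathbf{A}_{K,B}$ to be exactly $d$'' is false. What comes for free from the module structure is projectivity over $\mathbf{A}_{K,B}$ (the extension $\mathbf{A}_{K^{\mathrm{basic}}}\to \mathbf{A}_K$ is finite \'etale, so an $\mathbf{A}_{K,B}$-module which is projective over $\mathbf{A}_{K^{\mathrm{basic}},B}$ is projective over $\mathbf{A}_{K,B}$), but its rank is only locally constant on $\Sp\,\mathbf{A}_{K,B}$: for instance, once $B$ contains $k_{K,\infty}$ the ring $\mathbf{A}_{K,B}$ decomposes as a product, and the ranks on the factors may differ while still summing (with multiplicities) to $dn$. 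So ``rank $\equiv d$'' is a genuine extra condition; to finish one must show, as in \cite{EG22} and \cite{EG19}, that it cuts out an open and closed subscheme of the affine, finitely presented scheme of $\varphi_q$-compatible module structures, which preserves affineness and finite presentation. With these two repairs (and replacing the descent language in (v) by the observation that representability, affineness and finite presentation are by definition checked on arbitrary test affines over the target), your argument becomes the one the paper is citing.
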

\begin{proof}
Again the proof of \cite[Lem. 3.2.5]{EG22} works \textit{mutatis mutandis} in our setting.
\end{proof}
\begin{cor}
For any $K/F$, $\R_{K,d}$ is a limit preserving Ind-algebraic stack,
whose diagonal is affine, and of finite presentation.
\end{cor}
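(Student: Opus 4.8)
The plan is to bootstrap the general case from the $F$-basic one, where Theorem \eqref{R_d} is directly available. Since $K^{\mathrm{basic}}$ is $F$-basic, the ring $\mathbf{A}_{K^{\mathrm{basic}}}^{+}$ is $\varphi_q$-stable, so Theorem \eqref{R_d}(3) applies to the stack $\R_{K^{\mathrm{basic}},d'}$ with $d':=d[K:K^{\mathrm{basic}}]$: it is a limit preserving Ind-algebraic stack whose diagonal is representable by algebraic spaces, affine, and of finite presentation. On the other hand, Lemma \eqref{K to K basic Rk} provides a forgetful morphism $f\colon \R_{K,d}\to \R_{K^{\mathrm{basic}},d'}$ which is affine and of finite presentation. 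So the corollary will follow once we check that each of the three asserted properties passes from $\R_{K^{\mathrm{basic}},d'}$ to $\R_{K,d}$ along $f$; this is exactly the permanence argument of \cite[\S 3.2]{EG22}, which I would transcribe.

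First, the Ind-algebraic property. Write $\R_{K^{\mathrm{basic}},d'}=\varinjlim_i \mathcal{Y}_i$ as an increasing union of algebraic stacks with the transition morphisms closed immersions. Because $f$ is affine, each fibre product $\mathcal{X}_i:=\R_{K,d}\times_{\R_{K^{\mathrm{basic}},d'}}\mathcal{Y}_i$ is a stack affine over an algebraic stack, hence algebraic; the induced transition morphisms $\mathcal{X}_i\to \mathcal{X}_{i+1}$ are again closed immersions, and $\R_{K,d}=\varinjlim_i \mathcal{X}_i$. Thus $\R_{K,d}$ is Ind-algebraic. Limit preservation is inherited as well: $\R_{K^{\mathrm{basic}},d'}$ is limit preserving and $f$, being of finite presentation, is relatively limit preserving, so the composite $\R_{K,d}$ is limit preserving (alternatively this can be read off directly from the moduli description of $\R_{K,d}$, exactly as in the $F$-basic case).

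Second, the diagonal. I would use the factorization
\begin{displaymath}
\Delta_{\R_{K,d}}\colon\ \R_{K,d}\ \xrightarrow{\ \Delta_f\ }\ \R_{K,d}\times_{\R_{K^{\mathrm{basic}},d'}}\R_{K,d}\ \longrightarrow\ \R_{K,d}\times_{\Spf\O}\R_{K,d},
\end{displaymath}
in which the second arrow is the base change of $\Delta_{\R_{K^{\mathrm{basic}},d'}}$ along $\R_{K,d}\times_{\Spf\O}\R_{K,d}\to \R_{K^{\mathrm{basic}},d'}\times_{\Spf\O}\R_{K^{\mathrm{basic}},d'}$, hence affine and of finite presentation by Theorem \eqref{R_d}(3) together with stability of these properties under base change. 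The first arrow $\Delta_f$ is a closed immersion since $f$, being affine, is separated, and it is of finite presentation because $f$ is; in particular it too is affine and of finite presentation. Composing, $\Delta_{\R_{K,d}}$ is affine and of finite presentation, which also shows it is representable by schemes.

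I do not anticipate a genuine obstacle at this stage: all the substance is packed into Lemma \eqref{K to K basic Rk} (which transports the problem to the $F$-basic setting) and Theorem \eqref{R_d} (which settles that setting), and the corollary is then a formal consequence of the standard permanence properties of affine, finitely presented morphisms of stacks. The only point worth a moment's care is the finite-presentation bookkeeping for $\Delta_f$, which goes through precisely because $f$ is of finite presentation and the target $\R_{K^{\mathrm{basic}},d'}$ has finitely presented diagonal.
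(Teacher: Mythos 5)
Your proposal is correct and follows essentially the same route as the paper: the paper's proof is exactly ``combine Theorem \eqref{R_d}, Lemma \eqref{K to K basic Rk}, and \cite[Cor. 3.2.9]{EG22}'', where the cited corollary of \cite{EG22} is precisely the permanence argument (transport of Ind-algebraicity, limit preservation, and the affine finitely presented diagonal along an affine, finitely presented morphism) that you have written out by hand. The details you supply --- pulling back an Ind-presentation along the affine forgetful map and factoring $\Delta_{\R_{K,d}}$ through $\Delta_f$ followed by a base change of $\Delta_{\R_{K^{\mathrm{basic}},d'}}$ --- are exactly what that citation encapsulates, so there is nothing to correct.
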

\begin{proof}
Combine Theorem \eqref{R_d}, Lemma \eqref{K to K basic Rk}, and \cite[Cor. 3.2.9]{EG22}.
\end{proof}
Having shown that $\R_{K,d}$ has the claimed properties, the next step is to define and study an ``intermediate" stack sitting between $\R_{K,d}$ and $\X_{K,d}$, namely, it will be a stack of \' etale $\varphi_q$-modules which are furthermore equipped with an action of certain ``discretization" of $\Gamma_K$. More precisely, let $\gamma_1,\ldots,\gamma_{[F:\mathbf{Q}_p]}$ be a $\mathbf{Z}_p$-free basis for $\Gamma_K\cong\mathbf{Z}_p^{\oplus [F:\mathbf{Q}_p]}$, and accordingly, $\Gamma_\disc\subseteq \Gamma_K$ be the sub-$\mathbf{Z}$-module generated by $\gamma_1,\ldots,\gamma_{[F:\mathbf{Q}_p]}$. Let $\R_d^{\Gamma_{\disc}}$ be the moduli stack of \' etale $\varphi_q$-modules over $\mathbf{A}_{K,A}$ equipped with a (not necessarily continuous!) semilinear action of $\Gamma_\disc$ that commutes with $\varphi$. 
\begin{lem}\label{R_d Gamma_disc as a fixed point stack}
The natural morphism $\R_d^{\Gamma_\disc}\to \R_d$ given by forgetting the $\Gamma_\disc$-action, is affine, and of finite presentation.
\end{lem}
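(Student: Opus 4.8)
Throughout write $\R_d=\R_{K,d}$; by the corollary preceding the statement, the diagonal $\Delta\colon\R_d\to\R_d\times_{\Spf\O}\R_d$ is affine and of finite presentation, and the plan is to build the forgetful morphism out of $\Delta$ by base changes, fibre products, and finitely many finitely presented closed immersions. The key preliminary observation is that for $g\in\Gamma_\disc\subseteq\Gamma_K$ the action of $g$ on $\mathbf{A}_{K,A}$ is an $A$-algebra automorphism commuting with $\varphi_q$ and compatible with base change in $A$ (see Subsection \eqref{coefficient}), so that $M\mapsto g^{*}M:=\mathbf{A}_{K,A}\otimes_{g,\mathbf{A}_{K,A}}M$, with its induced Frobenius, defines an automorphism $g^{*}\colon\R_d\xrightarrow{\sim}\R_d$ of stacks over $\Spf\O$. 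Giving a $g$-semilinear automorphism of an \'etale $\varphi_q$-module $M$ commuting with $\varphi_q$ is the same as giving an isomorphism $g^{*}M\xrightarrow{\sim}M$ of \'etale $\varphi_q$-modules; hence, writing $n:=[F:\mathbf{Q}_p]$ and $\gamma_1,\dots,\gamma_n$ for the chosen generators of $\Gamma_\disc\cong\mathbf{Z}^{\oplus n}$, giving a $\Gamma_\disc$-action on $M$ amounts to giving isomorphisms $\iota_i\colon\gamma_i^{*}M\xrightarrow{\sim}M$ ($1\le i\le n$) whose associated semilinear automorphisms $\hat\gamma_i$ commute pairwise --- a meaningful condition since $\Gamma_K$, being a subquotient of $\O_F^{\times}$, is abelian.

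First I would treat the case of a single $\gamma$: the stack $\R_d^{\langle\gamma\rangle}$ of pairs $(M,\iota\colon\gamma^{*}M\xrightarrow{\sim}M)$ is the fibre product $\R_d\times_{(\mathrm{id},\gamma^{*}),\,\R_d\times_{\Spf\O}\R_d,\,\Delta}\R_d$, and under this identification the forgetful map $\R_d^{\langle\gamma\rangle}\to\R_d$ is the base change of $\Delta$ along $(\mathrm{id},\gamma^{*})$, hence affine and of finite presentation. Then, forming the fibre product over $\R_d$ of the $\R_d^{\langle\gamma_i\rangle}$, one gets a stack $\R_d^{\mathrm{free}}$ parametrizing tuples $(M,\iota_1,\dots,\iota_n)$ with no commutation imposed, and a short induction (composing base changes of the maps $\R_d^{\langle\gamma_i\rangle}\to\R_d$) shows that $\R_d^{\mathrm{free}}\to\R_d$ is affine and of finite presentation.

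Finally I would cut out the commutativity relations inside $\R_d^{\mathrm{free}}$. For $i<j$, the semilinear automorphism $c_{ij}:=\hat\gamma_i\hat\gamma_j\hat\gamma_i^{-1}\hat\gamma_j^{-1}$ of the universal object is actually $\mathbf{A}_{K,A}$-linear (its semilinearity cancels, as $\gamma_i\gamma_j\gamma_i^{-1}\gamma_j^{-1}=1$ in $\Gamma_K$) and commutes with $\varphi_q$, so it is a section of the automorphism group scheme of the universal \'etale $\varphi_q$-module over $\R_d^{\mathrm{free}}$; that group scheme is affine and of finite presentation, being a base change of $\Delta$. Requiring each $c_{ij}$ to be the identity section is thus a finitely presented closed condition, and imposing all of them ($i<j$) realizes $\R_d^{\Gamma_\disc}\hookrightarrow\R_d^{\mathrm{free}}$ as a finitely presented closed immersion; composing with the previous step gives that $\R_d^{\Gamma_\disc}\to\R_d$ is affine and of finite presentation.

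The case $n=1$ is essentially the argument used in \cite{EG22} in the (procyclic) cyclotomic setting, so the genuinely new ingredient here --- necessary because $F$ may be larger than $\mathbf{Q}_p$ --- is this last step: verifying that the commutation relations cut out a finitely presented closed substack. What makes it routine is that each commutator $c_{ij}$ is an honest \emph{linear} automorphism of the universal \'etale $\varphi_q$-module rather than merely a semilinear one, so that it is a section of an affine, finitely presented group scheme over $\R_d^{\mathrm{free}}$; and the good properties of that group scheme are a direct repackaging of the affine, finitely presented diagonal of $\R_d$ provided by the corollary above.
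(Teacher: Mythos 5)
Your proof is correct, and its skeleton matches the paper's: both begin from the identification of a (not necessarily continuous) semilinear $\Gamma_\disc$-action commuting with $\varphi_q$ on $M$ with a tuple of isomorphisms $\gamma_i^*M\xrightarrow{\sim}M$ of \'etale $\varphi_q$-modules whose associated semilinear automorphisms commute, and both ultimately rest on the affineness and finite presentation of the relevant Isom functors, which is exactly the content of \cite[Prop.~5.4.8]{EG19} (the paper quotes it directly; in your argument it enters through the affine, finitely presented diagonal of $\R_d$ from the preceding corollary, whose proof is the same citation). Where you genuinely diverge is in how the commutation relations are shown to cut out something finitely presented. The paper pulls back along $\Sp A\to\R_d$, reduces (as in the proof of \cite[Prop.~5.4.8]{EG19}) to the case of a free module, and observes that in the explicit coordinates --- the coefficients of finitely many powers of $T$ in the matrix entries of the $\alpha_i$ --- the relations $\alpha_i\circ\gamma_i^*\alpha_j=\alpha_j\circ\gamma_j^*\alpha_i$ are finitely many equations. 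You instead argue formally: each commutator $c_{ij}$ is a genuinely linear, $\varphi_q$-equivariant automorphism (the semilinearities cancel since $\Gamma_K$ is abelian), hence a section of the automorphism group scheme of the universal module over $\R_d^{\mathrm{free}}$; that group scheme is affine and of finite presentation, being a base change of the diagonal of $\R_d$, and the locus where a section agrees with the identity section is the base change of the diagonal of an affine finitely presented scheme, hence a finitely presented closed immersion. This coordinate-free treatment of the commutation locus is the only substantive difference: it avoids choosing bases and re-entering the explicit description of the Isom scheme, at the price of invoking the diagonal of $\R_d$ twice, whereas the paper's argument is a direct computation in the coordinates already furnished by \cite[Prop.~5.4.8]{EG19}. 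Both are complete proofs of the lemma.
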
  
\begin{proof}
We can prove the statement after pulling back along a morphism $\Sp A\to \R_d$ where $A$ is a $\mathbf{Z}/p^a$-algebra for some $a\geq 1$. Denote by $M$ the \' etale $\varphi_q$-module over $\mathbf{A}_{K,A}$ corresponding to this morphism. Note firstly that the data of a commuting semi-linear action of $\Gamma_\disc$ on $M$ is precisely the data of isomorphisms $\alpha_i: \gamma_i^*M\xrightarrow{\sim} M$ of $\varphi_q$-modules for $1\leq i\leq [F:\mathbf{Q}_p]$ such that $\alpha_i\circ \gamma_i^*\alpha_j=\alpha_j\circ\gamma_j^*\alpha_i$ for all $i,j$. Thus we need to show that the functor on $A$-algebras sending $B$ to the set 
\begin{displaymath}
\left\{(\alpha_i)_i\in \prod_{i}\mathrm{Isom}_{\mathbf{A}_{K,B},\varphi_q}(\gamma_i^*M_B,M_B)\;|\;\alpha_i\circ \gamma_i^*\alpha_j=\alpha_j\circ\gamma_j^*\alpha_i\;\text{for all $i,j$}\right\}
\end{displaymath}
is represented by an affine $A$-scheme of finite presentation. By \cite[Prop. 5.4.8]{EG19}, the functor on $A$-algebras sending $B$ to $\prod_{i}\mathrm{Isom}_{\mathbf{A}_{K,B},\varphi_q}(\gamma_i^*M_B,M_B)$ is represented by an affine scheme of finite presentation over $A$. More precisely, as in the proof of that proposition, we may reduce to the case where $M_B$ is finite free over $\mathbf{A}_{K,B}$; then after choosing bases, any morphism $\alpha_i: \gamma_i^*M\xrightarrow{\sim} M$ of $\varphi_q$-modules is determined by the coefficients of finitely many powers of $T$ in the Laurent series expansions of the entries of the matrix representing $\alpha_i$. To conclude, it suffices to note that, the conditions $\alpha_i\circ \gamma_i^*\alpha_j=\alpha_j\circ\gamma_j^*\alpha_i$ are evidently given by finitely many equations in these coefficients.
\end{proof}
\begin{cor}\label{diagonal of R_d Gamma_disc}
$\R_d^{\Gamma_\disc}$ is a limit preserving Ind-algebraic stack whose diagonal is affine, and of finite presentation.
\end{cor}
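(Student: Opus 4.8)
The plan is to deduce this immediately from Lemma \eqref{R_d Gamma_disc as a fixed point stack} together with the Corollary established above for $\R_d = \R_{K,d}$ (that it is a limit preserving Ind-algebraic stack with affine, finitely presented diagonal). The general principle at work is that each of the three properties --- being Ind-algebraic, being limit preserving, and having affine finitely presented diagonal --- is inherited by the source of an affine morphism of finite presentation whose target already enjoys it. This is exactly the content of \cite[Cor. 3.2.9]{EG22}, which was invoked in the same way above to pass from $\R_{K^{\mathrm{basic}},\ast}$ to $\R_{K,d}$; so the proof reduces to the single line ``combine Lemma \eqref{R_d Gamma_disc as a fixed point stack}, the Corollary above, and \cite[Cor. 3.2.9]{EG22}''.

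For the record, here is why the principle holds. Write $\pi\colon \R_d^{\Gamma_\disc}\to \R_d$ for the affine, finitely presented forgetful morphism of Lemma \eqref{R_d Gamma_disc as a fixed point stack}. If $\R_d=\varinjlim_i \R_{d,i}$ is a presentation as a colimit of algebraic stacks along closed immersions, then pulling back along $\pi$ exhibits $\R_d^{\Gamma_\disc}$ as the colimit of the stacks $\R_{d,i}\times_{\R_d}\R_d^{\Gamma_\disc}$, each of which, being affine over an algebraic stack, is algebraic; hence $\R_d^{\Gamma_\disc}$ is Ind-algebraic. It is limit preserving because $\pi$, being affine of finite presentation, is limit preserving, and a composite of limit preserving morphisms is limit preserving. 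For the diagonal, use the factorization
\begin{displaymath}
\Delta_{\R_d^{\Gamma_\disc}}\colon \R_d^{\Gamma_\disc}\xrightarrow{\ \Delta_\pi\ } \R_d^{\Gamma_\disc}\times_{\R_d}\R_d^{\Gamma_\disc}\longrightarrow \R_d^{\Gamma_\disc}\times\R_d^{\Gamma_\disc},
\end{displaymath}
in which $\Delta_\pi$ is a closed immersion of finite presentation (as $\pi$ is affine and finitely presented) and the second arrow is the base change of $\Delta_{\R_d}$ along $\R_d^{\Gamma_\disc}\times\R_d^{\Gamma_\disc}\to\R_d\times\R_d$, hence affine and of finite presentation; a composite of two such morphisms is affine and of finite presentation, and in particular representable by schemes.

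I anticipate no genuine difficulty: all the substantive work sits in Lemma \eqref{R_d Gamma_disc as a fixed point stack}, and what remains is formal. The only point to check is that the hypotheses of \cite[Cor. 3.2.9]{EG22} are literally satisfied --- i.e. that ``affine and of finite presentation'' is enough where that statement asks for a morphism representable by algebraic spaces and of finite presentation --- which is clear since affine morphisms are representable by schemes.
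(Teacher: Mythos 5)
Your proposal is correct and matches the paper's own proof, which likewise disposes of the statement in one line by combining Lemma \eqref{R_d Gamma_disc as a fixed point stack}, the already-established properties of $\R_d$, and the formal inheritance result \cite[Cor. 3.2.9]{EG22}. The extra paragraph you give unwinding why the inheritance principle holds is a reasonable (and accurate) expansion of what that citation encapsulates.
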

\begin{proof}
This follows from Lemma \eqref{R_d Gamma_disc as a fixed point stack} and \cite[Lem. 3.2.9]{EG22}.
\end{proof}
Restricting the $\Gamma_K$-action from an \' etale $(\varphi_q,\Gamma_K)$-module to $\Gamma_\disc$ defines a fully faithful morphism $\X_{K,d}^{\lt}\to \R_d^{\Gamma_\disc}$. Indeed, by construction, any linear map between two finite projective $\mathbf{A}_{K,A}$-modules is continuous with respect to the canonical topology (cf. \cite[Exam. 3.2.2]{Dri06}, \cite[Rem. D.2]{EG22}). As $\Gamma_{\disc}$ is dense in $\Gamma_K$, it follows that any $\Gamma_{\disc}$-equivariant morphism between two objects of $\X_{K,d}^{\lt}$ is automatically $\Gamma_K$-equivariant. 
\begin{cor}\label{diagonal of Xk}
The diagonal of $\X_{K,d}^{\lt}$ is representable by algebraic spaces, affine, and of finite presentation.
\end{cor}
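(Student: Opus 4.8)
The plan is to deduce the statement formally from the fully faithful morphism $f\colon\X_{K,d}^{\lt}\to\R_d^{\Gamma_\disc}$ constructed in the discussion preceding the statement, together with the fact, recorded in Corollary \eqref{diagonal of R_d Gamma_disc}, that $\R_d^{\Gamma_\disc}$ has affine diagonal of finite presentation. The one structural input is standard: a $1$-morphism of stacks in groupoids which is fully faithful on $T$-valued points for every $T$ is a monomorphism, i.e.\ its relative diagonal $\Delta_f\colon\X_{K,d}^{\lt}\to\X_{K,d}^{\lt}\times_{\R_d^{\Gamma_\disc}}\X_{K,d}^{\lt}$ is an isomorphism.

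Granting this, I would argue as follows. The absolute diagonal of $\X_{K,d}^{\lt}$ over $\Spf\O$ factors as
\[
\X_{K,d}^{\lt}\;\xrightarrow{\ \Delta_f\ }\;\X_{K,d}^{\lt}\times_{\R_d^{\Gamma_\disc}}\X_{K,d}^{\lt}\;\longrightarrow\;\X_{K,d}^{\lt}\times_{\Spf\O}\X_{K,d}^{\lt},
\]
where the first arrow is an isomorphism by the previous paragraph, and the second arrow is, by the very definition of the $2$-fibre product, the base change of the diagonal $\R_d^{\Gamma_\disc}\to\R_d^{\Gamma_\disc}\times_{\Spf\O}\R_d^{\Gamma_\disc}$ along the morphism $\X_{K,d}^{\lt}\times_{\Spf\O}\X_{K,d}^{\lt}\to\R_d^{\Gamma_\disc}\times_{\Spf\O}\R_d^{\Gamma_\disc}$ induced by $f$. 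Since the properties ``affine'' and ``of finite presentation'' are stable under base change and are preserved under composition with an isomorphism, Corollary \eqref{diagonal of R_d Gamma_disc} yields that the diagonal of $\X_{K,d}^{\lt}$ is affine and of finite presentation; being affine, it is in particular representable by schemes, hence by algebraic spaces.

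I do not expect a genuine obstacle: the corollary is essentially bookkeeping with $2$-fibre products on top of Corollary \eqref{diagonal of R_d Gamma_disc}. The only point that deserves care is that $f$ is \emph{fully} faithful and not merely faithful, which is exactly what was verified just before the statement (every $\mathbf{A}_{K,A}$-linear map between finite projective $\mathbf{A}_{K,A}$-modules is continuous for the canonical topology, and $\Gamma_\disc$ is dense in $\Gamma_K$, so $\Gamma_\disc$-equivariance of such a map already forces $\Gamma_K$-equivariance). Should one prefer to sidestep the monomorphism formalism altogether, one may instead pull $\Delta_{\X_{K,d}^{\lt}}$ back along a morphism $\Sp B\to\X_{K,d}^{\lt}\times_{\Spf\O}\X_{K,d}^{\lt}$ with $B$ an $\O/\varpi^a$-algebra, classifying a pair $(M_1,M_2)$, and identify the resulting functor with the functor of isomorphisms of the underlying étale $\varphi_q$-modules intertwining the two $\Gamma_\disc$-actions; by the remark above no further equations are imposed, so this is the same affine, finitely presented $B$-scheme that already occurred in the proof of Lemma \eqref{R_d Gamma_disc as a fixed point stack} (via \cite[Prop. 5.4.8]{EG19}), which again gives the claim.
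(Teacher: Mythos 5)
Your argument is correct and is essentially the paper's own proof: both exploit that the fully faithful (hence monomorphic) map $\X_{K,d}^{\lt}\to\R_d^{\Gamma_\disc}$ has isomorphic relative diagonal, and then realize the absolute diagonal of $\X_{K,d}^{\lt}$ as a base change of the diagonal of $\R_d^{\Gamma_\disc}$, invoking Corollary \eqref{diagonal of R_d Gamma_disc}. Your concluding alternative via the Isom functor is a fine sanity check but adds nothing beyond the main argument.
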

\begin{proof}
As the map $\X_{K,d}^{\lt}\to \R_d^{\Gamma_\disc}$ is a monomorphism, its diagonal is an isomorphism. Using the cartesian diagram
\begin{displaymath}
    \begin{tikzcd}
\X_{K,d}^{\lt}\times_{\R_d^{\Gamma_\disc}}  \X_{K,d}^{\lt}\ar[d]\ar[r] & \X_{K,d}^{\lt}\times_{\Spf \O}  \X_{K,d}^{\lt}\ar[d]\\
\R_d^{\Gamma_\disc} \ar[r] & \R_d^{\Gamma_\disc}\times_{\Spf \O}\R_d^{\Gamma_\disc}
    \end{tikzcd}
\end{displaymath}
we see that the diagonal of $\X_d^{\lt}$ is a base change of that of $\R_d^{\Gamma_\disc}$, which has the claimed properties by Corollary \eqref{diagonal of R_d Gamma_disc}. 
\end{proof}
The next lemma allows us to reduce to the case where $K$ is $F$-basic (in proving Theorem \eqref{Xd ind algebraic}).
\begin{lem}\label{cartesian diagram xk rk xk basic}
We have a Cartesian diagram
\begin{displaymath}
\begin{tikzcd}
\X_{K,d}^{\lt}\ar[d,hook] \ar[r] & \X_{K^{\mathrm{basic}},d[K:K^{\mathrm{basic}}]}^{\lt}\ar[d,hook]\\
\R_{K,d}^{\Gamma_\disc}\ar[r] & \R_{K^{\mathrm{basic}},d[K:K^{\mathrm{basic}}]}^{\Gamma_{\disc}},
\end{tikzcd}
\end{displaymath}
where the horizontal arrows are the natural forgetful maps, and the vertical arrows are the monomorphisms given by restricting
the action of $\Gamma_K$ to $\Gamma_{\mathrm{disc}}$.
\end{lem}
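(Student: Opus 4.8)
The plan is to show that the square realizes $\X_{K,d}^{\lt}$ as the $2$-fibre product $\R_{K,d}^{\Gamma_\disc}\times_{\R_{K^{\mathrm{basic}},d[K:K^{\mathrm{basic}}]}^{\Gamma_\disc}}\X_{K^{\mathrm{basic}},d[K:K^{\mathrm{basic}}]}^{\lt}$, which may be checked on $A$-valued points for $\varpi$-adically complete $\O$-algebras $A$ (legitimate since all four objects are stacks whose $A$-points compute the expected groupoids). First I would fix the four functors: the horizontal arrows are restriction of scalars along the finite free extension $\mathbf{A}_{K^{\mathrm{basic}},A}\hookrightarrow \mathbf{A}_{K,A}$ of rank $[K:K^{\mathrm{basic}}]$, using the isomorphism $\Gamma_{K^{\mathrm{basic}}}\xrightarrow{\sim}\Gamma_K$ (so that in particular $\Gamma_\disc$ is unchanged) to carry the group actions along; the vertical arrows restrict the $\Gamma_K$-action to $\Gamma_\disc$. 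Commutativity of the square is immediate. The one nontrivial input at this stage is that restriction of scalars along $\mathbf{A}_{K^{\mathrm{basic}},A}\hookrightarrow\mathbf{A}_{K,A}$ preserves \' etale-ness of $\varphi_q$-modules (sending rank $d$ to rank $d[K:K^{\mathrm{basic}}]$): this holds because $\mathbf{A}_K$ is the finite unramified, hence finite \' etale, extension of $\mathbf{A}_{K^{\mathrm{basic}}}$ realizing the residue extension, so $\varphi_q$ is the base change of the Frobenius on $\mathbf{A}_{K^{\mathrm{basic}}}$ and the two linearizations $\varphi_q^{*}M$ computed over the two rings agree — exactly as in the cyclotomic case of \cite[\S 3.2]{EG22}.

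For the Cartesian property I would exhibit a quasi-inverse to the resulting comparison functor. An object of the fibre-product groupoid over $A$ is a triple $(M,N,\iota)$, where $M$ is an \' etale $\varphi_q$-module over $\mathbf{A}_{K,A}$ with a commuting (a priori discontinuous) $\Gamma_\disc$-action, $N$ is an \' etale $(\varphi_q,\Gamma_K)$-module over $\mathbf{A}_{K^{\mathrm{basic}},A}$ of rank $d[K:K^{\mathrm{basic}}]$, and $\iota\colon M|_{\mathbf{A}_{K^{\mathrm{basic}},A}}\xrightarrow{\sim}N$ is an isomorphism of \' etale $\varphi_q$-modules compatible with the $\Gamma_\disc$-actions. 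Transporting the continuous $\Gamma_K$-action of $N$ through $\iota$ equips $M$ with a $\Gamma_K$-action extending the given $\Gamma_\disc$-action; since $M$ is finite projective over $\mathbf{A}_{K,A}$ and a fortiori over the finite subring $\mathbf{A}_{K^{\mathrm{basic}},A}$, its canonical topology is the same whether computed over $\mathbf{A}_{K^{\mathrm{basic}},A}$ or over $\mathbf{A}_{K,A}$, so this action is continuous for the $\mathbf{A}_{K,A}$-topology on $M$. Finally, as $\Gamma_\disc$ is dense in $\Gamma_K$ and $\mathbf{A}_{K,A}$-linear maps between finite projective $\mathbf{A}_{K,A}$-modules are automatically continuous (cf. \cite[Exam. 3.2.2]{Dri06}, \cite[Rem. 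D.2]{EG22}), the identities expressing $\mathbf{A}_{K,A}$-semilinearity (for the $\Gamma_K$-action on $\mathbf{A}_{K,A}$) and commutation with $\varphi_q$ vary continuously in the group variable and hold on $\Gamma_\disc$, hence hold on all of $\Gamma_K$; thus $M$ becomes an \' etale $(\varphi_q,\Gamma_K)$-module over $\mathbf{A}_{K,A}$ of rank $d$. Sending $(M,N,\iota)\mapsto M$ and, in the other direction, sending an \' etale $(\varphi_q,\Gamma_K)$-module $M$ over $\mathbf{A}_{K,A}$ to $\bigl(M,\ M|_{\mathbf{A}_{K^{\mathrm{basic}},A}},\ \id\bigr)$ (with the $\Gamma_\disc$-structure on the first entry obtained by restriction) gives mutually quasi-inverse functors; the same density argument identifies morphisms in the fibre product with $\Gamma_K$-equivariant morphisms of $(\varphi_q,\Gamma_K)$-modules, so the comparison functor is an equivalence of groupoids, naturally in $A$.

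I expect the main obstacle to be the topological bookkeeping in the second paragraph — namely, that continuity of the $\Gamma_K$-action over $\mathbf{A}_{K^{\mathrm{basic}},A}$ forces continuity over $\mathbf{A}_{K,A}$, and that $\mathbf{A}_{K,A}$-semilinearity is automatic once one has $\mathbf{A}_{K^{\mathrm{basic}},A}$-semilinearity together with $\Gamma_\disc$-equivariance. I would handle this by invoking that the canonical topology on a finite projective module is intrinsic and compatible with restriction of scalars along the finite (free) extension $\mathbf{A}_{K^{\mathrm{basic}},A}\hookrightarrow\mathbf{A}_{K,A}$, as developed for the cyclotomic rings in \cite[App. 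D]{EG22}; granting this, the comparison is formal. Everything else — commutativity of the square, the rank count, and naturality in $A$ — is routine, and the argument runs parallel to the corresponding lemma in \cite[\S 3.2]{EG22}.
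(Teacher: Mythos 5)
Your argument is correct, and it is essentially the paper's own proof written out in full: the paper's one-line justification is precisely that continuity of the $\Gamma_\disc$-action is detected by the canonical topology on $M$, which is the same whether $M$ is viewed over $\mathbf{A}_{K,A}$ or over $\mathbf{A}_{K^{\mathrm{basic}},A}$, and your transport-plus-density construction of the quasi-inverse is just the detailed implementation of this (the density of $\Gamma_\disc$ handling semilinearity over $\mathbf{A}_{K,A}$ and morphisms). The only cosmetic point is that in the semilinearity step the relevant input is continuity of the action maps and Hausdorffness of $M$, rather than automatic continuity of linear maps, but this does not affect the argument.
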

\begin{proof}
This is clear because $\Gamma_{\disc}$ acts continuously on an object $M$ of $\R_{K,d}^{\Gamma_{\disc}}$ only if it does so when $M$ is regarded as an object of $\R_{K^{\mathrm{basic}},d[K:K^{\mathrm{basic}}]}^{\Gamma_{\disc}}$.
\end{proof}
\begin{cor}\label{Xk to Xkbasic}
The natural map $\X_{K,d}^{\lt}\to \X_{K^{\mathrm{basic}},d[K:K^{\mathrm{basic}}]}^{\lt}$ is affine, and of finite presentation.
\end{cor}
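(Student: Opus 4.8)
The plan is to reduce the statement to the Cartesian diagram of Lemma \eqref{cartesian diagram xk rk xk basic}. In that square the top horizontal arrow $\X_{K,d}^{\lt}\to \X_{K^{\mathrm{basic}},d[K:K^{\mathrm{basic}}]}^{\lt}$ is, by definition of a Cartesian square, the base change of the bottom horizontal arrow
\[
\R_{K,d}^{\Gamma_\disc}\longrightarrow \R_{K^{\mathrm{basic}},d[K:K^{\mathrm{basic}}]}^{\Gamma_\disc}
\]
along the monomorphism $\X_{K^{\mathrm{basic}},d[K:K^{\mathrm{basic}}]}^{\lt}\hookrightarrow \R_{K^{\mathrm{basic}},d[K:K^{\mathrm{basic}}]}^{\Gamma_\disc}$. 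Since being affine and being of finite presentation are both stable under arbitrary base change, it suffices to prove that this forgetful map between the $\Gamma_\disc$-equivariant stacks is affine and of finite presentation.

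For this I would argue by cancellation rather than by a second base change, the key point being that $\R_{K,d}^{\Gamma_\disc}$ is \emph{not} the naive fibre product $\R_{K,d}\times_{\R_{K^{\mathrm{basic}},\dots}}\R_{K^{\mathrm{basic}},\dots}^{\Gamma_\disc}$: a $\varphi_q$-commuting $\Gamma_\disc$-action on an \'etale $\varphi_q$-module over $\mathbf{A}_{K,A}$ that is merely semilinear over the subring $\mathbf{A}_{K^{\mathrm{basic}},A}$ need not be semilinear over the larger ring $\mathbf{A}_{K,A}$. Concretely, choose $\gamma_1,\dots,\gamma_{[F:\mathbf{Q}_p]}$ to be a $\mathbf{Z}_p$-basis of $\Gamma_K$; this is simultaneously a basis of $\Gamma_{K^{\mathrm{basic}}}$ since the natural map $\Gamma_{K^{\mathrm{basic}}}\to\Gamma_K$ is an isomorphism, and it makes the forgetful map $\alpha\colon\R_{K,d}^{\Gamma_\disc}\to\R_{K^{\mathrm{basic}},d[K:K^{\mathrm{basic}}]}^{\Gamma_\disc}$ well defined (restriction of scalars along $\mathbf{A}_{K^{\mathrm{basic}},A}\hookrightarrow\mathbf{A}_{K,A}$, retaining the $\Gamma_\disc$-action, which stays semilinear and $\varphi_q$-commuting because $\mathbf{A}_{K^{\mathrm{basic}}}\subseteq\mathbf{A}_K$ is stable under $\varphi_q$ and under $\Gamma_K$, compatibly with the identification $\Gamma_{K^{\mathrm{basic}}}\cong\Gamma_K$). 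Now observe that the composite of $\alpha$ with the forgetful map $\beta\colon\R_{K^{\mathrm{basic}},\dots}^{\Gamma_\disc}\to\R_{K^{\mathrm{basic}},\dots}$ coincides with the composite $\R_{K,d}^{\Gamma_\disc}\to\R_{K,d}\to\R_{K^{\mathrm{basic}},\dots}$, which is affine and of finite presentation by Lemma \eqref{R_d Gamma_disc as a fixed point stack} (applied to $K$) and Lemma \eqref{K to K basic Rk}. Since $\beta$ is itself affine and of finite presentation by Lemma \eqref{R_d Gamma_disc as a fixed point stack} (applied to $K^{\mathrm{basic}}$), it is in particular separated and locally of finite presentation; factoring $\alpha$ through its graph $\R_{K,d}^{\Gamma_\disc}\to\R_{K,d}^{\Gamma_\disc}\times_{\R_{K^{\mathrm{basic}},\dots}}\R_{K^{\mathrm{basic}},\dots}^{\Gamma_\disc}$, which is a base change of the diagonal of $\beta$ and hence a closed immersion of finite presentation, followed by a base change of $\beta\circ\alpha$, we conclude that $\alpha$ is affine and of finite presentation. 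Plugging this into the first paragraph finishes the proof.

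The whole argument is essentially formal once Lemmas \eqref{K to K basic Rk}, \eqref{R_d Gamma_disc as a fixed point stack} and \eqref{cartesian diagram xk rk xk basic} are in hand; one could equally well note that the proof of the corresponding statement in \cite[\S 3.2]{EG22} applies \textit{mutatis mutandis}. The one point that deserves care — and the only thing I would call an obstacle, albeit a mild one — is the observation that $\R_{K,d}^{\Gamma_\disc}$ does \emph{not} arise as a base change of $\R_{K^{\mathrm{basic}},\dots}^{\Gamma_\disc}$ along $\R_{K,d}\to\R_{K^{\mathrm{basic}},\dots}$, so that the cancellation step (or the citation) genuinely has to do the work; beyond that I do not anticipate any difficulty.
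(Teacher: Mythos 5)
Your proposal is correct and is essentially the paper's own proof: the paper's argument is exactly to combine Lemmas \eqref{R_d Gamma_disc as a fixed point stack}, \eqref{K to K basic Rk} and \eqref{cartesian diagram xk rk xk basic}, and your base-change-along-the-Cartesian-square plus graph/cancellation argument is the natural way to assemble them into the statement that $\R_{K,d}^{\Gamma_\disc}\to \R_{K^{\mathrm{basic}},d[K:K^{\mathrm{basic}}]}^{\Gamma_\disc}$ (and hence its pullback $\X_{K,d}^{\lt}\to \X_{K^{\mathrm{basic}},d[K:K^{\mathrm{basic}}]}^{\lt}$) is affine and of finite presentation. The subtlety you flag, namely that $\R_{K,d}^{\Gamma_\disc}$ is not simply the base change of $\R_{K^{\mathrm{basic}},d[K:K^{\mathrm{basic}}]}^{\Gamma_\disc}$ along $\R_{K,d}\to \R_{K^{\mathrm{basic}},d[K:K^{\mathrm{basic}}]}$, is a genuine point that the paper's one-line proof leaves implicit, and your cancellation step handles it correctly.
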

\begin{proof}
This follows easily from Lemmas \eqref{R_d Gamma_disc as a fixed point stack}, \eqref{K to K basic Rk}, and \eqref{cartesian diagram xk rk xk basic}.
\end{proof}
In view of Corollaries \eqref{diagonal of Xk}, \eqref{Xk to Xkbasic} and \cite[Cor. 3.2.9]{EG22}, it remains to show that if $K=K^{\mathrm{basic}}$ is $F$-basic, then $\X_{K,d}^{\lt}$ is a limit preserving Ind-algebraic stack. For this we will need to understand the continuity condition in the definition of an object of $\X_{K,d}^{\lt}$ more carefully.

The following is an analogue of \cite[Lem. 3.2.18]{EG22} in our setting.
\begin{lem}\label{3.2.18}
For any $\gamma\in \Gamma_K$, we have $\gamma(T)-T\in \pi\mathbf{A}_{K,A}+T^2\mathbf{A}_{K,A}^+$. If $K$ is $F$-basic, then $\gamma(T)-T\in (\pi,T)T\mathbf{A}_{K,A}^+$.
\end{lem}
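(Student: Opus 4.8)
The plan is to reduce everything to the case $K = K_0 F$ unramified over $F$, where we have the explicit uniformizer $T = T_K = \prod_{g \in \Delta_F} g\cdot\{\iota(v)\}$ of Lemma~\eqref{phi-stable when unramified}, and then exploit the congruences coming from the Lubin--Tate formal group. First I would treat the $F$-basic case directly, since it is the one needed for the main argument and it gives the sharper conclusion $\gamma(T)-T\in(\pi,T)T\mathbf{A}_{K,A}^+$. When $K$ is $F$-basic we have $\mathbf{A}_K = \mathbf{A}_{K_0F}$ and $\mathbf{A}_K^+ = \O_{K_0F}[[T_K]]$ with $T_K$ as above, and $\Gamma_K$ acts through the restriction of $\Gamma_{K_0F}$; so it suffices to prove the statement for $K = K_0F$ unramified. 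There, Lemma~\eqref{phi-stable when unramified} already gives $g(T_K)\in T_K\mathbf{A}_K^+$, so we may write $g(T_K) = T_K\cdot u_g$ with $u_g\in\mathbf{A}_K^+$, and the task is to show $u_g \equiv 1 \bmod (\pi, T_K)$, i.e. that the image of $g(T_K)/T_K$ in the residue field $\mathbf{E}_K = k_K((\overline T_K))$ has constant term $1$.

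The key input for this is the description $\mathbf{E}_F' = k((\iota(v)))$ together with the formula $[\chi(g)]_\phi(T)\equiv \chi(g)\,T\bmod T^2$: modulo $\pi$, the action of $g\in\widetilde\Gamma_F$ on $\iota(v)$ is $g\cdot\iota(v) = \iota(\chi(g)v) = [\chi(g)]_\phi(\iota(v))$, whose leading term is $\overline{\chi(g)}\,\iota(v)$ where $\overline{\chi(g)}$ is the reduction of $\chi(g)\in\O_F^\times$ in $k^\times$. Taking the product over $\Delta_F$ to pass from $T_K'=\{\iota(v)\}$ to $T_K=\prod_{g\in\Delta_F}g\cdot T_K'$, one computes that the leading coefficient of $\overline{g(T_K)}$ relative to $\overline{T}_K$ is $\prod_{a\in\Delta_F}\overline{\chi(gag^{-1})} = \prod_{a\in\Delta_F}\overline{\chi(a)}$ (since $\widetilde\Gamma_F$ is abelian, this is just $\prod_{a\in\Delta_F}\overline{\chi(a)}$, independent of $g$), but for $g\in\Gamma_K$ — i.e. $g$ in the torsion-free part, so $\chi(g)\in 1+\varpi^{?}\O_F$ with trivial reduction — the relevant factor is $\overline{\chi(g)}=1$. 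Hence $\overline{g(T_K)} = \overline{T}_K\cdot(1 + \text{higher order})$, giving $u_g\equiv 1\bmod(\pi,T_K)$, which is exactly $g(T_K)-T_K\in(\pi,T_K)T_K\mathbf{A}_K^+$. Extending scalars to $A$ is harmless since the action is $A$-linear and $\mathbf{A}_{K,A}^+ = \mathbf{A}_K^+\widehat\otimes_{\O_F}A$, so the congruence is preserved.

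For general $K$ (the weaker statement $\gamma(T)-T\in\pi\mathbf{A}_{K,A}+T^2\mathbf{A}_{K,A}^+$), I would argue as follows. Choose $T=T_K$ lifting a uniformizer $\overline T_K$ of $\mathbf{E}_K$; then $\gamma(\overline T_K)$ is again a uniformizer of $\mathbf{E}_K$, so $\gamma(\overline T_K) = c\,\overline T_K + O(\overline T_K^2)$ for some $c\in k_{K,\infty}^\times$, and the claim is precisely that $c=1$. One reduces to the $F$-basic (indeed unramified) case by comparing with $K^{\mathrm{basic}}$: since $\mathbf{A}_{K}$ is finite unramified over $\mathbf{A}_{K^{\mathrm{basic}}}$ and $\Gamma_{K^{\mathrm{basic}}}\xrightarrow{\sim}\Gamma_K$, the action of $\gamma$ on $\mathbf{A}_K$ lies over its action on $\mathbf{A}_{K^{\mathrm{basic}}}$, and one checks $c$ can be computed after this base change (the leading coefficient of $\gamma$ on a uniformizer is detected in the maximal totally ramified, or here the common, part). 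Alternatively, and perhaps more cleanly, one works inside $\mathbf{A}_K'$ with $T_K' = \{\iota(v)\}$ where the formula $[\chi(g)]_\phi(T_K')\equiv\chi(g)\,T_K'\bmod T_K'^2$ applies verbatim after reduction mod $\pi$, observes that $\chi(\widetilde\Gamma_F)$ has the same reduction in $k^\times$ for all of $\widetilde\Gamma_F$ up to the (finite, prime-to-$p$) torsion — so that the leading coefficient is a root of unity — and then uses that $\gamma$ lies in a pro-$p$ group acting on a characteristic-$p$ local field to force that root of unity to be $1$; passing to the $\Delta_K$-invariants $\mathbf{E}_K$ and using that $\mathbf{E}_K'/\mathbf{E}_K$ is totally ramified then transports the conclusion to $\overline T_K$. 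The main obstacle is bookkeeping the leading coefficients through the two layers of descent (from $\mathbf{A}_K'$ to $\mathbf{A}_K$, and from general $K$ to $K^{\mathrm{basic}}$); once one fixes compatible uniformizers via the $\iota(v)$-construction this becomes a routine, if slightly fiddly, valuation-theoretic computation in the spirit of the display computation already carried out in the proof of Lemma~\eqref{phi-stable when unramified}.
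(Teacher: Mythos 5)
Your treatment of the $F$-basic case is correct and is in substance the argument the paper intends (the paper simply imports the proof of \cite[Lem.~3.2.18]{EG22}, reducing modulo $\pi$ instead of modulo $p$ and using the explicit $T_K$ of Lemma \eqref{phi-stable when unramified}). Two small repairs there: the leading-coefficient computation is garbled --- from $g(T_K)=\prod_{a\in\Delta_F}[\chi(ga)]_\phi(\{\iota(v)\})$ one finds, modulo $\pi$, that $\overline{g(T_K)}/\overline{T}_K$ has constant term $\overline{\chi(g)}^{\#\Delta_F}$ (not $\prod_{a}\overline{\chi(gag^{-1})}$, which is just the leading unit of $\overline{T}_K$ itself), and this equals $1$ because $\chi(\Gamma_K)\subseteq 1+\pi\O_F$; the latter deserves the one-line justification you elide (a torsion-free, hence pro-$p$, complement of $\Delta_F$ must map trivially to $\mu_{q-1}$). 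Also, passing from $\overline{g(T_K)}/\overline{T}_K\in 1+\m_{\mathbf{E}_K'}$ to $u_g\equiv 1\bmod{(\pi,T_K)\mathbf{A}_K^+}$ uses that $u_g$ is $\Delta_F$-invariant together with $\O_{\mathbf{E}_K}\cap\m_{\mathbf{E}_K'}=\m_{\mathbf{E}_K}=(\overline{T}_K)$. The extension of scalars to $A$ is indeed harmless.

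The genuine gap is the first assertion, for arbitrary $K$. Both of your sketches come down to transporting a leading coefficient across the residue-field extensions $\mathbf{E}_K/\mathbf{E}_{K^{\mathrm{basic}}}$ (resp.\ $\mathbf{E}_K'/\mathbf{E}_F'$ and $\mathbf{E}_K'/\mathbf{E}_K$), and the phrases ``detected in the common part'' and ``transports the conclusion to $\overline{T}_K$'' are exactly where the content lies: these extensions are in general ramified, and if $\overline{T}_{K^{\mathrm{basic}}}=u\,\overline{T}_K^{\,e}$ with $u$ a unit, then knowing that $\gamma$ fixes $\overline{T}_{K^{\mathrm{basic}}}$ to second order only yields $c^{\,e}=u_0/\gamma(u_0)$, where $c$ is the leading coefficient of $\gamma(\overline{T}_K)$ and $u_0$ the image of $u$ in $k_{K,\infty}$. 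Everything therefore hinges on the action of $\Gamma_K$ on the residue field $k_{K,\infty}$, which your sketch never addresses: nothing in the setup guarantees that $K_{\mathrm{cyc}}/K$ is totally ramified, and only when $\gamma$ acts trivially on $k_{K,\infty}$ is $\gamma\mapsto c_\gamma$ a homomorphism into the prime-to-$p$ group $k_{K,\infty}^\times$, which your pro-$p$ argument then kills. Moreover, since $T_K$ for general $K$ was fixed as a lift of an arbitrary uniformizer of $\mathbf{E}_K$, the assertion is sensitive to replacing $\overline{T}_K$ by $\lambda\overline{T}_K$ with $\lambda\in k_{K,\infty}^\times$ unless $\gamma(\lambda)=\lambda$, so this point cannot be waved away as bookkeeping. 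As written, the non-basic half is not proved; this is precisely the part the paper delegates to the argument of \cite[Lem.~3.2.18]{EG22} (note that only the $F$-basic case is what the Appendix actually uses), and any self-contained replacement has to confront the residue-field action and/or the precise choice of $T_K$.
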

\begin{proof}
The proof is identical to that of \cite[Lem. 3.2.18]{EG22}, except we consider the reduction modulo $\pi$ instead of modulo $p$ (recall also that in case $K$ is $F$-basic, we have chosen $T$ so that $\gamma(T)\in T\mathbf{A}_{K,A}^+$ for all $\gamma\in \Gamma_K$).
\end{proof}
Using Lemma \eqref{3.2.18}, we can obtain (among other things) an analogue of \cite[Lem. D.28]{EG22} in our setting. This will be worked out in Appendix \ref{T quasi linear} below. In particular, we can now apply these results to study the action of $\Gamma_K$ on the objects of $\X_{K,d}^{\lt}$.
\begin{lem}\label{X limit preserving}
The stack $\X_{K,d}^{\lt}$ is limit preserving.
\end{lem}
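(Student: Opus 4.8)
The plan is to show that $\X_{K,d}^{\lt}$ commutes with filtered colimits of $\varpi$-adically complete $\O$-algebras, by reducing to the already-established limit-preservation statements for the auxiliary stacks. Concretely, one wants: given a filtered system $(A_j)$ with colimit $A$, the natural functor $\varinjlim_j \X_{K,d}^{\lt}(A_j) \to \X_{K,d}^{\lt}(A)$ is an equivalence of groupoids. First I would recall that $\R_d^{\Gamma_\disc}$ is limit preserving (Corollary \eqref{diagonal of R_d Gamma_disc}), so the corresponding statement holds for the underlying $\varphi_q$-module together with its (discrete, not-necessarily-continuous) $\Gamma_\disc$-action. Thus both essential surjectivity and full faithfulness for $\X_{K,d}^{\lt}$ reduce to analyzing the single extra condition that distinguishes an object of $\X_{K,d}^{\lt}$ from one of $\R_d^{\Gamma_\disc}$: namely, that the $\Gamma_\disc$-action extends to a \emph{continuous} $\Gamma_K$-action. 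Since, as noted just before Corollary \eqref{diagonal of Xk}, any $\mathbf{A}_{K,A}$-linear map between finite projective modules is automatically continuous and $\Gamma_\disc$ is dense in $\Gamma_K$, full faithfulness is immediate from that of $\R_d^{\Gamma_\disc}$. So the crux is essential surjectivity: given an object of $\X_{K,d}^{\lt}(A)$, descend it to some $\X_{K,d}^{\lt}(A_j)$.

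The key step is therefore to show that the continuity condition on the $\Gamma_K$-action is itself a ``finite-type'' (limit-preserving) condition. Here I would invoke the explicit description of continuity furnished by Lemma \eqref{3.2.18} and the results promised in Appendix \ref{T quasi linear} (the Lubin--Tate analogue of \cite[Lem. D.28]{EG22}). In the $F$-basic case—to which Corollaries \eqref{diagonal of Xk}, \eqref{Xk to Xkbasic} and \cite[Cor. 3.2.9]{EG22} allow us to reduce—one has $\mathbf{A}_K^+$ stable under $\varphi_q$ and $\Gamma_K$ with $\gamma(T)\in T\mathbf{A}_{K,A}^+$, and the continuity of the action of a topological generator $\gamma_i$ on a finite projective module $M$, after choosing a basis, amounts to a convergence condition on the matrix entries: the matrix of $\gamma_i^n$ should converge to the identity $(\pi,T)$-adically as $n\to\infty$ in a way that can be checked on finitely many coefficients of the relevant Laurent/power series expansions. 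Following the pattern of \cite[\textsection 3.2]{EG22}, this should say that extending the $\Gamma_\disc$-action to a continuous $\Gamma_K$-action is equivalent to the matrices $\alpha_i$ (in the notation of the proof of Lemma \eqref{R_d Gamma_disc as a fixed point stack}) satisfying a closed condition involving only finitely many coefficients—hence a condition that is insensitive to passing along the filtered colimit.

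Assembling: an object of $\X_{K,d}^{\lt}(A)$ gives an object of $\R_d^{\Gamma_\disc}(A)$ which, by Corollary \eqref{diagonal of R_d Gamma_disc}, descends to $\R_d^{\Gamma_\disc}(A_j)$ for some $j$; the finite-coefficient continuity criterion, being expressible by finitely many equalities and convergence conditions that already hold after base change to $A$, holds at a further stage $A_{j'}$ (using that $A = \varinjlim A_j$ and each equation involves only finitely many elements); so the descended object lies in $\X_{K,d}^{\lt}(A_{j'})$, giving essential surjectivity. The main obstacle I anticipate is making precise that ``continuity of the $\Gamma_K$-action is detected by finitely many coefficients'', i.e. reducing the a priori topological/infinitary condition $\gamma_i^n \to \mathrm{id}$ to an algebraic condition; this is exactly what the appendix (the analogue of \cite[Lem. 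D.28]{EG22}) is designed to supply, so modulo that input the argument is a routine transcription of \cite[Lem. 3.2.22]{EG22} (or its analogue) to the Lubin--Tate setting.
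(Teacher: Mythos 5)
Your proposal takes essentially the same route as the paper's proof: reduce to the $F$-basic case via Corollary \eqref{Xk to Xkbasic}, use the fully faithful embedding $\X_{K,d}^{\lt}\hookrightarrow \R_{K,d}^{\Gamma_\disc}$ with limit-preserving target to reduce everything to showing that continuity of the $\Gamma_K$-action is detected at a finite stage of the colimit, and supply that point by the appendix criterion (Lemma \eqref{continuous action Gamma disc}, the analogue of \cite[Lem.~D.28]{EG22}) together with the descent argument of \cite[Lem.~D.31]{EG22}. The only cosmetic difference is that the precise finitary criterion is the lattice condition $(\gamma_i^{p^s}-1)(\fM)\subseteq T\fM$ rather than a condition on finitely many matrix coefficients, but this is exactly what your appeal to the appendix amounts to.
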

\begin{proof}
We may assume that $K$ is $F$-basic. As the natural map $\X_{K,d}^{\lt}\hookrightarrow \R_{K,d}^{\Gamma_\disc}$ is fully faithful with limit preserving target, it suffices to show that the map is limit preserving on objects. In other words, given any direct limit $A=\varinjlim_{i\geq i_0}A_i$ of $\O/\varpi^a$-algebra for some $a\geq 1$ and any object $M$ of $\R_{K,d}^{\Gamma_{\disc}}(A_{i_0})$ such that $M_A:=M\otimes_{\ak{A_{i_0}}}\ak{A}$ lies in the subgroupoid $\X_{K,d}^{\lt}(A)$, we need to find some $i\geq i_0$ such that $M_{A_i}\in \X_{K,d}^{\lt}(A_i)$, i.e. that the action of $\Gamma_{\disc}$ on $M_{A_i}$ extends (uniquely) to a continuous action of $\Gamma_K$. This follows from the Lemma \eqref{continuous action Gamma disc} below and (the proof of) \cite[Lem. D.31]{EG22} (note also the obvious observation that the action of $\Gamma_\disc$ is continuous if and if its restriction to each $\langle \gamma_i\rangle$ is continuous).
\end{proof}
\subsubsubsection{\texorpdfstring{$\X_{K,d}^{\lt}$}{Xd} is an Ind-algebraic stack}
We now turn to showing that $\X_{K,d}^{\lt}$ is an Ind-algebraic stack (thereby completing our proof of Theorem \eqref{Xd ind algebraic}). As in \cite{EG22}, we will need to relate $\X_{K,d}^{\lt}$ with the moduli stacks of weak Wach modules. We will assume throughout that $K$ is $F$-basic.  
\begin{defn}[{{\cite[Def. 3.3.2]{EG22}}}]
(Assume $K$ is $F$-basic.) A rank $d$ projective weak Wach module of $T$-height $\leq h$ with $A$-coefficients is a rank $d$ projective $\varphi_q$-module $\fM$ over $\mathbf{A}_{K,A}^+$ of $T$-height $\leq h$, such that $\fM[1/T]$ is equipped with a continuous semi-linear action of $\Gamma_K$.

If $s\geq 0$, then we say that $\fM$ has level $\leq s$ if $(\gamma_i^{p^s}-1)(\fM)\subseteq T\fM$ for all $1\leq i\leq n$.
\end{defn}
Denote by $\W_{d,h}$ the moduli stack of weak Wach modules which are of $T_K$-height $\leq h$. For each $s\geq 0$, denote by $\W_{d,h,s}$ the substack of those weak Wach modules of level $\leq s$. (Again, by using Drinfeld's descent results, we see that these are \textit{fppf} stacks over $\Spf\O$.) By Lemma \eqref{continuous action Gamma disc} below, any projective weak Wach module is of level $\leq s$ for some $s\gg 0$.

Let $\C_{d,h}$ denote the moduli stack of rank $d$ projective $\varphi_q$-modules of $T$-height $\leq h$ over $\mathbf{A}_{K}^+$ (that is, it is the stack $\C_{d,h}$ defined in Subsection \eqref{stacks of phi-modules} with $\mathbf{A}$ taken to be $\mathbf{A}_{K,A}$). By Theorem \eqref{R_d}, this is a $p$-adic formal algebraic stack of finite presentation over $\Spf\O$. Consider the fiber product $\R_d^{\Gamma_{\disc}}\times_{\R_d}\C_{d,h}$ where the map $\R_d^{\Gamma_\disc}\to \R_d$ is given by forgetting the $\Gamma_\disc$-action. This is the moduli stack of rank $d$ projective $\varphi_q$-modules $\fM$ of $T$-height $\leq h$, equipped with a semilinear action of $\Gamma_\disc$ on $\fM[1/T]$. As the map $\R_d^{\Gamma_\disc}\to \R_d$ is representable by algebraic spaces, and of finite presentation (by Lemma \eqref{R_d Gamma_disc as a fixed point stack}), the same is true of the map $\R_d^{\Gamma_\disc}\times_{\R_d}\C_{d,h}\to \C_{d,h}$. It follows that $\R_d^{\Gamma_\disc}\times_{\R_d}\C_{d,h}$ is also a $p$-adic formal algebraic stack of finite presentation over $\Spf \O$.

By restricting the $\Gamma_K$-action to $\Gamma_\disc$, we obtain a natural morphism $\W_{d,h}\to \R_d^{\Gamma_\disc}\times_{\R_d}\C_{d,h}$ which is again fully faithful since $\Gamma_\disc$ is dense $\Gamma_K$.
\begin{prop}\label{w d h s}
For each $s\geq 0$, the natural morphism
\begin{displaymath}
\W_{d,h,s}\to \R_d^{\Gamma_\disc}\times_{\R_d}\C_{d,h}
\end{displaymath} 
is a closed immersion of finite presentation. In particular, each $\W_{d,h,s}$ is a $p$-adic formal algebraic stack of finite presentation over $\Spf \O$, and $\W_{d,h}=\varinjlim_{s}\W_{d,h,s}$ is an Ind-algebraic stack. 
\end{prop}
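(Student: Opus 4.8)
The plan is to imitate the proof of the cyclotomic analogue in \cite{EG22} and reduce everything to an explicit vanishing condition on finitely many Laurent‑series coefficients. Since the target $\R_d^{\Gamma_\disc}\times_{\R_d}\C_{d,h}$ is a $p$-adic formal algebraic stack of finite presentation over $\Spf\O$ (as recorded just above), and since the morphism $\W_{d,h,s}\to\R_d^{\Gamma_\disc}\times_{\R_d}\C_{d,h}$ is a monomorphism — being fully faithful, as already noted for $\W_{d,h}$, and being of level $\leq s$ is merely a property — it suffices to prove: for every $a\geq 1$, every $\O/\varpi^a$-algebra $A$, and every morphism $\Sp A\to\R_d^{\Gamma_\disc}\times_{\R_d}\C_{d,h}$ classifying a rank $d$ projective $\varphi_q$-module $\fM$ over $\mathbf{A}_{K,A}^+$ of $T$-height $\leq h$ together with a semilinear $\Gamma_\disc$-action on $M:=\fM[1/T]$ commuting with $\varphi_q$, the fibre product $Z:=\W_{d,h,s}\times_{\R_d^{\Gamma_\disc}\times_{\R_d}\C_{d,h}}\Sp A$ is a closed subscheme of $\Sp A$ cut out by a finitely generated ideal. (Throughout, $K$ is $F$-basic, so $\mathbf{A}_K^+$ is $\varphi_q$-stable, the stack $\C_{d,h}$ and the notion of weak Wach module make sense, and $\gamma(T)\in T\mathbf{A}_{K,A}^+$ for all $\gamma\in\Gamma_K$.)

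First I would identify $Z$ with the locus on which $(\gamma_i^{p^s}-1)(\fM)\subseteq T\fM$ for all $i=1,\dots,[F:\mathbf{Q}_p]$. One direction is just the definition of level $\leq s$. For the converse I would invoke Lemma \eqref{continuous action Gamma disc}: a $\varphi_q$-module of finite $T$-height over $\mathbf{A}_{K,A}^+$ carrying a semilinear $\Gamma_\disc$-action on its localization with $(\gamma_i^{p^s}-1)(\fM)\subseteq T\fM$ automatically has that action extending (uniquely) to a continuous action of $\Gamma_K$, hence is a weak Wach module of level $\leq s$. That lemma rests on Lemma \eqref{3.2.18} (controlling $\gamma(T)-T$) and the $T$-adic approximation results of Appendix \ref{T quasi linear}, exactly as in \cite{EG22}. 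I expect this equivalence to be the conceptual heart of the proposition — it is where the continuity condition built into $\W_{d,h}$ meets the purely algebraic level condition — but it is precisely the content of the cited lemma, so in the present proof it is used as a black box; the remaining work is the bookkeeping showing the level condition is closed and finitely presented.

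It then remains to check that the condition ``$(\gamma_i^{p^s}-1)(\fM_{A'})\subseteq T\fM_{A'}$ for all $i$'' on $A$-algebras $A'$ is cut out by a finitely generated ideal of $A$. Since being a closed immersion may be checked fppf-locally on the target, I may use \cite[Lem.~5.1.9(1)]{EG19} to assume $\fM$ is free over $\mathbf{A}_{K,A}^+$ with basis $e_1,\dots,e_d$. Because $A$ is $\varpi^a$-torsion, $\mathbf{A}_{K,A}=\mathbf{A}_{K,A}^+[1/T]$ and $M=\bigcup_{n\geq0}T^{-n}\fM$, so there is a single integer $n_0\geq0$, independent of $A'$, with $\gamma_i^{p^s}(e_k)\in T^{-n_0}\fM$ for all $i,k$. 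Writing $\gamma_i^{p^s}(e_k)=\sum_l a^{(i)}_{kl}e_l$ with $a^{(i)}_{kl}=\sum_{m\geq-n_0}a^{(i)}_{kl,m}T^m$ and $a^{(i)}_{kl,m}\in W_{\O_F}(k_{K,\infty})\otimes_{\O_F}A$ — a finite free $A$-module — a direct computation shows the displayed condition over $A'$ amounts to the vanishing in $A'$ of the coordinates of $a^{(i)}_{kl,m}$ for $-n_0\leq m\leq0$ when $k\neq l$, and of $a^{(i)}_{kk,m}$ for $-n_0\leq m\leq-1$ together with $a^{(i)}_{kk,0}=1$; these are finitely many equations, so $Z=\Sp(A/I)$ for the finitely generated ideal they generate, giving the closed immersion of finite presentation.

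The remaining assertions are formal. Being a closed substack of finite presentation of the $p$-adic formal algebraic stack $\R_d^{\Gamma_\disc}\times_{\R_d}\C_{d,h}$, which is of finite presentation over $\Spf\O$, each $\W_{d,h,s}$ is itself a $p$-adic formal algebraic stack of finite presentation over $\Spf\O$ (cf.\ \cite[Appendix A]{EG22}). For $s\leq s'$, composing $\W_{d,h,s}\hookrightarrow\W_{d,h,s'}$ with the closed immersion $\W_{d,h,s'}\hookrightarrow\R_d^{\Gamma_\disc}\times_{\R_d}\C_{d,h}$ yields the closed immersion $\W_{d,h,s}\hookrightarrow\R_d^{\Gamma_\disc}\times_{\R_d}\C_{d,h}$, so the transition maps are closed immersions; and by Lemma \eqref{continuous action Gamma disc} every rank $d$ projective weak Wach module has level $\leq s$ for $s\gg0$, whence $\W_{d,h}=\varinjlim_s\W_{d,h,s}$. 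Writing each $\W_{d,h,s}$ as the colimit of its $\varpi^a$-truncations then exhibits $\W_{d,h}$ as a colimit of algebraic stacks, i.e.\ an Ind-algebraic stack.
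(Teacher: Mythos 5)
Your proposal is correct and is essentially the argument the paper intends: the paper's own proof simply defers to \cite[Lem. 3.3.5]{EG22}, and what you write is precisely the adaptation of that proof to the Lubin--Tate setting (full faithfulness reduces the statement to identifying the fibre product over $\Sp A$ with the algebraic level condition, which Lemma \eqref{continuous action Gamma disc} shows already encodes continuity, and which after locally trivializing $\fM$ is cut out by finitely many Laurent-coefficient equations). No gaps worth flagging.
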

\begin{proof}
See the proof of \cite[Lem. 3.3.5]{EG22}.
\end{proof}
For each $a\geq 1$, let $\W_{d,h,s}^a:=\W_{d,h,s}\times_{\Spf \O}\Sp \O/\varpi^a$, an algebraic stack of finite presentation over $\O/\varpi^a$. As $\R_d^{\Gamma_\disc}$ is an Ind-algebraic stack, it makes sense to consider the scheme-theoretic image $\X_{d,h,s}^a$ of the composite
\begin{displaymath}
\W_{d,h,s}^a\hookrightarrow \W_{d,h,s}\to \R_d^{\Gamma_\disc},
\end{displaymath}
As the map $\W_{d,h,s}^a\to \R_d^{\Gamma_\disc}$ is representable by algebraic spaces, proper and of finite presentation by Proposition \eqref{w d h s} (recall that the natural map $\C_{d,h}\to \R_d$ has the same properties by Theorem \eqref{R_d}), we find that $\X_{d,h,s}^a$ is a closed algebraic substack of $\R_d^{\Gamma_\disc}$ which is of finite presentation over $\Sp \O/\varpi^a$, and moreover, the induced map $\W_{d,h,s}^a\to \X_{d,h,s}^a$ is proper, scheme-theoretically dominant and surjective. 
\begin{prop}
Each $\X_{d,h,s}^a$ is a closed (algebraic) substack of $\X_{K,d}^{\lt}$. Moreover, the induced map $\varinjlim_{a,h,s}\X_{d,h,s}^a\to \X_{K,d}^{\lt}$ is an isomorphism. Thus $\X_{K,d}^{\lt}$ is an Ind-algebraic stack, and may in fact be written as the inductive limit of algebraic stacks of finite presentation over $\Sp \O$, with the transition maps being closed immersions.
\end{prop}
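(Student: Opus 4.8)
The plan is to follow \cite[\textsection 3.3]{EG22} and split the statement into two assertions. \emph{(a)} Each $\X_{d,h,s}^a$---already known from the preceding construction to be a closed algebraic substack of $\R_d^{\Gamma_\disc}$ of finite presentation over $\Sp\O/\varpi^a$---in fact factors through the monomorphism $\X_{K,d}^{\lt}\hookrightarrow\R_d^{\Gamma_\disc}$, and is therefore a closed substack of $\X_{K,d}^{\lt}$. \emph{(b)} The maps $\X_{d,h,s}^a\to\X_{K,d}^{\lt}$ so obtained are jointly essentially surjective. Granting these, the nesting for $(a,h,s)\leq(a',h',s')$ follows from the evident closed immersions $\W_{d,h,s}^a\hookrightarrow\W_{d,h',s'}^{a'}$ (obtained by raising the $T$-height bound, raising the level bound, and intersecting with $\Sp\O/\varpi^a$), which force the scheme-theoretic images $\X_{d,h,s}^a$ to be nested; hence the transition maps are closed immersions. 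As each $\X_{d,h,s}^a$ is then of finite presentation over $\Sp\O$, the colimit $\varinjlim_{a,h,s}\X_{d,h,s}^a$ is Ind-algebraic of the claimed shape, and \emph{(b)}---together with the fact that each $\X_{d,h,s}^a\hookrightarrow\X_{K,d}^{\lt}$ is a monomorphism, so the colimit map is fully faithful---shows that it agrees with $\X_{K,d}^{\lt}$. (For general $K$ one reduces to the $F$-basic case treated here via Corollary \eqref{Xk to Xkbasic}.)

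For \emph{(a)}: the closed immersion $\X_{d,h,s}^a\hookrightarrow\R_d^{\Gamma_\disc}$ classifies a universal rank $d$ projective étale $\varphi_q$-module over $\mathbf{A}_{K,A}$ ($A$ denoting the structure sheaf of $\X_{d,h,s}^a$) carrying an a priori merely discrete semilinear action of $\Gamma_\disc$ commuting with $\varphi_q$; by density of $\Gamma_\disc$ in $\Gamma_K$ and full faithfulness of $\X_{K,d}^{\lt}\hookrightarrow\R_d^{\Gamma_\disc}$ it suffices to check that this $\Gamma_\disc$-action extends to a continuous action of $\Gamma_K$ (the extension is then unique and compatible with base change). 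By the construction above, $\W_{d,h,s}^a\to\X_{d,h,s}^a$ is proper, surjective and scheme-theoretically dominant, and the pullback of the universal object along it is $\fM[1/T]$, for the universal weak Wach module $\fM$ of $T$-height $\leq h$ and level $\leq s$, equipped with the continuous $\Gamma_K$-action coming from the definition of a weak Wach module. The remaining, and crucial, input is that continuity of the $\Gamma_\disc$-action is a \emph{closed} condition on the Noetherian stack $\X_{d,h,s}^a$: I would obtain this from the ``$T$-quasi-linearity'' criterion of Appendix \ref{T quasi linear} (the analogue of \cite[Lem. D.28]{EG22}), which reformulates continuity in terms of $T$-adic congruences, together with the fact that on a Noetherian base such a condition is an increasing union of closed substacks, hence closed. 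Since this closed locus contains the scheme-theoretically dense image of $\W_{d,h,s}^a$, it is all of $\X_{d,h,s}^a$; thus the universal $\Gamma_\disc$-action is continuous, $\X_{d,h,s}^a\to\R_d^{\Gamma_\disc}$ factors through $\X_{K,d}^{\lt}$, and being a closed substack of $\R_d^{\Gamma_\disc}$ it is a closed substack of $\X_{K,d}^{\lt}$.

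For \emph{(b)}: since $\X_{K,d}^{\lt}$ is limit preserving (Lemma \eqref{X limit preserving}) and so, being a filtered colimit of finitely presented algebraic stacks along closed immersions, is $\varinjlim_{a,h,s}\X_{d,h,s}^a$, it suffices to show that every object $M$ of $\X_{K,d}^{\lt}(A)$, for $A$ of finite type over some $\O/\varpi^a$, lies in $\X_{d,h,s}^a(A)$ for suitable $h,s$. Forgetting the $\Gamma_K$-action, $M$ is a rank $d$ projective étale $\varphi_q$-module over $\mathbf{A}_{K,A}$; by the finite-height results of \cite{EG19} (available since $\mathbf{A}_K^+$ is $\varphi_q$-stable, $K$ being $F$-basic) and quasi-compactness of $\Sp A$, after an fppf cover $\Sp B\to\Sp A$ the module $M_B$ admits a $\varphi_q$-stable $\mathbf{A}_{K,B}^+$-lattice $\fM_B$ of $T$-height $\leq h$ for some $h$. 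Restricting the (continuous) $\Gamma_K$-action to $\Gamma_\disc$ exhibits $(\fM_B,\fM_B[1/T])$ as a weak Wach module, which has level $\leq s$ for some $s\gg0$ by Lemma \eqref{continuous action Gamma disc} (again using quasi-compactness). Thus $M_B$, regarded as an object of $\R_d^{\Gamma_\disc}(B)$, lies in the image of $\W_{d,h,s}^a(B)$, hence factors through the scheme-theoretic image $\X_{d,h,s}^a$; as $\X_{d,h,s}^a\hookrightarrow\X_{K,d}^{\lt}$ is a closed immersion by \emph{(a)} and ``factoring through a closed substack'' is fppf-local on the source, the original map $\Sp A\to\X_{K,d}^{\lt}$ factors through $\X_{d,h,s}^a$, i.e.\ $M\in\X_{d,h,s}^a(A)$, as desired.

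I expect the main obstacle to be the closedness claim in \emph{(a)}: continuity of the $\Gamma_\disc$-action is \emph{not} a closed condition on $\R_d^{\Gamma_\disc}$ itself---this is precisely why the stratification of weak Wach modules by level is introduced---so one cannot merely invoke minimality of the scheme-theoretic image, and must instead exploit the finite $T$-height and bounded level built into $\W_{d,h,s}^a$ to pin continuity down to an honestly closed condition on the Noetherian stack $\X_{d,h,s}^a$ before applying scheme-theoretic dominance. Everything else---the nesting of the $\W_{d,h,s}^a$, the finite-height and bounded-level inputs feeding into essential surjectivity, and the fppf-local character of factoring through a closed substack---is routine and closely parallels \cite{EG22}.
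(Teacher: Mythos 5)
Your part (b) is essentially the paper's argument (the paper checks the factorization after a scheme-theoretically dominant map $\Sp B\to \Sp A$ furnished by \cite[Lem. 5.4.7]{EG19}; that is what you should use in place of an fppf cover, but this is cosmetic), and your bookkeeping of the transition maps is fine. The genuine gap is in part (a), at exactly the point you flag as the main obstacle. Your proposed resolution --- that continuity of the $\Gamma_\disc$-action on the universal object is a closed condition on $\X^a_{d,h,s}$ because, via the criterion of Lemma \eqref{continuous action Gamma disc}, it is ``an increasing union of closed substacks, hence closed'' on a Noetherian base --- does not work. First, an increasing union of closed substacks of a Noetherian stack need not be closed: closed subsets of a Noetherian space satisfy the descending, not the ascending, chain condition (finite subsets of $\mathbf{A}^1$ already give a counterexample). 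Second, and more fundamentally, the continuity criterion is the existence of \emph{some} lattice $\fM$ with $(\gamma_i^{p^s}-1)\fM\subseteq T\fM$ for all $i$, and over $\X^a_{d,h,s}$ there is no universal lattice to which one could apply it: the lattice lives on $\W^a_{d,h,s}$, and $\W^a_{d,h,s}\to \X^a_{d,h,s}$ is proper and scheme-theoretically dominant but far from an isomorphism. So you have not exhibited ``the locus where the $\Gamma_\disc$-action is continuous'' as a closed substack at all, and scheme-theoretic dominance cannot be brought to bear in the way you intend.

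What the paper does instead, following \cite[Lem. 3.4.8]{EG22}: given a finite type point $\Sp A\to \X^a_{d,h,s}$, it first embeds $A$ into the product of its Artinian quotients, reducing to $A$ finite Artinian; it then produces the required lattice by running the argument of \cite[Lem. 3.4.8]{EG22} with the auxiliary subfunctor $D'$ of lattices of height $\leq h$, modified so as to impose $(\gamma_i^{p^s}-1)\fM_{\Lambda}\subseteq T\fM_{\Lambda}$ for all $i$. It is the properness of this moduli space of lattices, combined with the minimality property of the scheme-theoretic image, that yields a lattice (hence continuity) over the Artinian base. This mechanism --- reduction to Artinian coefficients plus the lattice-functor argument --- is the missing content of your part (a); it is not a formal consequence of the ingredients you list.
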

\begin{proof}
The arguments given in \cite[Lem. 3.4.8, 3.4.9 and 3.4.10]{EG22} go through provided one replaces the condition ``$(\gamma^{p^s}-1)(\fM)\subseteq T\fM$'' with ``$(\gamma_i^{p^s}-1)(\fM)\subseteq T\fM$ for all $i$" everywhere. For convenience of the reader, we sketch the proof here. We begin with the first assertion. It suffices to show that $\X_{d,h,s}^a$ is a substack of $\X_{K,d}^{\lt}$. By definition, this amounts to showing that if $M$ is an \' etale $\varphi$-module with coefficients in a finite type $\O/\varpi^a$-algebra $A$, equipped with a $\Gamma_\disc$-action for which the map $\Sp A\to \R_d^{\Gamma_\disc}$ classifying $M$ factors through $\X_{d,h,s}^a$, then the $\Gamma_\disc$-action on $M$ is continuous. In view of Lemma \eqref{continuous action Gamma disc}, we need to produce a lattice $\fM\subseteq M$ such that $(\gamma_i^{p^s}-1)\fM\subseteq T\fM$ for all $i$. By embedding $A$ into the product of its Artinian quotients, we may reduce to the case where $A$ is a finite Artinian $\O/\varpi^a$-algebra. In this case, the existence of the desired lattice is established in \cite[Lem. 3.4.8]{EG22} provided one modifies the definition of the subfunctor $D'$ there by imposing the condition that $(\gamma_i^{p^s}-1)\fM_{\Lambda}\subseteq T\fM_{\Lambda}$ for all $i$.

For the second assertion, we need to show that any morphism $\Sp A\to \X_{K,d}^{\lt}$ whose source is a Noetherian $\O/\varpi^a$-algebra necessarily factors through some $\X_{d,h,s}^a$. It suffices to do this after replacing $A$ by some algebra $B$ for which the map $\Sp B\to \Sp A$ is scheme-theoretically dominant. Thus, by \cite[Lem. 5.4.7]{EG19}, we may assume that $M$ is free, where $M$ is the \' etale $\varphi$-module over $\aka$ classified by the map $\Sp A\to \R_d$. In particular, we can choose a free $\varphi$-stable lattice $\fM$ inside $M$, say of height $\leq h$ for some $h$ large enough. As the $\Gamma$-action on $M$ is continuous by definition, Lemma \eqref{continuous action Gamma disc} implies that for all $s$ sufficiently large, we have $(\gamma_i^{p^s}-1)\fM\subseteq T\fM$ for all $i$. It follows that the map $\Sp A\to \X_{K,d}^{\lt}$ classifying $M$ factors through $\W_{d,h,s}^a$, and hence through $\X_{d,h,s}^a$, as required. 
\end{proof}
As mentioned before, using Corollary \eqref{Xk to Xkbasic}, we can now drop the assumption that $K$ is $F$-basic. 
\begin{prop}\label{Xd ind algebraic K general}
Let $K$ be an arbitrary finite extension of $F$. Then $\X_{K,d}^{\lt}$
is a limit preserving Ind-algebraic stack, which can in fact be written as the inductive limit of a sequence of algebraic stacks of finite presentations over $\Sp \O$ with transition maps being closed immersions. Furthermore the diagonal of $\X_{K,d}^{\lt}$ is affine (in particular, representable by schemes), and of finite presentation.
\end{prop}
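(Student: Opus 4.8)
The plan is to bootstrap from the $F$-basic case, which has already been established in the preceding discussion, via the reduction map $\X_{K,d}^{\lt}\to \X_{K^{\mathrm{basic}},d[K:K^{\mathrm{basic}}]}^{\lt}$. Concretely, I would proceed as follows. First, recall from Corollary \eqref{Xk to Xkbasic} that this map is affine and of finite presentation. Since $K^{\mathrm{basic}}$ is $F$-basic, the work done above (the construction of the $\X_{d,h,s}^a$ and the identification $\varinjlim_{a,h,s}\X_{d,h,s}^a\xrightarrow{\sim}\X_{K^{\mathrm{basic}},d[K:K^{\mathrm{basic}}]}^{\lt}$) shows that $\X_{K^{\mathrm{basic}},d[K:K^{\mathrm{basic}}]}^{\lt}$ is a limit preserving Ind-algebraic stack, expressible as an inductive limit of algebraic stacks of finite presentation over $\Sp\O$ with transition maps closed immersions. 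I would then invoke \cite[Cor. 3.2.9]{EG22}: given an affine, finitely presented morphism from a stack $\X$ to such an Ind-algebraic stack, $\X$ inherits the same structure. This yields directly that $\X_{K,d}^{\lt}$ is a limit preserving Ind-algebraic stack of the desired shape.

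For the statement on the diagonal, the cleanest route is not to go through the reduction map but to use Corollary \eqref{diagonal of Xk} directly, which already asserts — for arbitrary $K/F$, since its proof only used the monomorphism $\X_{K,d}^{\lt}\hookrightarrow \R_d^{\Gamma_\disc}$ together with Corollary \eqref{diagonal of R_d Gamma_disc}, neither of which assumed $K$ $F$-basic — that the diagonal of $\X_{K,d}^{\lt}$ is representable by algebraic spaces, affine, and of finite presentation. To upgrade ``representable by algebraic spaces and affine'' to ``representable by schemes'', one notes that an affine morphism is in particular schematic (affine schemes are schemes), so the diagonal is representable by schemes; alternatively this also follows formally from \cite[Cor. 3.2.9]{EG22} applied in the situation at hand. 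Thus both halves of the conclusion are assembled from results already in place.

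The only point requiring a little care — and the closest thing to an obstacle — is making sure the hypotheses of \cite[Cor. 3.2.9]{EG22} are genuinely met, i.e.\ that the target $\X_{K^{\mathrm{basic}},d[K:K^{\mathrm{basic}}]}^{\lt}$ has been shown to be \emph{limit preserving} and \emph{Ind-algebraic} before the reduction step is applied; this is why the argument is organized so that the $F$-basic case (including Lemma \eqref{X limit preserving} and the preceding Proposition identifying $\X_{K,d}^{\lt}$ with $\varinjlim_{a,h,s}\X_{d,h,s}^a$) is completed first. Once that is in hand, the present proposition is essentially a formal consequence, and no new geometric input is needed. I would therefore write the proof as: ``Combine Corollary \eqref{diagonal of Xk}, Corollary \eqref{Xk to Xkbasic}, the $F$-basic case treated above, and \cite[Cor. 3.2.9]{EG22}.''
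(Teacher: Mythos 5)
Your proposal is correct and coincides with the paper's own (implicit) argument: the paper reduces precisely via Corollary \eqref{Xk to Xkbasic}, the $F$-basic case established through the stacks $\X_{d,h,s}^a$, Corollary \eqref{diagonal of Xk} for the diagonal, and \cite[Cor. 3.2.9]{EG22}. Nothing further is needed.
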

\subsection{Galois representations with coefficients}
As one might expect from the case of cyclotomic $(\varphi,\Gamma)$-modules, there is also an analogue of the equivalence in Theorem \eqref{equivalence with Galois rep no coefficient} in the presence of coefficients. 

To this end, let $A$ be a complete local Noetherian $\O$-algebra with residue field. As in \cite[\textsection 3.6.1]{EG22}, we denote by $\widehat{\mathbf{A}}_{K,A}$ the $\m_A$-adic completion of $\aka$, and we define
a formal \' etale $(\varphi_q,\Gamma_K)$-module with $A$-coefficients to be an \' etale $(\varphi_q,\Gamma_K)$-module over $\widehat{\mathbf{A}}_{K,A}$ in the obvious sense. We also let $\widehat{\mathbf{A}}^{\mathrm{ur}}_{K,A}$ be the $\m_A$-adic completion of the tensor product $\mathbf{A}^{\mathrm{ur}}\otimes_{\O_F}A$.
\begin{thm}\label{Dee equivalence}
The functor 
\begin{displaymath}
V\mapsto D_A(V):=(V\otimes_A\widehat{\mathbf{A}}_{K,A}^{\mathrm{ur}})^{G_{K_\infty}}
\end{displaymath}
gives an equivalence of categories between the category of continuous $G_K$-representations on finite free $A$-modules and the category of finite projective formal \' etale $(\varphi_q,\widetilde{\Gamma}_K)$-modules with $A$-coefficients. A quasi-inverse functor is given by $M\mapsto T_A(M):=(\widehat{\mathbf{A}}_{K,A}^{\mathrm{ur}}\otimes_{\widehat{\mathbf{A}}_{K,A}} M)^{\varphi_q=1}$.
\end{thm}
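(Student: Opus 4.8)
The plan is to deduce the claim from the coefficient-free equivalence of Theorem~\eqref{equivalence with Galois rep no coefficient} by a two-step d\'evissage, in the spirit of \cite[\textsection 3.6.1]{EG22}: one first treats the case where $A$ is Artinian, and then passes to the limit over the quotients $A/\m_A^n$ for a general complete local Noetherian $A$ with finite residue field.

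For $A$ Artinian --- which, since $A$ is Noetherian with finite residue field, means $A$ is automatically a finite local ring --- the $\m_A$-adic completions are trivial, so $\widehat{\mathbf{A}}_{K,A}=\mathbf{A}_{K,A}$ and $\widehat{\mathbf{A}}^{\mathrm{ur}}_{K,A}=\mathbf{A}^{\mathrm{ur}}\otimes_{\O_F}A$. For $V$ finite free over $A$ there is a canonical identification $V\otimes_A\widehat{\mathbf{A}}^{\mathrm{ur}}_{K,A}\cong V\otimes_{\O_F}\mathbf{A}^{\mathrm{ur}}$ of $G_{K_\infty}$-modules, so $D_A$ and $T_A$ coincide, on underlying objects, with the functors $D$ and $T$ of Theorem~\eqref{equivalence with Galois rep no coefficient}, only carrying an extra $A$-linear structure. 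Hence the adjunction maps $V\to T_A(D_A(V))$ and $D_A(T_A(M))\to M$ are isomorphisms of underlying objects, and it remains to check that $D_A(V)$ is finite projective over $\mathbf{A}_{K,A}$ when $V$ is finite free over $A$, that $T_A(M)$ is finite free over $A$ when $M$ is finite projective over $\mathbf{A}_{K,A}$, and that the relevant $\widetilde{\Gamma}_K$- and $G_K$-actions are continuous (the last being routine from finiteness). For the remaining assertions I would induct on the length of $A$: choosing a nonzero ideal $I\subseteq A$ with $\m_A I=0$, so that $I$ is a vector space over the finite residue field $\kappa$, one obtains short exact sequences $0\to IV\to V\to V/IV\to 0$ and $0\to IM\to M\to M/IM\to 0$ whose outer terms are modules over $\kappa$, respectively over $A/I$. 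The base case $A=\kappa$ reduces to Theorem~\eqref{equivalence with Galois rep no coefficient} with the $\kappa$-structure transported by functoriality; here $\mathbf{A}_{K,\kappa}\cong\mathbf{E}_K\otimes_k\kappa$ is a finite \'etale algebra over $\mathbf{A}_K/\pi=\mathbf{E}_K$, hence a finite product of fields, over which finite modules are automatically projective, and one sees --- e.g.\ by comparing $\O_F$-lengths of $V$ and of $\mathrm{End}_\kappa(V)$, using the compatibility of $D$ with tensor products and duals --- that $D_\kappa(V)$ has constant rank $d$; dually $T_\kappa(M)$ is a $\kappa$-vector space of dimension $d$. The inductive step uses the exactness of $D$: for a finite $\O_F$-module $W$ killed by a power of $\pi$ one has $H^1(G_{K_\infty},W\otimes_{\O_F}\mathbf{A}^{\mathrm{ur}})=0$ by a standard additive Hilbert~90 / normal basis argument (via the Fontaine--Wintenberger isomorphism $G_{K_\infty}\xrightarrow{\sim}G_{\mathbf{E}_K'}$ and the vanishing of $H^1(G_{\mathbf{E}_K'},(\mathbf{E}_K')^{\mathrm{sep}})$), so $D$ carries the above sequences to exact sequences; combining finite generation of $D_A(V)$ with flatness over $\mathbf{A}_{K,A}$ --- checked modulo $\m_A$, where it reduces to the base case --- and Nakayama's lemma then yields the projectivity of $D_A(V)$, and symmetrically the freeness of $T_A(M)$.

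For a general complete local Noetherian $\O$-algebra $A$ with finite residue field I would write $A=\varprojlim_n A/\m_A^n$, each $A/\m_A^n$ being a finite local ring, and pass to the limit. On the Galois side a continuous representation on a finite free $A$-module is the same datum as a compatible system of continuous representations on the finite free $A/\m_A^n$-modules $V/\m_A^nV$. On the module side one checks, as in \cite[\textsection 3.6.1]{EG22}, that $\widehat{\mathbf{A}}_{K,A}=\varprojlim_n\mathbf{A}_{K,A/\m_A^n}$ and $\widehat{\mathbf{A}}^{\mathrm{ur}}_{K,A}=\varprojlim_n\mathbf{A}^{\mathrm{ur}}_{K,A/\m_A^n}$, and that a finite projective formal \'etale $(\varphi_q,\widetilde{\Gamma}_K)$-module over $\widehat{\mathbf{A}}_{K,A}$ is the same datum as a compatible system of finite projective \'etale $(\varphi_q,\widetilde{\Gamma}_K)$-modules over the $\mathbf{A}_{K,A/\m_A^n}$ of constant rank $d$. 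Since $V$ is finite free, $G_{K_\infty}$-invariants commute with these limits, so $D_A(V)=\varprojlim_n D_{A/\m_A^n}(V/\m_A^nV)$ and, dually, $T_A(M)=\varprojlim_n T_{A/\m_A^n}(M/\m_A^nM)$; the Artinian case then supplies the equivalence at each finite level, together with the adjunction isomorphisms, and these propagate to the inverse limit.

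I expect the main obstacle to be the completion bookkeeping in the last step --- showing that a finite projective module over $\widehat{\mathbf{A}}_{K,A}$ is recovered as the inverse limit of its reductions, and conversely that an inverse limit of compatible finite projective \'etale $(\varphi_q,\widetilde{\Gamma}_K)$-modules at finite level is again finite projective over $\widehat{\mathbf{A}}_{K,A}$ (a Mittag--Leffler and flatness argument, using that $\widehat{\mathbf{A}}_{K,A}$ is Noetherian). The exactness of $D$ invoked in the Artinian step, while standard, is the other point one cannot skip, since the whole d\'evissage rests on it.
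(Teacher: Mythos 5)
Your proposal is correct and follows essentially the same route as the paper's own (sketched) proof: reduce to the finite-length/Artinian case, where $D_A(V)$ coincides with the coefficient-free functor of Theorem \eqref{equivalence with Galois rep no coefficient} (this is exactly Dee's d\'evissage, cf. \cite{Dee01}), and then pass to the limit over the quotients $A/\m_A^n$, checking compatibility of the constructions with inverse limits as in \cite[Thm. 7.18]{HerrBerger}. The only differences are matters of how much detail is written out (e.g.\ your flatness/Nakayama argument for projectivity of $D_A(V)$ and the limit bookkeeping), not of strategy.
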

\begin{proof}
This is a generalization of of \cite[Thm. 2.2.1]{Dee01} to the setting of Lubin--Tate $(\varphi_q,\Gamma_K)$-modules. The idea is to first consider the case where $V$ has finite length. In this case $D_A(V)$ simplifies to $D_{\O_F}(V)$ where in the latter we view $V$ as a module over $\O_F$; in particular, we can make use of the already-established case of $\O_F$-linear representations (Theorem \eqref{equivalence with Galois rep no coefficient}). Once this is done, we pass to the limit to deduce the general case (after checking that everything behave well under taking limits). A detailed proof has been recently worked out in \cite[Thm. 7.18]{HerrBerger}.
\end{proof}
The above equivalence also holds in the case where $A=\overline{\mathbf{F}}_p$ (this follows easily from the fact that $\X_{K,d}^{\lt}$ is limit preserving). In other words, we see that the groupoid of $\overline{\mathbf{F}}_p$-points of $\X_{K,d}^{\lt}$ is equivalent to the groupoid of representations $G_K\to\mathrm{GL}_d(\overline{\mathbf{F}}_p)$. Using this equivalence, we can deduce easily that taking versal rings of the stack $\X_{K,d}^{\lt}$ at finite type points recovers the usual framed Galois deformation rings.

As in the cyclotomic setting, we can also define the universal unramified rank one \' etale $(\varphi_q,\Gamma_K)$-module.

Recall that $k_F\cong \mathbf{F}_q$ (resp. $k_K\cong \mathbf{F}_{q^r}$) denotes the residue field of $F$ (resp. $K$). Unless otherwise stated, we assume from now on that the base ring $\O$ is large enough so that it contains $W_{\O_F}(k_K)$ (or equivalently, that $\mathbf{F}$ contains $k_K$).

\begin{lem}
Let $A$ be an $\O$-algebra, and let $a\in A^\times$. Then, up to isomorphism, there is a unique free \' etale $\varphi_q$-module  $D_{K,a}$ of rank one over $W_{\O_F}(k_K)\otimes_{\O_F}A$ with the property that $\varphi_q^r=1\otimes a$ on $D_{K,a}$.
\end{lem}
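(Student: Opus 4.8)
The plan is to trivialize the coefficient ring and then reduce everything to an explicit computation with ``multipliers.'' Write $R:=W_{\O_F}(k_K)\otimes_{\O_F}A$. Since $\O$ — and hence $A$ — contains $W_{\O_F}(k_K)$, and $W_{\O_F}(k_K)$ is the ring of integers of the unramified extension of $F$ of degree $r$, which is cyclic Galois over $F$ with group generated by $\varphi_q$, there is an $A$-algebra isomorphism
\[
\theta\colon R=W_{\O_F}(k_K)\otimes_{\O_F}A\;\xrightarrow{\ \sim\ }\;\prod_{j\in\mathbf{Z}/r\mathbf{Z}}A,\qquad x\otimes a\longmapsto\bigl(\varphi_q^{\,j}(x)\,a\bigr)_{j},
\]
under which $\varphi_q=\varphi_q\otimes\mathrm{id}_A$ becomes the cyclic shift $(v_j)_j\mapsto(v_{j+1})_j$, and $a\mapsto 1\otimes a$ becomes the diagonal embedding $A\hookrightarrow\prod_jA$. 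In particular $\varphi_q^{\,r}=\mathrm{id}_R$, so on any rank one free étale $\varphi_q$-module $D$ the operator $\varphi_q^{\,r}$ is $R$-linear, i.e.\ multiplication by a well-defined element of $R^\times$, and the condition ``$\varphi_q^{\,r}=1\otimes a$ on $D$'' asks exactly that this element be the diagonal image of $a$.

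Next I would record the standard description of rank one free étale $\varphi_q$-modules: a choice of basis $e$ of $D$ gives $\varphi_q(e)=\alpha e$ with $\alpha\in R^\times$, every $\alpha\in R^\times$ occurs, and changing the basis by $\beta\in R^\times$ replaces $\alpha$ by $\varphi_q(\beta)\beta^{-1}\alpha$. A one-line induction gives $\varphi_q^{\,r}(e)=\bigl(\prod_{l=0}^{r-1}\varphi_q^{\,l}(\alpha)\bigr)e$; writing $\alpha=(\alpha_j)_j$ in the coordinates of $\theta$, each $\varphi_q^{\,l}(\alpha)=(\alpha_{j+l})_j$, so $\prod_{l=0}^{r-1}\varphi_q^{\,l}(\alpha)$ is the constant tuple with every entry equal to $\prod_{k\in\mathbf{Z}/r\mathbf{Z}}\alpha_k$, i.e.\ the diagonal image of $\prod_k\alpha_k$. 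Hence $D$ satisfies $\varphi_q^{\,r}=1\otimes a$ if and only if $\prod_k\alpha_k=a$ in $A$. For existence one may then simply take $\alpha=(a,1,\dots,1)\in R^\times$ — equivalently, declare $D_{K,a}$ to be $R$ with $\varphi_q$ acting on the chosen generator through any $\alpha\in R^\times$ whose coordinate product is $a$ — which is manifestly free of rank one and étale.

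For uniqueness, suppose $D=Re$ and $D'=Re'$ have multipliers $\alpha,\alpha'$ with $\prod_k\alpha_k=\prod_k\alpha'_k=a$. An isomorphism $D\to D'$ is a $\beta\in R^\times$ with $\varphi_q(\beta)\beta^{-1}=\alpha\alpha'^{-1}=:\gamma$, where $\gamma\in R^\times$ and $\prod_k\gamma_k=a/a=1$. In coordinates, $\varphi_q(\beta)=\gamma\beta$ reads $\beta_{j+1}=\gamma_j\beta_j$ for all $j\in\mathbf{Z}/r\mathbf{Z}$; setting $\beta_0=1$ and $\beta_j=\gamma_{j-1}\cdots\gamma_0$ solves all relations, the only constraint beyond these definitions being the wrap-around relation $\beta_0=\gamma_{r-1}\beta_{r-1}$, which is precisely $\prod_k\gamma_k=1$. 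Each $\beta_j$ is a product of units, so $\beta\in R^\times$, giving $D\cong D'$. I do not expect a genuine obstacle here: the only point needing care is the bookkeeping around the cycle $\mathbf{Z}/r\mathbf{Z}$ — matching the telescoped consistency condition with the vanishing of the coordinate product — and the statement is really a concrete instance of Hilbert's Theorem 90 for the $\mathbf{Z}/r\mathbf{Z}$-Galois cover $\Sp R\to\Sp A$, its existence half being trivial and its uniqueness half the telescoping above. (If desired, one checks along the way that the construction $a\mapsto D_{K,a}$ is functorial in the $\O$-algebra $A$, so that $D_{K,a}$ deserves to be called universal; this is immediate from the explicit description, and the whole argument can alternatively be run via Galois descent along $\Sp R\to\Sp A$, as in the cyclotomic case of \cite{EG22}.)
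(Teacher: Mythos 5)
Your proposal is correct and follows essentially the same route as the paper: the paper reduces the statement to the surjectivity of the norm map $x\mapsto x\varphi_q(x)\cdots\varphi_q^{r-1}(x)$ on $(W_{\O_F}(k_K)\otimes_{\O_F}A)^\times$ with kernel $\{\varphi_q(y)/y\}$, and deduces both from the same splitting $W_{\O_F}(k_K)\otimes_{\O_F}A\cong\prod_j A$ turning $\varphi_q$ into a cyclic shift. Your explicit multiplier bookkeeping (existence via $\alpha=(a,1,\dots,1)$, uniqueness via telescoping a norm-one cocycle) just spells out the details the paper leaves as ``follows easily from this isomorphism.''
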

\begin{proof}
We need to show that the norm map $(W_{\O_F}(k_K)\otimes_{\O_F}A)^\times\to A^\times, x\mapsto N(x):=x\varphi_q(x)\ldots \varphi_q^{r-1}(x)$ is surjective with kernel given by the set $\{\varphi_q(y)/y\;|\;y\in (W_{\O_F}(k)\otimes_{\O_F}A)^\times\}$. As $\O$ (and hence $A$) is naturally a $W_{\O_F}(k_K)$-algebra, we have an $A$-algebra isomorphism $W_{\O_F}(k_K)\otimes_{\O_F}A\to \prod A, x\otimes 1\mapsto (x,\varphi_q(x),\ldots,\varphi_q^{r-1}(x))$. The lemma then follows easily from this ismorphism. 
\end{proof}
\begin{defn}
Let $A$ be a $\varpi$-adically complete $\O$-algebra, and let $a\in A^\times$. Define $\aka(\ur{a}):=D_{K,a}\otimes_{W_{\O_F}(k_K)\otimes_{\O_F}A}\ak{A}$. This is a rank one \' etale $(\varphi_q,\Gamma_K)$-module with $A$-coefficients, where we let $\varphi_q$ act diagonally, and $\Gamma_K$ act on the second factor. 
\end{defn}
\begin{lem}\label{universal unramified}
Let $A$ be a finite Artinian local $\O$-algebra, and let $a\in A^\times$. Then, under the equivalence \eqref{Dee equivalence}, $\aka(\ur{a})$ corresponds to the unramified character $\ur{a}$ of $G_K$ sending geometric Frobenii to $a$.
\end{lem}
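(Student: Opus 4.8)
The plan is to compute the image of $\aka(\ur{a})$ under the quasi-inverse $T_A$ of the equivalence of Theorem~\eqref{Dee equivalence} (used here in its $\Gamma_K$-incarnation; cf.\ the coefficient analogue of Lemma~\eqref{Delta invariant}), and to check that it is the free rank one $A$-module on which $G_K$ acts through $\ur{a}$. Since $A$ is finite Artinian local, $\m_A$ is nilpotent and $A$ is killed by a power of $\pi$, so $\widehat{\mathbf{A}}_{K,A}=\aka$ and $\widehat{\mathbf{A}}_{K,A}^{\mathrm{ur}}=\mathbf{A}_K^{\mathrm{ur}}\otimes_{\O_F}A$, and the completions in the definition of $T_A$ disappear. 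Substituting $\aka(\ur a)=D_{K,a}\otimes_{W_{\O_F}(k_K)\otimes_{\O_F}A}\aka$ into $T_A(M)=(\widehat{\mathbf{A}}_{K,A}^{\mathrm{ur}}\otimes_{\widehat{\mathbf{A}}_{K,A}}M)^{\varphi_q=1}$, the copy of $\aka$ cancels; and since the $G_K$-action on $\mathbf{A}_K^{\mathrm{ur}}$ restricts on $\mathbf{A}_K$ to the $\Gamma_K$-action, one obtains a $\varphi_q$- and $G_K$-equivariant isomorphism
\[
T_A(\aka(\ur a))\;\cong\;\bigl(\mathbf{A}_K^{\mathrm{ur}}\otimes_{W_{\O_F}(k_K)}D_{K,a}\bigr)^{\varphi_q=1},
\]
with $\varphi_q$ acting diagonally and $G_K$ acting through its (Frobenius-equivariant) action on $\mathbf{A}_K^{\mathrm{ur}}$.

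The next step is to make this explicit. Since $\O\supseteq W_{\O_F}(k_K)$ and $W_{\O_F}(k_K)/\O_F$ is unramified of degree $r$, we have $W_{\O_F}(k_K)\otimes_{\O_F}A\cong\prod_{i\in\mathbf{Z}/r}A$, under which $\varphi_q$ is a cyclic shift; choosing a basis of the free rank one module $D_{K,a}$ adapted to this decomposition, the relation $\varphi_q^{r}=a$ becomes concentrated at a single coordinate, and a direct computation identifies the displayed $\varphi_q$-invariants with
\[
\bigl\{\,\mu\in\mathbf{A}_K^{\mathrm{ur}}\otimes_{W_{\O_F}(k_K)}A\ :\ \varphi_q^{r}(\mu)=a^{-1}\mu\,\bigr\},
\]
$A$ being regarded as a $W_{\O_F}(k_K)$-algebra via its structure map and $\varphi_q$ acting $A$-linearly.

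Finally I would descend this eigenspace to the constants. Recall that $K_\infty/K$, hence $K_{\mathrm{cyc}}/K$, is totally ramified (standard Lubin--Tate theory), so $k_{K_\infty}=k_K$; consequently $\mathbf{A}_K^{\mathrm{ur}}$ contains the unramified lift $W_{\O_F}(\overline{k}_K)$ of $\overline{k}_K\subseteq(\mathbf{E}_K')^{\mathrm{sep}}$, this inclusion is $\varphi_q$-equivariant, and $G_K$ acts on $W_{\O_F}(\overline{k}_K)$ through the maximal unramified quotient $G_K\twoheadrightarrow\mathrm{Gal}(\overline{k}_K/k_K)=\mathrm{Gal}(K^{\mathrm{unr}}/K)$, with the arithmetic Frobenius acting there exactly as $\varphi_q^{r}$. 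Reducing $\varphi_q^{r}(\mu)=a^{-1}\mu$ modulo $\pi$ and using $\bigl((\mathbf{E}_K')^{\mathrm{sep}}\bigr)^{\varphi_q^{r}=1}=k_K$, one sees that $\bar\mu$ lies in $\overline{k}_K\otimes_{k_K}(A/\pi A)$; a Lang's-theorem-plus-successive-approximation argument then shows that the eigenspace is free of rank one over $A$, with a generator $\mu_0$ already lying in $W_{\O_F}(\overline{k}_K)\otimes_{W_{\O_F}(k_K)}A$, so that the evident inclusion of eigenspaces is an equality. As $a\in A^\times$ is $G_K$-fixed and $G_K$ commutes with $\varphi_q$, each $g\in G_K$ preserves $A\mu_0$ and hence acts by an unramified character $\psi\colon G_K\to A^\times$; and since the arithmetic Frobenius acts on $W_{\O_F}(\overline{k}_K)$ as $\varphi_q^{r}$, one gets $\sigma(\mu_0)=\varphi_q^{r}(\mu_0)=a^{-1}\mu_0$ for $\sigma$ an arithmetic Frobenius, i.e.\ $\psi$ sends geometric Frobenii to $a$. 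Hence $T_A(\aka(\ur a))\cong A(\ur a)$, as desired.

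The only genuinely delicate points are the normalization bookkeeping --- the direction of the cyclic shift attached to $W_{\O_F}(k_K)\otimes_{\O_F}\O\cong\prod\O$, and arithmetic versus geometric Frobenius, both of which must be tracked carefully so that the resulting character is $\ur a$ rather than $\ur{a^{-1}}$ --- and the descent from $\mathbf{A}_K^{\mathrm{ur}}$ to $W_{\O_F}(\overline{k}_K)$ in the last step. That descent is the elementary instance of the étale $\varphi_q$-module formalism in which the module is extended from $W_{\O_F}(k_K)\otimes_{\O_F}A$ and corresponds to an unramified $A$-representation; the whole argument runs parallel to the cyclotomic computation in \cite{EG22} and to the analysis of the ring $\mathbf{A}_K^{\mathrm{ur}}$ underlying Theorem~\eqref{equivalence with Galois rep no coefficient}.
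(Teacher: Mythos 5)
Your argument is essentially the paper's: the paper likewise computes the associated representation by producing a $\varphi_q$-fixed basis $hv$ with $h\in W_{\O_F}(\overline{\mathbf{F}}_p)\otimes_{\O_F}A$ (field case via Lang/Artin--Schreier, then successive approximation along square-zero ideals and Nakayama), and then reads off that inertia acts trivially while an arithmetic Frobenius acts by $a^{-1}$, because $\varphi_q^{r}$ is simultaneously multiplication by $a$ on the chosen basis of $D_{K,a}$ and the arithmetic Frobenius on the unramified Witt coefficients; your idempotent decomposition of $W_{\O_F}(k_K)\otimes_{\O_F}A$ and the resulting eigenvalue equation $\varphi_q^{r}(\mu)=a^{-1}\mu$ is the same computation, carried out over $\mathbf{A}_K^{\mathrm{ur}}$ rather than the paper's $W_{\O_F}(\mathbf{C}^\flat)\widehat{\otimes}_{\O_F}A$. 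One inaccuracy to fix: for a general finite extension $K/F$ the extension $K_\infty/K$ need \emph{not} be totally ramified (this is precisely why the paper distinguishes $k_{K,\infty}$, which can be strictly larger than $k_K$); this is harmless for your argument, since $W_{\O_F}(\overline{\mathbf{F}}_p)\subseteq \mathbf{A}_K^{\mathrm{ur}}$ and $G_K$ acts on it through the unramified quotient in any case, but the appeal to total ramifiedness should be dropped.
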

\begin{proof}
The case $F=\mathbf{Q}_p$ is proved e.g. in \cite[Lem. 2.4]{Pham}; since the proof is short, we will recall the argument here (which works also for general $\mathbf{F}$). To this end, note that the rank one $A$-representation of $G_{K}$ corresponding to $\aka(\ur{a})$ can be computed as
\begin{align*}
    V &:= ((W_{\O_F}(\mathbf{C}^\flat) \widehat{\otimes}_{\O_F}A)\otimes_{\aka}\aka(\ur{a}))^{\varphi_q=1}\\
    &\cong \{hv\;|\; h\in W_{\O_F}(\mathbf{C}^\flat)\otimes_{\O_F}A\;\text{such that $\varphi_q(hv)=hv$}\},
\end{align*}
where $v$ is a basis of $D_{K,a}$. Assume $V$ has a basis $hv$ with $h\in W_{\O_F}(\overline{\mathbf{F}}_p)\otimes_{\O_F}A$. We claim that $G_{K}$ acts on this basis via the unramified character taking geometric Frobenii to $a$. First, for $\sigma\in I_K$, we have $\sigma(hv)=\sigma(h)v=hv$ (note that $\sigma(h)=h$ by our assumption on $h$). Now, let $\sigma$ be an arithmetic Frobenius in $G_K$. By design, $\varphi_q(hv)=hv$ and $\varphi_q^r(v)=av$, so $hv=\varphi_q^r(hv)=\varphi_q^r(h)av=\sigma(h)av$, and thus $\sigma(hv)=\sigma(h)v=a^{-1}(hv)$, as claimed. 

It remains to find the desired basis. If $A$ is a field, say $A=\mathbf{F}'$ for some finite extension $\mathbf{F}'/\mathbf{F}$, this follows easily from the ring isomorphism $\mathbf{C}^\flat\otimes_{\mathbf{F}_q}\mathbf{F}'\xrightarrow{\sim} \prod \mathbf{C}^\flat$. In general, by induction, we may assume that, for some ideal $I$ with $I^2=0$, there is a basis $\overline{h}v$ of $V/I V$ with $\overline{h}\in W_{\O_F}(\overline{\mathbf{F}}_p)\otimes_{\O_F}(A/I)$. Let $h\in W_{\O_F}(\overline{\mathbf{F}}_p)\otimes_{\O_F}A$ be any lift of $\overline{h}$. Then $\varphi_q(hv)=g(hv)$ for some $g\in 1+W_{\O_F}(\overline{\mathbf{F}}_p)\otimes_{\O_F} I$, say $g=1+g_1\otimes m_1+\ldots+g_n\otimes m_n$ with $g_i\in W_{\O_F}(\overline{\mathbf{F}}_p)$ and $m_i\in I$. By a standard sucessive approximation argument, we can find, for each $i$, an element $h_i\in W_{\O_F}(\overline{\mathbf{F}}_p)$ such that $\varphi_q(h_i)-h_i=-g_i$. If $f:=1+h_1\otimes m_1+\ldots h_n\otimes m_n\in 1+W_{\O_F}(\overline{\mathbf{F}}_p)\otimes_{\O_F} I$, then $\varphi_q(f)=f/g$, and hence $\varphi_q(hfv)=hfv$. As $(hf)v$ lifts a basis of $V/I V$, it is necessarily a basis of $V$ by Nakayama's lemma.
\end{proof}
\section{Finer geometric properties of \texorpdfstring{$\X_{K,d}^{\lt}$}{Xd}}\label{finer geometric}
\subsection{Relation with the Emerton--Gee stack}
Having established that $\X_{K,d}^{\lt}$ is an Ind-algebraic stack, our goal in this subsection is to make a more detailed study of its geometry. More precisely, we will show that our stack of (rank $d$) Lubin--Tate \' etale $(\varphi_q,\Gamma_K)$-modules is in fact isomorphic to the Emerton-Gee stack of (rank $d$) cyclotomic $(\varphi,\Gamma)$-modules. As an immediate consequence, we deduce that $\X_{K,d}^{\lt}$ is in fact a Noetherian formal algebraic stack. 

We maintain our notation from the previous section. Our main result is the following.
\begin{prop}\label{equivalence LT EG}
Assume $A$ is a finite type $\mathbf{\O}/\varpi^a$-algebra for some $a\geq 1$. Then there is an exact tensor-compatible rank-preserving equivalence between the category of finite projective \' etale $(\varphi_q,\Gamma_K)$-modules over $\aka$, and the category of finite projective \' etale $(\varphi,\Gamma_K^{\eg})$-modules over $\aka^{\eg}$.
\end{prop}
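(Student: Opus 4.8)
The plan is to prove the proposition by descending both categories to a common ``perfectoid'' description and carrying out the comparison there; this detour is forced on us because the Galois-representation dictionaries of Theorems \ref{equivalence with Galois rep no coefficient} and \ref{Dee equivalence} are only available over (complete local) Noetherian $\O$-algebras, whereas here $A$ is an arbitrary finite type $\O/\varpi^a$-algebra. The first step is to invoke the descent results of \cite{EG22} --- these rest only on the formal behaviour of the perfectoid period rings together with their $G_K$-actions, and so transpose to the ramified Witt-vector setting once the topology on $\aka$ described after Definition \ref{basic K} is in place --- to identify the category of finite projective \'{e}tale $(\varphi_q,\Gamma_K)$-modules over $\aka$ with the category of finite projective \'{e}tale $\varphi_q$-modules over $W_{\O_F}(\mathbf{C}^\flat)\widehat{\otimes}_{\O_F}A$ carrying a continuous semilinear $G_K$-action that commutes with $\varphi_q$. (Concretely, one first uses Lemma \ref{Delta invariant} to pass to $(\varphi_q,\widetilde{\Gamma}_K)$-modules over $\aka'$, and then performs completed perfection followed by Galois descent along $\widehat{F_\infty}^\flat\hookrightarrow\mathbf{C}^\flat$, using the Fontaine--Wintenberger isomorphism $G_{K_\infty}\xrightarrow{\sim}G_{\mathbf{E}_K'}$ and the density of $(\mathbf{E}_K')^{\mathrm{sep}}$ in $\mathbf{C}^\flat$ recorded in Section \ref{definition related}.) The same descent machinery, applied to the cyclotomic situation of \cite{EG22}, identifies finite projective \'{e}tale $(\varphi,\Gamma_K^{\eg})$-modules over $\aka^{\eg}$ with finite projective \'{e}tale $\varphi$-modules over $W(\mathbf{C}^\flat)\widehat{\otimes}_{\mathbf{Z}_p}A$ carrying a continuous semilinear $G_K$-action commuting with $\varphi$.

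It then remains to compare these two perfectoid categories, and here the argument is essentially formal. Write $R_q:=W_{\O_F}(\mathbf{C}^\flat)\widehat{\otimes}_{\O_F}A$ and $R_p:=W(\mathbf{C}^\flat)\widehat{\otimes}_{\mathbf{Z}_p}A$. The key point is an ``Artin--Schreier acyclicity'': because $\mathbf{C}^\flat$ is algebraically closed, the additive maps $x\mapsto x^q-x$ and $x\mapsto x^p-x$ are surjective on it, and a successive approximation argument reducing to this residue-field statement (as in the proof of Lemma \ref{universal unramified}) shows that $\varphi_q-1$ is surjective on $R_q$ and $\varphi-1$ is surjective on $R_p$, with $R_q^{\varphi_q=1}=A=R_p^{\varphi=1}$. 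Granting this, for $R\in\{R_q,R_p\}$ the functor $N\mapsto N\otimes_A R$ --- Frobenius acting through the second factor --- from finite projective $A$-modules to finite projective \'{e}tale $\varphi$-modules over $R$ is an equivalence with quasi-inverse $M\mapsto M^{\varphi=1}$; composing the two resulting equivalences gives an equivalence
\[
\bigl\{\text{\'{e}tale }\varphi_q\text{-modules over }R_q\bigr\}\;\xrightarrow{\ \sim\ }\;\bigl\{\text{\'{e}tale }\varphi\text{-modules over }R_p\bigr\},\qquad M\longmapsto M^{\varphi_q=1}\otimes_A R_p.
\]
By inspection of these formulas this functor is exact, rank-preserving and symmetric monoidal, and it is $G_K$-equivariant because the $G_K$-action on $\mathbf{C}^\flat$ commutes with both Frobenii, so $M^{\varphi_q=1}$ inherits a continuous semilinear $G_K$-action and $N\otimes_A R_p$ depends $G_K$-equivariantly on $N$. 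Transporting the $G_K$-action through this equivalence and running the identifications of the first step backwards yields the equivalence asserted in the proposition, with all the stated exactness, tensor and rank properties.

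The step I expect to be the main obstacle is the first one: verifying that the perfectoid descent of \cite{EG22}, set up there for cyclotomic $(\varphi,\Gamma)$-modules, genuinely goes through in the ramified Witt-vector setting --- above all that the continuity condition defining a $(\varphi_q,\Gamma_K)$-module over $\aka$ is preserved under completed perfection and under descent along $\widehat{F_\infty}^\flat\hookrightarrow\mathbf{C}^\flat$. This amounts to redoing the analysis of \cite[App. D]{EG22} in the present situation. The ingredients needed for this --- the explicit description of $\aka$ from Subsection \ref{coefficient}, its topology from the discussion after Definition \ref{basic K}, and the analogue of \cite[Lem. D.28]{EG22} established in Appendix \ref{T quasi linear} --- are all available, but marshalling them into the full descent equivalence is the technical heart of the proof. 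Once one is at the perfectoid level, by contrast, the comparison is routine, resting only on the acyclicity of $W(\mathbf{C}^\flat)$ and $W_{\O_F}(\mathbf{C}^\flat)$ noted above.
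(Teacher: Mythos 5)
Your first reduction --- identifying finite projective \'etale $(\varphi_q,\Gamma_K)$-modules over $\aka$ with \'etale $\varphi_q$-modules over $A\widehat{\otimes}_{\O_F}W_{\O_F}(\mathbf{C}^\flat)$ carrying a continuous commuting $G_K$-action, and likewise on the cyclotomic side --- is exactly the paper's first step (via \cite[Prop. 2.7.8]{EG22} and the remark that its proof works for ramified Witt vectors), so there is no problem there. The gap is your ``Artin--Schreier acyclicity'' step. Surjectivity of $x\mapsto x^q-x$ on $\mathbf{C}^\flat$ together with successive approximation proves triviality of \'etale $\varphi$-modules over $R_p$ or $R_q$ only when $A$ is Artinian local (the induction is along a nilpotent/maximal ideal, exactly as in the proof of Lemma \eqref{universal unramified}); for a general finite type $\O/\varpi^a$-algebra there is no such d\'evissage, and moreover trivializing a module is a nonabelian $H^1$-vanishing statement that does not follow from surjectivity of $\varphi-1$. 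In fact the claim is false: take (say) $q=p$, $a=1$, $A=\mathbf{F}_p[t,t^{-1}]$, and $M=R_p\,e$ with $\varphi(e)=te$, the pullback to $R_p$ of the universal unramified twist $\aka(\ur{t})$. Writing elements of $R_p=A\widehat{\otimes}_{\mathbf{F}_p}\mathbf{C}^\flat$ as $\sum_{i\in\mathbf{Z}}c_it^i$ with $c_i\in\mathbf{C}^\flat$ bounded and tending to $0$ as $|i|\to\infty$, a trivializing unit $u$ would satisfy $\varphi(u)t=u$, i.e. $c_i=c_0^{p^i}$ for all $i$; but then $|c_i|\to 1$ as $i\to-\infty$ whenever $c_0\neq 0$, so no such $u$ exists. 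Hence $M\not\cong A\otimes_AR_p$ and indeed $M^{\varphi=1}=0$, so your proposed quasi-inverse $M\mapsto M^{\varphi=1}$ does not even produce a finite projective $A$-module. (This is precisely the standard phenomenon that the Emerton--Gee stack is not the naive moduli of Galois representations over non-Artinian coefficients; the $A$-linear dictionaries, Theorem \eqref{Dee equivalence} and Lemma \eqref{universal unramified}, are confined to complete local Noetherian or finite Artinian $A$ for this reason.) Since your final functor $M\mapsto M^{\varphi_q=1}\otimes_AR_p$ rests on this claim, the second step, and with it the proof, collapses; at best your argument yields the proposition for Artinian local $A$, which is not enough for Corollary \eqref{equivalence LT EG stacks}.

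The paper's comparison at the perfectoid level avoids any trivialization and is uniform in $A$: using $W(k_F)\otimes_{\mathbf{Z}_p}W(\mathbf{C}^\flat)\cong\prod_{j\in\mathbf{Z}/f\mathbf{Z}}W(\mathbf{C}^\flat)$ (the factors indexed by the embeddings $\sigma_j=\sigma_0\circ\varphi^j$ of $k_F$), any module over $A\widehat{\otimes}_{\mathbf{Z}_p}W(\mathbf{C}^\flat)$ decomposes as $\prod_jM_j$, and the \'etale structure map shifts the index, giving isomorphisms $\varphi^*M_j\xrightarrow{\sim}M_{j+1}$. Thus $M\mapsto M_0$, equipped with its induced $\varphi_q=\varphi^f$-structure, is an equivalence onto \'etale $\varphi_q$-modules over $A\widehat{\otimes}_{\O_F}W_{\O_F}(\mathbf{C}^\flat)$, with inverse $M_0\mapsto((\varphi^{j})^*M_0)_j$; it is visibly exact, tensor-compatible, rank-preserving and $G_K$-equivariant. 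This linear-algebraic ``unfolding'' is what makes the statement true for arbitrary finite type $A$, and it is the ingredient missing from your proposal.
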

Here, as the notation suggests, $\aka^{\eg}$ is the coefficient ring for the cyclotomic $(\varphi,\Gamma)$-modules considered in \cite{EG22} (it is denoted by $\aka$ in \textit{loc. cit.}). Also, exactness here means that both the equivalence and its inverses are exact.
\begin{proof}
By \cite[Prop. 2.7.8]{EG22}, if $A$ is as in the statement, then the functor $M\mapsto M\otimes_{\aka^{\eg}}(A\widehat{\otimes}_{\mathbf{Z}_p}W(\mathbf{C}^\flat))$ defines an exact equivalence between finite projective \' etale $(\varphi,\Gamma^{\eg}_K)$-modules over $\aka^{\eg}$, and finite projective \' etale $(\varphi,G_K)$-modules over $A\widehat{\otimes}_{\mathbf{Z}_p}W(\mathbf{C}^\flat)$ (exactness follows from fully faithfulness of the map $\aka^{\eg}\to A\widehat{\otimes}_{\mathbf{Z}_p}W(\mathbf{C}^\flat)$, cf. \cite[Prop. 2.2.12]{EG22}). In the same way, extending scalars along $\aka=A\widehat{\otimes}_{\O_F}\mathbf{A}_K\to A\widehat{\otimes}_{\O_F}W_{\O_F}(\mathbf{C}^\flat)$ defines an exact equivalence between the category of finite projective \' etale $(\varphi_q,\Gamma_K)$-modules over $\aka$, and finite projective \' etale $(\varphi_q,G_K)$-modules over $A\widehat{\otimes}_{\O_F}W_{\O_F}(\mathbf{C}^\flat)$ (\textit{loc. cit.} is written only for the ring of unramified Witt vectors, but the proof works in general).

It thus suffices to relate \' etale $\varphi$-modules over $A\widehat{\otimes}_{\mathbf{Z}_p}W(\mathbf{C}^\flat)$, and \' etale $\varphi_q$-modules over $A\widehat{\otimes}_{\O_F}W_{\O_F}(\mathbf{C}^\flat)$. To this end, let $\sigma_0: k_F\hookrightarrow \mathbf{C}^\flat$ be the canonical embedding. For each $j\in \mathbf{Z}/f\mathbf{Z}$, let $\sigma_j:=\sigma_0\circ \varphi^j$, where $\varphi$ is the $p$-power Frobenius on $k_F$. As $W(k_F)\otimes_{\mathbf{Z}_p}W(\mathbf{C}^\flat)\xrightarrow{\sim}\prod_j W(\mathbf{C}^\flat), a\otimes x\mapsto (\sigma_j(a) x)_j$, we obtain an isomorphism
\begin{align*}
    A\widehat{\otimes}_{\mathbf{Z}_p}W(\mathbf{C}^\flat) & = A\widehat{\otimes}_{W(k_F)}(W(k_F)\otimes_{\mathbf{Z}_p} W(\mathbf{C}^\flat))\\
    &\xrightarrow{\sim} \prod_j A\widehat{\otimes}_{W(k_F),\sigma_j}W(\mathbf{C}^\flat)\\
    &\xrightarrow{\sim} \prod_j A \widehat{\otimes}_{\O_F,\sigma_j}W_{\O_F}(\mathbf{C}^\flat).
\end{align*}
Thus, any module $M$ over $A\widehat{\otimes}_{\mathbf{Z}_p}W(\mathbf{C}^\flat)$ decomposes as $M=\prod_j M_j$, where $M_j$ is the base change of $M$ along the map $p_j: A\widehat{\otimes}_{\mathbf{Z}_p}W(\mathbf{C}^\flat) \twoheadrightarrow A \widehat{\otimes}_{\O_F,\sigma_j}W_{\O_F}\mathbf{C}^\flat)$. We claim that the functor $M\mapsto M_0$ defines an equivalence between the category of \' etale $\varphi$-modules over $A\widehat{\otimes}_{\mathbf{Z}_p}W(\mathbf{C}^\flat)$, and the category of \' etale $\varphi_q$-module over $A \widehat{\otimes}_{\O_F,\sigma_0}W_{\O_F}(\mathbf{C}^\flat)$ (as the map $p_0$ is clearly $\Gamma_K$-equivariant, we then obtain an equivalence between the corresponding categories of \' etale $(\varphi,G_K)$-modules). 

Let us abusively denote also by $\varphi$ the ring map $A \widehat{\otimes}_{\O_F,\sigma_j}W_{\O_F}(\mathbf{C}^\flat)\to A \widehat{\otimes}_{\O_F,\sigma_{j+1}}W_{\O_F}(\mathbf{C}^\flat)$ induced by the $p$-power Frobenius on $\mathbf{C}^\flat$. As $\varphi\circ p_{j}=p_{j+1}\circ \varphi$, the isomorphism $\Phi_{M}: \varphi^*M\xrightarrow{\sim} M$ induces an isomorphism $\varphi^*M_{j}\xrightarrow{\sim}M_{j+1}$ for each $j$. In particular, we have an isomorphism $\varphi_q^*M_0=(\varphi^f)^*M_0\xrightarrow{\sim} M_0$, i.e. $M_0$ is an \' etale $\varphi_q$-module. Conversely, given such $M_0$, we can define $M$ to be the module over $A\widehat{\otimes}_{\mathbf{Z}_p}W(\mathbf{C}^\flat)$ corresponding to the tuple $((\varphi^{j})^*M_0)_j$. By design, there is a linear isomorphism $\varphi^*M\xrightarrow{\sim} M$ as $(\varphi^*M)_{j+1}=\varphi^*(M_{j})=\varphi^*((\varphi^{j})^*M_0)=(\varphi^{j+1})^*M_0=M_{j+1}$ for each $j$. In other words, $M$ is an \' etale $\varphi$-module, as desired.
\end{proof}
\begin{cor}\label{equivalence LT EG stacks}
There is an isomorphism
\begin{align*}
    \X^{\lt}_{K,d}\xrightarrow{\sim} \X^{\eg}_{K,d}.
\end{align*}
\end{cor}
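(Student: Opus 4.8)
The plan is to upgrade the equivalence of categories in Proposition~\eqref{equivalence LT EG} to an isomorphism of stacks. By construction, $\X_{K,d}^{\lt}$ is the colimit $\varinjlim_{a\geq 1}\X_{K,d}^{\lt,a}$ of \textit{fppf} stacks over $\Sp\O/\varpi^a$ (defined exactly as $\R_d^a$ was, but for \'etale $(\varphi_q,\Gamma_K)$-modules), and likewise $\X_{K,d}^{\eg}=\varinjlim_{a\geq 1}\X_{K,d}^{\eg,a}$. Hence it suffices to construct, for each $a$, an isomorphism $\X_{K,d}^{\lt,a}\xrightarrow{\sim}\X_{K,d}^{\eg,a}$ of \textit{fppf} stacks over $\Sp\O/\varpi^a$, compatibly as $a$ varies. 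Now both $\X_{K,d}^{\lt,a}$ and $\X_{K,d}^{\eg,a}$ are limit preserving (by Proposition~\eqref{Xd ind algebraic K general}, resp.\ by the results of \cite{EG22}), so each is recovered from its restriction to the category of finite type $\O/\varpi^a$-algebras, any $\O/\varpi^a$-algebra being the filtered colimit of its finite type subalgebras. It therefore suffices to produce an equivalence of the corresponding \textit{fppf}-fibered categories over finite type $\O/\varpi^a$-algebras which is functorial in the coefficient ring, and the resulting equivalence of \textit{fppf} stacks will automatically be compatible with descent since both sides are already stacks.

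This is precisely what Proposition~\eqref{equivalence LT EG} furnishes, \emph{once} its construction is checked to be natural in $A$. But the equivalence there is assembled from the base-change functors appearing in its proof: extension of scalars along $\aka^{\eg}\to A\widehat{\otimes}_{\mathbf{Z}_p}W(\mathbf{C}^\flat)$, extension of scalars along $\aka\to A\widehat{\otimes}_{\O_F}W_{\O_F}(\mathbf{C}^\flat)$, and the projection onto the $j=0$ factor of the product decomposition $A\widehat{\otimes}_{\mathbf{Z}_p}W(\mathbf{C}^\flat)\xrightarrow{\sim}\prod_j A\widehat{\otimes}_{\O_F,\sigma_j}W_{\O_F}(\mathbf{C}^\flat)$. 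Each of these functors is given by tensoring along a ring homomorphism, and so commutes, up to the canonical base-change isomorphisms, with base change along any morphism $A\to A'$ of finite type $\O/\varpi^a$-algebras; their composite is therefore a pseudonatural transformation, and the existence of a quasi-inverse with the same naturality (again from Proposition~\eqref{equivalence LT EG}) shows it is a pseudonatural \emph{equivalence}. The rank-preservation clause ensures the rank $d$ substacks correspond, and the coherence (cocycle) conditions needed to promote this to a morphism of stacks hold automatically because each constituent is an honest base change along ring maps.

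Passing to the colimit over $a\geq 1$ then yields the asserted isomorphism $\X_{K,d}^{\lt}\xrightarrow{\sim}\X_{K,d}^{\eg}$ of \textit{fppf} stacks over $\Spf\O$. As a consistency check, on $\overline{\mathbf{F}}_p$-points this isomorphism recovers the canonical identification of $\mathrm{Rep}_{\overline{\mathbf{F}}_p}(G_K)$ provided by the cyclotomic and the Lubin--Tate descriptions of \'etale $(\varphi,\Gamma)$-modules.

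The only point that I expect to require genuine care is the reduction in the first paragraph: one must know that $\X_{K,d}^{\lt}$ and $\X_{K,d}^{\eg}$ are each genuinely determined by their $\varpi^a$-truncations and by the values of those truncations on finite type $\O/\varpi^a$-algebras. This rests on their being limit preserving Ind-algebraic stacks written as colimits of algebraic stacks of finite presentation, which is Proposition~\eqref{Xd ind algebraic K general} on the Lubin--Tate side and \cite{EG22} on the cyclotomic side. Granting this, the remainder of the argument is formal, all of the substantive content being contained in Proposition~\eqref{equivalence LT EG}.
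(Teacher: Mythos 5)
Your proposal is correct and is essentially the paper's own argument: the paper likewise deduces the isomorphism directly from Proposition \eqref{equivalence LT EG}, using that both $\X_{K,d}^{\lt}$ and $\X_{K,d}^{\eg}$ are limit preserving (Lemma \eqref{X limit preserving} and \cite[Lem.\ 3.2.19]{EG22}) to reduce to finite type $\O/\varpi^a$-algebras, where the equivalence (which is visibly functorial in $A$, being built from base change along ring maps) applies. You have merely spelled out the naturality and truncation bookkeeping that the paper leaves implicit.
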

\begin{proof}
This follows immediately from Proposition \eqref{equivalence LT EG} as $\X^{\lt}_{K,d}$ and $\X^{\eg}_{\lt}$ are both limit preserving by Lemma \eqref{X limit preserving} and \cite[Lem. 3.2.19]{EG22}.
\end{proof}
As an immediate consequence of the above comparison result, we obtain the following refinement of Theorem \eqref{Xd ind algebraic} on the geometry of the stack $\X_{K,d}^{\lt}$. See also Subsection \eqref{families of extensions} below for a related discussion.
\begin{cor}\label{Xd noetherian formal}
$\X_{K,d}^{\lt}$ is a Noetherian formal algebraic stack over $\Spf \O$. The underlying reduced substack $\X_{d,\mathrm{red}}^{\lt}$ is an algebraic stack of finite presentation over $\mathbf{F}$. Moreover, the irreducible components of $\X_{d,\mathrm{red}}^{\lt}$ admits a natural labeling by Serre weights. 
\end{cor}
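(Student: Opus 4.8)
The content of the corollary is entirely a transport of structure along the isomorphism of Corollary \eqref{equivalence LT EG stacks}, so the plan is short. First I would observe that each of the three assertions concerns a property of a stack that is manifestly invariant under isomorphism: being a Noetherian formal algebraic stack over $\Spf\O$; having underlying reduced substack an algebraic stack of finite presentation over $\mathbf{F}$; and having the set of irreducible components of that reduced substack in natural bijection with the set of Serre weights. In particular, the formation of the underlying reduced substack commutes with isomorphisms, so an isomorphism $\X_{K,d}^{\lt}\xrightarrow{\sim}\X_{K,d}^{\eg}$ restricts to an isomorphism $\X_{d,\mathrm{red}}^{\lt}\xrightarrow{\sim}\X_{K,d,\mathrm{red}}^{\eg}$ of reduced substacks, and hence induces a bijection on irreducible components respecting all the relevant structure.

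Next I would simply quote the corresponding facts for the Emerton--Gee stack. By the main results of \cite{EG22} (see e.g. \cite[Thm.\ 1.2.1]{EG22}), $\X_{K,d}^{\eg}$ is a Noetherian formal algebraic stack over $\Spf\O$, its underlying reduced substack $\X_{K,d,\mathrm{red}}^{\eg}$ is an algebraic stack of finite presentation over $\mathbf{F}$, and the irreducible components of $\X_{K,d,\mathrm{red}}^{\eg}$ are naturally labelled by Serre weights (associated to the fixed field $K$). Combining this with Corollary \eqref{equivalence LT EG stacks} and the invariance observations above immediately gives all three assertions for $\X_{K,d}^{\lt}$, with $\X_{d,\mathrm{red}}^{\lt}$ identified with $\X_{K,d,\mathrm{red}}^{\eg}$.

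There is no real obstacle here beyond Corollary \eqref{equivalence LT EG stacks} itself; the one point that genuinely deserves a remark is that the comparison of Proposition \eqref{equivalence LT EG} is an exact, tensor-compatible, rank-preserving equivalence that is natural in the test algebra $A$, so that the resulting morphism of fppf stacks is a bona fide isomorphism over $\Spf\O$ rather than merely an equivalence on $A$-points for each fixed $A$. That naturality (together with the limit-preserving property of both stacks, used to pass from finite type algebras to arbitrary $\varpi$-adically complete ones) is exactly what Corollary \eqref{equivalence LT EG stacks} records, and with it in hand the proof of the present corollary is a one-line citation.
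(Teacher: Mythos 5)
Your proposal is correct and matches the paper's argument: the paper likewise deduces all three assertions by transporting them across the isomorphism of Corollary \eqref{equivalence LT EG stacks} and quoting the corresponding results for $\X_{K,d}^{\eg}$ from \cite{EG22} (there cited as \cite[Cor. 5.5.18]{EG22} and \cite[Thm. 6.5.1]{EG22} rather than the summary theorem). Your extra remarks on invariance under isomorphism and the naturality underlying the stack-level comparison are accurate but not needed beyond what the cited corollary already records.
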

\begin{proof}
This follows from Corollary \eqref{equivalence LT EG stacks}, and the corresponding properties of the stack $\X_{K,d}^{\eg}$, see \cite[Cor. 5.5.18]{EG22} and \cite[Thm. 6.5.1]{EG22}.
\end{proof}
\subsection{The Lubin--Tate Herr complex}\label{herr complex section}
In this subsection we introduce a version of the Herr complex for Lubin--Tate $(\varphi_q,\Gamma_K)$-modules with coefficients; the goal is to show that it is a perfect complex. Although this is not strictly needed in our proof that $\X_{K,d}^\lt$ is a Noetherian formal algebraic stack, the result itself may be of independent interest.

Again, we will keep the notation in Section \eqref{definition related}. In particular, we fix a set $\{\gamma_1,\ldots,\gamma_n\}$ of topological generators of the group $\Gamma_K\cong \mathbf{Z}_p^{\oplus [F:\mathbf{Q}_p]}$.

Let $A$ be a $\varpi$-adically complete $\O$-algebra, and let $M$ be a finite projective \' etale $(\varphi_q,\Gamma_K)$-module with $A$-coefficients. The Herr complex $\C^\bullet(M)$ of $M$ is by definition the cohomological Koszul complex of $M$ with respect to the commuting operators $\varphi_q-1, \gamma_1-1,\ldots,\gamma_n-1$. Concretely, if we let $\gamma_0:=\varphi_q$, then $\C^\bullet(M)$ is by the complex
\begin{displaymath}
M \xrightarrow{d^0} \bigoplus_{0\leq i_1\leq n} M \xrightarrow{d^1} \bigoplus_{0\leq i_1<i_2\leq n} M\to \ldots \bigoplus_{0\leq i_1<\ldots<i_{n}\leq n}\xrightarrow{d^n} M
\end{displaymath}
sitting in (cohomological) degrees $0,1,\ldots,n+1$, where for each $0\leq r\leq n$, the component $d^r{|_{i_1<\ldots<i_r}^{j_1<\ldots<j_{r+1}}}: M\to M$ of the $r$th differential $d^r$ is the multiplication by
\begin{displaymath}
\begin{cases} 0\hspace{1.9cm}\quad\text{if $\{i_1,\dots,i_r\}\nsubseteq \{j_1,\ldots,j_{r+1}\}$}\\
(-1)^s(\gamma_j-1)\quad\text{if $\{j_1,\ldots,j_{r+1}\}=\{i_1,\dots,i_r\}\coprod\{j\}$},\end{cases}
\end{displaymath}
where $s$ is the number of elements in the set $\{i_1<\ldots<i_r\}$, which are smaller than $j$.
\begin{example}
If $n=2$, then $\C^\bullet(M)$ can be identified with the complex
\begin{displaymath}
[M\xrightarrow{\begin{psmallmatrix} \varphi_q-1 \\ \gamma_1-1 \\ \gamma_2-1\end{psmallmatrix}} M^{\oplus 3}\xrightarrow{\begin{psmallmatrix} -(\gamma_1-1) & \varphi_q-1 & 0\\ -(\gamma_2-1) & 0 & \varphi_q-1\\0 & -(\gamma_2-1) & (\gamma_1-1) \end{psmallmatrix}} M^{\oplus 3} \xrightarrow{\begin{psmallmatrix} \gamma_2-1 & -(\gamma_1-1) & \varphi_q-1\end{psmallmatrix}} M].
\end{displaymath}
\end{example}
In what follows, unless otherwise stated, we assume that $A$ is an $\O/\varpi^a$-algebra for some $a\geq 1$. We have the following generalization of \cite[Lem. 5.1.2]{EG22}, which allows us to interpret the cohomology groups of the Herr complex entirely in terms of the ambient category of \' etale $(\varphi_q,\Gamma_K)$-modules.
\begin{prop}\label{Herr cohomology and ext}
There is a natural isomorphism of cohomological $\delta$-functors $H^i(\C^\bullet(\cdot)))\xrightarrow{\sim} \ext_{\lt}^i(\aka,\cdot)$.
\end{prop}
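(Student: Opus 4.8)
The plan is to mimic the proof of \cite[Lem. 5.1.2]{EG22}, now in the Lubin--Tate setting, and reduce the assertion to a concrete computation of $\ext^i_{\lt}(\aka,-)$ via an explicit injective-like resolution adapted to the operators $\varphi_q-1,\gamma_1-1,\ldots,\gamma_n-1$. First I would observe that $\C^\bullet(-)$ is manifestly a $\delta$-functor: a short exact sequence $0\to M'\to M\to M''\to 0$ of \'etale $(\varphi_q,\Gamma_K)$-modules gives a short exact sequence of Koszul complexes (the functor $M\mapsto \bigoplus M$ in each degree is exact), hence a long exact cohomology sequence, and this is functorial and compatible with connecting maps. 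On the other side, $\ext^i_{\lt}(\aka,-)$ is the universal $\delta$-functor extending $\Hom_{\lt}(\aka,-)$ (here $\aka$ is the unit object, so $\Hom_{\lt}(\aka,M)$ is the module of $(\varphi_q,\Gamma_K)$-invariants of $M$, and one must first check the abelian category of \'etale $(\varphi_q,\Gamma_K)$-modules with $A$-coefficients has enough injectives, or at least enough of whatever acyclic objects are needed — this is where one invokes that $A$ is an $\O/\varpi^a$-algebra and the equivalence with Galois representations, or the argument of \cite{EG22}).

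The key steps, in order: (1) Identify $H^0(\C^\bullet(M))$ with $\Hom_{\lt}(\aka,M)=M^{\varphi_q=1,\Gamma_K=1}$ — immediate from the definition of $d^0$. (2) Show that $H^i(\C^\bullet(-))$ is effaceable for $i>0$, i.e. every $M$ embeds into some $N$ with $H^i(\C^\bullet(N))=0$ for all $i>0$; the natural candidate is an induced/co-induced object built from $\aka^{\mathrm{ur}}$ (or its completed coefficient version $\widehat{\mathbf{A}}^{\mathrm{ur}}_{K,A}$), using that $\C^\bullet$ applied to such an object computes continuous cohomology of $G_{K}$ with coefficients in an acyclic module, which vanishes. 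Here is the one place the Lubin--Tate specifics really enter: one needs that the Koszul complex on $\gamma_1-1,\ldots,\gamma_n-1$ computes the continuous cohomology of $\Gamma_K\cong\mathbf{Z}_p^{\oplus[F:\mathbf{Q}_p]}$ (standard, since $\gamma_1,\ldots,\gamma_n$ is a topological basis and $\Gamma_K$ is a free pro-$p$ abelian group of that rank), combined with the $\varphi_q$-Koszul step playing the role of the "$\widehat{\mathbf{A}}^{\mathrm{ur}}$ is $\varphi_q$-acyclic in the relevant sense" statement — i.e. $\varphi_q-1$ is surjective on $\widehat{\mathbf{A}}^{\mathrm{ur}}_{K,A}$ with kernel the coefficients, which is exactly the content packaged into Theorem \eqref{equivalence with Galois rep no coefficient}/Theorem \eqref{Dee equivalence}. (3) Conclude by the universal property: a $\delta$-functor that agrees with $\ext^\bullet_{\lt}(\aka,-)$ in degree $0$ and is effaceable in positive degrees is canonically isomorphic to it. One should also note tensor/rank compatibility is not needed here, only the abelian-category structure.

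I expect the main obstacle to be step (2): constructing the effacing objects and verifying their $\C^\bullet$-acyclicity cleanly. Over a general $\varpi$-adically complete $A$ one would worry about completions and continuity, but since we restrict to $A$ an $\O/\varpi^a$-algebra (as stated), $\widehat{\mathbf{A}}^{\mathrm{ur}}_{K,A}$-type constructions are harmless and one can argue by d\'evissage along the $\varpi$-adic (or $\m_A$-adic) filtration down to $A=\mathbf{F}$-coefficients, where acyclicity reduces to the vanishing of $H^{>0}_{\mathrm{cont}}(G_{K_\infty},\mathbf{A}^{\mathrm{ur}}\otimes V)$ and $H^{>0}(\Gamma_K,-)$ computed by the $\gamma_i$-Koszul complex — precisely the ingredients already assembled in Section \eqref{definition related}. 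The remaining bookkeeping (signs in the Koszul differentials matching the $\delta$-functor connecting maps, naturality in $M$) is routine and I would not spell it out beyond citing the parallel argument in \cite[Lem. 5.1.2]{EG22}.
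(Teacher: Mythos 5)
Your overall skeleton (both sides are $\delta$-functors, they agree in degree $0$ with $M^{\varphi_q=1,\Gamma_K=1}=\Hom_{\ltet}(\aka,M)$, and one concludes by universality via effaceability) is exactly the paper's, but your step (2) — the effacement of $H^i(\C^\bullet(-))$ — has a genuine gap. The $\delta$-functors live on the exact category $\ltet$ of \emph{finite projective} \'etale $(\varphi_q,\Gamma_K)$-modules over $\aka$, so effaceability means: every class dies after an admissible embedding $M\hookrightarrow N$ with $N$ again an object of $\ltet$. Your proposed effacing objects, coinduced modules built from $\mathbf{A}^{\mathrm{ur}}$ or $\widehat{\mathbf{A}}^{\mathrm{ur}}_{K,A}$, are not finite over $\aka$ and hence are not objects of this category; killing a class after embedding into such a module says nothing about effaceability of the functor on $\ltet$, which is what the universal-$\delta$-functor argument requires. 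Moreover, the acyclicity inputs you invoke ($\varphi_q-1$ surjective on $\widehat{\mathbf{A}}^{\mathrm{ur}}_{K,A}$ with kernel the coefficients, $\gamma$-Koszul computing $H^*_{\mathrm{cts}}(\Gamma_K,-)$, d\'evissage ``down to $A=\mathbf{F}$'') all rest on the Galois-theoretic dictionary, which is available only for $\O_F$-coefficients (Theorem \eqref{equivalence with Galois rep no coefficient}) or complete local Noetherian $A$ (Theorem \eqref{Dee equivalence}); the proposition is asserted for an \emph{arbitrary} $\O/\varpi^a$-algebra $A$, where an object of $\ltet$ has no interpretation as a $G_K$-representation and no $\m_A$-adic d\'evissage is available. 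Finally, your worry about enough injectives on the $\ext_{\lt}$ side is a red herring: the paper uses Yoneda Ext in the exact category $\ltet$, whose effaceability is a purely formal splicing argument (Lemma \eqref{effaceable ext}); recasting it as a derived functor in a larger abelian category would create a separate (nontrivial) comparison problem between ambient Ext and Yoneda Ext in the subcategory.

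What the paper actually does at the crucial point is a class-by-class effacement entirely inside $\ltet$: given a Koszul $i$-cocycle, it enlarges $M$ by adjoining finitely many free $\aka$-summands with explicitly prescribed $(\varphi_q,\gamma_j)$-actions so that the cocycle becomes a coboundary (Lemma \eqref{effaceable lubin tate}), the induction being powered by Lemmas \eqref{M phi=1 lemma} and \eqref{(phi - 1) M}, which in turn rely on the topological statement that $1-\varphi_q$ is bijective on a suitable $\varphi_q$-stable lattice (Lemma \eqref{phi stable lattice phi-1 bijective}). No globally $\C^\bullet$-acyclic object in $\ltet$ is ever produced (and it is unclear one exists), which is precisely why the pointwise construction is needed. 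If you want to salvage your route, you would have to either prove a finite-projective approximation of your coinduced objects inside $\ltet$ carrying the relevant class to zero, or change category (e.g.\ to ind-\'etale torsion modules, as the paper does later for Theorem \eqref{comparison Galois coho}, where $A$ is complete local Noetherian) — but then you would be proving a different statement than the one at hand.
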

Let us first recall the definition of the extension groups $\ext_{\lt}^i(\aka,M)$. Let $\ltet$ be the category of finite projective \' etale $(\varphi_q,\Gamma_K)$-modules over $\aka$. This is an exact full additive subcategory of the abelian category of all \textit{finite} \' etale $(\varphi_q,\Gamma_K)$-modules over $\aka$ (in the sense that if $0\to M\to N\to P\to 0$ is a short exact sequence of finite \' etale $(\varphi_q,\Gamma_K)$-module with $M, P$ finite projective, then the same is true for $N$). 

As is the case for any exact category, given any object $M$ in $\ltet$ and any $i\geq 1$, we have the abelian group $\mathrm{Ext}_{\lt}^i(\aka,M)$ of equivalence classes of so-called degree $i$ Yoneda extensions of $\aka$ by $M$ in $\ltet$. By definition, such an extension is an exact sequence 
\begin{displaymath}
E: 0\to M\to M_{i-1} \to M_{i-2}\to \ldots \to M_0\to \aka\to 0
\end{displaymath}
in $\ltet$\footnote{This is to say that the sequence is exact when viewed as a sequence in the \textit{abelian} category of all finite \' etale $(\varphi_q,\Gamma_K)$-modules over $\aka$.}, and the relevant equivalence relation is generated by the relation that identifies two extensions $E$ and $E'$ whenever there is a map of extensions $E\to E'$, i.e. a commutative diagram
\begin{displaymath}
\begin{tikzcd}
0 \ar[r] & M\ar[d,"\mathrm{id}",swap] \ar[r] & M_{i-1}\ar[d]\ar[r]\ar[r] & \ldots \ar[r] & M_0\ar[d]\ar[r] & \aka\ar[d,"\mathrm{id}"] \ar[r] & 0\\
0 \ar[r] & M \ar[r] & M'_{i-1}\ar[r] & \ldots \ar[r] & M'_0\ar[r] & \aka \ar[r] & 0.
\end{tikzcd}
\end{displaymath}
Concretely,  $E$ and $E'$ are equivalent if and only if there is some extension $E''$ together with maps of extensions $E\xleftarrow{} E''\xrightarrow{} E'$ as above. 

It is easy to see that the abelian group $\mathrm{Ext}^i_{\lt}(\aka,M)$ is naturally an $A$-module with $a\in A$ acting via the map $a: \mathrm{Ext}^i_{\lt}(\aka,M)\to \mathrm{Ext}^i_{\lt}(\aka,M)$ induced by the multiplication by $a$ on $M$ (or on $\aka$). As usual, for $i=0$, we define $\ext^0_{\lt}(\aka,M)$ to be the $A$-module of morphisms $\aka\to M$ in $\ltet$. 

The collection of functors $(\mathrm{Ext}^i_{\lt}(\aka,\cdot))_{i\geq 0}$ forms a (cohomological) $\delta$-functor from the exact category $\M_{\varphi,\Gamma}^{\mathrm{\acute{e}t},\lt}$ to the abelian category of $A$-modules. More precisely, given any short exact sequence $0\to M\to N\to P\to 0$, there is an associated long exact sequence for Yoneda extensions
\begin{align}\label{long exact ext}
    \ldots \to \ext^{i}_{\lt}(\aka,M)\to \ext^{i}_{\lt}(\aka,N)\to \ext^i_{\lt}(\aka,P)\xrightarrow{\delta} \ext^{i+1}_{\lt}(\aka,M) \to \ldots,
\end{align}
where the connecting map $\delta$ is defined by ``splicing'' a degree $i$ extension $[0\to P\to Z_{i-1}\to \ldots \to Z_0\to \aka\to 0]$ in $\ext_{\lt}^i(\aka,P)$ with the given short exact sequence $0\to M\to N\to P\to 0$. A proof of this can be found in \cite[Chap. VII, Thm. 5.1]{mitchell} (strictly speaking, the reference works throughout with an abelian category, but the proof goes over unchanged to any exact category).  

Similarly, as the association $M\mapsto \C^\bullet(M)$ is clearly functorial and exact in $M$, we see easily that the collection $(H^i(\C^\bullet(\cdot)))_{i\geq 0}$ also forms a $\delta$-functor between the same categories. 

In what follows by an embedding $M\hookrightarrow N$ between objects in $\ltet$, we will mean an equivariant injective map $M\to N$ which splits on the level of underlying $\aka$-modules (so that the quotient $P:=N/M$ will also be an object in $\ltet$\footnote{In a general exact category, such embedding is often called a strict (or admissible) monomorphism.}).  
\begin{lem}\label{effaceable ext}
For each $i\geq 1$, the functor $M\mapsto \ext_{\lt}^i(\aka, M)$ is effaceable. In particular, the collection $(\ext_{\lt}^i(\aka,\cdot))_{i\geq 0}$ forms a universal $\delta$-functor.
\end{lem}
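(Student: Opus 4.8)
The plan is to show that for every object $M$ of $\ltet$ and every $i \geq 1$, there exists an embedding $M \hookrightarrow N$ (in the strict sense introduced just above, i.e. equivariant, injective, and split on underlying $\aka$-modules) such that the induced map $\ext^i_{\lt}(\aka, M) \to \ext^i_{\lt}(\aka, N)$ is zero. The natural candidate for $N$ is an ``induced'' or ``co-free'' object: one wants an analogue of the fact that in module categories every object embeds into an injective (or at least into a module on which the relevant Ext vanishes). Here the key point is that an \'etale $(\varphi_q,\Gamma_K)$-module which is ``free as a $\varphi_q$-module'' (or induced from the trivial $\Gamma_K$-action in an appropriate sense) should have vanishing higher $\ext_{\lt}$. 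Concretely, I would take $N$ to be built from $M$ by adjoining enough ``formal $\Gamma_\disc$-orbits'', or more efficiently, exploit Proposition \eqref{Herr cohomology and ext} together with the identification of the Herr complex as a Koszul complex: the Koszul complex on $n+1$ commuting operators has cohomology vanishing in degrees $> n+1$ automatically, and one gets effaceability in the top degree for free. For the intermediate degrees $1 \le i \le n$, the effaceability should be reduced to the analogous statement for the cyclotomic Emerton--Gee setting.

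More precisely, the cleanest route is to invoke Corollary \eqref{equivalence LT EG stacks} (equivalently Proposition \eqref{equivalence LT EG}): the exact, rank-preserving tensor equivalence between finite projective \'etale $(\varphi_q,\Gamma_K)$-modules over $\aka$ and finite projective \'etale $(\varphi,\Gamma_K^{\eg})$-modules over $\aka^{\eg}$ carries strict short exact sequences to strict short exact sequences, hence induces isomorphisms $\ext^i_{\lt}(\aka, M) \cong \ext^i_{\eg}(\aka^{\eg}, M^{\eg})$ compatibly with the $\delta$-functor structure. Thus the lemma for $\ltet$ follows from the corresponding statement in \cite{EG22} (this is precisely \cite[Lem. 5.1.3]{EG22} or the discussion around it), where effaceability of $\ext^i_{\eg}(\aka^{\eg}, -)$ is established. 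Once effaceability holds, the fact that $(\ext^i_{\lt}(\aka, -))_{i \ge 0}$ is then a universal $\delta$-functor is the standard consequence of the general criterion (an effaceable $\delta$-functor is universal), so that step is purely formal.

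If one prefers an intrinsic argument not routed through the cyclotomic comparison, I would proceed as follows. Given $M$ in $\ltet$, consider the object $N := \bigoplus_{g \in \Delta} g^* (\text{something})$ — more usefully, first pass to the $(\varphi_q,\widetilde{\Gamma}_K)$-module $M' = \aka' \otimes_{\aka} M$ over $\aka'$ via Lemma \eqref{Delta invariant}, and over $\aka'$ use that $\aka'$ is a complete DVR to embed $M'$ into an ``induced'' module $\mathrm{Ind}(M'|_{\varphi_q})$ on which the $\widetilde\Gamma_K$-cohomology is controlled; then take $\Delta_K$-invariants. The map on $\ext^1$ should be killed because the extension class, pushed into the induced object, acquires a canonical splitting of the $\Gamma_K$-action. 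The main obstacle will be checking that the chosen $N$ is genuinely an object of $\ltet$ — i.e. that it is still \emph{finite} projective over $\aka$ and that the $\Gamma_K$-action on it is still \emph{continuous} — since naive induction would produce infinitely generated modules; one has to either truncate carefully or argue, as in \cite{EG22}, via the limit-preserving and formal-algebraicity properties already established. For this reason I expect the comparison-based proof (reducing to \cite[Lem. 5.1.3]{EG22} through Proposition \eqref{equivalence LT EG}) to be the one actually carried out, with the intrinsic construction relegated to a remark.
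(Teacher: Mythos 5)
There is a genuine gap: you have missed the one-line formal argument that the paper uses, and the machinery you propose in its place either does not apply in the stated generality or is circular. The paper's proof of Lemma \eqref{effaceable ext} is the classical ``middle term'' effacement of Yoneda extensions: given a class $E\in\ext^i_{\lt}(\aka,M)$ represented by $0\to M\to N\to Z_{i-2}\to\cdots\to Z_0\to\aka\to 0$, the embedding $M\hookrightarrow N$ already does the job, because $E=\delta(E')$ for the class $E'\in\ext^{i-1}_{\lt}(\aka,P)$ of the truncated extension (with $P=\mathrm{Im}(N\to Z_{i-2})$), and the composite $\ext^{i-1}_{\lt}(\aka,P)\xrightarrow{\delta}\ext^i_{\lt}(\aka,M)\to\ext^i_{\lt}(\aka,N)$ vanishes by exactness of the long exact sequence \eqref{long exact ext}. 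Note also that only this per-class version of effaceability is needed for universality; you set out to kill the whole group at once via an injective-like or induced object, which is unnecessary and is exactly what creates the finiteness/continuity obstacles you acknowledge. The genuinely hard content in this section is the companion Lemma \eqref{effaceable lubin tate} (effaceability of $H^i(\C^\bullet(\cdot))$), not the Ext side, and your proposal conflates the two.

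Concretely, your ``cleanest route'' cannot prove the lemma as stated: Proposition \eqref{equivalence LT EG} is only available for $A$ of finite type over $\O/\varpi^a$, whereas Lemma \eqref{effaceable ext} is stated and used for arbitrary $\O/\varpi^a$-algebras (and the paper explicitly wants the Herr--Ext comparison to hold intrinsically; the transfer of Yoneda Ext across the equivalence is what the paper does later, in Corollary \eqref{equivalence LT EG cohomology}, \emph{after} Proposition \eqref{Herr cohomology and ext} is in place). Your alternative suggestion to ``exploit Proposition \eqref{Herr cohomology and ext}'' is circular, since that proposition is deduced from Lemmas \eqref{effaceable ext} and \eqref{effaceable lubin tate}. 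Finally, the intrinsic construction via induced/co-free objects is only a sketch and, as you note yourself, does not obviously stay within the category of finite projective modules with continuous $\Gamma_K$-action; the paper never needs it.
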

\begin{proof}
Given any $M\in \M_{\varphi,\Gamma}^{\mathrm{\acute{e}t},\lt}$, and any class $E\in \mathrm{Ext}^i_{\lt}(\aka,M)$, we need to find an embedding $M\hookrightarrow N$ in $\M_{\varphi,\Gamma}^{\mathrm{\acute{e}t},\lt}$ so that $E=0$ in $\ext_{\lt}^i(\aka,N)$. If $[0\to M\to N\to Z_{i-2}\to \ldots \to Z_0\to \aka\to 0]$ is an extension representing $E$, then $E=\delta(E')$ by construction of the connecting map $\delta$, where $E'$ is the class in $\ext_{\lt}^{i-1}(\aka,P)$ represented by the extension $[0\to P\to Z_{i-2}\to \ldots \to Z_0\to \aka\to 0]$ (as above, we set $P:=\mathrm{Im}(N\to Z_{i-2})$, an object in $\ltet$). Using the long exact sequence \eqref{long exact ext}, we find that $E=0$ in $\ext_{\lt}^i(\aka,N)$, as desired.
\end{proof}
\begin{lem}\label{effaceable lubin tate}
For each $i\geq 1$, the functor $M\mapsto H^i(\C^\bullet(M))$ is effaceable. In particular, the collection $(H^i(\C^\bullet(\cdot)))_{i\geq 0}$ forms a universal $\delta$-functor.
\end{lem}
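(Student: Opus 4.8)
The plan is to mimic the proof of Lemma \eqref{effaceable ext}: given $M\in\ltet$ and a class $c\in H^i(\C^\bullet(M))$ for some $i\geq 1$, I want to produce an embedding $M\hookrightarrow N$ in $\ltet$ (equivariant, split on underlying $\aka$-modules) such that $c$ maps to $0$ in $H^i(\C^\bullet(N))$. The natural candidate for $N$ is an \emph{induced} or \emph{co-induced} object built from $M$ by tensoring with a free resolution-like object over the ``group algebra'' generated by $\varphi_q-1,\gamma_1-1,\dots,\gamma_n-1$. Concretely, since $\C^\bullet(M)$ is the Koszul complex on the $n+1$ commuting operators $\gamma_0:=\varphi_q,\gamma_1,\dots,\gamma_n$, the most transparent route is: first treat the single-operator case, i.e. show that for each $j$ the functor $M\mapsto H^i(\mathrm{Kos}(M;\gamma_j-1))$ (a two-term complex, so only $i=0,1$ matter) is effaceable via a suitable embedding, and then bootstrap to the full Koszul complex by the standard tensor-product / spectral-sequence formalism for Koszul complexes on commuting operators.

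First I would record the key construction. For the operator $\varphi_q$: given $M$, set $N:=M\oplus(\varphi_q$-module freely generated, in an appropriate completed sense, so that $\varphi_q-1$ becomes surjective on it$)$ — in the étale $\varphi_q$-module world the right gadget is $N$ such that there is an exact sequence $0\to M\to N\xrightarrow{\varphi_q-1}N\to 0$ stably; such $N$ exists because one may adjoin a ``$\varphi_q=1$ free part'' à la the unramified construction $\aka(\ur{a})$, or more simply because $\ltet$ has enough objects $N$ with $H^1(\varphi_q-1\mid N)=0$ (e.g. base change along the faithfully flat $\aka\to\widehat{\mathbf A}^{\mathrm{ur}}_{K,A}$-type extensions, then descend). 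For the operators $\gamma_i$: one uses that $\gamma_i$ topologically generates a $\mathbf Z_p$, and a coinduction $N:=\mathrm{Coind}_{\langle\gamma_i^{p^s}\rangle}^{\langle\gamma_i\rangle}M$ (continuous version) kills $H^{\geq 1}$ of the corresponding one-variable complex while remaining an object of $\ltet$. In each case the embedding $M\hookrightarrow N$ is split over $\aka$ by construction, so $P=N/M\in\ltet$ as required.

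Next I would assemble these. Because the $\gamma_0,\dots,\gamma_n$ commute, $\C^\bullet(M)$ is the total complex of the $(n+1)$-fold tensor product of the one-variable complexes $[M\xrightarrow{\gamma_j-1}M]$, and there is a Künneth/filtration spectral sequence. So to efface a given class $c\in H^i(\C^\bullet(M))$ it suffices, iterating, to efface the relevant one-variable cohomology in each slot: one applies the coinduction construction successively in the variables $\gamma_1,\dots,\gamma_n$ and the ``$\varphi_q=1$-free'' construction in the variable $\gamma_0$, at each stage enlarging $M$ to an $N$ inside $\ltet$ and checking (split exactness over $\aka$) that the previously-achieved vanishing is preserved — this is where one uses that tensoring the Koszul complex with an $\aka$-split short exact sequence stays exact. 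After finitely many steps the image of $c$ vanishes. Effaceability of every $H^i(\C^\bullet(-))$, $i\geq1$, together with the $\delta$-functor structure already noted, gives universality by the usual argument (Grothendieck). Since $H^0(\C^\bullet(M))=\ker(\varphi_q-1)\cap\bigcap_i\ker(\gamma_i-1)=\Hom_{\ltet}(\aka,M)=\ext^0_{\lt}(\aka,M)$, the two universal $\delta$-functors agree in degree $0$, hence in all degrees — which is exactly Proposition \eqref{Herr cohomology and ext}.

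The main obstacle I anticipate is the construction of the effacing objects $N$ \emph{within $\ltet$}, i.e. verifying that the coinduced/$\varphi_q$-free enlargements are again \emph{finite projective} étale $(\varphi_q,\Gamma_K)$-modules over $\aka$ with a \emph{continuous} $\Gamma_K$-action, and that the map $M\hookrightarrow N$ is $\aka$-split. The coinduction along $\langle\gamma_i^{p^s}\rangle\subseteq\langle\gamma_i\rangle$ is only a finite (degree $p^s$) induction, so finiteness and projectivity are not an issue, but continuity of the $\Gamma_K$-action on the enlarged module requires the topological input from Lemma \eqref{3.2.18} and the appendix (the analogue of \cite[Lem. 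D.28]{EG22}); and for the $\varphi_q$-variable one must be a little careful that adjoining a ``$\varphi_q=1$'' part does not destroy étaleness of the linearized Frobenius — this is handled exactly as in the proof of Lemma \eqref{universal unramified}. Modulo these checks, the argument is the standard effaceability bootstrap and should go through \emph{mutatis mutandis} as in \cite[Lem. 5.1.2]{EG22}.
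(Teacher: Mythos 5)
There is a genuine gap: the two effacing constructions at the heart of your sketch do not do what you claim. For the $\gamma_i$-directions, coinduction along the \emph{finite-index} subgroup $\langle\gamma_i^{p^s}\rangle\subseteq\langle\gamma_i\rangle$ does not kill $H^{\geq 1}$ of the one-variable complex: by Shapiro's lemma the cohomology of $\mathrm{Coind}_{\langle\gamma_i^{p^s}\rangle}^{\langle\gamma_i\rangle}M$ is $H^*(\langle\gamma_i^{p^s}\rangle,M)$, and $\langle\gamma_i^{p^s}\rangle\cong\mathbf{Z}_p$ again, so nothing has been gained; the map induced by $M\hookrightarrow \mathrm{Coind}$ is the restriction map, i.e.\ the norm $1+\gamma_i+\cdots+\gamma_i^{p^s-1}$ on the relevant quotient, and showing that it kills a \emph{given} class for $s\gg 0$ is not automatic --- it would require exactly the topological-nilpotence analysis that the paper carries out, not a formal Shapiro argument. (Coinduction from the trivial subgroup would efface, but the resulting module is not finite over $\aka$, hence not in $\ltet$.) For the $\varphi_q$-direction the situation is worse: there is in general \emph{no} object of $\ltet$ on which $\varphi_q-1$ is surjective (already for $\aka$ itself the cokernel of $\varphi_q-1$ is nonzero, being an unramified $H^1$), and base change along $\aka\to\widehat{\mathbf{A}}^{\mathrm{ur}}_{K,A}$ leaves the category of finite projective $\aka$-modules, so it cannot serve as an effacing embedding; ``then descend'' would simply undo the enlargement.

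The second, and more fundamental, omission is the commutation problem. Effacing a class of the total Koszul complex does not reduce to effacing each one-variable slot, and in any attempt to enlarge $M$ by adjoining a preimage of an element under $\varphi_q-1$ (or $\gamma_n-1$) one must define the action of \emph{all} the remaining operators on the new element so that the actions still pairwise commute, remain continuous, and keep the module in $\ltet$. This is precisely where all the work in the paper's proof lies: the case $i=1$ is the explicit construction $N:=M\oplus\aka x$ with $\gamma_j x:=x+x_j$ (the cocycle condition is exactly what makes the actions commute), and the cases $2\leq i\leq n+1$ are handled by Lemmas \eqref{(phi - 1) M} and \eqref{M phi=1 lemma}, which hit one element at a time and inductively \emph{correct} the chosen preimages $x_k$ (so that the discrepancies $x_{jk}=(\gamma_j-1)x_k-(\gamma_k-1)x_j$ become zero), using that $\gamma_i-1$ is topologically nilpotent and that $\varphi_q-1$ is bijective on a suitable $\varphi_q$-stable lattice (Lemma \eqref{phi stable lattice phi-1 bijective}). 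Your proposal contains neither these corrections nor any substitute for them, so, as written, the argument does not go through; it is not a variant of the paper's proof but a sketch whose key steps fail.
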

\begin{proof}
Given any class $c$ in $H^i(\C^\bullet(M))$, we need to find an embedding $M\hookrightarrow N$ in $\M_{\varphi,\Gamma}^{\mathrm{\acute{e}t},\lt}$ so that $c$ becomes zero in $H^i(\C^\bullet(N))$. For brevity, in this proof and the next two lemmas, by ``enlarging'' $M$, we mean replacing $M$ by an appropriate object $N$ in $\M_{\varphi,\Gamma}^{\mathrm{\acute{e}t},\lt}$ for which there is an embedding $M\hookrightarrow N$ as in the statement.

There is nothing to prove if $i>n+1$. Now consider the case $i=1$. We need to show that given any tuple $(x_0,x_1,\ldots,x_n)\in M^{\oplus \binom{n+1}{1}}$ such that $(\gamma_j-1)(x_k)=(\gamma_k-1)(x_j)$ for all $0\leq j<k\leq n$, where $\gamma_0:=\varphi_q$, we can find $N$ and some $x\in N$ so that $(\gamma_j-1)(x)=x_j$ for all $0\leq j\leq n$. This is clear: we can just set $N:=M\oplus \aka x$ with $\gamma_j x:=x+x_j$ for each $0\leq j\leq n$. We can check easily that our assumption on the tuple $(x_j)_j$ guarantees precisely that these actions pairwise commute, and so define an \' etale $(\varphi_q,\Gamma_K)$-module structure on $N$, as wanted.

The case $i=n+1$ follows directly from Lemma \eqref{(phi - 1) M} below. It remains to treat the case where $2\leq i\leq n$. In this case, given any tuple $(y_{j_0 j_1\ldots j_{i-1}})_{0\leq j_0<\ldots<j_{i-1}\leq n}\in M^{\oplus \binom{n+1}{i}}$ such that
\begin{align}\label{dy=0}
    \sum_{k=0}^i (-1)^k(\gamma_k-1) y_{j_0\ldots \hat{j}_k\ldots j_i} & =0
\end{align}
for all $(i+1)$-tuples $0\leq j_0<\ldots <j_i\leq n$ (where, as usual, by $\hat{j}_k$ we indicate that we have omitted $j_k$ from the tuple $(j_0,\ldots,j_k,\ldots,j_i)$), we need to find $M\hookrightarrow N$, and a tuple $(x_{j_0\ldots j_{i-2}})_{0\leq j_0<\ldots < j_{i-2}\leq n}\in N^{\oplus \binom{n+1}{i-1}}$ such that 
\begin{align}\label{y=dx}
    y_{j_0 j_1\ldots j_{i-1}} & = \sum_{k=0}^{i-1} (-1)^k(\gamma_k-1) x_{j_0 \ldots \hat{j}_k j_{i-1}} 
\end{align}
for all $i$-tuples $0\leq j_0<\ldots<j_{i-1}\leq n$. By Lemma \eqref{(phi - 1) M}, after enlarging $M$, we may define the tuple $(x_{j_0\ldots j_{i-2}})_{0\leq j_0<\ldots < j_{i-2}\leq n}$ as follows. Set $x_{j_0\ldots j_{i-2}}:=0$ if $j_0=0$, otherwise set $x_{j_0\ldots j_{i-2}}$ to be any element in $N$ for which $(\varphi_q-1)(x_{j_0\ldots j_{i-2}})=y_{0j_0\ldots j_{i-2}}$. Then \eqref{y=dx} holds for all tuples $0\leq j_0<\ldots<j_{i-1}\leq n$ with $j_0=0$. Assume that we have chosen $(x_{j_0\ldots j_{i-2}})_{0\leq j_0<\ldots < j_{i-2}\leq n}$ so that \eqref{y=dx} is true whenever $0\leq j_0< j_0'$. It suffices to show that we may modify the tuple $(x_{j_0\ldots j_{i-2}})_{0\leq j_0<\ldots < j_{i-2}\leq n}$ further so that \eqref{y=dx} in fact holds true for all $0\leq j_0\leq j_0'$. Indeed, by the inductive hypothesis, and our assumption \eqref{dy=0} (applied to the $(i+1)$-tuples $0\leq j_0<j_0'<\ldots<j_{i-1}\leq n$ with $0\leq j_0< j_0'$), we obtain $(\gamma_{j_0}-1)z_{j_0' j_1\ldots j_{i-1}}=0$ for all $0\leq j_0<j_0'$, or equivalently, $z_{j_0' j_1\ldots j_{i-1}}\in M^{\varphi_q=1,\gamma_1=\ldots=\gamma_{j_0'-1}=1}$, where
\begin{align*}
    z_{j_0' j_1\ldots j_{i-1}}:=y_{j_0' j_1\ldots j_{i-1}} - \sum_{k=0}^{i-1} (-1)^k(\gamma_k-1) x_{j_0' \ldots \hat{j}_k j_{i-1}}.
\end{align*}
By Lemma \eqref{M phi=1 lemma} (applied to $i=j_0'-1\leq n-2$), we may thus enlarge $N$ further so that $z_{j_0' j_1\ldots j_{i-1}}=(\gamma_{j_0'}-1)z_{j_1\ldots j_{i-1}}$ for some $z_{j_1\ldots j_{i-1}}\in M^{\varphi_q=1,\gamma_1=\ldots=\gamma_{j_0'-1}=1}$. By replacing $x_{j_1\ldots j_{i-1}}$ with $x_{j_1\ldots j_{i-1}}+z_{j_1\ldots j_{i-1}}$, we see that \eqref{y=dx} now holds true for all $0\leq j_0\leq j_0'$, as desired. 
\end{proof}
\begin{lem}\label{M phi=1 lemma}
Given any $M\in \M_{\varphi,\Gamma}^{\mathrm{\acute{e}t},\lt}$, and any $x\in M^{\varphi_q=1,\gamma_1=\ldots=\gamma_{i}=1}$ with $0\leq i\leq n-2$, we can find an embedding $M\hookrightarrow N$ in $\ltet$ so that $x\in (\gamma_n-1)(N^{\varphi_q=1,\gamma_1=\ldots=\gamma_{i}=1}).$
\end{lem}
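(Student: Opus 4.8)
\emph{Proof proposal.} The plan is to realize $x$ as $(\gamma_n-1)(y)$ by adjoining to $M$ a single free $\aka$-generator $y$. Set $N:=M\oplus\aka y$ as an $\aka$-module, so that the inclusion $M\hookrightarrow N$ is automatically split over $\aka$, and try to extend the $(\varphi_q,\Gamma_K)$-action of $M$ to $N$ by declaring $\varphi_q(y):=y$, $\gamma_k(y):=y$ for $1\le k\le i$, $\gamma_n(y):=y+x$, and $\gamma_j(y):=y+a_j$ for $i<j<n$, where the correction terms $a_j\in M$ remain to be chosen. Unwinding the requirement that $\varphi_q,\gamma_1,\dots,\gamma_n$ pairwise commute on $y$, one finds that this is equivalent to: each $a_j$ lies in $M^{+}:=M^{\varphi_q=1,\gamma_1=\dots=\gamma_i=1}$, and, setting $a_n:=x$ (which lies in $M^{+}$ by hypothesis), the tuple $(a_{i+1},\dots,a_n)$ is a Koszul $1$-cocycle for the commuting operators $\gamma_{i+1}-1,\dots,\gamma_n-1$ acting on $M^{+}$, i.e. $(\gamma_j-1)(a_k)=(\gamma_k-1)(a_j)$ for all $i<j<k\le n$. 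Thus the lemma reduces to the following: after an admissible enlargement of $M$ in $\ltet$, the partial datum ``$a_n=x$'' can be completed to such a cocycle; granting this, $y$ lies in $N^{\varphi_q=1,\gamma_1=\dots=\gamma_i=1}$ with $(\gamma_n-1)(y)=x$, provided the enlarged $\Gamma_K$-action is continuous.

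To build the required enlargement I would adjoin, besides $y$, finitely many further free $\aka$-generators that serve as the correction terms $a_j$ and as the corrections to their $\Gamma_K$-orbits, organised in a Koszul pattern indexed by subsets $S\subseteq\{i+1,\dots,n-1\}$: the generator attached to $S$ is sent by $\gamma_n-1$ to the corresponding iterated image $\prod_{j\in S}(\gamma_j-1)(x)\in M^{+}$, and by $\gamma_k-1$ ($k\notin S$, $k$ a middle index) to the generator attached to $S\cup\{k\}$; the cocycle relations are then built into the construction. The base case is $r:=n-1-i=0$, i.e. $i=n-1$ (slightly outside the stated range $i\le n-2$, but harmless and the natural bottom of the recursion): here there is nothing to correct, and $N:=M\oplus\aka y$ with $\varphi_q(y)=y$, $\gamma_k(y)=y$ for $k\le n-1$, $\gamma_n(y)=y+x$ is already an object of $\ltet$, because every commutation relation collapses to the hypotheses $(\varphi_q-1)(x)=0$ and $(\gamma_k-1)(x)=0$ for $k\le n-1$ — this is where the hypothesis on $x$ is used essentially. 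The continuity of the extended $\Gamma_K$-action on each enlargement is checked exactly as in \cite[Lem.~D.28, Lem.~D.31]{EG22}, using Lemma \eqref{3.2.18} and the quasi-linearity estimates of Appendix \ref{T quasi linear}; this is the same continuity input as in the proof of Lemma \eqref{X limit preserving}.

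The main obstacle is that, carried out naively, the correction process is an \emph{infinite} tower: the iterated operators $\gamma_j-1$ are only topologically nilpotent, not nilpotent, on $M$, so the subsets $S$ above never stop producing nonzero data, whereas the output $N$ must be a \emph{finite} projective $\aka$-module. The way around this is to observe that the successive corrections are controlled by the submodules of $M^{+}/(\gamma_n-1)M^{+}$ generated by the iterated images $(\gamma_{i+1}-1)^{m_{i+1}}\cdots(\gamma_{n-1}-1)^{m_{n-1}}(x)$; after the standard reduction to the case of a Noetherian $A$ (as in the proofs of Lemmas \eqref{(phi - 1) M} and \eqref{effaceable lubin tate}), these form an ascending chain which stabilises, so that only finitely many new $\aka$-generators are actually needed and $N$ remains finite projective with $M\hookrightarrow N$ split over $\aka$. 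Making this stabilisation precise, and keeping it compatible with the continuity verification, is the technical heart of the argument, and runs parallel to \cite[Lem.~3.4.8]{EG22}.
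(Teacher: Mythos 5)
Your reduction at the start is sound and agrees with the paper's mechanism for $H^1$: adjoining one generator $y$ with $\varphi_q(y)=y$, $\gamma_k(y)=y$ for $k\le i$, $\gamma_j(y)=y+a_j$, $\gamma_n(y)=y+x$ works precisely when all $a_j$ lie in $M^{\varphi_q=1,\gamma_1=\ldots=\gamma_i=1}$ and $(a_{i+1},\ldots,a_{n-1},x)$ is a Koszul $1$-cocycle for $\gamma_{i+1}-1,\ldots,\gamma_n-1$. But the heart of the lemma is how to produce such a cocycle after an admissible enlargement, and there your argument has a genuine gap. Your subset-indexed construction is incomplete as stated: you never define $\gamma_k(y_S)$ for $k\in S$, and any attempt to do so (e.g. $\gamma_k(y_S)=y_S$) forces relations like $(\gamma_k-1)\prod_{j\in S}(\gamma_j-1)(x)=0$, which is exactly where the iterated operators, hence your ``infinite tower'', reappear. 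You correctly identify termination as the crux, but your proposed fix --- stabilization of an ascending chain of submodules of $M^{+}/(\gamma_n-1)M^{+}$ over a Noetherian $A$ --- is unsupported: at this point in the paper nothing guarantees that $M^{+}/(\gamma_n-1)M^{+}$ is a finitely generated (hence Noetherian) $A$-module; finiteness of Herr cohomology is a \emph{downstream} consequence of this very effaceability argument (via Proposition \eqref{Herr cohomology and ext} and the comparison with the cyclotomic stack), so invoking it here is circular. Moreover, even granting stabilization, it would not give you a base case: the recursion needs the iterated images to actually vanish (or land in $(\gamma_n-1)$ of a finite enlargement) after finitely many steps, not merely that the submodules they generate stop growing.

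The idea you are missing is the one the paper uses to terminate: because $x$ is fixed by $\varphi_q$ and the $\gamma_j$ commute with $\varphi_q$, every iterate $(\gamma_{i+1}-1)^{a_{i+1}}\cdots(\gamma_{n-1}-1)^{a_{n-1}}(x)$ is again $\varphi_q$-fixed; by Lemma \eqref{phi stable lattice phi-1 bijective} there is a $\varphi_q$-stable open subgroup $U\subseteq M$ on which $1-\varphi_q$ is bijective, so any $\varphi_q$-fixed element of $U$ is zero; and since the $\gamma_j-1$ are topologically nilpotent, the iterates lie in $U$, hence vanish \emph{identically}, once the total degree is large. This exact vanishing (nilpotence on $\varphi_q$-invariants, not on all of $M$) provides the finite starting point, and the paper then runs a descending induction on the total degree, at each stage correcting the witnesses $x_{i+1},\ldots,x_{n-1}$ into a genuine $1$-cocycle using the inductive hypothesis of the lemma for larger $i$ (which is why the statement is proved by descending induction on $i$, with $i=n-2$ as base case), before applying the single-generator $H^1$ construction. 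Without this vanishing input and the induction on $i$ to repair the cocycle conditions, your construction does not close up into a finite projective $N$.
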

\begin{proof}
We will use descending induction on $0\leq i\leq n-2$. Assume first that $i=n-2$. We can pick $r\geq 0$ large enough so that $(\gamma_{n-1}-1)^r(x)=0$. To see this, let $U$ be an $\varphi_q$-stable open subgroup of $M$: such $U$ exists by Lemma \eqref{phi stable lattice phi-1 bijective} below. As the map $\gamma_{n-1}-1: M\to M$ is topologically nilpotent, we have $(\gamma_{n-1}-1)^r(x)\in U$ for all $r\geq 0$ sufficiently large. But this in fact forces $(\gamma_{n-1}-1)^r(x)=0$ by injectivity of the map $1-\varphi_q: U\to U$.

We claim that for each $0\leq k\leq r$, we can choose $N$ so that $(\gamma_{n-1}-1)^k(x)\in (\gamma_n-1)(N^{\varphi_q=1,\gamma_1=\ldots=\gamma_{n-2}=1})$. This is clear for $k=r$ by our choice of $r$. Assume the claim is true for $k+1$ (with $0\leq k\leq r-1$). Then we can choose $x'\in N^{\varphi_q=1,\gamma_1=\ldots=\gamma_{n-2}=1}$ so that $(\gamma_{n-1}-1)^{k+1}(x)=(\gamma_n-1)(x')$. Now applying the construction in the case of $H^1$ for the tuple $(0,\ldots,0,x',(\gamma_{n-1}-1)^k(x))$, we can enlarge $N$ further so that $(\gamma_{n-1}-1)^k(x)\in (\gamma_n-1)(N^{\varphi_q=1,\gamma_1=\ldots\gamma_{n-2}=1})$. In particular, we obtain the result for $i=n-2$ by setting $k=0$.

Assume that $0\leq i\leq n-3$ and that the result have been proved for all $i+1\leq j\leq n-2$. Again, we can pick $r\geq 0$ large enough so that $(\gamma_{i+1}-1)^{a_{i+1}}\ldots (\gamma_{n-1}-1)^{a_{n-1}}(x)=0$ for all nonnegative integers $a_{i+1},\ldots,a_{n-1}$ with sum $a_{i+1}+\ldots+a_{n-1}=r$. In particular, we trivially have $(\gamma_{i+1}-1)^{a_{i+1}}\ldots (\gamma_{n-1}-1)^{a_{n-1}}(x)\in (\gamma_n-1)M^{\varphi_q=1,\gamma_1=\ldots=\gamma_i=1}$. If $r\geq 1$, then we claim that the same property holds for $r-1$ after possibly enlarging $M$, i.e. that we can choose $N$ so that $(\gamma_{i+1}-1)^{a_{i+1}'}\ldots (\gamma_{n-1}-1)^{a_{n-1}'}(x)\in (\gamma_n-1)N^{\varphi_q=1,\gamma_1=\ldots=\gamma_i=1}$ for all tuples $(a_{i+1}',\ldots,a_{n-1}')$ of nonnegative integers of sum $r-1$. Indeed, for such tuple, we can write 
\begin{align*}
    \begin{cases}
    (\gamma_{i+1}-1)^{1+a_{i+1}'}\ldots (\gamma_{n-1}-1)^{a_{n-1}'}(x) & =(\gamma_n-1)(x_{i+1}) \\
    &\ldots\\
    (\gamma_{i+1}-1)^{a_{i+1}'}\ldots (\gamma_{n-1}-1)^{1+a_{n-1}'}(x) & =(\gamma_n-1)(x_{n-1})
    \end{cases}
\end{align*}
for some $x_{i+1},\ldots,x_{n-1}\in N^{\varphi_q=1,\gamma_1=\ldots=\gamma_i=1}$. In particular, we have $x_{jk}:=(\gamma_j-1)x_k-(\gamma_k-1)x_j\in M^{\varphi_q=1,\gamma_1=\ldots=\gamma_i=\gamma_n=1}$ for every $i+1\leq j<k\leq n-1$. 

We will now modify the elements $x_{i+1},\ldots,x_{n-1}$ suitably so that in fact $x_{jk}=0$ for all such $(j,k)$. More precisely, we will show by ascending induction on $i+1\leq j<n-1$ that we can modify the $x_k$ with $k>j$ so that $x_{jk}=0$ for all such $k$. Assume first that $j=i+1$. Using the inductive hypothesis for the case $i+1\leq n-2$, after possibly enlarging $M$, we may choose for each $j<k\leq n-1$ an element $y_{k}\in M^{\varphi_q=1,\gamma_1=\ldots=\gamma_{i}=\gamma_n=1}$ so that $x_{jk}=(\gamma_j-1)(y_k)$. We now simply replace each $x_k$ by $x_{k}-y_k$. Next assume that we have modified so that $x_{j'k}=0$ for all $i+1\leq j'<k\leq n-1$ with $j'<j$. In particular, as $x_{j'j}=0=x_{j'k}$, we have 
\begin{align*}
    (\gamma_{j'}-1)x_{jk} & = (\gamma_j-1) (\gamma_{j'}-1)x_k-(\gamma_{k}-1)(\gamma_{j'}-1)x_j\\
    &=(\gamma_j-1)(\gamma_k-1)x_{j'}-(\gamma_{k}-1)(\gamma_j-1)x_{j'}\\
    &=0.
\end{align*}
In other words, we in fact have $x_{jk}\in M^{\varphi_q=1,\gamma_1=\ldots=\gamma_{j-1}=\gamma_n=1}$. Thus, by applying the inductive hypothesis for $i+1\leq j\leq n-2$, we can find (after possibly further enlarging $N$), for each $k>j$ an element $z_k\in M^{\varphi_q=1,\gamma_1=\ldots=\gamma_{j-1}=\gamma_n=1}$ so that $x_{jk}=(\gamma_j-1)z_k$. Again, we can now replace each $x_k$ by $x_k-z_k$ so that in fact $x_{jk}=0$, as wanted (note that as we have designed so that $z_k$ are fixed by $\gamma_{j'}$ for all $j'<j$, this does not affect the modifcation that we made earlier on the $x_{j'k}$ with $j'<k$ (i.e. they are still zero)). 

The upshot is that now the tuple $(0,\ldots,0,x_{i+1},\ldots,x_{n-1},(\gamma_{i+1}-1)^{a_{i+1}'}\ldots(\gamma_{n-1}-1)^{a_{n-1}'}(x))$ is a 1-cocycle, and so by using the construction in the case of $H^1$, we can further enlarge $N$ so that $(\gamma_{i+1}-1)^{a_{i+1}'}\ldots(\gamma_{n-1}-1)^{a_{n-1}'}(x)\in (\gamma_n-1)N^{\varphi_q=1,\gamma_1=\ldots=\gamma_i=1}$ for all tuples $(a_{i+1}',\ldots,a_{n-1}')$ of nonnegative integers of sum $r-1$, as claimed. Again by continuing this procedure, we may arrive to the case $r=0$, where we clearly have $x\in (\gamma_n-1)N^{\varphi_q=1,\gamma_1=\ldots=\gamma_i=1}$, as wanted.
\end{proof}
\begin{lem}\label{(phi - 1) M}
Given any $M\in \M_{\varphi,\Gamma}^{\mathrm{\acute{e}t},\lt}$, and any $x\in M$, we can find an embedding $M\hookrightarrow N$ in $\ltet$ so that $x\in (\varphi_q-1)(N)$.
\end{lem}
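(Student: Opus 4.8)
The plan is to reduce everything to the effacement construction that already appears in the proof of Lemma~\eqref{effaceable lubin tate} for $H^{1}$. Recall that construction: if $M'$ is an object of $\ltet$ and $(z_{0},z_{1},\dots,z_{n})$ is a $1$-cocycle in the Herr complex of $M'$ — that is, a tuple with $(\gamma_{a}-1)(z_{b})=(\gamma_{b}-1)(z_{a})$ for all $0\le a<b\le n$, where $\gamma_{0}:=\varphi_{q}$ — then $N:=M'\oplus\aka e$, with $\gamma_{j}(e):=e+z_{j}$, is again an object of $\ltet$ (étaleness of $\varphi_{q}$ is immediate because its linearization is block triangular with invertible diagonal blocks, and the $\Gamma_{K}$-action is continuous and well-defined by exactly the verification carried out in the $i=1$ case of Lemma~\eqref{effaceable lubin tate}); moreover $M'\hookrightarrow N$ is a strict embedding and $(\varphi_{q}-1)(e)=z_{0}$. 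Thus it suffices to enlarge $M$, inside $\ltet$, so that the given $x$ becomes the $0$th entry of some $1$-cocycle.

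I would obtain this from the following auxiliary claim, proved by induction: every \emph{partial} cocycle $(x_{0},\dots,x_{j})$ in $M$ — meaning $(\gamma_{a}-1)(x_{b})=(\gamma_{b}-1)(x_{a})$ for $0\le a<b\le j$ — can, after replacing $M$ by a suitable enlargement in $\ltet$, be completed to a full $1$-cocycle $(x_{0},\dots,x_{n})$ leaving the first $j+1$ entries unchanged. Applying the claim with $j=0$ and $x_{0}=x$ (the partial-cocycle condition then being vacuous) and following it with the $H^{1}$-construction above yields $M\hookrightarrow N$ with $e\in N$ and $(\varphi_{q}-1)(e)=x$, which is the lemma.

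For the claim, appending one entry $x_{m+1}$ to a partial cocycle $(x_{0},\dots,x_{m})$ amounts to solving $(\gamma_{a}-1)(x_{m+1})=(\gamma_{m+1}-1)(x_{a})$ for $a=0,\dots,m$. The elements $w_{a}:=(\gamma_{m+1}-1)(x_{a})$ themselves form a partial cocycle (because $\Gamma_{K}$ is abelian and $\varphi_{q}$ commutes with it), with entries lying \emph{deeper} in the $T$-adic filtration of a $\varphi_{q}$-stable lattice than the $x_{a}$ — this is where Lemma~\eqref{3.2.18} enters. Completing $(w_{0},\dots,w_{m})$ to a full cocycle (recursively) over a further enlargement $\tilde N$ and applying the $H^{1}$-construction to it produces $\tilde e\in\tilde N$ with $(\gamma_{a}-1)(\tilde e)=w_{a}$ for all $a$; then $x_{m+1}:=\tilde e$ does the job. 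Termination relies on two inputs: each $\gamma_{i}-1$ is topologically nilpotent on the modules in play (continuity of the $\Gamma_{K}$-action together with Lemma~\eqref{3.2.18}), and there is a $\varphi_{q}$-stable open subgroup $U$ on which $1-\varphi_{q}$ is bijective (Lemma~\eqref{phi stable lattice phi-1 bijective}), which after shrinking we may also take to be $\Gamma_{K}$-stable. Once all entries of a partial cocycle lie in $U$, one completes it to a full cocycle \emph{with no further enlargement}, defining each new entry by applying $(1-\varphi_{q})^{-1}$ on $U$ and deducing the cocycle relations not forced by that definition from injectivity of $1-\varphi_{q}$ on $U$. Since repeatedly applying $\gamma_{m+1}-1$ drives the finitely many entries into $U$ after finitely many steps, the recursion — organized as a lexicographic descent on (number of entries still to be appended, then a measure of $T$-adic shallowness of the current entries) — bottoms out.

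The main obstacle is precisely the bookkeeping in this last step: one must carry the ever-growing system of partial-cocycle identities through a doubly nested recursion and, at the base case, verify that the identities not directly imposed by the equation defining the new entry hold automatically — which is exactly where injectivity of $1-\varphi_{q}$ on $U$ is invoked. This is a routine but genuinely lengthy elaboration of the argument in the proof of Lemma~\eqref{M phi=1 lemma}. A secondary point needing care (or a preliminary reduction, e.g.\ to the case where $K$ is $F$-basic) is the existence, for general $K$, of an open subgroup $U$ that is simultaneously $\varphi_{q}$- and $\Gamma_{K}$-stable and on which $1-\varphi_{q}$ is bijective.
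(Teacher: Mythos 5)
Your argument is correct in substance but takes a genuinely different route from the paper's. The paper proves the statement by a descending induction on the degree $r$ of the monomials $(\gamma_1-1)^{a_1}\cdots(\gamma_n-1)^{a_n}(x)$: openness of $(\varphi_q-1)(M)$ (from Lemma \eqref{phi stable lattice phi-1 bijective}) starts the induction, and at each step the elements $x_j$ with $(\varphi_q-1)(x_j)=(\gamma_j-1)(\text{monomial})$ are chosen independently, so their commutation defects $x_{jk}\in M^{\varphi_q=1}$ must be repaired by invoking Lemma \eqref{M phi=1 lemma} (and its proof) before the $H^1$-adjunction $N=M\oplus\aka e$ can be applied. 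You instead prove a single ``partial cocycle completion'' claim and never need Lemma \eqref{M phi=1 lemma}: each new entry is either of the form $(\gamma_a-1)(\tilde e)$ for one adjoined element $\tilde e$, so the required relations hold by construction, or is produced at the base of the recursion inside an open subgroup $U$ on which $\varphi_q-1$ is bijective, where the remaining relations follow from injectivity of $\varphi_q-1$ on $U$. Both proofs use the same toolkit (the adjunction construction, topological nilpotence of the $\gamma_i-1$ via Lemmas \eqref{3.2.18} and \eqref{topologically nilpotent}, and Lemma \eqref{phi stable lattice phi-1 bijective}); the paper's version costs little extra since Lemma \eqref{M phi=1 lemma} is needed anyway for the middle-degree effaceability, while yours is more uniform and independent of that lemma.

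Two points must be supplied to close your sketch. First, your base case needs $U$ to be stable under both $\varphi_q$ and $\Gamma_K$, whereas Lemma \eqref{phi stable lattice phi-1 bijective} only provides $\varphi_q$-stability; this strengthening is true but is not in the paper, so you must prove it. It follows along the lines you indicate: reduce to $K$ being $F$-basic (stability and bijectivity of $1-\varphi_q$ concern only the additive group with its operators, so one may view $M$ over $\mathbf{A}_{K^{\mathrm{basic}},A}$), take the lattice $\fM'$ of that lemma, and use continuity of the compact $\Gamma_K$-action together with $\gamma(T),\varphi_q(T)\in T\mathbf{A}_K^+$ to check that $U:=\sum_{\gamma\in\Gamma_K}\gamma(T^k\fM')$ is, for suitable $k$, open, contained in $\fM'$, and $(\varphi_q,\Gamma_K)$-stable, with $1-\varphi_q$ still bijective on it (inverse $\sum_{l\geq 0}\varphi_q^l$, convergent because $U$ is a finite module over the Noetherian complete ring $\mathbf{A}_{K,A}^+$). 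Second, the ``lexicographic descent'' should be phrased carefully: a single application of $\gamma_{m+1}-1$ need not increase $T$-adic depth, so the clean organization is an outer induction on the number of entries still to be appended, combined with an inner reverse induction along the finite chain $\bigl((\gamma_{m+1}-1)^j x_a\bigr)_a$, whose length is bounded by topological nilpotence --- which is the fact you in effect appeal to. With these additions the proof goes through.
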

\begin{proof}
It follows from Lemma \eqref{phi stable lattice phi-1 bijective} that $(\varphi_q-1)(M)$ is an open subgroup of $M$. We may thus pick $r\geq 0$ large enough so that $(\gamma_1-1)^{a_1}\ldots (\gamma_n-1)^{a_n}(x)\in (\varphi_q-1)(M)$ for all tuples $(a_1,\ldots,a_n)$ of nonnegative integers with sum $r$. As in the proof of Lemma \eqref{M phi=1 lemma}, we claim that if $r\geq 1$, then the same property holds for $r-1\geq 0$. Indeed, given any tuple $(a_1',\ldots,a_n')$ with sum $r-1$, we can write 
\begin{align*}
    \begin{cases}
    (\gamma_1-1)^{1+a_1'}\ldots (\gamma_n-1)^{a_n'}(x) & = (\varphi_q-1)(x_1)\\
    & \ldots \\
    (\gamma_1-1)^{a_1'}\ldots (\gamma_n-1)^{1+a_n'}(x) & = (\varphi_q-1)(x_n)
    \end{cases}
\end{align*}
for some $x_1,\ldots,x_n\in M$. Again, we have $x_{jk}:=(\gamma_j-1)x_k-(\gamma_k-1)x_j\in M^{\varphi_q=1}$ for all $1\leq j<k\leq n$. By applying Lemma \eqref{M phi=1 lemma} and its proof, we may modify the elements $x_1,\ldots,x_n$ inductively so that in fact $x_{jk}=0$ for all such $j<k$. Indeed, assume first that $j=1$. By the case $i=0$ of the that lemma, we can find $N$ so that $x_{1k}=(\gamma_1-1)y_k$ for some $y_k\in N^{\varphi_q=1}$. Then by replacing each $x_k$ with $x_k-y_k$, we obtain $x_{1k}=0$ for all $k>1$. Assume we have $x_{j'k}=0$ for all $1\leq j'<k\leq n$ with $j'<j$. Arguing as the proof of \textit{loc. cit.}, we see that in fact $x_{jk}\in M^{\varphi_q=1,\gamma_1=\ldots=\gamma_{j-1}=1}$. Then by the case $i=j-1\leq n-2$ of \textit{loc. cit.}, we may choose $z_k\in M^{\varphi_q=1,\gamma_1=\ldots=\gamma_{j-1}=1}$ so that $x_{jk}=(\gamma_j-1)(z_k)$. Again by replacing each $x_k$ with $x_k-z_k$, we may then assume that $x_{jk}=0=x_{j'k}$ for all $k>j>j'$, as desired.

Now by using the construction in the case of $H^1$ for the 1-cocycle $((\gamma_1-1)^{a_1'}\ldots (\gamma_n-1)^{a_n'}(x),x_1,\ldots,x_n)$, we may further enlarge $N$ so that $(\gamma_1-1)^{a_1'}\ldots (\gamma_n-1)^{a_n'}(x)\in (\varphi_q-1)(N)$, as claimed. Again, in the case $r=0$, we obtain $x\in (\varphi_q-1)(N)$, as desired.
\end{proof}
\begin{lem}\label{phi stable lattice phi-1 bijective}
Let $A$ be a Noetherian $\O/\varpi^a$-algebra for some $a\geq 1$, and let $M$ be a finite (not necessarily \' etale) $\varphi_q$-module over $\aka$. Then $M$ admits an open $\varphi_q$-stable subgroup $U$ on which the map $\varphi_q-1$ is bijective. 
\end{lem}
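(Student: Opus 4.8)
The plan is to take for $U$ a sufficiently deep $T$-adic congruence sublattice of a fixed lattice of $M$: once $\varpi$ (hence $p$ and $\pi$) is nilpotent in $A$, the operator $\varphi_q$ strictly raises $T$-adic valuations on small lattices, which lets us invert $1-\varphi_q$ by a Neumann series. (I write $\varphi_q$ also for the semilinear operator on $M$.) First I would reduce to the case where $K$ is $F$-basic: since $\mathbf{A}_{K,A}$ is finite free over $\mathbf{A}_{K^{\mathrm{basic}},A}$ and the inclusion is $\varphi_q$-equivariant, $M$ is also a finite $\varphi_q$-module over $\mathbf{A}_{K^{\mathrm{basic}},A}$ with the same underlying topological group and the same operator, so any open $\varphi_q$-stable subgroup witnessing the claim over $\mathbf{A}_{K^{\mathrm{basic}},A}$ also does so over $\mathbf{A}_{K,A}$; thus we may assume $\mathbf{A}_K^+$, hence $\mathbf{A}_{K,A}^+$, is $\varphi_q$-stable, with $\varphi_q(T)\in T\mathbf{A}_{K,A}^+$ by Lemma \eqref{phi-stable when unramified}. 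Next, as $A$ is an $\O/\varpi^a$-algebra, $p$ and $\pi$ are nilpotent in $A$; hence the completion in the definition of $\aka$ is vacuous, $\aka=\mathbf{A}_{K,A}^+[1/T]=\bigcup_{N}T^{-N}\mathbf{A}_{K,A}^+$, the ring $\mathbf{A}_{K,A}^+=(W_{\O_F}(k_{K,\infty})\otimes_{\O_F}A)[[T]]$ is Noetherian and $T$-adically complete and separated, and the canonical topology on $M$ admits the submodules $T^mM^+$, $m\geq0$, as a basis of neighbourhoods of $0$, where $M^+\subseteq M$ is any finitely generated $\mathbf{A}_{K,A}^+$-submodule with $M^+[1/T]=M$. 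Fix such an $M^+$ (again $T$-adically complete and separated); since $\varphi_q(\mathbf{A}_{K,A}^+)\subseteq\mathbf{A}_{K,A}^+$, one has $\varphi_q(M^+)\subseteq T^{-h}M^+$ for some $h\geq0$.

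The substantive step is the estimate $\varphi_q(T^mM^+)\subseteq T^{\,q(m-a+1)-h}M^+$ for all $m\geq a-1$. Its proof is short: since $\varphi_q$ is congruent to the $q$-power Frobenius modulo $\pi$, the element $\eta:=\varphi_q(T)-T^q$ lies in $\pi\aka\cap\mathbf{A}_{K,A}^+$, hence is nilpotent with $\eta^a=0$; expanding $\varphi_q(T)^m=(T^q+\eta)^m$ and dropping the vanishing terms $\eta^j$ with $j\geq a$ gives $\varphi_q(T)^m\in T^{q(m-a+1)}\mathbf{A}_{K,A}^+$, and then for $x\in M^+$ one has $\varphi_q(T^mx)=\varphi_q(T)^m\,\varphi_q(x)\in T^{q(m-a+1)}\mathbf{A}_{K,A}^+\cdot T^{-h}M^+$, using $\varphi_q(x)\in T^{-h}M^+$ and $\varphi_q$-stability of $\mathbf{A}_{K,A}^+$.

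Finally I would choose $m_0$ with $(q-1)m_0>q(a-1)+h$, which is possible since $q\geq2$, and put $U:=T^{m_0}M^+$. The estimate shows $\varphi_q(U)\subseteq U$; iterating it, the relevant $T$-exponent is the $N$-fold iterate of an increasing affine map of slope $q>1$ with $m_0$ lying above its fixed point, so it tends to $+\infty$, i.e.\ $\varphi_q$ is topologically nilpotent on $U$. As $U$ is an open $\mathbf{A}_{K,A}^+$-submodule of $M$ that is $T$-adically complete and $\varphi_q$-stable, the series $\sum_{N\geq0}\varphi_q^{N}$ converges pointwise on $U$ and furnishes a two-sided inverse to $1-\varphi_q|_U$ (the injectivity being immediate: $(1-\varphi_q)u=0$ forces $u=\varphi_q^{N}(u)\to0$), whence $\varphi_q-1$ is bijective on $U$. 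I expect the only real obstacle to be the estimate above, together with the bookkeeping of topologies and semilinearity (e.g.\ arranging that $U$ can be taken a genuine $\mathbf{A}_{K,A}^+$-submodule rather than merely a subgroup); the essential input — that $p$ is nilpotent, forcing $\eta=\varphi_q(T)-T^q$ to be nilpotent and hence $\varphi_q$ to contract the $T$-adic filtration — is precisely what restricts the statement to $\O/\varpi^a$-algebras.
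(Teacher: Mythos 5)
Your proof is correct and follows essentially the same route as the paper: restrict scalars to a subring with $\varphi_q$-stable integral subring (the paper passes to $\mathbf{A}_{F,A}$, you to $\mathbf{A}_{K^{\mathrm{basic}},A}$, noting the topology and operator are unchanged), use $\pi^a=0$ and the binomial theorem to see that $\varphi_q$ gains $T$-adic valuation, take $U$ to be a sufficiently deep $T$-power multiple of a lattice so that $\varphi_q$ is stable and topologically nilpotent on it, and invert $1-\varphi_q$ by the convergent series $\sum_{N\geq 0}\varphi_q^N$ using $T$-adic completeness. The only differences are bookkeeping (your affine recursion $m\mapsto q(m-a+1)-h$ versus the paper's explicit exponents), so nothing further is needed.
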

\begin{proof}
By considering $M$ as a finite \' etale $\varphi_q$-module over the subring $\mathbf{A}_{F,A}\subseteq \aka$, we may assume that $K=F$ (note that the canonical topology on $M$ does not depend on whether we view $M$ as a finite module over $\aka$ or $\mathbf{A}_{F,A}$). In this case the subring $\mathbf{A}_{F,A}^+$ of $\mathbf{A}_{F,A}$ is $\varphi_q$-stable, and we can in fact choose $U$ to be a $\varphi_q$-stable lattice in $M$. To see this, pick any lattice $\fM$ in $M$, and pick $n>0$ large enough so that $\Phi_M(\varphi_q^*\fM)\subseteq T^{-n}\fM$. As $\pi^a=0$ in $A$ and $\varphi_q(T)\equiv T^q\bmod{\pi \mathbf{A}_{F,A}^+}$, it follows easily from the binomial theorem that $\varphi_q(T^{M+a-1})$ is divisible by $T^{Mq}$ in $\mathbf{A}_{F,A}^+$. In particular, we have $\Phi_M(\varphi_q^*(T^{M+a-1}\fM))=\varphi_q(T^{M+a-1})\Phi_M(\varphi_q^*\fM)\subseteq T^{Mq-n}\fM$. If we pick $M$ large enough so that $Mq-n\geq M+a-1+q^{a-1}$, then $\fM':=T^{M+a-1}\fM$ is a $\varphi_q$-stable lattice in $M$ satisfying $\varphi_q(\fM')\subseteq T^{q^{a-1}}\fM'$. As $\varphi_q(T)\equiv T^q\bmod{\pi}$, we have $\varphi_q(T^{q^{a-1}})\equiv T^{q^a}\bmod{\pi^a}$. Thus for any $t\geq 0$, we have $\varphi_q(T^{tq^{a-1}}\fM')\subseteq T^{(1+tq)q^{a-1}}\fM'$. As $\fM'$ is $T$-adically complete (being finite over the Noetherian $T$-adically complete ring $\mathbf{A}_{F,A}$), it follows that for each $x\in \fM'$, the series $\sum_{l\geq 0}\varphi_q^l(x)$ converges in $\fM'$. We deduce that the map $1-\varphi_q: \fM'\to \fM'$ is bijective with inverse given by $\sum_{l\geq 0}\varphi_q^l$, as desired.
\end{proof}
\begin{proof}[Proof of Proposition \eqref{Herr cohomology and ext}]
This follows from Lemmas \eqref{effaceable ext} and \eqref{effaceable lubin tate}, uniqueness of universal $\delta$-functors, and the fact that we have a natural identification between $H^0(\C^\bullet(M))=M^{\varphi_q=1,\Gamma_K=1}$ and $\mathrm{Hom}_{\ltet}(\aka,M)=\ext_{\lt}^0(\aka,M)$.
\end{proof}
\begin{cor}\label{equivalence LT EG cohomology}
Assume $A$ is a finite type $\O/\varpi^a$-algebra for some $a\geq 1$. Let $M$ be a finite projective \' etale $(\varphi_q,\Gamma_K)$-module with $A$-coefficients, and let $M^{\eg}$ denote the $(\varphi,\Gamma_K^{\eg})$-module corresponding to $M$ under the equivalence in Proposition \eqref{equivalence LT EG}. Then $H^i(\C^\bullet(M))\cong H^i_{\eg}(\C^\bullet(M^{\eg}))$ for each integer $i$.  
\end{cor}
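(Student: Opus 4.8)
The plan is to reduce each side of the asserted isomorphism to Yoneda $\Ext$-groups and then transport these across the equivalence of Proposition~\eqref{equivalence LT EG}. By Proposition~\eqref{Herr cohomology and ext} there is, for every $i$, a natural isomorphism
\[
H^i(\C^\bullet(M))\;\cong\;\ext^i_{\lt}(\aka,M).
\]
On the cyclotomic side, the same formalism --- namely \cite[Lem.~5.1.2]{EG22}, of which Proposition~\eqref{Herr cohomology and ext} is the Lubin--Tate analogue --- yields a natural isomorphism $H^i_{\eg}(\C^\bullet(M^{\eg}))\cong\ext^i_{\eg}(\aka^{\eg},M^{\eg})$, where $\ext^i_{\eg}(\aka^{\eg},-)$ denotes Yoneda $\Ext$ in the exact category of finite projective \'etale $(\varphi,\Gamma_K^{\eg})$-modules over $\aka^{\eg}$. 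It therefore suffices to produce, naturally in $M$, an isomorphism of $A$-modules $\ext^i_{\lt}(\aka,M)\xrightarrow{\sim}\ext^i_{\eg}(\aka^{\eg},M^{\eg})$.

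For this I would invoke the general principle that an exact equivalence between exact categories induces isomorphisms on Yoneda $\Ext$-groups in all degrees. Write $\Phi$ for the equivalence of Proposition~\eqref{equivalence LT EG} and $\Psi$ for a quasi-inverse; both are exact by hypothesis, so a complex is a strict exact sequence on one side if and only if its image is one on the other. Hence $\Phi$ carries a degree $i$ Yoneda extension of $\aka$ by $M$ to a degree $i$ Yoneda extension of $\Phi(\aka)$ by $\Phi(M)=M^{\eg}$, it sends maps of extensions to maps of extensions (so it respects the Yoneda equivalence relation), and $\Psi$ furnishes a two-sided inverse at the level of equivalence classes; $A$-linearity is automatic because $\Phi$ is additive and commutes with multiplication by $a\in A$ on the terms of an extension. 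Finally, $\Phi$ is compatible with tensor products and rank-preserving, so it sends the tensor unit $\aka$ of $\ltet$ to the tensor unit of the target, i.e.\ $\Phi(\aka)\cong\aka^{\eg}$. Composing the resulting isomorphism with the two identifications above gives the corollary, the case $i=0$ being exactly full faithfulness of $\Phi$ (and negative or large $i$ being the trivial case where both sides vanish).

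I do not anticipate a genuine obstacle: once Propositions~\eqref{equivalence LT EG} and~\eqref{Herr cohomology and ext} are in hand, the argument is purely formal. The two points worth spelling out are that the Yoneda equivalence relation is preserved under an exact equivalence --- which holds because a map of extensions is defined solely in terms of the strict exact structure that $\Phi$ and $\Psi$ respect --- and the identification $\Phi(\aka)\cong\aka^{\eg}$, which is where the ``tensor-compatible'' and ``rank-preserving'' assertions of Proposition~\eqref{equivalence LT EG} enter.
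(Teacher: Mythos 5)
Your argument is correct and is essentially the paper's own proof: identify both sides with Yoneda $\Ext$-groups via Proposition \eqref{Herr cohomology and ext} (and its cyclotomic counterpart), then transport extensions across the exact equivalence of Proposition \eqref{equivalence LT EG}, which takes $\aka$ to $\aka^{\eg}$. The paper states this in two lines; your write-up merely spells out the formal points (preservation of the Yoneda relation, $A$-linearity, the identification of tensor units) that the paper leaves implicit.
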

\begin{proof}
As the equivalence in \textit{loc. cit.} is exact and clearly takes $\aka$ to $\aka^{\eg}$, we see that $\ext_{\lt}^i(\aka,M)\cong \ext_{\eg}^i(\aka^{\eg},M^{\eg})$ for each $i\in \mathbf{Z}$. The result now follows from Proposition \eqref{Herr cohomology and ext} (applied to both $M$ and $M^{\eg}$).
\end{proof}
\begin{thm}\label{Herr perfect}
Let $A$ is a finite type $\O/\varpi^a$-algebra for some $a\geq 1$, and let $M$ be a finite projective \' etale $(\varphi_q,\Gamma_K)$-module with $A$-coefficients.
\begin{itemize}
    \item[\emph{(1)}] The Herr complex $\C^\bullet(M)$ is a perfect complex of $A$-modules, with tor-amplitude in $[0,2]$.
    \item[\emph{(2)}] If $B$ is a finite type $A$-algebra, then there is natural isomorphism in the derived category
    \begin{displaymath}
    \C^\bullet(M)\otimes_A^{\mathbf{L}} B\xrightarrow{\sim} \C^\bullet(M\otimes_{\aka}\ak{B}).
    \end{displaymath}
    In particular, there is a natural isomorphism 
    \begin{displaymath}
    H^2(C^\bullet(M))\otimes_AB\xrightarrow{\sim} H^2(\C^\bullet(M\otimes_{\aka}\ak{B}).
    \end{displaymath}
\end{itemize}
\end{thm}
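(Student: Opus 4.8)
\emph{Plan.} I would deduce both assertions from perfectness of the cyclotomic Herr complex in \cite{EG22} via the comparison Corollary \eqref{equivalence LT EG cohomology}, testing everything at closed points so as never to confront directly the (genuine) difference between $M\otimes_A B$ and its completion $M\otimes_{\aka}\ak{B}$. The preliminary observation is that $\C^\bullet(M)$ is a bounded complex of $A$-modules, concentrated in degrees $0,\dots,n+1$, with $A$-linear differentials (both $\varphi_q$ and each $\gamma_i$ act $A$-linearly) and $A$-flat terms $M^{\oplus\binom{n+1}{r}}$: indeed $M$ is finite projective over $\aka$, and $\aka=\mathbf{A}_{K,A}^{+}[1/T_K]$ with $\mathbf{A}_{K,A}^{+}=(W_{\O_F}(k_{K,\infty})\otimes_{\O_F}A)[[T_K]]$ is $A$-flat, being a localisation of a power series ring over the Noetherian ring $A$ in the finite free $A$-module $W_{\O_F}(k_{K,\infty})\otimes_{\O_F}A$. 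Hence for every $A$-module $N$ one has $\C^\bullet(M)\otimes_A^{\mathbf L}N\simeq\C^\bullet(M)\otimes_AN=\C^\bullet(M\otimes_AN)$, the last being the Koszul complex of $\varphi_q\otimes1-1,\gamma_i\otimes1-1$ on $M\otimes_AN$; likewise $\C^\bullet(M)\otimes_A^{\mathbf L}B\simeq\C^\bullet(M\otimes_AB)$ for any $A$-algebra $B$.

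\emph{Part (1).} By Corollary \eqref{equivalence LT EG cohomology}, $H^i(\C^\bullet(M))\cong H^i_{\eg}(\C^\bullet(M^{\eg}))$, which is a finitely generated $A$-module vanishing for $i\notin[0,2]$, since the cyclotomic Herr complex is a perfect complex of $A$-modules of tor-amplitude in $[0,2]$ (\cite[\S5.1]{EG22}). Thus $\C^\bullet(M)$ is a bounded, termwise $A$-flat complex with finitely generated cohomology; being pseudo-coherent over the Noetherian ring $A$ and of finite tor-dimension, it is perfect. To pin down the tor-amplitude it suffices to bound the amplitude of $\C^\bullet(M)\otimes_A^{\mathbf L}\kappa(\m)\simeq\C^\bullet(M\otimes_A\kappa(\m))$ at each closed point $\m\in\Sp A$; as $\kappa(\m)$ is finite over $\mathbf F$ (Nullstellensatz, $A$ being Jacobson), the base change of $\aka$ along $A\to\kappa(\m)$ is a quotient followed by a module-finite extension, and since the base change of $\aka$ commutes with each one gets $\aka\otimes_A\kappa(\m)=\ak{\kappa(\m)}$, whence $\C^\bullet(M)\otimes_A^{\mathbf L}\kappa(\m)\simeq\C^\bullet(M_{\kappa(\m)})$ for the base-changed étale $(\varphi_q,\Gamma_K)$-module $M_{\kappa(\m)}$. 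Applying Corollary \eqref{equivalence LT EG cohomology} again (or Theorem \eqref{comparison Galois coho}, since $G_K$ has $p$-cohomological dimension $2$) gives $H^i(\C^\bullet(M_{\kappa(\m)}))=0$ for $i>2$, and trivially for $i<0$; so $\C^\bullet(M)$ is perfect with tor-amplitude in $[0,2]$.

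\emph{Part (2).} The natural $(\varphi_q,\Gamma_K)$-equivariant map $M\otimes_AB\to M\otimes_{\aka}\ak{B}$ induces a morphism $\theta_B\colon\C^\bullet(M)\otimes_A^{\mathbf L}B\simeq\C^\bullet(M\otimes_AB)\to\C^\bullet(M\otimes_{\aka}\ak{B})$ in $D(B)$, compatible with composition of base changes. As $B$ is again of finite type over $\O/\varpi^a$, both sides are perfect $B$-complexes (the source because $\C^\bullet(M)$ is perfect, the target by part (1)), so it is enough to show $\theta_B\otimes_B^{\mathbf L}\kappa(\m)$ is a quasi-isomorphism for every closed point $\m\in\Sp B$. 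Since $\kappa(\m)$ is finite over $\mathbf F$, it is obtained from $A$ (resp.\ from $B$) by a quotient followed by a module-finite extension, so $\aka\otimes_A\kappa(\m)=\ak{\kappa(\m)}=\ak{B}\otimes_B\kappa(\m)$; using flatness of the complexes and transitivity of base change, $\theta_B\otimes_B^{\mathbf L}\kappa(\m)$ is identified with the identity map of $\C^\bullet(M\otimes_{\aka}\ak{\kappa(\m)})$, hence is a quasi-isomorphism, and $\theta_B$ is one too. Finally, since $\C^\bullet(M)$ has tor-amplitude in $[0,2]$ it is quasi-isomorphic to a complex of finite projective $A$-modules in degrees $0,1,2$, so $H^2$ is right-exact under base change; combined with the isomorphism $\C^\bullet(M)\otimes_A^{\mathbf L}B\simeq\C^\bullet(M\otimes_{\aka}\ak{B})$ this yields $H^2(\C^\bullet(M))\otimes_AB\xrightarrow{\sim}H^2(\C^\bullet(M\otimes_{\aka}\ak{B}))$.

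\emph{Main obstacle.} The genuinely delicate input is the pair of base-change identities for the coefficient ring, $\aka\otimes_AA'=\ak{A'}$ along a quotient $A\twoheadrightarrow A'$ and along a module-finite extension $A\to A'$: without some such control one cannot relate $M\otimes_AB$ to the completed $M\otimes_{\aka}\ak{B}$ — the two already differ for $B=A[x]$, since $\ak{A[x]}$ contains series such as $\sum_nx^nT_K^n$ — which is exactly why the argument is routed through closed points, where the two coincide. Establishing these identities for the Lubin--Tate coefficient rings (and checking the compatibility of the resulting identifications) is where the real work lies; it is modelled on, and largely reduces to, \cite[App. B]{EG22}.
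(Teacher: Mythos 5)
Your proposal is correct, and for the substantive part it takes a genuinely different (and more self-contained) route than the paper. The perfectness step in (1) is identical to the paper's: $\C^\bullet(M)$ is a bounded complex of flat $A$-modules, so perfectness reduces to finiteness of cohomology, which comes from Corollary \eqref{equivalence LT EG cohomology} and the cyclotomic result of \cite{EG22}. After that the orders diverge: the paper proves the base-change isomorphism of (2) by transcribing the proof of \cite[Thm. 5.1.22 (2)]{EG22} (itself following \cite[Thm. 4.4.3]{KPX14}), and only then deduces the tor-amplitude bound from (2) together with the vanishing of $H^i(\C^\bullet(M_B))$ outside $[0,2]$ for finite $A$-algebras $B$, as in \cite[Cor. 5.1.25]{EG22}. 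You instead prove the tor-amplitude bound first, by reading it off the fibers $\C^\bullet(M)\otimes_A^{\mathbf L}\kappa(\m)$ at closed points, and then get (2) by noting that both sides are perfect over $B$ (the target by part (1) applied to $M_B$ over $B$, so there is no circularity) and that a map of perfect complexes over the Noetherian Jacobson ring $B$ is a quasi-isomorphism once it is one after $-\otimes_B^{\mathbf L}\kappa(\m)$ for every maximal ideal $\m$ (cone plus Nakayama on the top nonvanishing cohomology). This buys independence from the KPX-style argument at the cost of two classes of inputs that you correctly isolate but should spell out: (i) the standard facts that tor-amplitude of a perfect complex and quasi-isomorphy of a map of perfect complexes are both detected on closed fibers, and (ii) the coefficient-ring base-change identities, which here are elementary because $\varpi^a=0$ forces $\aka\cong (W_{\O_F}(k_{K,\infty})\otimes_{\O_F}A)((T_K))$, so that formation of $\aka$ commutes with quotients by finitely generated ideals and with module-finite extensions (the analogues of the statements in \cite[App. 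B]{EG22}); these make $\aka\otimes_A\kappa(\m)\cong\ak{\kappa(\m)}$ and hence identify $\theta_B\otimes_B^{\mathbf L}\kappa(\m)$ with the identity, exactly as you say. The concluding right-exactness argument for $H^2$ coincides with the paper's.
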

\begin{proof}
(1) As $\C^\bullet(M)$ is a bounded complex of flat $A$-modules, showing $\C^\bullet(M)$ is perfect amounts to showing that its cohomology groups are finite $A$-modules. This follows from Corollary \eqref{equivalence LT EG cohomology} and the corresponding result in \cite[Thm. 5.1.22 (1)]{EG22} for the cyclotomic Herr complex. 

Having shown that the complex $\C^\bullet(M)$ is perfect, we can now prove part (2) by following exactly the proof of \cite[Thm. 5.1.22 (2)]{EG22} (which in turn follows the argument in \cite[Thm. 4.4.3]{KPX14}).

It remains to show that $\C^\bullet(M)$ has tor-amplitude in $[0,2]$. For this we need to check that $\C^\bullet(M)\otimes^{\mathbf{L}}_A N$ has cohomological amplitude in $[0,2]$ for all $A$-modules $N$. As in the proof of \cite[Cor. 5.1.25]{EG22}, it suffices to treat the case where $N=B$ for some finite $A$-algebra $B$. This follows from the isomorphism in part (2) and the fact that $H^i(\C^\bullet(M_B))\cong H^i_{\eg}(\C^\bullet(M_B^{\eg}))=0$ for all $i\notin [0,2]$, where $M_B:=M\otimes_{\aka}\ak{B}$.
\end{proof}
\begin{remark}
1. For showing that the Herr complex is perfect, it suffices by \cite[\href{https://stacks.math.columbia.edu/tag/07LU}{Tag 07LU}]{Sta21} to consider the case where $A$ is a finite type $\mathbf{F}$-algebra. Thus, in view of the above proof, it is in fact enough to invoke the comparison \eqref{equivalence LT EG} only for such algebras; in other words, we need the isomorphism $\X_{K,d}^{\lt}\xrightarrow{\sim} \X_{K,d}^{\eg}$ only on special fibers. On the other hand, it would be desirable to have a proof purely in the world of Lubin--Tate $(\varphi_q,\Gamma_K)$-modules.

2. Our first approach on showing perfectness of the Herr complex was to proceed along the lines of \cite[\textsection 5.1]{EG22} by using a kind of $\psi$-operator on Lubin--Tate $(\varphi_q,\Gamma_K)$-modules. However, this turned out to be not possible (as far as we know) since there is no $\Gamma_K$-equivariant left inverse of $\varphi_q: M\to M$ if $F\ne \mathbf{Q}_p$, cf. \cite[Prop. 3.2.6]{bergerpsi}.
\end{remark}
\begin{cor}\label{perfect complex represented 0 1 2}
Assume $A$ is a finite type $\O/\varpi^a$-algebra for some $a\geq 1$. Then the Herr complex $\C^\bullet(M)$ can be represented by a complex $[C^0\to C^1\to C^2]$ of finite locally free A-modules in degrees $[0,2]$.
\end{cor}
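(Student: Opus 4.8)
The plan is to obtain this as a formal consequence of Theorem \eqref{Herr perfect} together with the standard structure theory of perfect complexes of bounded Tor-amplitude. By part (1) of that theorem, $\C^\bullet(M)$ is a perfect complex of $A$-modules whose Tor-amplitude is contained in $[0,2]$. The only remaining point is purely homological: a perfect complex of modules over a commutative ring with Tor-amplitude in $[0,2]$ is quasi-isomorphic to a complex $[C^0\to C^1\to C^2]$ of finite projective modules placed in cohomological degrees $0,1,2$ (see e.g. \cite[\href{https://stacks.math.columbia.edu/tag/0658}{Tag 0658}]{Sta21}). Since a finitely generated projective module over a commutative ring is the same as a finite locally free module, the terms $C^0, C^1, C^2$ are finite locally free, as asserted.

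For the reader's convenience I also recall the short mechanism behind the cited lemma. One starts from a bounded complex $P^\bullet$ of finite free $A$-modules representing $\C^\bullet(M)$, which exists because $\C^\bullet(M)$ is perfect; say $P^i=0$ for $i$ outside an interval $[a,b]$ with $a\le 0$ and $b\ge 2$. If $b>2$, the Tor-amplitude bound makes $H^b$ of $P^\bullet\otimes_A^{\mathbf{L}}N$ vanish for every $A$-module $N$ (in particular for $N=A$), so the top differential $P^{b-1}\to P^b$ is surjective; as $P^b$ is projective this splits, and a Gaussian-elimination homotopy replaces $P^\bullet$ by a complex whose top term, a direct summand of $P^{b-1}$ and hence finite projective, sits in degree $b-1$. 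Iterating lowers the top degree to $2$. Dually, if $a<0$, the vanishing of $H^a$ of $P^\bullet\otimes_A^{\mathbf{L}}N$ for all $N$ forces $\mathrm{coker}(P^a\to P^{a+1})$ to be flat, hence, being finitely presented, finite projective; the corresponding short exact sequence then splits, and a symmetric Gaussian elimination raises the bottom degree to $0$. This produces the three-term complex $[C^0\to C^1\to C^2]$ of finite projective (equivalently, finite locally free) $A$-modules.

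There is essentially no obstacle here: all the genuine work has already been carried out in Theorem \eqref{Herr perfect}, namely perfectness and the Tor-amplitude estimate (which themselves rest on the comparison with the cyclotomic Herr complex, cf. Corollary \eqref{equivalence LT EG cohomology}). The only point worth flagging is that one really must use the full Tor-amplitude statement, not merely that the cohomology of $\C^\bullet(M)$ is finitely generated and concentrated in $[0,2]$: the flatness of the bottom cokernel in the sketch above requires the vanishing of $H^a(P^\bullet\otimes_A^{\mathbf{L}}N)$ for \emph{all} $A$-modules $N$.
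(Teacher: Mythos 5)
Your argument is correct and is exactly the paper's proof: the corollary is deduced from Theorem \eqref{Herr perfect} (perfectness and tor-amplitude in $[0,2]$) together with the very same Stacks Project lemma \cite[\href{https://stacks.math.columbia.edu/tag/0658}{Tag 0658}]{Sta21}. Your added sketch of the truncation/Gaussian-elimination mechanism behind that lemma is accurate but not needed beyond the citation.
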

\begin{proof}
This follows from Theorem \eqref{Herr perfect}, and \cite[\href{https://stacks.math.columbia.edu/tag/0658}{Tag 0658}]{Sta21}.
\end{proof}
\subsubsection{Galois cohomology via the Herr complex}
As another consequence of the effaceablity of the functors $M\mapsto H^i(\C^\bullet(M)), i\geq 1$ (Lemma \eqref{effaceable lubin tate}), we recover the following comparison result. The approach employed here is similar to that in \cite{Her98}; see also \cite{HerrBerger} for a different approach.
\begin{thm}\label{comparison Galois coho}
Let $A$ be a complete local Noetherian $\O$-algebra with finite residue field of characteristic $p$, and let $V$ be a finite $A$-module equipped with a continuous linear representation of $G_K$. Then there are isomorphism of $A$-modules
\begin{align*}
    H^i(G_K,V)\xrightarrow{\sim} H^i(\C^\bullet(D_A(V))).
\end{align*}
which are functorial in $A$.
\end{thm}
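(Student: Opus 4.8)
The plan is to realize both sides as cohomological $\delta$-functors in $V$ on the category of continuous $G_K$-representations on finite $A$-modules, and then invoke universality. First I would set up the source: the collection $(H^i(G_K,-))_{i\geq 0}$ of continuous Galois cohomology groups is a cohomological $\delta$-functor (every short exact sequence of finite $A$-modules with continuous $G_K$-action yields a long exact sequence, since the modules are finite and $G_K$ is profinite), and it is \emph{effaceable} in positive degrees: given $V$ one may embed it into the co-induced module $\mathrm{Coind}_{1}^{G_K}(V)=\mathrm{Map}_{\mathrm{cont}}(G_K,V)$, which is acyclic for $H^i(G_K,-)$ for $i\geq 1$ (Shapiro's lemma), so $(H^i(G_K,-))_{i\geq 0}$ is the universal $\delta$-functor extending $V\mapsto H^0(G_K,V)=V^{G_K}$. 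For the target, I would transport the functor $M\mapsto H^i(\C^\bullet(M))$ along the equivalence of Theorem \ref{Dee equivalence}: composing $V\mapsto D_A(V)$ with $M\mapsto H^i(\C^\bullet(M))$ gives a sequence of functors $(H^i(\C^\bullet(D_A(-))))_{i\geq 0}$ on $G_K$-representations, which is again a $\delta$-functor because $D_A(-)$ is exact (it is an equivalence of categories, so it takes short exact sequences to short exact sequences — here one needs that a short exact sequence of finite $A$-modules with $G_K$-action goes to a short exact sequence of $(\varphi_q,\widetilde\Gamma_K)$-modules, which the proof of Theorem \ref{Dee equivalence} provides) and $M\mapsto H^i(\C^\bullet(M))$ is a $\delta$-functor by the reasoning already used in the excerpt (the Koszul complex $\C^\bullet$ is exact in $M$, giving the long exact sequence of Koszul cohomology).

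The next step is to identify the degree-zero parts. On one side $H^0(\C^\bullet(D_A(V)))=D_A(V)^{\varphi_q=1,\Gamma_K=1}=(D_A(V)^{\varphi_q=1})^{\Gamma_K}$. Using the quasi-inverse $T_A$ of Theorem \ref{Dee equivalence}, $D_A(V)^{\varphi_q=1}=(\widehat{\mathbf{A}}_{K,A}^{\mathrm{ur}}\otimes_{\widehat{\mathbf{A}}_{K,A}}D_A(V))^{\varphi_q=1}\cap D_A(V)$; more directly, $H^0$ of the Herr complex computes $\mathrm{Hom}$ in the category of $(\varphi_q,\Gamma_K)$-modules from the unit object to $D_A(V)$, and under the equivalence this is $\mathrm{Hom}_{G_K}(A, V)=V^{G_K}=H^0(G_K,V)$; so the two $\delta$-functors agree in degree $0$, compatibly. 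I would be slightly careful here about whether Theorem \ref{Dee equivalence} as stated (for finite \emph{free} $A$-modules) suffices, or whether one needs the version for finite $A$-modules — for a complete local Noetherian $A$ one writes $V$ as a quotient of free modules and uses the $\delta$-functor formalism, or one appeals to the finite-length case plus a limit argument exactly as in the proof of Theorem \ref{Dee equivalence}; in any case only the degree-zero identification and exactness of $D_A$ are needed, both of which are available.

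Having both $\delta$-functors effaceable in positive degrees and agreeing in degree $0$, the uniqueness of universal $\delta$-functors (the same principle invoked in the proof of Proposition \ref{Herr cohomology and ext}) produces a canonical isomorphism $H^i(G_K,V)\xrightarrow{\sim} H^i(\C^\bullet(D_A(V)))$ for all $i$, functorial in $V$ and hence in $A$. The main obstacle I anticipate is the effaceability of $M\mapsto H^i(\C^\bullet(M))$ in the present setting: the category in play for the $\delta$-functor formalism is the exact category of finite \emph{projective} étale $(\varphi_q,\Gamma_K)$-modules over $\aka$ (as in Lemmas \ref{effaceable ext}--\ref{(phi - 1) M}), but $D_A(V)$ for a general finite $A$-module $V$ need not be projective over $\widehat{\mathbf{A}}_{K,A}$, so one must either redo the effaceability arguments of Lemmas \ref{effaceable lubin tate}, \ref{M phi=1 lemma}, \ref{(phi - 1) M} for finite (not necessarily projective) formal étale $(\varphi_q,\Gamma_K)$-modules — the key inputs being Lemma \ref{phi stable lattice phi-1 bijective} (open $\varphi_q$-stable subgroup on which $1-\varphi_q$ is bijective), which is stated for finite modules already — or transport the effaceability of $V\mapsto H^i(G_K,-)$ through the equivalence directly, realizing the "enlarging $M$" operations as the $(\varphi_q,\Gamma_K)$-module incarnation of embedding $V$ into a co-induced representation. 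I would pursue the second route, as it is cleaner: the co-induced module $\mathrm{Map}_{\mathrm{cont}}(G_K,V)$ has $\C^\bullet$-cohomology equal to its continuous $G_K$-cohomology, which vanishes in positive degrees, giving effaceability on the target side for free once the degree-$0$ identification is pinned down.
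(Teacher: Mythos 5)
Your overall strategy (realize both sides as $\delta$-functors agreeing in degree $0$ and invoke universality/effaceability) is the same one the paper uses, but as written the argument has a genuine gap coming from the choice of category. Effaceability of $H^i(G_K,-)$ via the coinduced module $\mathrm{Map}_{\mathrm{cont}}(G_K,V)$ forces you to leave the category of continuous representations on \emph{finite} $A$-modules (the coinduced module is not finite over $A$), so the universal-$\delta$-functor formalism cannot be run there; moreover, for non-torsion $V$ (e.g.\ $V=A=\O$) the groups $H^i(G_K,V)$ are continuous-cochain cohomology and are not effaceable in any category of finite modules. The paper's proof is engineered around exactly this: it first assumes $\m_A^nV=0$, enlarges the source category to \emph{all} $\m_A$-power torsion representations (which has enough injectives), extends $D_A$ and the Herr complex to the Ind-category of torsion \'etale $(\varphi_q,\Gamma_K)$-modules, runs the universality argument there, and only then treats general finite $A$-modules by passing to the limit over $V/\m_A^nV$ — where the identification $H^i(G_K,V)\cong\varprojlim H^i(G_K,V/\m_A^nV)$ (and likewise for the Herr side) requires killing an $R^1\varprojlim$ term, which is done using the \emph{finiteness} of the Herr cohomology groups, i.e.\ Theorem \eqref{Herr perfect}. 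Your phrase ``a limit argument exactly as in the proof of Theorem \eqref{Dee equivalence}'' glosses over this Mittag--Leffler/finiteness input, which is a real ingredient of the proof.

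The second and more serious problem is the ``second route'' you elect for effaceability of $M\mapsto H^i(\C^\bullet(M))$: you claim that $\C^\bullet(D_A(\mathrm{Map}_{\mathrm{cont}}(G_K,V)))$ has cohomology equal to the continuous $G_K$-cohomology of the coinduced module, hence vanishes in positive degrees. That equality is itself an instance of the comparison theorem being proven (for a module that is not even finite), so the argument is circular unless you independently compute the Herr cohomology of the \'etale module attached to a coinduced representation — and no such computation is offered. The non-circular option is the route you set aside: re-verify the effaceability Lemmas \eqref{effaceable lubin tate}, \eqref{M phi=1 lemma} and \eqref{(phi - 1) M} on the category of direct colimits of finite $\m_A$-power torsion \'etale $(\varphi_q,\Gamma_K)$-modules (their key input, Lemma \eqref{phi stable lattice phi-1 bijective}, is already stated for finite, not necessarily projective, modules, and the constructions there carry over verbatim). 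This is precisely what the paper does, so your proposal becomes correct once the torsion/Ind-category reduction, the re-checked effaceability, and the $R^1\varprojlim$ step are supplied.
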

\begin{proof}
First assume that $\m_A^n V=0$ for some $n$. The functor $D_A$ extends formally to an equivalence between the category $\mathrm{Rep}_A^{\mathrm{tor}}(G_K)$ of $\m_A$-power torsion linear $G_K$-representations, and the category $\M_{\varphi,\Gamma}^{\mathrm{Ind-\acute{e}t,tor}}$ of direct colimits of finite $\m_A$-power torsion \' etale $(\varphi,\Gamma)$-modules over $\aka$ (the point being that, unlike its subcategory of \textit{finite} $G_K$-representations, $\mathrm{Rep}_A^{\mathrm{tor}}(G_K)$ has enough injective objects). Clearly, if $M$ is an object in $\M_{\varphi,\Gamma}^{\mathrm{Ind-\acute{e}t,tor}}$, then $M$ has a natural $(\varphi_q,\Gamma_K)$-module structure, and we can define the Herr complex of $M$ exactly as for finite \' etale $(\varphi_q,\Gamma_K)$-modules. We can check easily that the arguments in Lemmas \eqref{effaceable lubin tate}, \eqref{M phi=1 lemma} and \eqref{(phi - 1) M} still make sense, and thus show that the functors $M\mapsto H^i(\C^\bullet(M)), i\geq 1$ remain effaceable on the category $\M_{\varphi,\Gamma}^{\mathrm{Ind-\acute{e}t,tor}}$. Now as the cohomological $\delta$-functor $(V\mapsto H^i(G_K,V))_{i\geq 0}$ is universal on $\mathrm{Rep}_{A}^{\mathrm{tor}}(G_K)$, we obtain the result in this case. 

For a general $V$, consider the inverse system $(\C^\bullet(G_K,V/\m_A^n V))_{n}$ of cochain complexes. As the transition maps are surjective, its derived limit can be computed by taking inverse limits termwise, i.e. 
\begin{displaymath}
R\lim \C^\bullet(G_K,V/\m_A^n V) =\varprojlim \C^\bullet(G_K,V/\m_A^n V)=C^\bullet_{\mathrm{cts}}(G_K,V).
\end{displaymath}
In particular, there is a Milnor short exact sequence 
\begin{displaymath}
0\to R^1\lim H^{i-1}(G_K,V/\m_A^n V)\to H^i(G_K,V)\to \varprojlim H^i(G_K,V/\m_A^n V)\to 0
\end{displaymath}
for each $i\in \mathbf{Z}$. By what we have just seen, each $H^{i-1}(G_K,V/\m_A^n V)$ is computed by $H^{i-1}(\C^\bullet(D_A(V/\m_A^n V))$, and hence finite by Theorem \eqref{Herr perfect}. It follows that the $R^1 \lim$ on the left vanishes, and we obtain 
\begin{displaymath}
H^i(G_K,V)=\varprojlim H^i(G_K,V/\m_A^n V).
\end{displaymath}
Arguing similarly for the inverse system $(\C^\bullet(D_A(V/\m_A^n V)))_n$ (and recalling that $D_A(V)=\varprojlim D_A(V/\m_A^n V)$), we obtain 
\begin{displaymath}
H^i(\C^\bullet(D_A(V)))=\varprojlim H^i(\C^\bullet(D_A(V/\m_A^n V))),
\end{displaymath}
hence the desired result.
\end{proof}
\subsubsection{Obstruction theory}
As in \cite[\textsection 5.1.33]{EG22}, one can use perfectness of the Herr complex to show that the stack $\X_{K,d}^{\lt}$ admits a nice obstruction theory in the sense of \cite[Defn. A.34]{EG22}.

Let $A$ be a finite type $\O/\varpi^a$-algebra for some $a\geq 1$, and let $M$ be a rank $d$ projective \' etale $(\varphi_q,\Gamma_K)$-module with $A$-coefficients, classified by a map $x: \Sp A\to \X_{K,d}^{\lt}$. Given a square-zero extension
\begin{displaymath}
0\to I\to A'\to A\to 0
\end{displaymath}
in which $A'$ is a finite type $\O/\varpi^a$-algebra, we want to consider the problem of deforming $M$ to a projective \' etale $(\varphi_q,\Gamma_K)$-module $M'$ with $A'$-coefficients. More formally, let $\mathrm{Lift}(x,A')$ be the set of isomorphism classes of pairs $(M',\iota)$ where $M'$ is a projective \' etale $(\varphi_q,\Gamma_K)$-module $M'$ with $A'$-coefficients, and $\iota$ is an isomorphism $M'\otimes_{A'}A\xrightarrow{\sim} M$ of $(\varphi_q,\Gamma_K)$-module with $A$-coefficients. 

First, we can lift $M$ uniquely to a rank $d$ projective $\mathbf{A}_{K,A'}$-module $\widetilde{M}$ (as a finite projective module always deforms uniquely along a nilpotent thickening). By viewing $\varphi: M\to M$ as a linear map $\varphi_q^*M\to M$, we see that $\varphi$ also lifts to a semilinear map $\widetilde{\varphi}: \widetilde{M}\to \widetilde{M}$. Note that the linearization $\Phi_{\widetilde{\varphi}}$ of $\widetilde{\varphi}$ is then automatically an isomorphism as it is a surjective map between finitely projective modules of the same rank (surjectivity follows from Nakayama's lemma and the fact that $\Phi_{\widetilde{\varphi}}$ is surjective modulo a nilpotent ideal). Similarly, we can also lift each $\gamma_i$ to a bijective semilinear map $\widetilde{\gamma}_i: \widetilde{M}\to \widetilde{M}$. However, the problem is that the actions of $\widetilde{\varphi}$ and $\widetilde{\gamma}_i$ on $\widetilde{M}$ may not commute with each other (the set $\mathrm{Lift}(x,A')$ is therefore empty in this case). On the other hand, as the following lemma shows, this is the only obstruction to lift $M$ to an $A'$-point of $\X_{K,d}^{\lt}$.
\begin{lem}\label{obstruction auto continuous}
Assume there exist pairwise commuting lifts $\widetilde{\varphi}, \widetilde{\gamma}_1,\ldots,\widetilde{\gamma}_n$ of $\varphi,\gamma_1,\ldots,\gamma_n$ respectively to $\widetilde{M}$. Then the induced semilinear action of $\Gamma_{K,\disc}$ on $\widetilde{M}$ is continuous. In other words, these lifts makes $\widetilde{M}$ into an \' etale $(\varphi_q,\Gamma_K)$-module with $A'$-coefficients which lifts $M$.  
\end{lem}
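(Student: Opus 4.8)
The plan is to show that the module $\widetilde{M}$, now equipped with commuting lifts $\widetilde\varphi,\widetilde\gamma_1,\ldots,\widetilde\gamma_n$, satisfies the continuity condition appearing in the definition of an object of $\X_{K,d}^{\lt}$; by Lemma \eqref{continuous action Gamma disc} (and the reduction to each $\langle\gamma_i\rangle$ noted in the proof of Lemma \eqref{X limit preserving}), this amounts to exhibiting a $\varphi_q$-stable lattice $\widetilde{\fM}\subseteq\widetilde M$ together with an integer $s\geq 0$ such that $(\widetilde\gamma_i^{p^s}-1)(\widetilde{\fM})\subseteq T\widetilde{\fM}$ for all $i$. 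The key point is that such data already exists ``downstairs'': the $(\varphi_q,\Gamma_K)$-module $M$ with $A$-coefficients is, by hypothesis, an object of $\X_{K,d}^{\lt}$, hence continuous, so there is a lattice $\fM\subseteq M$ and an $s$ with $(\gamma_i^{p^s}-1)(\fM)\subseteq T\fM$ for all $i$.

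\medskip

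\noindent First I would lift the lattice. Since $A'\to A$ is a square-zero (hence nilpotent) thickening and $\widetilde{M}$ is finite projective over $\mathbf{A}_{K,A'}^+$-modules are well-behaved under such thickenings, one can choose a finite projective $\mathbf{A}_{K,A'}^+$-submodule $\widetilde{\fM}\subseteq\widetilde M$ lifting $\fM$ (concretely, lift a local basis of $\fM$ to elements of $\widetilde M$ and take their $\mathbf{A}_{K,A'}^+$-span; that this is independent of choices up to the relevant approximation, and that $\widetilde{\fM}[1/T]=\widetilde M$, follows as in the finite-height lattice arguments of \cite{EG19}). Next I would arrange $\varphi_q$-stability: the condition $\widetilde\varphi(\widetilde{\fM})\subseteq\widetilde{\fM}$ is a closed condition that holds modulo the nilpotent ideal $I$, so after possibly enlarging the lattice (replacing $\widetilde{\fM}$ by $\widetilde{\fM}+\widetilde\varphi(\widetilde{\fM})+\cdots$, which stabilizes since $I$ is nilpotent and the linearization of $\widetilde\varphi$ is an isomorphism) we may assume $\widetilde{\fM}$ is $\varphi_q$-stable of some finite $T$-height. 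Finally, for the $\Gamma_{\disc}$-part: the operator $\widetilde\gamma_i^{p^s}-1$ on $\widetilde M$ reduces modulo $I$ to $\gamma_i^{p^s}-1$ on $M$, which sends $\fM$ into $T\fM$; hence $(\widetilde\gamma_i^{p^s}-1)(\widetilde{\fM})\subseteq T\widetilde{\fM}+I\widetilde M$. Because $I$ is a finitely generated $A$-module killed by $\m_{A'}$ and $\widetilde M\cong M\oplus(I\otimes_A M)$ as a (non-equivariant) module, one can absorb the $I\widetilde M$ error by a standard successive-approximation/Artin--Rees argument, or more simply by noting $I\widetilde M$ is itself a finite $\mathbf{A}_{K,A}$-submodule on which one repeats the finite-height estimate; after possibly increasing $s$, one gets $(\widetilde\gamma_i^{p^s}-1)(\widetilde{\fM})\subseteq T\widetilde{\fM}$ for all $i$.

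\medskip

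\noindent Having produced $\widetilde{\fM}$ and $s$, Lemma \eqref{continuous action Gamma disc} gives that the $\Gamma_{\disc}$-action on $\widetilde M$ is continuous, hence (as $\Gamma_{\disc}$ is dense in $\Gamma_K$ and linear maps between finite projective $\mathbf{A}_{K,A'}$-modules are automatically continuous for the canonical topology) extends uniquely to a continuous $\Gamma_K$-action. Thus $\widetilde M$, with $\widetilde\varphi$ and the $\widetilde\gamma_i$, is an \'etale $(\varphi_q,\Gamma_K)$-module with $A'$-coefficients, and the isomorphism $\widetilde M\otimes_{A'}A\xrightarrow{\sim}M$ is visibly compatible with all structures; this gives the desired element of $\mathrm{Lift}(x,A')$.

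\medskip

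\noindent The main obstacle I anticipate is the third step: controlling the error term $I\widetilde M$ when passing the level estimate $(\gamma_i^{p^s}-1)(\fM)\subseteq T\fM$ from $M$ up to $\widetilde M$. One has to make sure that enlarging the lattice to absorb $I\widetilde M$ does not destroy $\varphi_q$-stability, and that a single $s$ works simultaneously for all $i$; both are handled by choosing the lattice and then $s$ in the right order (first fix a $\varphi_q$-stable lift $\widetilde{\fM}$ of $\fM$, then exploit that $I\widetilde M$, being finite over the Noetherian ring $\mathbf{A}_{K,A}$ and $T$-adically separated, is itself eventually contracted into $T^N\widetilde{\fM}$ for large $N$ by $\gamma_i^{p^s}-1$ since these operators are topologically nilpotent on any finite module, cf.\ the proof of Lemma \eqref{(phi - 1) M}), but it requires a little care to write cleanly. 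Everything else is a routine deformation-theoretic bookkeeping that parallels \cite[\textsection 5.1.33]{EG22}.
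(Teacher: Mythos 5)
Your overall strategy is the same as the paper's: reduce continuity to the lattice criterion of Lemma \eqref{continuous action Gamma disc}, and deduce the required estimate for $\widetilde{M}$ from the known continuity of the $\Gamma_K$-action on $M$ together with the square-zero structure of $I$. Two of your intermediate steps, however, do not hold up as written. The $\varphi_q$-stabilization of the lattice is both unnecessary and incorrectly justified: criterion (4) of Lemma \eqref{continuous action Gamma disc} only asks for \emph{some} lattice, and for an \'etale $\varphi_q$-module the sum $\widetilde{\fM}+\widetilde{\varphi}(\widetilde{\fM})+\widetilde{\varphi}^2(\widetilde{\fM})+\cdots$ does not stabilize in general (iterating $\varphi_q$ worsens the height without bound; nilpotence of $I$ is irrelevant to this), so this step should simply be deleted. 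You also omit the reduction to the case where $K$ is $F$-basic (via Lemma \eqref{cartesian diagram xk rk xk basic}), which is needed before Lemma \eqref{continuous action Gamma disc} can be invoked, since the appendix results presuppose $\gamma(T)-T\in(\pi,T)T\mathbf{A}_A^+$.

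The more serious problem is the absorption of the error term $I\widetilde{M}$, which is the actual content of the lemma and which you assert rather than prove, with an incorrect justification. Since $T$ is invertible in $\mathbf{A}_{K,A}$, the module $I\widetilde{M}$ is not $T$-adically separated, and it is false that $\gamma_i-1$ (or $\gamma_i^{p^s}-1$) is ``topologically nilpotent on any finite module'': by Lemma \eqref{continuous action Gamma disc} such nilpotence is \emph{equivalent} to continuity of the action, which is exactly what is at stake; moreover Lemmas \eqref{(phi - 1) M} and \eqref{phi stable lattice phi-1 bijective} concern $\varphi_q-1$, not the $\gamma_i$. The correct point, which you gesture at but never state, is that $I\widetilde{M}\cong I\otimes_A M$ \emph{equivariantly} (the $\widetilde{\gamma}_i$ are $A'$-linear and $I^2=0$), so on the error term the action is induced from $M$, where continuity is the hypothesis. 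This is how the paper argues: choose a free lattice $\widetilde{\fM}$ upstairs with reduction $\fM$, use continuity on $M$ to get $(\gamma_i-1)^{m}\widetilde{\fM}\subseteq T\widetilde{\fM}+I\widetilde{\fM}$, apply $(\gamma_i-1)^{m'}$ once more and use $I^2=0$ to land in $T\widetilde{\fM}$, which yields topological nilpotence of $\gamma_i-1$ (criterion (5), via Lemma \eqref{topologically nilpotent}). Note that the paper iterates $\gamma_i-1$, whose powers compose; with your criterion (4) a composite of two operators of the form $\gamma_i^{p^s}-1$ is not again of that shape, so ``after possibly increasing $s$'' conceals a genuine manipulation of the kind carried out inside the proof of Lemma \eqref{continuous action Gamma disc}. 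As written, this key step is a gap.
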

\begin{proof}
By assumption, $\widetilde{M}$ is an object of $\R_{K,d}^{\Gamma_{K,\disc}}(A')$, and we want to show that $\widetilde{M}$ in fact comes from an object of $\X_{K,d}^{\lt}\hookrightarrow \R_{K,d}^{\Gamma_{K,\disc}}$. Using Lemma \eqref{cartesian diagram xk rk xk basic}, we may reduce to the case $K$ is $F$-basic. In particular, we can apply the material on $T$-quasi-linear endomorphisms. More precisely, by Lemma \eqref{continuous action Gamma disc}, it suffices to show that the action of $\gamma_i-1$ on $\widetilde{M}$ is topologically nilpotent for each $1\leq i\leq [K:\mathbf{Q}_p]$. Arguing in as the proof of \cite[Lem. D.31]{EG22}, we may reduce to the case when $\widetilde{M}$ is free. In particular, we may choose a \textit{free} latice $\widetilde{\fM}$ in $\widetilde{M}$. Then $\fM:=\widetilde{\fM}\otimes_{A'}A$ is also a lattice in $M:=\widetilde{M}\otimes_{A'}A$. As the action of $\Gamma_{K,\disc}$ on $M$ is continuous by assumption, we can use Lemma \eqref{continuous action Gamma disc} again to find $m\gg 0$ large enough so that $(\gamma_i-1)^{m}\fM\subseteq T\fM$. Equivalently, we have $(\gamma_i-1)^{m}\widetilde{\fM}\subseteq T\widetilde{\fM}+I\widetilde{\fM}$. Repeating the same argument for the \textit{free} lattice $T\widetilde{\fM}$ in $\widetilde{\fM}$, we can choose some $m'\geq m$ large enough so that $(\gamma_i-1)^{m'}T\widetilde{\fM}\subseteq T^2\widetilde{\fM}+I(T\widetilde{\fM})$. As $I^2=0$, it follows that $(\gamma_i-1)^{m+m'}\widetilde{\fM}\subseteq T^2\widetilde{\fM}+I(T\widetilde{\fM})+I(T\widetilde{\fM}+I\widetilde{\fM})=T^2\widetilde{\fM}+IT\widetilde{\fM}\subseteq T\widetilde{\fM}$, as desired.  
\end{proof}
Let $\mathrm{ad}\;M:=\mathrm{Hom}_{\aka}(M,M)$ be the adjoint of $M$, endowed with its natural structure of an \' etale $(\varphi_q,\Gamma_K)$-module with $A$-coefficients. 

We are now ready to measure the obstruction for lifting $M$ to $A'$.
\begin{lem}\label{H2 lift}
There is a functorial obstruction element $o_x(A')\in H^2(\C^\bullet(\mathrm{ad}\;M)\otimes^{\mathbf{L}}_AI)$, which vanishes precisely when $x$ can be lifted to $\Sp A'$.
\end{lem}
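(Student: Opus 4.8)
The goal is to produce a functorial obstruction class in $H^2$ of the complex $\C^\bullet(\mathrm{ad}\,M)\otimes^{\mathbf{L}}_A I$ whose vanishing is equivalent to liftability of $x$ to $\Sp A'$. In view of Theorem \eqref{Herr perfect}(2), there is a natural isomorphism $\C^\bullet(\mathrm{ad}\,M)\otimes^{\mathbf{L}}_A I\xrightarrow{\sim}\C^\bullet(\mathrm{ad}\,M\otimes_A I)$ once $I$ is a finite $A$-module (reduce to the case of a finite $A$-algebra, or argue directly since $\C^\bullet(\mathrm{ad}\,M)$ is a bounded complex of flat $A$-modules; note $\mathrm{ad}\,M\otimes_A I$ carries a natural $(\varphi_q,\Gamma_K)$-module structure with $I$-coefficients as $\aka\widehat{\otimes}_{\O_F}I\cong \aka\otimes_A I$ when $I$ is finitely generated over the $\varpi$-complete ring $A$). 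So it is enough to build an obstruction class in $H^2(\C^\bullet(\mathrm{ad}\,M\otimes_A I))$, i.e.\ in degree-$2$ cohomology of the Koszul complex of $\mathrm{ad}\,M\otimes_A I$ with respect to $\varphi_q-1,\gamma_1-1,\ldots,\gamma_n-1$.

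First I would fix, as in the discussion preceding Lemma \eqref{obstruction auto continuous}, a lift $\widetilde M$ of $M$ to a rank $d$ projective $\mathbf{A}_{K,A'}$-module, together with \emph{some} semilinear lifts $\widetilde\varphi=\widetilde\gamma_0,\widetilde\gamma_1,\ldots,\widetilde\gamma_n$ of $\varphi_q,\gamma_1,\ldots,\gamma_n$; these exist (though they need not commute), and by Lemma \eqref{obstruction auto continuous} any \emph{commuting} such collection automatically yields an object of $\X_{K,d}^{\lt}(A')$ lifting $M$. For each pair $i<j$, the commutator defect $c_{ij}:=\widetilde\gamma_i\widetilde\gamma_j-\widetilde\gamma_j\widetilde\gamma_i$ vanishes modulo $I$, hence factors through $\mathrm{ad}\,\widetilde M\otimes_{A'}(A'/\ker)=\mathrm{ad}\,M\otimes_A I$ (using $I^2=0$, so $I$ is an $A$-module); more precisely $\widetilde\gamma_i\widetilde\gamma_j\widetilde\gamma_i^{-1}\widetilde\gamma_j^{-1}-\mathrm{id}$ is an $A$-linear endomorphism of $M$ valued in $\mathrm{ad}\,M\otimes_A I$, and one checks it commutes with $\varphi_q$ and the $\gamma_k$ up to the same order, so it defines an element $c_{ij}\in\mathrm{ad}\,M\otimes_A I$. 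The family $(c_{ij})_{0\le i<j\le n}$ is the degree-$2$ term of a cochain in $\C^\bullet(\mathrm{ad}\,M\otimes_A I)$. The key computation is that $(c_{ij})$ is a $2$-cocycle: this is a direct (if slightly tedious) manipulation — it is the standard fact that for a "prederivation"/noncommuting lift the commutator defects satisfy a cocycle identity, coming from the Jacobi-type relation among triple commutators $\gamma_i,\gamma_j,\gamma_k$ (and one uses that $I^2=0$ to linearize). So define $o_x(A'):=[(c_{ij})]\in H^2(\C^\bullet(\mathrm{ad}\,M\otimes_A I))\cong H^2(\C^\bullet(\mathrm{ad}\,M)\otimes^{\mathbf{L}}_A I)$.

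Next I would check that $o_x(A')$ is independent of the chosen lifts $\widetilde\gamma_i$: any two systems of lifts differ by $A$-linear maps $\delta_i\in\mathrm{ad}\,M\otimes_A I$ (again since the ambiguity dies modulo $I$ and $I^2=0$), and replacing $\widetilde\gamma_i$ by $\widetilde\gamma_i+\delta_i$ changes $(c_{ij})$ by the Koszul differential of $(\delta_i)_i$ — precisely $d^1$ applied to the $1$-cochain $(\delta_0,\ldots,\delta_n)$, using the formula for $d^1$ in the definition of $\C^\bullet$. The choice of $\widetilde M$ itself is unique up to (non-unique) isomorphism, and changing it conjugates everything, again altering $(c_{ij})$ only by a coboundary. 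Hence $o_x(A')$ is well defined, and functoriality in the square-zero extension $0\to I\to A'\to A\to 0$ follows because one can transport lifts along maps of extensions. Finally, for the equivalence: if $o_x(A')=0$, pick a $1$-cochain $(\delta_i)$ with $d^1(\delta_i)=(c_{ij})$; then $\widetilde\gamma_i-\delta_i$ is a system of pairwise commuting lifts (the commutator defects now all vanish by construction), which by Lemma \eqref{obstruction auto continuous} gives a genuine lift $M'\in\X_{K,d}^{\lt}(A')$, so $\mathrm{Lift}(x,A')\ne\emptyset$; conversely if a lift exists, its structure maps are a commuting system of lifts, forcing $(c_{ij})$ to be exactly the coboundary measuring the difference from our arbitrary choice, so $o_x(A')=0$.

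\textbf{Main obstacle.} The routine-but-delicate heart is verifying the $2$-cocycle identity $d^2(c_{ij})=0$ in $\C^\bullet(\mathrm{ad}\,M\otimes_A I)$ and matching the coboundary ambiguity precisely with $d^1$; this is purely formal group-cohomology bookkeeping (deformation theory of a module with a commuting action of $\mathbf{Z}^{n+1}$, here with $\gamma_0$ playing a semilinear role), but one must be careful that all the "error terms" genuinely land in $\mathrm{ad}\,M\otimes_A I$ and are $A$-linear, which is where $I^2=0$ and the compatibility of $\varphi_q$ with the tensor structure (Theorem \eqref{Herr perfect}(2)) get used. There is also a minor point — that the continuity condition defining $\X_{K,d}^{\lt}$ inside $\R_{K,d}^{\Gamma_\disc}$ is automatic for the lift — but this is exactly Lemma \eqref{obstruction auto continuous} and so costs nothing here.
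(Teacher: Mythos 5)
Your proposal is correct and follows essentially the same route as the paper: choose semilinear lifts $\widetilde\gamma_i$ to the (unique) projective lift $\widetilde M$, take the commutator defects $\widetilde\gamma_i\widetilde\gamma_j\widetilde\gamma_i^{-1}\widetilde\gamma_j^{-1}-1$ as a $2$-cochain in $\C^\bullet(\mathrm{ad}\;M\otimes_AI)$, check the cocycle identity and that changing lifts alters it by a $1$-coboundary, identify $\C^\bullet(\mathrm{ad}\;M)\otimes^{\mathbf{L}}_AI$ with $\C^\bullet(\mathrm{ad}\;M\otimes_AI)$ by flatness, and use Lemma \eqref{obstruction auto continuous} to pass from commuting lifts to genuine points of $\X_{K,d}^{\lt}(A')$. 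The only difference is that the paper carries out the $2$-cocycle verification by an explicit commutator manipulation (using $I^2=0$), which you defer as the "standard" step rather than writing out.
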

\begin{proof}
As above, we can choose semilinear lifts $\widetilde{\gamma}_i$ of the $\gamma_i$ to $\widetilde{M}$ (again, we will denote $\varphi$ by $\gamma_0$ for ease of notation). We first check that the tuple
\begin{align}\label{tuple}
(\widetilde{\gamma}_i\widetilde{\gamma}_j\widetilde{\gamma}_i^{-1}\widetilde{\gamma}_j^{-1}-1)_{0\leq i<j\leq n}\in \C^2(\mathrm{ad}\;M\otimes_AI)
\end{align}
is a cocyle, i.e. that $(\gamma_k-1)(\widetilde{\gamma}_i\widetilde{\gamma}_j\widetilde{\gamma}_i^{-1}\widetilde{\gamma}_j^{-1}-1)-(\gamma_j-1)(\widetilde{\gamma}_i\widetilde{\gamma}_k\widetilde{\gamma}_i^{-1}\widetilde{\gamma}_k^{-1}-1)+(\gamma_i-1)(\widetilde{\gamma}_j\widetilde{\gamma}_k\widetilde{\gamma}_j^{-1}\widetilde{\gamma}_k^{-1}-1)=0$ in $\mathrm{ad}\;M\otimes_AI$ for all $0\leq i<j<k\leq n$. It is convenient to consider this as an equation in $\mathrm{ad}_{\mathbf{A}_{K,A'}}\widetilde{M}\supseteq \mathrm{ad}\;M\otimes_AI$; in particular, for each $0\leq i\leq n$, the action of $\gamma_i$ on $\mathrm{ad}\;M\otimes_AI$ is now nothing but the conjugation by $\widetilde{\gamma}_i: \widetilde{M}\to \widetilde{M}$. Now fix $0\leq i<j<k\leq n$, and write $Y:=\widetilde{\gamma}_i\widetilde{\gamma}_k\widetilde{\gamma}_i^{-1}\widetilde{\gamma}_k^{-1}-1, Z:=\widetilde{\gamma}_i\widetilde{\gamma}_j\widetilde{\gamma}_i^{-1}\widetilde{\gamma}_j^{-1}-1$. Then $\widetilde{\gamma}_i\widetilde{\gamma}_k\widetilde{\gamma}_i^{-1}=(1+Y)\widetilde{\gamma}_k$ and $\widetilde{\gamma}_i\widetilde{\gamma}_j\widetilde{\gamma}_i^{-1}=(1+Z)\widetilde{\gamma}_j$. Multiplying these equations gives $\widetilde{\gamma}_i\widetilde{\gamma}_k\widetilde{\gamma}_j\widetilde{\gamma}_i^{-1}=\widetilde{\gamma}_k\widetilde{\gamma}_j+\widetilde{\gamma}_kZ\widetilde{\gamma}_j+Y\widetilde{\gamma}_k\widetilde{\gamma}_j$ whence $\widetilde{\gamma}_kZ\widetilde{\gamma}_k^{-1}+Y=\widetilde{\gamma}_i\widetilde{\gamma}_k\widetilde{\gamma}_j\widetilde{\gamma}_i^{-1}\widetilde{\gamma}_j^{-1}\widetilde{\gamma}_k^{-1}-1$. Similarly, we have $\widetilde{\gamma}_jY\widetilde{\gamma}_j^{-1}+Z=\widetilde{\gamma}_i\widetilde{\gamma}_j\widetilde{\gamma}_k\widetilde{\gamma}_i^{-1}\widetilde{\gamma}_k^{-1}\widetilde{\gamma}_j^{-1}-1$. It follows that $(\gamma_k-1)Z-(\gamma_i-1)Y=\widetilde{\gamma}_i\widetilde{\gamma}_k\widetilde{\gamma}_j\widetilde{\gamma}_i^{-1}\widetilde{\gamma}_j^{-1}\widetilde{\gamma}_k^{-1}-\widetilde{\gamma}_i\widetilde{\gamma}_j\widetilde{\gamma}_k\widetilde{\gamma}_i^{-1}\widetilde{\gamma}_k^{-1}\widetilde{\gamma}_j^{-1}$. We are thus reduced to showing that $\widetilde{\gamma}_i\widetilde{\gamma}_k\widetilde{\gamma}_j\widetilde{\gamma}_i^{-1}\widetilde{\gamma}_j^{-1}\widetilde{\gamma}_k^{-1}-\widetilde{\gamma}_i\widetilde{\gamma}_j\widetilde{\gamma}_k\widetilde{\gamma}_i^{-1}\widetilde{\gamma}_k^{-1}\widetilde{\gamma}_j^{-1}+\widetilde{\gamma}_i\widetilde{\gamma}_j\widetilde{\gamma}_k\widetilde{\gamma}_j^{-1}\widetilde{\gamma}_k^{-1}\widetilde{\gamma}_i^{-1}-\widetilde{\gamma}_j\widetilde{\gamma}_k\widetilde{\gamma}_j^{-1}\widetilde{\gamma}_k^{-1}=0$. We can rewrite the left hand side as $\widetilde{\gamma}_i\widetilde{\gamma}_j\widetilde{\gamma}_k(\widetilde{\gamma}_j^{-1}\widetilde{\gamma}_k^{-1}\widetilde{\gamma}_i^{-1}-\widetilde{\gamma}_i^{-1}\widetilde{\gamma}_k^{-1}\widetilde{\gamma}_j^{-1})+(\widetilde{\gamma}_i\widetilde{\gamma}_k\widetilde{\gamma}_j\widetilde{\gamma}_i^{-1}-\widetilde{\gamma}_j\widetilde{\gamma}_k)(\widetilde{\gamma}_k\widetilde{\gamma}_j)^{-1}=\widetilde{\gamma}_i(\widetilde{\gamma}_j\widetilde{\gamma}_k-\widetilde{\gamma}_k\widetilde{\gamma}_j)(\widetilde{\gamma}_j^{-1}\widetilde{\gamma}_k^{-1}\widetilde{\gamma}_i^{-1}-\widetilde{\gamma}_i^{-1}\widetilde{\gamma}_k^{-1}\widetilde{\gamma}_j^{-1})+(\widetilde{\gamma}_i\widetilde{\gamma}_k\widetilde{\gamma}_j\widetilde{\gamma}_i^{-1}-\widetilde{\gamma}_j\widetilde{\gamma}_k)((\widetilde{\gamma}_k\widetilde{\gamma}_j)^{-1}-(\widetilde{\gamma}_j\widetilde{\gamma}_k)^{-1})=\widetilde{\gamma}_i\widetilde{\gamma}_j\widetilde{\gamma}_k(1-\widetilde{\gamma}_k^{-1}\widetilde{\gamma}_j^{-1}\widetilde{\gamma}_k\widetilde{\gamma}_j)(1-\widetilde{\gamma}_i^{-1}\widetilde{\gamma}_k^{-1}\widetilde{\gamma}_j^{-1}\widetilde{\gamma}_i\widetilde{\gamma}_k\widetilde{\gamma}_j)\widetilde{\gamma}_j^{-1}\widetilde{\gamma}_k^{-1}\widetilde{\gamma}_i^{-1}+\widetilde{\gamma}_j\widetilde{\gamma}_k(\widetilde{\gamma}_k^{-1}\widetilde{\gamma}_j^{-1}\widetilde{\gamma}_i\widetilde{\gamma}_k\widetilde{\gamma}_j\widetilde{\gamma}_i^{-1}-1)(1-\widetilde{\gamma}_k^{-1}\widetilde{\gamma}_j^{-1}\widetilde{\gamma}_k\widetilde{\gamma}_j)\widetilde{\gamma}_j^{-1}\widetilde{\gamma}_k^{-1}$. The last expression is now clearly zero as $I^2=0$. 

We can now define $o_x(A')$ to be the image in $H^2(\C^\bullet(\mathrm{ad}\;M\otimes_AI))$ of the tuple \eqref{tuple}. It remains to check that $o_x(A')$ is independent of the choice of the lifts $\widetilde{\gamma}_i$, and vanishes if and only if $\mathrm{Lift}(x,A')\ne \emptyset$. Indeed, any other choice of lifts $\widetilde{\gamma}_i'$ is of the form $\widetilde{\gamma}_i'=(1+X_i)\widetilde{\gamma}_i$ for some $X_i\in \mathrm{ad}\;M\otimes_AI$. From this, one can check easily that 
\begin{displaymath}
(\widetilde{\gamma}_i'\widetilde{\gamma}_j'(\widetilde{\gamma}_i')^{-1}(\widetilde{\gamma}_j')^{-1}-1)-(\widetilde{\gamma}_i\widetilde{\gamma}_j\widetilde{\gamma}_i^{-1}\widetilde{\gamma}_j^{-1}-1)=(\gamma_i-1)X_j-(\gamma_j-1)X_i.
\end{displaymath}
This shows that the class $o_x(A')\in H^2(\C^\bullet(\mathrm{ad}\;M\otimes_AI))$ is well-defined, and vanishes if and only if we can choose commuting lifts $\widetilde{\gamma}_i$. By Lemma \eqref{obstruction auto continuous}, the latter is equivalent to the condition that $\mathrm{Lift}(x,A')\ne \emptyset$, as desired (note also that as $\C^\bullet(\mathrm{ad}\;M)$ is a complex of flat $A$-modules, the derived tensor $\C^\bullet(\mathrm{ad}\;M)\otimes^{\mathbf{L}}_AI$ simplifies to $\C^\bullet(\mathrm{ad}\;M\otimes_AI)$). 
\end{proof}
\begin{lem}\label{H1 lift}
Let $F$ be a finitely generated $A$-module. Then there is a natural isomorphism $\mathrm{Lift}(x,A[F])\xrightarrow{\sim} H^1(\C^\bullet(\mathrm{ad}\;M)\otimes^{\mathbf{L}}_AF)$ of $A$-modules. 
\end{lem}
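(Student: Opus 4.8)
The plan is to run the standard first-order deformation computation, most of whose content is already contained in the proof of Lemma \eqref{H2 lift}. Here $A[F]$ denotes the trivial square-zero extension $A\oplus F$. Since finite projective modules deform uniquely along nilpotent thickenings, any $(M',\iota)\in\mathrm{Lift}(x,A[F])$ may be identified, compatibly with $\iota$, with the module $\widetilde{M}:=M\otimes_{\aka}\ak{A[F]}=M\oplus(M\otimes_A F)$. Recall from the discussion preceding Lemma \eqref{H2 lift} that each of the operators $\gamma_0:=\varphi_q,\gamma_1,\dots,\gamma_n$ admits a bijective semilinear lift to $\widetilde{M}$ whose linearization is automatically an isomorphism, and that by Lemma \eqref{obstruction auto continuous} a choice of pairwise commuting such lifts $\widetilde{\gamma}_0,\dots,\widetilde{\gamma}_n$ is precisely the extra datum making $\widetilde{M}$ into an object of $\mathrm{Lift}(x,A[F])$ (continuity of the associated $\Gamma_{K,\disc}$-action, and hence of a $\Gamma_K$-action, being automatic). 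Fix once and for all the ``standard'' pairwise commuting lifts $\widetilde{\gamma}_i^{0}:=\gamma_i\otimes\mathrm{id}_{A[F]}$.

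First I would parametrize the possible lifts of the operators. Because $F^2=0$, every $\ak{A[F]}$-linear automorphism of $\widetilde{M}$ reducing to $\mathrm{id}_M$ is of the form $1+Y$ with $Y\in\mathrm{ad}\,M\otimes_A F$, and consequently every semilinear lift of $\gamma_i$ is $\widetilde{\gamma}_i=(1+X_i)\widetilde{\gamma}_i^{0}$ for a unique $X_i\in\mathrm{ad}\,M\otimes_A F$. Thus the tuple $(X_i)_{0\le i\le n}$ is an element of the degree-$1$ term $\C^1(\mathrm{ad}\,M\otimes_A F)$ of the Herr complex of $\mathrm{ad}\,M\otimes_A F$. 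Applying the commutator identity established in the proof of Lemma \eqref{H2 lift} with the $\widetilde{\gamma}_i$ there taken to be $\widetilde{\gamma}_i^{0}$ (so that the term $\widetilde{\gamma}_i^{0}\widetilde{\gamma}_j^{0}(\widetilde{\gamma}_i^{0})^{-1}(\widetilde{\gamma}_j^{0})^{-1}-1$ vanishes), one finds that the $\widetilde{\gamma}_i=(1+X_i)\widetilde{\gamma}_i^{0}$ pairwise commute if and only if $(\gamma_i-1)X_j=(\gamma_j-1)X_i$ for all $0\le i<j\le n$, that is, if and only if $(X_i)_i\in\ker\bigl(d^1\colon\C^1(\mathrm{ad}\,M\otimes_A F)\to\C^2(\mathrm{ad}\,M\otimes_A F)\bigr)$.

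Next I would pass to isomorphism classes of lifts. A short computation with $F^2=0$ shows that conjugating the tuple $(\widetilde{\gamma}_i)_i$ by $1+Y$, for $Y\in\mathrm{ad}\,M\otimes_A F$, replaces each $X_i$ by $X_i-(\gamma_i-1)Y$, where $\gamma_i$ acts on $\mathrm{ad}\,M$ by conjugation; this alters the tuple $(X_i)_i$ exactly by $-d^0(Y)\in\operatorname{im}d^0$. Hence $(M',\iota)\mapsto[(X_i)_i]$ is a well-defined bijection $\mathrm{Lift}(x,A[F])\xrightarrow{\sim}\ker d^1/\operatorname{im}d^0=H^1(\C^\bullet(\mathrm{ad}\,M\otimes_A F))=H^1(\C^\bullet(\mathrm{ad}\,M)\otimes_A F)$. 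As observed in the proof of Lemma \eqref{H2 lift}, $\C^\bullet(\mathrm{ad}\,M)$ is a complex of flat $A$-modules, so $\C^\bullet(\mathrm{ad}\,M)\otimes_A F=\C^\bullet(\mathrm{ad}\,M)\otimes_A^{\mathbf{L}}F$ and the target is $H^1(\C^\bullet(\mathrm{ad}\,M)\otimes_A^{\mathbf{L}}F)$. Finally, the $A$-module structure on $\mathrm{Lift}(x,A[F])$ is the one coming from functoriality of $F\mapsto A[F]$; since multiplication by $a\in A$ on $F$ rescales each $X_i$ by $a$ and is compatible with the differentials $d^i$, and the whole assignment is visibly functorial in $F$, the bijection above is an isomorphism of $A$-modules, natural in $F$.

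I do not anticipate a genuine obstacle: the substantive input — the commutator computation and the effect of conjugating the lifts — is already supplied by the proof of Lemma \eqref{H2 lift}, and the automatic continuity of the lifted action is Lemma \eqref{obstruction auto continuous}. The only points needing care are bookkeeping ones: matching the summands of $\C^1(\mathrm{ad}\,M\otimes_A F)$ with the operators $\gamma_0,\dots,\gamma_n$ consistently with the sign conventions in the differentials of the Herr complex, and invoking the $A$-flatness of $\C^\bullet(\mathrm{ad}\,M)$ in order to replace $\otimes_A$ by $\otimes_A^{\mathbf{L}}$.
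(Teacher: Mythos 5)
Your proposal is correct and follows essentially the same route as the paper's proof: parametrize lifts of the operators as $(1+X_i)$ times the trivial lifts, identify commutativity with the cocycle condition and conjugation by $1+Y$ with adding a coboundary $d^0(-Y)$, and use flatness of $\C^\bullet(\mathrm{ad}\,M)$ to identify the underived and derived tensor products. The only cosmetic differences are that you verify the isomorphism-class bookkeeping and the $A$-linearity directly, where the paper characterizes the trivial lift and cites \cite[Lem. 5.1.35]{EG22} for linearity.
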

\begin{proof}
As before, a lift of $M$ to $A[F]$ is determined by lifts $\widetilde{\gamma}_i$ of the $\gamma_i$ on $M$. Given any such lifts, we can write $\widetilde{\gamma}_i=(1+X_i)\gamma_i$ for some $X_i\in \mathrm{ad}\;M\otimes_AF$ (note that we have abusively denoted also by $\gamma_i$ the lifts corresponding to the trivial lift $M\otimes_AA[F]$). It is easy to check that the lifts $\widetilde{\gamma}_i$ commute with each other (or equivalently, define an element of $\mathrm{Lift}(x,A[F])$) if and only if the tuple $(X_0,\ldots,X_n)$ lies $Z^1(\C^\bullet(\mathrm{ad}\;M\otimes_A F))$. Furthermore, as the endomorphisms of the trivial lifting $M\otimes_A A[F]$ are given by $1+X$ for $X\in\mathrm{ad}\;M\otimes_A F$, we see that the lift determined by a tuple $(X_0,\ldots,X_n)\in Z^1(\C^\bullet(\mathrm{ad}\;M\otimes_A F))$ is trivial if and only if there exists $X\in \mathrm{ad}\;M\otimes_AF$ such that $\widetilde{\gamma}_i=(1+X)\gamma (1+X)^{-1}$ for all $0\leq i\leq n$. It is easy to check that this is equivalent to saying that $(X_0,\ldots,X_n)=d^0(-X)$ is a 1-coboundary. Thus, we obtain a bijection $H^1(\C^\bullet(\mathrm{ad}\;M\otimes_AF))\xrightarrow{\sim} \mathrm{Lift}(x,A[F])$ at the level of sets. That it is also $A$-linear can be done exactly as in the proof of \cite[Lem. 5.1.35]{EG22}.
\end{proof}
\begin{cor}\label{nice obstruction theory}
$\X_{K,d}^{\lt}$ admits a nice obstruction theory in the sense of \cite[Defn. A.34]{EG22}.
\end{cor}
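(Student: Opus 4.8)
The plan is to exhibit, for each finite type point $x\colon\Sp A\to\X_{K,d}^{\lt}$ (with $A$ a finite type $\O/\varpi^a$-algebra, classifying a rank $d$ projective \'etale $(\varphi_q,\Gamma_K)$-module $M$ with $A$-coefficients), a perfect complex of $A$-modules together with the three pieces of data required by \cite[Defn. A.34]{EG22}, and to identify all of them with the (shifted) cohomology of the Herr complex of the adjoint module $\mathrm{ad}\,M=\Hom_{\aka}(M,M)$. First I would note that $\mathrm{ad}\,M$ is again a finite projective \'etale $(\varphi_q,\Gamma_K)$-module with $A$-coefficients, so Theorem \eqref{Herr perfect} applies: $\C^\bullet(\mathrm{ad}\,M)$ is perfect of tor-amplitude in $[0,2]$, and by Corollary \eqref{perfect complex represented 0 1 2} it may be taken to be a complex $[C^0\to C^1\to C^2]$ of finite locally free $A$-modules in degrees $[0,2]$. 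In particular $\C^\bullet(\mathrm{ad}\,M)$ is a bounded complex of flat $A$-modules, so for any finitely generated $A$-module $N$ one has $\C^\bullet(\mathrm{ad}\,M)\otimes^{\mathbf{L}}_AN\simeq\C^\bullet(\mathrm{ad}\,M)\otimes_AN=\C^\bullet(\mathrm{ad}\,M\otimes_AN)$, which is what lets us pass freely between derived and ordinary tensor product in everything below.

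Next I would assemble the pieces that have already been proved. For a trivial square-zero extension $A[F]\to A$ with $F$ a finitely generated $A$-module, Lemma \eqref{H1 lift} gives a natural $A$-linear isomorphism $\mathrm{Lift}(x,A[F])\xrightarrow{\sim}H^1(\C^\bullet(\mathrm{ad}\,M)\otimes^{\mathbf{L}}_AF)$; this supplies both the torsor structure on liftings and (by the usual comparison with the trivial lift) the functorial identification of infinitesimal automorphisms of $x$ over $A[F]$ with $H^0(\C^\bullet(\mathrm{ad}\,M)\otimes^{\mathbf{L}}_AF)$ --- indeed an automorphism of $M\otimes_AA[F]$ reducing to the identity is $1+X$ with $X\in\mathrm{ad}\,M\otimes_AF$, and it is $(\varphi_q,\Gamma_K)$-equivariant exactly when $X$ is annihilated by each of $\varphi_q-1,\gamma_1-1,\dots,\gamma_n-1$, i.e. $X\in H^0(\C^\bullet(\mathrm{ad}\,M\otimes_AF))$. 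For an arbitrary square-zero extension $0\to I\to A'\to A\to 0$ of finite type $\O/\varpi^a$-algebras, Lemma \eqref{H2 lift} produces the functorial obstruction class $o_x(A')\in H^2(\C^\bullet(\mathrm{ad}\,M)\otimes^{\mathbf{L}}_AI)$, which vanishes precisely when $x$ lifts to $\Sp A'$; here one uses Lemma \eqref{obstruction auto continuous} to know that commuting semilinear lifts of $\varphi_q,\gamma_1,\dots,\gamma_n$ automatically assemble into a genuine \'etale $(\varphi_q,\Gamma_K)$-module, so that the cocycle condition is the only obstruction. Since $\C^\bullet(\mathrm{ad}\,M)$ has tor-amplitude in $[0,2]$, the higher groups $H^i(\C^\bullet(\mathrm{ad}\,M)\otimes^{\mathbf{L}}_AN)$ vanish for $i\notin[0,2]$, which is exactly the boundedness built into the notion of a ``nice'' obstruction theory.

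What then remains is the formal verification that these identifications fit together into an obstruction theory in the precise sense of \cite[Defn. A.34]{EG22}: naturality in $A$, compatibility with pullback of square-zero extensions, and the expected interplay between the obstruction map, the lifting torsor, and the automorphism group (e.g. for composites of square-zero extensions, and the exactness properties these must satisfy). All of this is immediate from the explicit cocycle descriptions in Lemmas \eqref{H1 lift} and \eqref{H2 lift} together with the flat (hence strictly functorial) base change for $\C^\bullet(\mathrm{ad}\,M)$, and follows the argument of \cite[\textsection 5.1.33]{EG22} essentially verbatim, the only change being that the single operator ``$\gamma$'' there is replaced by the tuple $\gamma_1,\dots,\gamma_n$ (with $\gamma_0:=\varphi_q$ and $n=[F:\mathbf{Q}_p]$) and the cyclotomic Herr complex by its Lubin--Tate analogue defined in Subsection \eqref{herr complex section}. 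I expect the only real (and minor) obstacle to be organizational: carefully matching the Koszul-type indexing of our Herr complex against the two-term complex used in \textit{loc. cit.} when transcribing the compatibility checks, and keeping track of signs in the differentials so that the obstruction and lifting constructions land in the intended cohomological degrees.
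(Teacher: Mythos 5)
Your proposal is correct and follows essentially the same route as the paper: the paper's proof is precisely the combination of Theorem \eqref{Herr perfect} with Lemmas \eqref{H1 lift} and \eqref{H2 lift}, which is the core of your argument. The additional details you supply (flat base change for the Herr complex of $\mathrm{ad}\,M$, the identification of infinitesimal automorphisms with $H^0$, and the appeal to \cite[\textsection 5.1.33]{EG22} for the formal compatibilities) are exactly the implicit content of that one-line proof.
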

\begin{proof}
This follows by combining Theorem \eqref{Herr perfect}, Lemmas \eqref{H1 lift} and \eqref{H2 lift}.
\end{proof}
\subsection{Families of extensions}\label{families of extensions}
We now briefly indicate how to proceed along the lines of the arguments in \cite[Chap. 5]{EG22} to give an alternative proof that $\X_{K,d}^{\lt}$ is a Noetherian formal algebraic stack using only perfectness of the Lubin--Tate Herr complex (in particular, we are not using directly the comparison in Corollary \eqref{equivalence LT EG stacks} between $\X_{K,d}^{\lt}$ and the Emerton--Gee stack $\X_{K,d}^{\eg}$). See also Remark \eqref{equidimensional unsolved} below.

First, by perfectness of the Lubin--Tate Herr complex, we see that if $M$ is a finite projective \' etale $(\varphi_q,\Gamma_K)$-module with coefficients in some finite type $\O/\varpi^a$-algebra $A$, then the cohomology group $H^2(\C^\bullet(M))$ defines a coherent sheaf on $T=\Sp(A)$. In view of the relation with Galois cohomology (and in accordance with the notation in \cite{EG22}), we denote this sheaf by $H^2(G_K,\rho_T)$, where $\rho_T: T\to \X_{K,d}^{\lt}$ is the map classifying $M$. More generally, as the formation of $H^2$ is compatible with arbitrary finite type base change, by gluing we can extend the definition of $H^2(G_K,\rho_T)$ to an arbitrary (not necessarily affine) scheme $T$ of finite type over $\O/\varpi^a$ together with a family $\rho_T: T\to \X_{K,d}^{\lt}$.

Having also defined the universal unramified character in our setting (Definition \eqref{universal unramified}), it is then straightforward to check that the results in \cite[\textsection 5.3]{EG22} go over unchanged in our setting. In particular, we have an obvious analogue of \cite[Prop. 5.3.8]{EG22}, giving a description of families of extensions which are generally maximally nonsplit of niveau 1.

Next, by using Corollary \eqref{perfect complex represented 0 1 2} in place of \cite[Cor. 5.1.25]{EG22}, one can easily adapt that the construction of families of extensions in \cite[\textsection 5.4]{EG22} to our setting. More precisely, starting with a family $\overline{\rho}_T: T\to (\X_{d,\mathrm{red}}^{\lt})_{\overline{\mathbf{F}}_p}$ on a reduced affine $\overline{\mathbf{F}}_p$-scheme of finite type, and any representation $\overline{\alpha}: G_K\to \mathrm{GL}_a(\overline{\mathbf{F}}_p)$ for which $\mathrm{Ext}^2_{G_K}(\overline{\alpha},\overline{\rho}_t)$ is of constant rank for varying $t\in T(\overline{\mathbf{F}}_p)$, one can construct, for each fixed integer $n\geq 1$, a (geometric) vector bundle $V\to T$ together with a morphism 
\begin{displaymath}
V \to (\X_{d+an,\mathrm{red}}^{\lt})_{\overline{\mathbf{F}}_p}
\end{displaymath}
parametrizing a universal family of extensions 
\begin{displaymath}
0\to \overline{\rho}_T\otimes_{\O_T}\O_V\to \E_V\to \overline{\alpha}^{\oplus n}\otimes_{\overline{\mathbf{F}}_p}\O_V\to 0.
\end{displaymath}
Again, one can check that the results in \textit{loc. cit.} remain valid in the current context. One can then follow the proof of \cite[Thm. 5.5.12]{EG22} to obtain the following result. (For the algebraicity part, the rough idea is that by iterating the above construction, we obtain families of \' etale $(\varphi_q,\Gamma_K)$-modules parametrized by the various vector bundles $V$ appearing; as any Galois representation can be written as an iterated extension of irreducible representations, this will ultimately gives a cover of $\X_{d,\mathrm{red}}$. For obtaining the desired dimension, we will need to actually compute the dimensions of the various families of extensions arising from this construction, cf. \cite[Prop. 5.4.4]{EG22}.)
\begin{thm} $\X_{d,\mathrm{red}}^{\lt}$ is a finitely presented algebraic stack over $\mathbf{F}$ of dimension $[K:\mathbf{Q}_p]d(d-1)/2$.
\end{thm}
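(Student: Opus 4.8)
The plan is to follow the proof of \cite[Thm. 5.5.12]{EG22}, transporting the families-of-extensions machinery of \cite[Chap. 5]{EG22} to the Lubin--Tate setting, with the universal unramified character of Definition \eqref{universal unramified} and the perfectness statement of Corollary \eqref{perfect complex represented 0 1 2} taking over the roles of their cyclotomic counterparts. Concretely, as indicated in the paragraphs preceding the statement, one first has the construction of families of extensions: given a reduced finite type $\overline{\mathbf{F}}_p$-scheme $T$, a family $\overline\rho_T\colon T\to (\X_{d,\mathrm{red}}^{\lt})_{\overline{\mathbf{F}}_p}$ and a representation $\overline\alpha\colon G_K\to \mathrm{GL}_a(\overline{\mathbf{F}}_p)$ with $\mathrm{Ext}^2_{G_K}(\overline\alpha,\overline\rho_t)$ of locally constant rank on $T$, one obtains for each $n\geq 1$ a geometric vector bundle $V\to T$ carrying a universal extension $0\to \overline\rho_T\otimes_{\O_T}\O_V\to \E_V\to \overline\alpha^{\oplus n}\otimes_{\overline{\mathbf{F}}_p}\O_V\to 0$ and hence a map $V\to (\X_{d+an,\mathrm{red}}^{\lt})_{\overline{\mathbf{F}}_p}$. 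The arguments of \cite[\textsection 5.3--5.4]{EG22} carry over once the single operator $\gamma^{p^s}-1$ is replaced systematically by the collection $\gamma_i^{p^s}-1$ and the cyclotomic Herr complex by the Lubin--Tate one.

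For algebraicity and finite presentation, I would iterate this construction starting from rank one modules. Since every continuous $G_K\to \mathrm{GL}_d(\overline{\mathbf{F}}_p)$ is an iterated extension of irreducibles, and each irreducible of dimension $e$ becomes a sum of characters after restriction to the unramified degree-$e$ subextension of $K$ (characters being parametrized via Definition \eqref{universal unramified}), the vector bundles $V$ appearing assemble into a smooth surjection $\coprod V\twoheadrightarrow (\X_{d,\mathrm{red}}^{\lt})_{\overline{\mathbf{F}}_p}$ whose source is of finite type. This reproves that $\X_{d,\mathrm{red}}^{\lt}$ is a finitely presented algebraic stack over $\mathbf{F}$ using only perfectness of the Lubin--Tate Herr complex, i.e. without appealing to Corollary \eqref{equivalence LT EG stacks}.

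For the dimension, I would compute the dimensions of the families of extensions occurring in this cover, following \cite[Prop. 5.4.4]{EG22}. The arithmetic input is the local Euler characteristic formula $\dim H^0(G_K,W)-\dim H^1(G_K,W)+\dim H^2(G_K,W)=-[K:\mathbf{Q}_p]\dim W$ together with Tate local duality identifying $H^2(G_K,W)$ with the dual of $H^0(G_K,W^\vee(1))$; by Theorem \eqref{comparison Galois coho} these may be read off from the Lubin--Tate Herr complex, and in any case they only involve the group $G_K$, which is unchanged between the two settings. On the locus where the representation is maximally nonsplit of niveau 1, one has generic vanishing of $H^0$ and $H^2$ for the adjoint coefficients occurring, so that adjoining a rank one quotient to a maximally nonsplit rank $d-1$ representation costs an extension class in a space of generic dimension $[K:\mathbf{Q}_p](d-1)$; bookkeeping the unramified twist of the new character, the projectivization, and the residual scalar automorphism yields the recursion $\dim \X_{d,\mathrm{red}}^{\lt}=\dim \X_{d-1,\mathrm{red}}^{\lt}+[K:\mathbf{Q}_p](d-1)$, which with $\dim \X_{1,\mathrm{red}}^{\lt}=0$ gives $[K:\mathbf{Q}_p]d(d-1)/2$.

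I expect the main obstacle to be exactly the part that occupies most of \cite[\textsection 5.4--5.5]{EG22}: showing that the maximally nonsplit of niveau 1 locus is dense in each irreducible component (the components being labelled by Serre weights, cf. Corollary \eqref{Xd noetherian formal}) and that no other stratum appearing in the cover has strictly larger dimension, which requires careful control of the jumps in the rank of $\mathrm{Ext}^2$ and of the automorphism groups along the various families. While none of this is conceptually new in the Lubin--Tate case, one has to verify that every estimate survives the passage from a single topological generator $\gamma$ of $\Gamma_K$ to the family $\gamma_1,\dots,\gamma_n$ and the replacement of the cyclotomic Herr complex by its Lubin--Tate analogue.
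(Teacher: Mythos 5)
Your proposal takes essentially the same route as the paper, which itself only sketches the argument: transport the families-of-extensions machinery of \cite[Chap. 5]{EG22} to the Lubin--Tate setting (with the universal unramified character of Definition \eqref{universal unramified}, the Lubin--Tate Herr complex and Corollary \eqref{perfect complex represented 0 1 2} replacing their cyclotomic counterparts), then follow the proof of \cite[Thm. 5.5.12]{EG22} for both algebraicity and the dimension count via \cite[Prop. 5.4.4]{EG22}. The one caveat is your phrase ``smooth surjection'': the maps from the vector bundles $V$ to $(\X_{d,\mathrm{red}}^{\lt})_{\overline{\mathbf{F}}_p}$ are not smooth, and as in \cite{EG22} algebraicity is instead deduced from the Ind-algebraic structure of $\X_{d,\mathrm{red}}^{\lt}$ together with the fact that the finitely many finite-type families hit every $\overline{\mathbf{F}}_p$-point, so that their scheme-theoretic images form a closed algebraic substack exhausting the colimit.
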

In particular, by combining algebraicity of $\X_{d,\mathrm{red}}$ with Corollary \eqref{nice obstruction theory}, we obtain another proof that $\X_{K,d}^{\lt}$ is a Noetherian algebraic stack (see the proof of \cite[Cor. 5.5.18]{EG22}, which ultimately relies on the criteria of \cite[Cor. 6.6 and Thm. 11.13]{Eme}).
\begin{remark}\label{equidimensional unsolved}
In fact, the inductive argument in \cite[Thm. 5.5.12]{EG22} also allows us to construct, for each Serre weight $\underline{k}$, an irreducible component $\X_{d,\mathrm{red},\overline{\mathbf{F}}_p}^{\underline{k},\lt}$ of $(\X_{d,\mathrm{red}}^{\lt})_{\overline{\mathbf{F}}_p}$ of dimension $[K:\mathbf{Q}_p]d(d-1)/2$, whose generic $\overline{\mathbf{F}}_p$-points are maximally nonsplit of niveau 1 and weight $\underline{k}$ in the sense of \cite[Defn. 5.5.1]{EG22}, and that the union of the $\X_{d,\mathrm{red},\overline{\mathbf{F}}_p}^{\underline{k},\lt}$, together with a closed substack of dimension strictly smaller than $[K:\mathbf{Q}_p]d(d-1)/2$, covers $(\X_{d,\mathrm{red}}^{\lt})_{\overline{\mathbf{F}}_p}$. In order to show that the $ \X_{d,\mathrm{red},\overline{\mathbf{F}}_p}^{\underline{k},\lt}$ exhaust the irreducible components of $(\X_{d,\mathrm{red}}^{\lt})_{\overline{\mathbf{F}}_p}$, we need to show that the latter is equidimensional. In the cyclotomic case, this is achieved by first constructing closed substacks of $\X_{d}^{\lt}$ corresponding to crystalline representations, computing their versal rings at finite type points, and then using the existence of crystallne lifts of mod $p$ representations. In order to adapt this strategy to the Lubin--Tate setting, it seems necessary to first have at one's disposal a description of $G_K$-stable $\O_F$-lattices in crystalline $F$-linear representations of $G_K$ in terms of Breuil--Kisin--Fargues modules over $\mathbf{A}_{\mathrm{inf},F}:=W_{\O_F}(\O_{\mathbf{C}}^\flat)$. It seems likely that these are also related to modifications of vector bundles on the Fargues--Fontaine curve $X_F$ associated to $F$ (and $\mathbf{C}^\flat$); the main difference with the case $F=\mathbf{Q}_p$ is that the relevant modifications will now take place at finitely many closed points in $X_F$ (namely those living over the distinguished closed point of $X_{\mathbf{Q}_p}$), rather than only at one single point. We hope to discuss this in more details in a subsequent work.
\end{remark}
\section{The rank one case}
We conclude by giving a simple description of the stack $\X_{K,d}^{\lt}$ in the case $d=1$. To this end, let $\xan$ be the functor on $\varpi$-adically complete $\O$-algebras taking $A$ to the set of (continuous) characters $\delta: W_K\to A^\times$. We equip $\xan$ with the trivial action of $\widehat{\mathbf{G}}_m$, the $\varpi$-adic completion of $\mathbf{G}_{m,\O}$.
\begin{prop}
There is an isomorphism 
\begin{displaymath}
\X_{K,1}^{\lt}\xrightarrow{\sim}  \left[\xan/\widehat{\mathbf{G}}_m\right].
\end{displaymath}
\end{prop}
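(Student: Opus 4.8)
The plan is to construct a morphism $\Psi\colon [\xan/\widehat{\mathbf{G}}_m]\to\X_{K,1}^{\lt}$ and then show it is an isomorphism. Since $\widehat{\mathbf{G}}_m$ acts trivially on $\xan$ we have $[\xan/\widehat{\mathbf{G}}_m]=\xan\times B\widehat{\mathbf{G}}_m$, so giving $\Psi$ amounts to giving, functorially in the $\varpi$-adically complete $\O$-algebra $A$ and the character $\delta\in\xan(A)$, a rank one \'etale $(\varphi_q,\Gamma_K)$-module $\aka(\delta)$ over $\aka$, together with the map $B\widehat{\mathbf{G}}_m\to\X_{K,1}^{\lt}$ classifying the trivial module $\aka$ with its tautological weight-one $\widehat{\mathbf{G}}_m$-action. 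To define $\aka(\delta)$ I would decompose $\delta$ by local class field theory as $\delta=\ur{a}\cdot\eta$, where $a\in A^\times$ is the value of $\delta$ on a geometric Frobenius and $\eta$ is, via the Artin reciprocity map, a continuous character of $\O_K^\times$; then set $\aka(\delta):=\aka(\ur{a})\otimes_{\aka}\aka(\eta)$, where $\aka(\ur{a})$ is the universal unramified module of Definition~\eqref{universal unramified} and $\aka(\eta)$ is attached to the rank one $G_K$-representation $\eta\circ\mathrm{Art}_K^{-1}$ by the functor $D$ of Theorem~\eqref{Dee equivalence} (and Lemma~\eqref{Delta invariant}). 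A little care is needed here to see that this recipe, carried out over the Iwasawa algebra $\O[[\O_K^\times]]$ and then base-changed along the classifying map $\O[[\O_K^\times]]\to A$, indeed produces an \'etale $(\varphi_q,\Gamma_K)$-module over $\aka$, and not merely over its $\mathfrak{m}$-adic completion.

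Next I would prove that $\Psi$ is fully faithful. An endomorphism of $\aka(\delta)$ is multiplication by some $u\in\aka$, and commutation with $\varphi_q$ and $\Gamma_K$ forces $u\in(\aka)^{\varphi_q=1,\,\Gamma_K=1}$; reducing modulo $\varpi$, where $\varphi_q$ acts as the $q$-power map, one checks that this invariant ring is $A$, whence $\underline{\Aut}(\aka(\delta))=\widehat{\mathbf{G}}_m$, matching the automorphism group on the source. That $\aka(\delta)$ and $\aka(\delta')$ are non-isomorphic when $\delta\ne\delta'$ may be checked after base change, since the relevant Isom functor is limit preserving (Lemma~\eqref{X limit preserving}): we reduce to $A$ of finite type over $\O/\varpi^a$ and then, after passing to Artinian local quotients, invoke Theorem~\eqref{Dee equivalence} to identify $\aka(\delta)$ with the rank one $G_K$-representation $\delta$, so that an isomorphism forces $\delta=\delta'$.

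Finally I would show that $\Psi$ is essentially surjective locally for the fppf topology. Given a rank one \'etale $(\varphi_q,\Gamma_K)$-module $M$ with $A$-coefficients, $M$ is Zariski-locally free on $\Sp A$; a choice of basis presents $M$ by a unit $\alpha\in\aka^\times$ and units $c_g\in\aka^\times$ satisfying the obvious cocycle identities, and one must produce an fppf cover $A\to B$ over which $M_B\cong\ak{B}(\delta)$ for some $\delta$. Using that $\X_{K,1}^{\lt}$ is limit preserving (Lemma~\eqref{X limit preserving}) we reduce to $A$ of finite type over $\O/\varpi^a$. Over such $A$ one can either argue directly from the proof of Proposition~\eqref{equivalence LT EG} (the functor $N\mapsto N\otimes_{\aka}(A\widehat{\otimes}_{\O_F}W_{\O_F}(\mathbf{C}^\flat))$ being an exact equivalence onto rank one \'etale $(\varphi_q,G_K)$-modules, from which one extracts the character of $W_K$ after an fppf-local trivialization compatible with $\varphi_q$), or, more expediently, invoke the isomorphism $\X_{K,1}^{\lt}\xrightarrow{\sim}\X_{K,1}^{\eg}$ of Corollary~\eqref{equivalence LT EG stacks} together with the rank one description of the Emerton--Gee stack in \cite{EG22}, checking that $\Psi$ is carried to the analogous morphism there.

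The step I expect to be the main obstacle is this last one, namely controlling essential surjectivity (and, less seriously, the construction of $\Psi$ itself) over coefficient rings $A$ that are neither Noetherian nor local. I plan to dispose of it by the reduction to finite-type $\O/\varpi^a$-algebras via limit-preservation, combined with the comparison isomorphism of Corollary~\eqref{equivalence LT EG stacks} with the Emerton--Gee stack, whose rank one case is already understood.
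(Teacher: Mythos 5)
Your overall strategy is essentially the paper's: build the map out of the universal unramified character of Definition \eqref{universal unramified} together with the equivalence \eqref{Dee equivalence} on Artinian coefficients (this is exactly the construction the paper delegates to Section 2 of \cite{Pham}), and then conclude via the comparison with the Emerton--Gee stack. The difference lies in how the final identification is organized, and it affects how much actually has to be verified. The paper, following the argument of Theorem 3.1 of \cite{Pham}, notes that once one has the monomorphism $[\xan/\widehat{\mathbf{G}}_m]\hookrightarrow\X_{K,1}^{\lt}$, the whole statement reduces to showing that the universal unramified map $\mathbf{G}_m\to\overline{\X}_{K,1}^{\lt}$ on \emph{special fibers} is a closed immersion, and this is deduced from $\overline{\X}_{K,1}^{\lt}\cong\overline{\X}_{K,1}^{\eg}$ by matching only the two \emph{unramified} families, which is genuinely straightforward since both are explicit. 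You instead prove full faithfulness directly (that part is fine) and, for essential surjectivity, invoke the full isomorphism of Corollary \eqref{equivalence LT EG stacks} together with the rank one description of $\X_{K,1}^{\eg}$, ``checking that $\Psi$ is carried to the analogous morphism there.'' That compatibility is where the real work sits in your version: it requires matching not just the unramified parts but also the ramified (Iwasawa-algebra) parts of the two constructions under the comparison functor of Proposition \eqref{equivalence LT EG}, which is appreciably heavier than the paper's check and which you leave unexamined; the paper's reduction to the unramified line on the special fiber is designed precisely to avoid this. Likewise, the point you flag in the construction of $\Psi$ --- that the module produced over $\O[[\O_K^\times]]$ is an \'etale $(\varphi_q,\Gamma_K)$-module over $\aka$ itself and not merely over the $\m_A$-adic completion $\widehat{\mathbf{A}}_{K,A}$ --- is not a formality but the main content of the construction; the paper handles it by citing \cite{Pham}, and you should either do the same or supply the descent argument. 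With those two points filled in (or properly delegated), your argument closes; as written, they are the places where it is still open, and your alternative ``direct'' route via rank one $(\varphi_q,G_K)$-modules over $A\widehat{\otimes}_{\O_F}W_{\O_F}(\mathbf{C}^\flat)$ would amount to redoing the Emerton--Gee rank one analysis in the Lubin--Tate setting rather than a shortcut.
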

\begin{proof}
Of course this follows at once from the comparison \eqref{equivalence LT EG stacks}, and the corresponding description for the stack $\X_{K,1}^{\eg}$ (\cite[Prop. 7.2.17]{EG22}). However, it is (again) enough to invoke only the comparison on special fibers. First, by using the universal unramified character \eqref{universal unramified}, we can construct a map $\xan\to \X_{K,1}^{\lt}, \delta\mapsto \aka(\delta)$ which recovers the equivalence \eqref{Dee equivalence} in case $A$ is a finite local Artinian $\O$-algebra, cf. Section 2 in \cite{Pham}. Again, this induces a monomorphism 
\begin{displaymath}
 \left[\xan/\widehat{\mathbf{G}}_m\right]\hookrightarrow \X_{K,1}^{\lt},
\end{displaymath}
which we want to show is an isomorphism. As in the proof of Theorem 3.1 in \textit{loc. cit.}, it in fact suffices to show that the map 
\begin{displaymath}
 [\mathbf{G}_m/\mathbf{G}_m]\hookrightarrow \overline{\X}_{K,1}^{\lt}
\end{displaymath}
(where the target denotes the special fiber of $\X_{K,1}^{\lt}$)
induced by the universal unramified character $\mathbf{G}_m\to \overline{\X}_{K,1}^{\lt}$ is a closed immersion. Thus, it suffices to show that the last map corresponds to the ``cyclotomic" universal unramified character $\mathbf{G}_m\to \overline{\X}_{K,1}^{\eg}$ in \textit{loc. cit.} via the isomorphism $\overline{\X}_{K,1}^{\lt}\xrightarrow{\sim} \overline{\X}_{K,1}^{\eg}$ of special fibers. This is straightforward.
\end{proof}
\begin{appendices}
  \titleformat{\section}{\centering\large\sc}{\appendixname{} \thesection.}{0.7em}{\centering\large\sc}
  \section{\texorpdfstring{$T$}{T}-quasi-linear endomorphisms}\label{T quasi linear}
We now briefly recall some material on $T$-quasi-linear endomorphisms, adapted to our slightly more general setting. We will be content with giving only the results that we need in our proof of Ind-algebraicity of $\X_{K,d}^{\lt}$; for a more detailed account, we refer the reader to Appendix D of \cite{EG22}.

Fix a finite extension $F/\mathbf{Q}_p$ with uniformizer $\pi$. Fix also a finite unramified extension $K/F$, and a finite extension $E/F$ with uniformizer $\varpi$ and ring of integers $\O$. If $A$ is $\varpi$-adically complete $\O$-algebra, we let $\mathbf{A}_A^+:=(\O_K\otimes_{\O_F}A)[[T]]$, and let $\mathbf{A}_A$ be the $p$-adic completion of $\mathbf{A}_A^+[1/T]$. As usual, finite projective modules over $\mathbf{A}_A$ or $\mathbf{A}_A^+$ will be endowed with their canonical topology (cf. \cite[Rem. D.2]{EG22}).
\begin{defn}
Let $A$ be an $\O/\varpi^a$-algebra for some $a\geq 1$, and let $M$ be a finite projective $\mathbf{A}_A$-module. A $T$-quasi-linear endomorphism of $M$ is a continuous $\O_K\otimes_{\O_F}A$-linear morphism $f: M\to M$ which furthermore satisfies: there exist $a(T)\in (\mathbf{A}_A^+)^\times$ and $b(T)\in (\pi,T)\mathbf{A}_A^+$ such that 
\begin{displaymath}
f(Tm)=a(T)Tf(m)+b(T)Tm
\end{displaymath}
for every $m\in M$.
\end{defn}
\begin{remark}
The above definition recovers \cite[Defn. D.17]{EG22} in case $F=\mathbf{Q}_p$.
\end{remark}
\begin{lem}\label{lemma D.18}
Assume $f$ is $T$-quasi-linear, then for all $n\in\mathbf{Z}$, we can write
\begin{displaymath}
f(T^n m)=a(T)^n T^n f(m)+ b_n(T) T^n m
\end{displaymath}
for all $m\in M$, where $a(T)\in (\mathbf{A}_A^+)^\times$ and $b_n(T)\in (\pi,T)\mathbf{A}_A^+$.
\end{lem}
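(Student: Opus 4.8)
The statement to prove is Lemma~\ref{lemma D.18}: if $f$ is $T$-quasi-linear, then for all $n\in\mathbf{Z}$ one has $f(T^nm)=a(T)^nT^nf(m)+b_n(T)T^nm$ with $a(T)\in(\mathbf{A}_A^+)^\times$ the same unit as in the definition, and $b_n(T)\in(\pi,T)\mathbf{A}_A^+$. The natural approach is a two-step induction: first establish the formula for all $n\geq 0$ by ordinary induction on $n$, then bootstrap to negative $n$ using invertibility of $T$ on $M$ together with the fact that $a(T)$ is a unit.

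\emph{Nonnegative $n$.} The base case $n=0$ is trivial ($b_0=0$) and $n=1$ is the defining property with $b_1=b$. For the inductive step, assume the formula holds for $n$. Then for $m\in M$, writing $m'=T^nm$ and applying the defining relation to $m'$,
\begin{align*}
f(T^{n+1}m)=f(T\cdot T^nm)&=a(T)Tf(T^nm)+b(T)T\cdot T^nm\\
&=a(T)T\bigl(a(T)^nT^nf(m)+b_n(T)T^nm\bigr)+b(T)T^{n+1}m\\
&=a(T)^{n+1}T^{n+1}f(m)+\bigl(a(T)b_n(T)+b(T)\bigr)T^{n+1}m,
\end{align*}
so $b_{n+1}(T):=a(T)b_n(T)+b(T)$. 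Since $b_n(T),b(T)\in(\pi,T)\mathbf{A}_A^+$ and this is an ideal of $\mathbf{A}_A^+$ (so it is closed under multiplication by $a(T)\in\mathbf{A}_A^+$ and under addition), we get $b_{n+1}(T)\in(\pi,T)\mathbf{A}_A^+$, completing the induction. One small point to check in passing: $a(T)^n\in(\mathbf{A}_A^+)^\times$ for all $n\geq 0$, which is clear since units form a group.

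\emph{Negative $n$.} Here one uses that multiplication by $T$ is injective on $M$ (as $M$ is finite projective over $\mathbf{A}_A$, in which $T$ is a unit, $T$ acts invertibly on $M$; in particular injectively). Fix $n\geq 1$ and apply the already-proven case ``$n$'' to the element $T^{-n}m$ in place of $m$:
\begin{align*}
f(m)=f(T^n\cdot T^{-n}m)=a(T)^nT^nf(T^{-n}m)+b_n(T)T^n\cdot T^{-n}m=a(T)^nT^nf(T^{-n}m)+b_n(T)m.
\end{align*}
Solving for $f(T^{-n}m)$, and using that $a(T)^n$ is a unit and $T^{-n}$ is the inverse of $T^n$ on $M$,
\begin{align*}
f(T^{-n}m)=a(T)^{-n}T^{-n}\bigl(f(m)-b_n(T)m\bigr)=a(T)^{-n}T^{-n}f(m)-a(T)^{-n}b_n(T)T^{-n}m.
\end{align*}
Thus the formula holds for $-n$ with $b_{-n}(T):=-a(T)^{-n}b_n(T)$, which again lies in $(\pi,T)\mathbf{A}_A^+$ since $b_n(T)\in(\pi,T)\mathbf{A}_A^+$ and $a(T)^{-n}\in\mathbf{A}_A^+$. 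This exhausts all $n\in\mathbf{Z}$.

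\emph{Main obstacle.} There is really no serious obstacle here; this is a bookkeeping lemma. The only points that deserve a moment's care are (i) that $(\pi,T)\mathbf{A}_A^+$ is genuinely an ideal of $\mathbf{A}_A^+$ so that the recursion $b_{n+1}=a b_n+b$ stays inside it — which is immediate — and (ii) the passage to negative exponents, where one must know that $T$ (equivalently each $T^n$) acts invertibly on the finite projective $\mathbf{A}_A$-module $M$, so that ``$T^{-n}m$'' makes sense and the cancellation is legitimate; this is exactly the point where it matters that we are working over $\mathbf{A}_A$ (where $T$ is inverted) rather than over $\mathbf{A}_A^+$. Continuity of $f$ plays no role in the algebraic identity itself and need not be re-invoked. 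If one wishes, the two cases can be unified by noting $a(T)^nb_m+a(T)^mb_n$-type cocycle relations, but the straightforward induction above is cleaner.
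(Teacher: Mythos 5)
Your proof is correct, and it follows the same route as the argument the paper defers to (the proof of \cite[Lem.~D.18]{EG22}): induction on $n\geq 0$ using the defining relation, then solving the identity $f(m)=a(T)^nT^nf(T^{-n}m)+b_n(T)m$ for negative exponents, using that $a(T)$ is a unit and $T$ acts invertibly on the $\mathbf{A}_A$-module $M$. Nothing further is needed.
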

\begin{proof}
See the proof of \cite[Lem. D.18]{EG22}.
\end{proof}
\begin{lem}\label{lemma D.19}
Let A be an $\O/\varpi^a$-algebra for some $a\geq 1$, and let $M$ be a
finite projective $\mathbf{A}_A$-module. Let $f$ be a $T$-quasi-linear endomorphism of $M$, and let $\fM$ be a lattice in $M$. Then there is an integer $m\geq 0$ such that $f^n(T^s\fM)\subseteq T^{s-mn}\fM$ for all $s\in\mathbf{Z}$ and $n\geq 0$.
\end{lem}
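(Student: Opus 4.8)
\textbf{Proof plan for Lemma \eqref{lemma D.19}.}

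The plan is to adapt the proof of \cite[Lem. D.19]{EG22} to the present setting, using Lemma \eqref{lemma D.18} as the main input. First I would fix a lattice $\fM$ in $M$; since $f$ is continuous and $\fM$ is an open subgroup, there exists an integer $m \geq 0$ (which we may take to be the same $m$ as in the statement) such that $f(\fM) \subseteq T^{-m}\fM$. The key point is then to control how $f$ interacts with multiplication by powers of $T$. Using Lemma \eqref{lemma D.18}, for any $s \in \mathbf{Z}$ and any $x \in \fM$ we can write $f(T^s x) = a(T)^s T^s f(x) + b_s(T) T^s x$, where $a(T) \in (\mathbf{A}_A^+)^\times$ and $b_s(T) \in (\pi, T)\mathbf{A}_A^+ \subseteq \mathbf{A}_A^+$. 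Since $a(T)$ is a unit in $\mathbf{A}_A^+$ and $b_s(T) \in \mathbf{A}_A^+$, the term $b_s(T) T^s x$ lies in $T^s \fM$, while $a(T)^s T^s f(x) \in T^s f(\fM) \subseteq T^{s-m}\fM$. Hence $f(T^s \fM) \subseteq T^{s-m}\fM$ for all $s \in \mathbf{Z}$; this is the case $n = 1$.

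The general case follows by induction on $n$. Assuming $f^n(T^s \fM) \subseteq T^{s-mn}\fM$ for all $s \in \mathbf{Z}$, we apply $f$ once more: $f^{n+1}(T^s \fM) = f(f^n(T^s \fM)) \subseteq f(T^{s-mn}\fM) \subseteq T^{(s-mn)-m}\fM = T^{s-m(n+1)}\fM$, where the last inclusion is the case $n=1$ applied with $s$ replaced by $s - mn$. This closes the induction and gives the claim with the single uniform constant $m$.

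The only genuinely nontrivial ingredient is Lemma \eqref{lemma D.18}, whose proof (as in \emph{loc. cit.}) requires checking that the coefficients $b_n(T)$ obtained by iterating the quasi-linearity relation stay in $(\pi,T)\mathbf{A}_A^+$ — this uses that $(\pi,T)\mathbf{A}_A^+$ is an ideal and that $a(T)$ is a unit, together with an induction on $n$ handling separately the cases $n \geq 0$ and $n < 0$ (for negative $n$ one inverts the relation, using that $a(T)^{-1} \in (\mathbf{A}_A^+)^\times$ as well). Since we are permitted to assume the statement of Lemma \eqref{lemma D.18}, the present argument is essentially a formal bookkeeping exercise with no real obstacle; the main point to be careful about is keeping the estimate uniform in $s$, which is automatic here because the bound on $b_s(T)$ (namely that it lies in $\mathbf{A}_A^+$) does not depend on $s$.
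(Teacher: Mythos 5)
Your argument is correct and is essentially the paper's own proof (which simply notes that continuity gives an $m$ with $f(T^m\fM)\subseteq\fM$, that Lemma \eqref{lemma D.18} then yields $f(T^s\fM)\subseteq T^{s-m}\fM$ uniformly in $s$, and concludes by induction on $n$). The only cosmetic difference is that you phrase the continuity input as $f(\fM)\subseteq T^{-m}\fM$ rather than $f(T^m\fM)\subseteq\fM$; since continuity at $0$ literally gives the latter, it is cleaner to start there and recover your form via Lemma \eqref{lemma D.18} applied with exponent $-m$ (or via the $\O_K\otimes_{\O_F}A$-linearity of $f$), but this is a one-line adjustment, not a gap.
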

\begin{proof}
As $f$ is continuous \textit{by definition}, we can choose $m\geq 0$ large enough so that $f(T^m\fM)\subseteq \fM$. From this and Lemma \eqref{lemma D.18}, it is easy to see that $f(T^s\fM)\subseteq T^{s-m}\fM$ for all $s\in\mathbf{Z}$. The lemma now follows by an evident induction.
\end{proof}
\begin{lem}\label{topologically nilpotent}
Let A be an $\O/\varpi^a$-algebra for some $a\geq 1$. Let $M$ be a
finite projective $\mathbf{A}_A$-module, and let $f$ be a $T$-quasi-linear endomorphism of $M$. The following are equivalent:
\begin{itemize}
    \item[\emph{(1)}] $f$ is topologically nilpotent.
    \item[\emph{(2)}] There exists a lattice $\fM\subseteq M$ and some $n\geq 1$ such that $f^n(\fM)\subseteq (\pi,T)\fM$. 
    \item[\emph{(3)}] For any lattice $\fM\subseteq M$ and any $m\geq 1$, we have $f^n(\fM)\subseteq T^{m}\fM$ for all sufficiently large $n$.
\end{itemize}
\end{lem}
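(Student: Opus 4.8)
We prove the three conditions equivalent via the cycle $(3)\Rightarrow(1)\Rightarrow(2)\Rightarrow(3)$, following \cite[Lem.~D.28]{EG22} with $p$ replaced throughout by $\pi$. The only feature peculiar to our setting is the nilpotence of $\pi$: since $A$ is an $\O/\varpi^{a}$-algebra and $\pi=\varpi^{e}u$ with $u\in\O^{\times}$ and $e=e(E/F)\geq 1$, we have $\pi^{a}=0$ in $A$. Two consequences will be used repeatedly: for any lattice $\fM\subseteq M$, the open subgroups $T^{m}\fM$ ($m\geq 0$) already form a fundamental system of neighbourhoods of $0$ in $M$ (cf.\ the description of the topology in Subsection~\ref{ring}), and $(\pi,T)\fM$ is open, since it contains $T\fM$.

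Granting this, $(3)\Rightarrow(1)$ is immediate, as $(3)$ says precisely that $f^{n}(\fM)$ eventually lies in each member of the cofinal system $\{T^{m}\fM\}$ of neighbourhoods of $0$. For $(1)\Rightarrow(2)$, fix a lattice $\fM$ together with a finite set of $\mathbf{A}_{A}^{+}$-generators; by Lemma~\ref{lemma D.19} the endomorphism $f$ expands any lattice by at most a fixed power of $T^{-1}$, so Lemma~\ref{lemma D.18} reduces the desired containment $f^{n}(\fM)\subseteq(\pi,T)\fM$ to finitely many assertions of the form ``$f^{n}$ applied to a fixed element of $M$ lies in $(\pi,T)\fM$'', all of which hold for $n$ large by topological nilpotence; this is the argument of \cite[Lem.~D.28]{EG22}.

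The substance is $(2)\Rightarrow(3)$. Let $\fM_{0}$ and $n_{0}$ be as in $(2)$, so that $h:=f^{n_{0}}$ carries $\fM_{0}$ into $(\pi,T)\fM_{0}$, and in particular stabilises $\fM_{0}$. The core assertion is that
\[
h^{k}(\fM_{0})\subseteq(\pi,T)^{k}\fM_{0}\qquad\text{for all }k\geq 1,
\]
and this is proved by induction on $k$, following \cite[Lem.~D.28]{EG22}: one writes $(\pi,T)^{k}\fM_{0}=\sum_{i+j=k}\pi^{i}T^{j}\fM_{0}$, applies $h$ summand by summand, and uses the identity $f^{n_{0}}(T^{j}m)=T^{j}g_{j}^{\,n_{0}}(m)$ obtained by iterating Lemma~\ref{lemma D.18} (here $g_{j}(y)=a(T)^{j}f(y)+b_{j}(T)y$ with $a(T)\in(\mathbf{A}_{A}^{+})^{\times}$ and $b_{j}(T)\in(\pi,T)\mathbf{A}_{A}^{+}$), together with Lemma~\ref{lemma D.19} to control the operators $g_{j}$; it is the membership $b_{j}(T)\in(\pi,T)\mathbf{A}_{A}^{+}$, rather than merely $b_{j}(T)\in\mathbf{A}_{A}^{+}$, that makes each application of $h$ raise the filtration degree by one. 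Now since $\pi^{a}=0$ in $A$, a monomial $\pi^{i}T^{j}$ with $i+j=k$ either vanishes (when $i\geq a$) or satisfies $j\geq k-a+1\geq m$ as soon as $k\geq a+m-1$; hence $(\pi,T)^{k}\fM_{0}\subseteq T^{m}\fM_{0}$ for such $k$, and therefore $f^{n_{0}k}(\fM_{0})\subseteq T^{m}\fM_{0}$ for $k\gg 0$. Writing an arbitrary large $n$ as $n=n_{0}k+r$ with $0\leq r<n_{0}$ and bounding $f^{r}$ by a fixed power of $T^{-1}$ via Lemma~\ref{lemma D.19}, we conclude $f^{n}(\fM_{0})\subseteq T^{m}\fM_{0}$ for all $n$ sufficiently large; the same statement for an arbitrary lattice $\fM$ then follows by commensurability, again using Lemma~\ref{lemma D.19}.

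I expect the main obstacle to be the inductive estimate $h^{k}(\fM_{0})\subseteq(\pi,T)^{k}\fM_{0}$: because $f$ is merely $T$-quasi-linear and not $\mathbf{A}_{A}$-linear, powers of $T$ cannot be pulled through $f$ freely, so one has to argue via Lemma~\ref{lemma D.18} and keep careful track of how the unit $a(T)^{j}$ and the error term $b_{j}(T)$ interact with the $(\pi,T)$-adic filtration on $\fM_{0}$. Once that estimate is in hand, the remainder is routine bookkeeping with commensurable lattices, governed by Lemma~\ref{lemma D.19} and the nilpotence of $\pi$ in $A$.
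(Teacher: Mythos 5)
Your reduction of (2)$\Rightarrow$(3) to the estimate $h^{k}(\fM_{0})\subseteq(\pi,T)^{k}\fM_{0}$ identifies the right target, but the inductive step you sketch does not close, and this is precisely the point where the whole argument lives (the paper's own proof is nothing more than the citation of [EG22, Lem.~D.21] with $p$ replaced by $\pi$ and Lemma \eqref{lemma D.18} in place of [EG22, Lem.~D.18]; your reference to Lem.~D.28 is the wrong lemma, D.28 being the model for Lemma \eqref{continuous action Gamma disc}). Unwinding your step: from $h^{k}(\fM_{0})\subseteq\sum_{i+j=k}\pi^{i}T^{j}\fM_{0}$ and the identity $f^{n_{0}}(T^{j}m)=T^{j}g_{j}^{n_{0}}(m)$, you need $g_{j}^{n_{0}}(\fM_{0})\subseteq(\pi,T)\fM_{0}$ for every $j\leq k$. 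This does not follow from the hypothesis $f^{n_{0}}(\fM_{0})\subseteq(\pi,T)\fM_{0}$ together with Lemmas \eqref{lemma D.18} and \eqref{lemma D.19}: writing $c=\sum_{i}c_{i}T^{i}$ and summing Lemma \eqref{lemma D.18} gives the commutation rule $f(c\,y)=c(a(T)T)\,f(y)+\delta(c)\,y$ with $\delta(c)\in(\pi,T)\mathbf{A}_{A}^{+}$, and pushing all multiplications to the left in $g_{j}^{n_{0}}=\bigl(a^{j}\cdot f+b_{j}\bigr)^{n_{0}}$ yields $g_{j}^{n_{0}}=u\,f^{n_{0}}+\sum_{r<n_{0}}c_{r}f^{r}$ with $u$ a unit and $c_{r}\in(\pi,T)\mathbf{A}_{A}^{+}$. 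The top term is controlled by the hypothesis, but the lower-order terms are only bounded by $c_{r}f^{r}(\fM_{0})\subseteq(\pi,T)T^{-mr}\fM_{0}$ via Lemma \eqref{lemma D.19}, which points in the wrong direction: the intermediate images $f^{r}(\fM_{0})$, $0<r<n_{0}$, need not lie in $\fM_{0}$ at all, so the single factor $b_{j}\in(\pi,T)$ does not make ``each application of $h$ raise the filtration degree by one''. That assertion is exactly what has to be proved, and Lemma \eqref{lemma D.19} cannot supply it.

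The missing idea is to enlarge the lattice so that it becomes $f$-stable before running the induction: let $\fM'$ be the $\mathbf{A}_{A}^{+}$-span of $\bigcup_{0\leq l<n_{0}}f^{l}(\fM_{0})$. The commutation rule shows $f(\fM')\subseteq\fM'$ (the only new generators are the $f^{n_{0}}(e_{i})$, which lie in $(\pi,T)\fM_{0}\subseteq\fM'$), and a short computation with the same rule shows that the hypothesis persists, i.e.\ $f^{n_{0}}(\fM')\subseteq(\pi,T)\fM'$: for instance, the induced $C/\pi$-linear endomorphism of $\fM'/(\pi,T)\fM'$, where $C=\O_K\otimes_{\O_F}A$, kills every generator $f^{l}(e_{i})$ because $f^{n_{0}+l}(e_{i})=f^{l}(f^{n_{0}}(e_{i}))\in f^{l}\bigl((\pi,T)\fM_{0}\bigr)\subseteq(\pi,T)\fM'$ for $l<n_{0}$. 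Once the lattice is $f$-stable, your expansion does work: the error terms satisfy $c_{r}f^{r}(\fM')\subseteq(\pi,T)\fM'$, hence $g_{j}^{n_{0}}(\fM')\subseteq(\pi,T)\fM'$ and the induction $f^{kn_{0}}(\fM')\subseteq(\pi,T)^{k}\fM'$ goes through; combining this with $\pi^{a}=0$ (so $(\pi,T)^{a+m-1}\subseteq T^{m}\mathbf{A}_{A}^{+}$), Lemma \eqref{lemma D.19} for the remainders $f^{r}$ with $r<n_{0}$, and commensurability of lattices then gives (3). A similar caution applies to your (1)$\Rightarrow$(2): reducing to generators also uses the commutation formula, and the cross terms with $f^{l}(e_{i})$ for $l$ close to $n$ must be handled by pointwise topological nilpotence, not by Lemma \eqref{lemma D.19}. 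As written, the proposal therefore has a genuine gap at the central estimate, even though the overall skeleton (and the bookkeeping with $\pi^{a}=0$ and commensurable lattices) is the intended one.
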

\begin{proof}
The proof is identical to that of \cite[Lem. D.21]{EG22}, except that we replace $p$ by $\pi$ everywhere, and appeal to Lemma \eqref{lemma D.18} in place of \cite[Lem. D.18]{EG22}.
\end{proof}
We now specialize to our main case of interest. To this end, assume that $\mathbf{A}_A$ is endowed with a continuous action of $\mathbf{Z}_p$ by $\O_K\otimes_{\O_F}A$-algebra automorphisms, which preserves $\mathbf{A}_A^+$, and moreover satisfies
\begin{align}\label{gamma T - T}
\gamma(T)-T\in (\pi,T)T\mathbf{A}_A^+
\end{align}
for some topological generator $\gamma$ of $\mathbf{Z}_p$.
\begin{lem}\label{gamma -1 quasi-linear}
Let $M$ be a finite projective $\mathbf{A}_A$-module which is endowed with a semilinear action of the subgroup $\langle\gamma\rangle$ of $\mathbf{Z}_p$, then $f:=\gamma^n-1$ is a $T$-quasi-linear endomorphism of $M$ for any $n\geq 1$.
\end{lem}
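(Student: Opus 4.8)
The plan is to check directly the two clauses in the definition of a $T$-quasi-linear endomorphism for $f:=\gamma^{n}-1$: that it is a continuous $\O_K\otimes_{\O_F}A$-linear endomorphism of $M$, and that it satisfies an identity $f(Tm)=a(T)\,Tf(m)+b(T)\,Tm$ with $a(T)\in(\mathbf{A}_A^+)^\times$ and $b(T)\in(\pi,T)\mathbf{A}_A^+$.

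First I would dispose of linearity and continuity. Since $\gamma$ (hence $\gamma^{n}$) acts on $\mathbf{A}_A$ by an $\O_K\otimes_{\O_F}A$-algebra automorphism, it fixes $\O_K\otimes_{\O_F}A$ pointwise; therefore the $\gamma^{n}$-semilinear map $\gamma^{n}\colon M\to M$ is $\O_K\otimes_{\O_F}A$-linear, and so is $f=\gamma^{n}-1$. For continuity, write $\sigma:=\gamma^{n}|_{\mathbf{A}_A}$ and view $\gamma^{n}$ as an $\mathbf{A}_A$-linear isomorphism ${}^{\sigma}M\xrightarrow{\sim}M$ of finite projective $\mathbf{A}_A$-modules; since $\sigma$ is a homeomorphism of $\mathbf{A}_A$ the restriction of scalars ${}^{\sigma}M$ carries the same underlying topology as $M$, so the automatic continuity of linear maps between finite projective modules for the canonical topology (cf.\ \cite[Rem. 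D.2]{EG22}) applies, and $f$ is continuous.

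The computational heart is the containment $\gamma^{n}(T)-T\in(\pi,T)T\mathbf{A}_A^+$ for all $n\geq 1$. For $n=1$ this is hypothesis \eqref{gamma T - T}; for the inductive step I would write $\gamma^{n+1}(T)-T=\gamma\bigl(\gamma^{n}(T)-T\bigr)+\bigl(\gamma(T)-T\bigr)$ and note that $\gamma$ stabilizes the ideal $(\pi,T)T\mathbf{A}_A^+$, because it fixes $\pi$, sends $T$ into $T\mathbf{A}_A^+$ by the case $n=1$, and preserves $\mathbf{A}_A^+$. Writing $\gamma^{n}(T)=T(1+u_n)$ with $u_n\in(\pi,T)\mathbf{A}_A^+$ and using that $\varpi^{a}=0$ in $A$ makes $\pi$ nilpotent — so that $(\pi,T)$ lies in the Jacobson radical of $\mathbf{A}_A^+=(\O_K\otimes_{\O_F}A)[[T]]$ — I get $a(T):=1+u_n\in(\mathbf{A}_A^+)^\times$. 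Then for $m\in M$,
\[
f(Tm)=\gamma^{n}(T)\,\gamma^{n}(m)-Tm=\gamma^{n}(T)\,f(m)+\bigl(\gamma^{n}(T)-T\bigr)m=a(T)\,Tf(m)+b(T)\,Tm,
\]
with $b(T):=u_n\in(\pi,T)\mathbf{A}_A^+$, which is the required identity.

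I do not expect a genuine obstacle here. The only mildly delicate points are the continuity of $\gamma^{n}$, handled by the restriction-of-scalars observation above, and the passage from $n=1$ to general $n$ in the key containment, which is the one-line induction just indicated; the rest is a direct unwinding of the semilinearity of the $\gamma$-action. This lemma is the exact analogue of the corresponding statement in \cite[App.\ D]{EG22}, and its proof there carries over once $p$ is replaced by $\pi$ throughout.
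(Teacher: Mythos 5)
Your proof is correct, and its computational core (the induction giving $\gamma^n(T)-T\in(\pi,T)T\mathbf{A}_A^+$, the identity $f(Tm)=\gamma^n(T)f(m)+(\gamma^n(T)-T)m$, and the unit/Jacobson-radical observation making $\gamma^n(T)/T$ invertible) is exactly the paper's argument, just with the induction spelled out. The one place you diverge is the continuity clause: the paper checks it by hand, taking a lattice $\fM$ with generators $m_1,\ldots,m_r$ and noting that the lattice $\mathfrak{N}$ generated by $\gamma^{-n}(m_1),\ldots,\gamma^{-n}(m_r)$ satisfies $\gamma^n(\mathfrak{N})\subseteq\fM$, whereas you twist by $\sigma=\gamma^n|_{\mathbf{A}_A}$ and invoke the automatic continuity of $\mathbf{A}_A$-linear maps between finite projective modules (\cite[Rem. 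D.2]{EG22}, already used in the body of the paper). Your route is valid: since $\gamma^{\pm n}$ preserve $\mathbf{A}_A^+$ and $\gamma^n(T)$ is a unit multiple of $T$, the lattices of ${}^{\sigma}M$ and of $M$ are the same subsets and the canonical topologies agree, so the citation applies; it buys you a slightly softer argument at the cost of leaning on the Drinfeld/Emerton--Gee continuity input, while the paper's lattice computation is self-contained and is the same style of argument reused later in Lemma \eqref{continuous action Gamma disc}. Either way the lemma is established.
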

\begin{proof}
By induction, it is easy to see that $\gamma^n(T)-T\in (\pi,T)T\mathbf{A}_A^+$ for all $n\geq 1$. As $(\gamma^n-1)(Tm)=\gamma^n(T)(\gamma^n-1)(m)+(\gamma^n(T)-T)m$, we have $f(Tm)=a(T)^n T^n f(m)+b_n(T) Tm$ where $a(T):=\gamma(T)/T\in (\mathbf{A}_A^+)^\times$, and $b_n(T):=(\gamma^n(T)-T)/T\in (\pi,T)\mathbf{A}_A^+$. As we already require $\gamma$ to be $\O_K\otimes_{\O_F}A$-linear, it remains to check that $f: M\to M$ is continuous. To see this, let $\fM$ be an arbitrary lattice in $M$, say generated by $m_1,\ldots,m_r\in M$. Let $\mathfrak{N}$ be the sub-$\mathbf{A}_A^+$-module of $M$ generated by $\gamma^{-n}(m_1),\ldots,\gamma^{-n}(m_r)$. As $\gamma$ acts on $\mathbf{A}_A$ and $M$ by automorphisms, and moreover preserves $\mathbf{A}_A^+$, it is clear that $\mathfrak{N}$ is a lattice in $M$ verifying $\gamma^n(\mathfrak{N})\subseteq \fM$. Thus, $\gamma^n$ (hence $f$) is continuous, as desired. 
\end{proof}
\begin{remark}
We have seen in Lemma \eqref{3.2.18} that in case $K$ is $F$-basic, the action of $\gamma\in \Gamma_K$ on $\mathbf{A}_{K|F,A}$ indeed satisfies \eqref{gamma T - T}. Accordingly\footnote{We should also check that $\gamma$ fixes the subring $W_{\O_F}(k_{K,\infty})\otimes_{\O_F}A$, but this is clear because $k_{K,\infty}=k_K$ if $K$ is $F$-basic.}, we can apply Lemma \eqref{gamma -1 quasi-linear} in the case where ($K$ is $F$-basic and) $M$ is an object of $\R_{K,d}^{\Gamma_\disc}$, and the semilinear action of $\langle \gamma\rangle$ is given by restricting the action of $\Gamma_\disc$ on $M$. 
\end{remark}
\begin{lem}\label{continuous action Gamma disc}
Assume that ${A}$ is an $\O/\varpi^a$-algebra for some $a\geq 1$, and that $\mathbf{A}_A$ is endowed with an action of $\mathbf{Z}_p$ as above.  Let $M$ be a finite projective
$\mathbf{A}_A$-module, equipped with a semi-linear action of $\langle \gamma\rangle\subseteq \mathbf{Z}_p$. Then the following
are equivalent:
\begin{itemize}
    \item[\emph{(1)}] The action of $\langle\gamma \rangle$ extends (necessarily uniquely) to a continuous action of $\mathbf{Z}_p$. 
    \item[\emph{(2)}] The action of $\langle \gamma\rangle$ on $M$ is continuous. 
    \item[\emph{(3)}] For any lattice $\fM\subseteq M$ and any $n\geq 1$, there exists $s\geq 0$ such that $(\gamma^{p^s}-1)(\fM)\subseteq T^n\fM$.
    \item[\emph{(4)}] For some lattice $\fM\subseteq M$ and some $s\geq 0$, we have $(\gamma^{p^s}-1)(\fM)\subseteq T\fM$.
    \item[\emph{(5)}] The action of $\gamma-1$ on $M$ is topologically nilpotent. 
\end{itemize}
\end{lem}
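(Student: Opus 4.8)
The plan is to prove the chain of equivalences (1)--(5) essentially by following the structure of \cite[Lem. D.30]{EG22}, using the $T$-quasi-linear machinery of Lemmas \eqref{lemma D.18}--\eqref{topologically nilpotent} together with Lemma \eqref{gamma -1 quasi-linear}. By Lemma \eqref{gamma -1 quasi-linear}, $\gamma-1$ (and more generally $\gamma^{p^s}-1$) is a $T$-quasi-linear endomorphism of $M$, so Lemma \eqref{topologically nilpotent} applies to it. First I would observe that $(2)\Leftrightarrow(5)$ is almost immediate: if the action of $\langle\gamma\rangle$ on $M$ is continuous, then since $M$ is $\pi$-adically complete (as an $\mathcal{O}/\varpi^a$-module it is even bounded $\pi$-torsion) and $\gamma^{p^s}\to 1$ as $s\to\infty$, the operator $\gamma-1$ must be topologically nilpotent on any lattice; conversely, if $\gamma-1$ is topologically nilpotent then $\langle\gamma\rangle$ preserves a neighborhood basis of $0$ and acts continuously. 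This is exactly where Lemma \eqref{topologically nilpotent} does the work, translating topological nilpotence into the lattice conditions (2), (3), (4) of \emph{that} lemma, which here become conditions (3), (4), (5) of the present statement --- so $(3)\Leftrightarrow(4)\Leftrightarrow(5)$ follows directly from Lemma \eqref{topologically nilpotent} applied to $f=\gamma-1$, once one notes that $f^n(\fM)\subseteq(\pi,T)^n\fM\subseteq T\fM\pmod{\pi}$-type manipulations let one pass between powers of $\gamma$ and powers of $f$ (using that $\gamma^{p^s}-1=(\gamma-1)(\gamma^{p^s-1}+\cdots+1)$ and that the second factor is an automorphism).

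Next I would handle $(1)\Rightarrow(2)$, which is trivial since a continuous $\mathbf{Z}_p$-action restricts to a continuous action of the dense subgroup $\langle\gamma\rangle$. The substantive implication is $(5)\Rightarrow(1)$: given that $\gamma-1$ is topologically nilpotent, I want to extend the $\langle\gamma\rangle$-action to a continuous $\mathbf{Z}_p$-action by the formula $\gamma^a:=\sum_{k\geq 0}\binom{a}{k}(\gamma-1)^k$ for $a\in\mathbf{Z}_p$, and show this series converges and defines a continuous action. Convergence is where topological nilpotence is used: by Lemma \eqref{topologically nilpotent}(3), for any lattice $\fM$ and any $m$ we have $(\gamma-1)^n\fM\subseteq T^m\fM$ for $n\gg 0$, so the partial sums form a Cauchy sequence of endomorphisms in the topology of pointwise convergence and the limit is a well-defined continuous $\mathcal{O}_K\otimes_{\mathcal{O}_F}A$-linear automorphism; the cocycle identity $\gamma^{a+b}=\gamma^a\gamma^b$ follows from Vandermonde's identity for binomial coefficients, and semilinearity is inherited in the limit. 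Uniqueness of the extension is automatic from density of $\langle\gamma\rangle$ in $\mathbf{Z}_p$ and continuity.

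The main obstacle, as usual in this circle of ideas, is the bookkeeping in $(5)\Rightarrow(1)$: one must verify that the binomial series genuinely converges as a map $M\to M$ in the canonical topology of $M$ --- not just $\pi$-adically but in the finer $(\pi,T)$-adic sense --- and that the resulting operator is $\mathbf{A}_A$-semilinear with respect to the given $\mathbf{Z}_p$-action on $\mathbf{A}_A$, i.e. that $\gamma^a(xm)=\gamma^a(x)\gamma^a(m)$ where $\gamma^a(x)$ is computed by the \emph{same} binomial expansion on $\mathbf{A}_A$ (using hypothesis \eqref{gamma T - T} and Lemma \eqref{lemma D.18} to control how $\gamma-1$ interacts with multiplication by powers of $T$). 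This is a routine-but-delicate successive-approximation argument; I would simply cite \cite[Lem. D.30]{EG22} and remark that the proof there goes through verbatim after replacing $p$ by $\pi$ and invoking Lemmas \eqref{lemma D.18}, \eqref{topologically nilpotent}, and \eqref{gamma -1 quasi-linear} in place of their cyclotomic counterparts, exactly as was done for Lemmas \eqref{lemma D.19} and \eqref{topologically nilpotent} above.
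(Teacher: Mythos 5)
The central gap is in your passage between the conditions on $\gamma^{p^s}-1$ (items (3), (4)) and topological nilpotence of $\gamma-1$ (item (5)). You propose to move back and forth using the factorization $\gamma^{p^s}-1=(\gamma-1)(\gamma^{p^s-1}+\cdots+\gamma+1)$ ``and that the second factor is an automorphism''. That claim is false in this setting: writing $\gamma^j=(1+(\gamma-1))^j$, the operator $1+\gamma+\cdots+\gamma^{p^s-1}$ equals $p^s\cdot\mathrm{id}$ plus a multiple of $\gamma-1$, and $p$ is nilpotent in $A$ (an $\O/\varpi^a$-algebra), so once $\gamma-1$ is (or is expected to be) topologically nilpotent this factor is as far from invertible as possible. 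Consequently (3)$\Leftrightarrow$(4)$\Leftrightarrow$(5) does not ``follow directly'' from Lemma \eqref{topologically nilpotent}: the conditions of that lemma concern powers of the single operator $f=\gamma-1$, whereas (3) and (4) concern $\gamma^{p^s}-1$, and the bridge the paper actually uses is the integral congruence $(1+x)^{p^s}-1\equiv x^{p^s}\bmod p$, i.e. $\gamma^{p^s}-1\equiv(\gamma-1)^{p^s}$ modulo $p$ times a polynomial in $\gamma-1$. Even with this, (4)$\Rightarrow$(5) only lands you in $(p,T)\fM\subseteq(\pi,T)\fM$ (which Lemma \eqref{topologically nilpotent}(2) is designed to absorb), and the harder direction (5)$\Rightarrow$(3) only gives $(\gamma^{p^s}-1)\fM\subseteq(p,T^n)\fM$; upgrading this to $T^n\fM$ requires the induction on the nilpotency exponent $a$ of $\varpi$ (applying the inductive hypothesis to $M/\varpi^{a-1}M$, then using topological nilpotence of $\gamma^{p^s}-1$ itself and a second congruence to replace $s$ by $s+t$). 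None of this appears in your proposal, and it cannot be bypassed by the invertibility claim.

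On the implication (5)$\Rightarrow$(1)/(2), your binomial-series extension $\gamma^b:=\sum_k\binom{b}{k}(\gamma-1)^k$ is a genuinely different strategy from the paper, which instead verifies the hypotheses of \cite[Lem. D.13 (2)]{EG22} directly (continuity of each $g\in\langle\gamma\rangle$, stability $\gamma^{\pm p^s}\fM\subseteq\fM$ via the geometric series for $\gamma^{-p^s}$, and the orbit-map estimates at the identity). Your route could in principle work, but the convergence and equicontinuity bookkeeping you defer to a verbatim citation of \cite{EG22} needs exactly the uniform estimates of condition (3) --- for every lattice and every $n$, some $s$ with $(\gamma^{p^s}-1)\fM\subseteq T^n\fM$ --- and producing those from (5) is precisely the inductive argument you have omitted. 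So as written the proposal both rests on a false algebraic claim and silently skips the one step of the proof that is not formal. The remaining points ((1)$\Leftrightarrow$(2) via density, (2)$\Rightarrow$(4) by evaluating on generators and using that $\gamma$ fixes $\O_K\otimes_{\O_F}A$ and preserves $T\mathbf{A}_A^+$) are fine in spirit, though the latter deserves the short explicit computation the paper gives.
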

\begin{proof}
The proof is essentially identical to that of \cite[Lem. D.28]{EG22}, but to make sure that everything works out as expected, we will provide the details. By \cite[Lem. D.15]{EG22}, (1) and (2) are equivalent. Clearly, $(3) \Longrightarrow (4)$. Now if (4) holds, then as $(\gamma^{p^s}-1)\equiv (\gamma-1)^{p^s}\bmod{p}$, we deduce that $(\gamma-1)^{p^s}(\fM)\subseteq (p,T)\fM\subseteq (\pi,T)\fM$, and so $\gamma-1$ is topologically nilpotent by Lemma \eqref{topologically nilpotent} (2). Conversely, assume (5) holds. By Lemma \eqref{topologically nilpotent} (3), we may choose $s\geq 0$ large enough so that $(\gamma-1)^{p^s}\fM\subseteq T^n\fM$. As $(\gamma-1)^{p^s}\equiv (\gamma^{p^s}-1)\bmod{p}$, we see that $(\gamma^{p^s}-1)\fM\subseteq (p,T^n)\fM$; in particular that (3) holds when $a=1$. Assume now that (3) is true up to $a-1\geq 1$. By the inductive hypothesis (applied to the lattice $(\fM+\varpi^{a-1}M)/\varpi^{a-1}M$ inside $M/\varpi^{a-1} M$), we can choose $s$ large enough so that $(\gamma^{p^s}-1)\fM\subseteq T^n\fM+\varpi^{a-1}M$. In particular, we have $p(\gamma^{p^s}-1)\fM\subseteq T^n\fM$. Now as $\gamma^{p^s}-1$ is topologically nilpotent, we can use Lemma \eqref{topologically nilpotent} again to pick $t$ large enough so that $(\gamma^{p^s}-1)^{p^t}\fM\subseteq T^n\fM$. As $(\gamma^{p^{s+t}}-1)=((\gamma^{p^s}-1)+1)^{p^t}-1\equiv (\gamma^{p^s}-1)^{p^t}\bmod{p(\gamma^{p^s}-1})$, it follows that $(\gamma^{p^{s+t}}-1)\fM\subseteq T^n\fM$, as desired. 

Assume now that (1) and (2) hold. Let $\fM\subseteq M$ be an arbitrary lattice, say with generators $m_1,\ldots,m_r$ of $\fM$. By \cite[Lem. D.13 (2)(a)]{EG22}, we can choose $s\geq 0$ sufficiently large so that $(\gamma^{p^s}-1)m_i\in T\fM$ for all $i$. Thus, for any $\lambda_i\in\mathbf{A}_A^+$, we have 
\begin{displaymath}
(\gamma^{p^s}-1)(\sum_i \lambda_i m_i)=\sum_i \gamma^{p^s}(\lambda_i)(\gamma^{p^s}-1)m_i+\sum_i (\gamma^{p^s}(\lambda_i)-\lambda_i)m_i.
\end{displaymath}
As $\gamma^{p^s}(\lambda_i)-\lambda_i\in T\mathbf{A}_A^+$ for all $i$ (recall that $\gamma$ fixes $\O_K\otimes_{\O_F}A$, and preserves $T\mathbf{A}_A^+$), it follows that $(\gamma^{p^s}-1)\fM\subseteq T\fM$, hence (4).

Finally assume that the equivalent conditions (3)--(4)--(5) hold. To show that (1) and (2) holds, it suffices to verify the conditions of \cite[Lem. D.13 (2)]{EG22}. That \cite[Lem. D.13 (2)(b)]{EG22} holds, i.e. each $g\in \langle \gamma\rangle$ acts continuously on $M$ follows by the same argument used in Lemma \eqref{gamma -1 quasi-linear}.

We now check \cite[Lem. D.13 (2)(c)]{EG22}. So let $\fM\subseteq M$ be a lattice. We need to show that $\langle \gamma^{p^s}\rangle\fM \subseteq \fM$ for all $s\geq 0$ sufficient large. As we are assuming (3), we may choose $s$ so that $(\gamma^{p^s}-1)\fM\subseteq T\fM$. In particular, $\gamma^{p^s}\fM\subseteq \fM$, and by induction we have $\gamma^{r p^s}\fM\subseteq \fM$ for all $r\geq 0$. To handle the case $r<0$, it suffices (by descending induction) to check that $\gamma^{-p^s}\fM\subseteq \fM$. This follows from the fact that $\gamma^{p^s}-1$ acts topologically nilpotent on $M$. Indeed, for any $m\in \fM$ we have 
\begin{displaymath}
\gamma^{-p^s}(m)=(1-(1-\gamma^{p^s}))^{-1}m=m+(1-\gamma^{p^s})m+(1-\gamma^{p^s})^2m+\ldots \in \fM.
\end{displaymath}
It remains to check \cite[Lem. D.13 (2)(a)]{EG22}. In other words, we need to check that for any $m\in M$, the orbit map $\langle \gamma \rangle \to M, g\mapsto gm$ is continuous at the identity of $\langle \gamma \rangle$, or equivalently, for any lattice $\fM\subseteq M$, we can find $s\geq 0$ sufficiently large so that $gm\in m+\fM$ for all $g\in \langle \gamma^{p^s}\rangle$. By picking $n$ large enough so that $m\in T^{-n}\fM$ and replacing $\fM$ by $T^{-n}\fM$, we are reduced to choosing $s$ so that $(g-1)\fM\subseteq T^n\fM$ for all $g\in \langle \gamma^{p^s}\rangle$. Again, by (3) we can choose $s$ so that $(\gamma^{p^s}-1)\fM\subseteq T^n\fM$. As in preceding paragraph, this implies in particular that $\gamma^{r p^s}\fM\subseteq \fM$ for all $r\in\mathbf{Z}$. It follows that $(\gamma^{r p^s}-1)\fM\subseteq T^n\fM$ for all $r\in \mathbf{Z}$. Indeed, if $r\geq 1$, then $(\gamma^{r p^s}-1)\fM=(\gamma^{p^s}-1)(\gamma^{(r-1)p^s}+\ldots+\gamma^{p^s}+1)\fM\subseteq (\gamma^{p^s}-1)\fM\subseteq T^n\fM$, while if $r<0$, then $(\gamma^{r p^s}-1)\fM=-(\gamma^{-r p^s}-1)\gamma^{r p^s}\fM\subseteq (\gamma^{-r p^s}-1)\fM\subseteq T^n\fM$.
\end{proof}
\end{appendices}

\medskip
\printbibliography[title ={References}]
\addcontentsline{toc}{section}{References}
\textsc{\small LAGA, Universit\' e Paris 13, 99 avenue Jean-Baptiste Cl\' ement, 93430 Villetaneuse, France}\\
\indent\textit{Email address}: \href{mailto:dat.pham@math.univ-paris13.fr}{\texttt{dat.pham@math.univ-paris13.fr}}
\end{document}